\definecolor{darkgreen}{rgb}{0,0.45,0}
\definecolor{darkred}{rgb}{0.75,0,0}
\definecolor{darkblue}{rgb}{0,0,0.6}
\theoremstyle{plain}% default
\newtheorem{thm}{Theorem}[section]
\newtheorem{lem}[thm]{Lemma}
\newtheorem{prop}[thm]{Proposition}
\newtheorem{cor}[thm]{Corollary}
\newtheorem*{thmC}{Theorem C}
\newtheorem*{reci}{The Recipe}
\theoremstyle{plain}% default
\newtheorem{thmA}{Theorem}
\theoremstyle{definition}
\newtheorem{conj}[thm]{Conjecture}
\newtheorem{exmp}[thm]{Example}
\newtheorem{rem}[thm]{Remark}
\newtheorem{conv}[thm]{Convention}
\newtheorem*{konj}{Conjecture}
\theoremstyle{remark}
\newtheorem*{defn}{Definition}
\newcommand{\F}{F}
\newcommand{\Jord}{\mathrm{Jord}}
\newcommand{\Sp}{\mathrm{Sp}}
\newcommand{\Ort}{\mathrm{O}}
\newcommand{\SO}{\mathrm{SO}}
\newcommand{\Mp}{\mathrm{Mp}}
\newcommand{\Irr}{{\mathrm{Irr}}}
\newcommand{\rterm}[3]{#1 \otimes S_{#2} \otimes S_{#3}}
\newcommand{\Jac}{\textnormal{Jac}}
\numberwithin{equation}{section}
\newcommand{\GL}{\operatorname{GL}}
\newcommand{\SL}{\operatorname{SL}}
\newcommand{\Hom}{\operatorname{Hom}}
\newcommand{\Ind}{\operatorname{Ind}}
\title{Theta correspondence and Arthur packets:
on the Adams conjecture}
\author[P. Baki\'{c}]{Petar Baki\'{c}}
\address{Department of Mathematics\\University of Utah\\155 S 1400 E\\Salt Lake City, UT 84112}
\email{bakic@math.utah.edu}
\author[M. Hanzer]{Marcela Hanzer}
\address{Department of Mathematics\\University of Zagreb\\
Bijeni\v{c}ka 30 \\ 10000 Zagreb, Croatia}
\email{hanmar@math.hr}
\date{}
\begin{document}

\begin{abstract}
The Adams conjecture predicts that the local theta correspondence should respect the Arthur parametrization.
In this paper, we revisit the Adams conjecture for the symplectic--even orthogonal dual pair over a nonarchimedean local field of characteristic zero. Our results provide a precise description of all situations in which the conjecture holds.
\end{abstract}

\maketitle
\thispagestyle{empty}

\setcounter{tocdepth}{1}
\tableofcontents

\normalsize

\section*{Introduction}
\label{sec_intro}The theory of theta correspondence is important in the study of automorphic forms. This paper is an attempt to understand this theory in the language of Arthur packets.\footnote{We are stealing the opening sentence from the paper which first raised this question \cite{Adams}.}
%ovo sam ukrao od Adamsa

The systematic study of theta correspondence was initiated by Roger Howe in the 1970's \cite{Howe_theta_series}, building on the work of Andr\'{e} Weil \cite{Weil}. Since then, it has motivated a large body of work by numerous authors. This sustained interest stems from the fact that theta correspondence is one of the few ways to explicitly construct automorphic forms and representations. Notably, it has been used to construct many instances of Langlands functoriality.
In this paper, we study the local version of this correspondence.

To recall the basic idea, we need two ingredients. The first one is a reductive dual pair inside a symplectic group. Let $F$ be a non-archimedean local field of characteristic zero. We fix $\epsilon=\pm 1$ and let 
\begin{align*}
W_n &= \text{a }(-\epsilon)\text{-Hermitian space of even dimension }n\; \text{over } F,\\
V_m &= \text{an }\epsilon\text{-Hermitian space of even dimension }m\; \text{over } F.
\end{align*}
We let $G=G_n$ (resp.~$H=H_m$) denote the isometry group of $W_n$ (resp.~$V_m$). Tensoring the two bilinear forms, we get a symplectic form on $W\otimes V$ which is preserved by both $G$ and $H$. Moreover, $G$ and $H$ form a reductive dual pair inside $\Sp(W\otimes V)$: each one is the centralizer of the other. The second ingredient we need is the so-called Weil representation. The symplectic group has a unique non-trivial double cover, called the metaplectic group. The Weil representation $\omega$ is a representation of $\Mp(W\otimes V)$, the metaplectic group associated with $W\otimes V$. Crucially, there exists a splitting $G\times H \to \Mp(W\otimes V)$. Pulling back the Weil representation along this splitting, one obtains a representation $\omega_{W,V}$ of $G\times  H$. The goal is to analyze $\omega_{W,V}$.

For an irreducible representation $\pi$ of $G$, the maximal $\pi$-isotypic quotient of $\omega_{W,V}$ is of the form
\[
\pi \otimes \Theta(\pi),
\]
for some smooth representation $\Theta(\pi)$ of $H$. The representation $\Theta(\pi)$ is called the big theta lift of $\pi$; an early result of Kudla \cite{Kudla2} shows that it has finite length. When non-zero, $\Theta(\pi)$ has a unique irreducible quotient, denoted $\theta(\pi)$ (we set $\theta(\pi)=0$ when $\Theta(\pi)=0$). This remarkable result, known as the \emph{Howe duality conjecture}, was first formulated by Howe \cite{Howe_theta_series}, proven by Waldspurger \cite{Waldspurger_howe_duality} in odd residue characteristic and by Gan and Takeda \cite{Gan_Takeda_proof_of_Howe} in full generality. The map $\pi \mapsto \theta(\pi)$ is called the local theta correspondence.

There are two basic questions concerning this construction: a) For which $\pi$ is $\Theta(\pi)\neq 0$? and b) What is $\theta(\pi)$, when it is non-zero? An early result in this direction was proved by M\oe glin, Vign\'{e}ras and Waldspurger \cite{MVW_Howe} for cuspidal representations. Similar results were obtained by Muić \cite{Muic_Israel} for discrete series representations. These results were obtained before the local Langlands classification (LLC) was available. In modern terms, we typically ask: Given an $L$-parameter corresponding to $\pi$, can we tell whether $\theta(\pi)$ is non-zero? Furthermore, can we determine the $L$-parameter of $\theta(\pi)$?
Nowadays, complete answers to questions a) and b) are known: in \cite{Atobe_Gan} Atobe and Gan answered them for tempered representations; in \cite{nas_clanak}, we extended these results to obtain answers for general irreducible representations.

\noindent  Although these results offer complete answers, in some sense they could never tell the whole story. The reason is that theta correspondence does not respect $L$-packets: two representations from the same packet can lift to representations belonging to different $L$-packets\,---\,this much has been known from the early days of theta correspondence. In his 1989 paper \cite{Adams}, Adams proposed a solution to this problem: instead of $L$-packets, consider Arthur packets! Indeed, by enlarging the packets, one could hope to obtain a nicer formula for the correspondence. To explain this idea, we briefly recall the notion of (local) Arthur packets; we refer the reader to Section \ref{subs_Arthur} for a more detailed overview.

In his work \cite{Arthur_endoscopic}, Arthur proposed a classification of square-integrable automorphic representations of $G$. The central notion is that of an Arthur parameter. Each parameter corresponds to a (global) Arthur packet of automorphic representations. For each local place, the global parameter restricts to a local Arthur parameter, which is a map 
\[
\psi: L_F \times \SL_2(\mathbb{C}) \to \prescript{L}{}{G}
\]
subject to certain technical requirements. Here $L_F$ denotes the Weil--Deligne group of $F$, and $\prescript{L}{}{G}$ is the (complex) dual group of $G$:
\[
\prescript{L}{}{G} = 
\begin{cases}
\SO(n+1, \mathbb{C}), &\quad \text{if }G \text{ is symplectic (i.e.~if } \epsilon = 1);\\
\Ort(n, \mathbb{C}), &\quad \text{if }G \text{ is even orthogonal (i.e.~if } \epsilon = -1).
\end{cases}
\]
To each such parameter, one attaches a local Arthur packet, i.e.~a packet $\Pi_\psi$ of irreducible representations of $G$. The representations in $\Pi_\psi$ are precisely those which appear as local constituents of the corresponding global Arthur packet.
Composing $\psi$ with the standard representation of $\prescript{L}{}{G} $, one may view $\psi$ as a representation of $L_F \times \SL_2(\mathbb{C})$. 

A critical contribution to this theory was that of M\oe glin \cite{moeglin2006paquets,moeglin2006certains,moeglin2009comparaison,moeglin2010holomorphie,moeglin2011multiplicite1}, who constructed the local Arthur packets $\Pi_\psi$ and showed that they are multiplicity free. To classify the different irreducible representations in $\Pi_\psi$, M\oe glin uses additional data $(\eta, t)$ (see \S \ref{subs_Arthur}). Given a parameter $\psi$, each choice of $(\eta, t)$ yields a representation $\pi_{(\eta, t)}$. This representation is either irreducible or $0$, and the non-zero representations obtained this way constitute the local A-packet $\Pi_\psi$. The key technical question of whether $\pi_{(\eta, t)}$ is non-zero has been resolved by Xu \cite{xu2021combinatorial} and more recently by Atobe \cite{atobe2022construction}.

\noindent We now explain the Adams conjecture. Let $\pi$ be an irreducible representation of $G=G_n$. Recall that we are interested in computing the lift to $H=H_m$; we usually denote this lift by $\theta_{-\alpha}(\pi)$, where  $\alpha = m-n-\epsilon$. Suppose $\alpha >0$, and let $S_\alpha$ denote the irreducible (algebraic) $\alpha$-dimensional representation of $\text{SL}_2(\mathbb{C})$. We then have:\footnote{See \S \ref{sec_conj} for a fully precise statement; to simplify the exposition, we are not mentioning the characters needed to fix the splitting $G\times H \to \Sp(W\otimes V)$}
\begin{konj}[Adams \cite{Adams}]
Assume that $\pi$ is  parametrized by $\psi$. Then $\theta_{-\alpha}(\pi)$ is pa\-ra\-metrized by
\[
\psi_\alpha = \psi\ \oplus \ 1 \otimes S_\alpha.
\]
\end{konj}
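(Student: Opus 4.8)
The plan is to prove the conjecture by induction on the complexity of $\psi$, matching M\oe glin's explicit construction of the packet $\Pi_\psi$ (\cite{moeglin2006paquets,moeglin2011multiplicite1}, in the sharpened form of \cite{xu2021combinatorial,atobe2022construction}) against the behaviour of theta lifting under parabolic induction. Write the Jordan blocks of $\psi$ as $\Jord(\psi)=\{(\rho_i,a_i,b_i)\}$, so that as a representation of $L_F\times\SL_2(\C)$ one has $\psi=\bigoplus_i \rho_i\otimes S_{a_i}\otimes S_{b_i}$. After ordering the blocks suitably, M\oe glin's recipe exhibits any $\pi=\pi_{(\eta,t)}\in\Pi_\psi$ as a distinguished irreducible constituent of an induced representation $\tau\rtimes\pi'$, where $\pi'$ lies in the packet of a strictly smaller parameter $\psi'$ and $\tau$ is a product of shifted segments and Speh-type representations built from the blocks being removed. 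The first task is to establish the parallel statement on the theta side: if $\pi$ is this distinguished constituent of $\tau\rtimes\pi'$, then $\theta_{-\alpha}(\pi)$ is the corresponding constituent of $\tau\rtimes\theta_{-\alpha}(\pi')$, \emph{with the same} $\tau$ --- the new block $1\otimes S_\alpha$ (in the precise statement, $\chi\otimes S_\alpha$ for the relevant quadratic character $\chi$) involves only the trivial representation of $L_F$, and so, in generic position, is untouched by the $\GL$-part of the induction. This compatibility rests on Kudla's filtration of the Jacquet modules of the Weil representation \cite{Kudla2} and on the analysis of theta lifts of induced representations in \cite{Atobe_Gan,nas_clanak}; I would isolate it as the key lemma.

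Granting the lemma, the combinatorial core is to check that $\psi\mapsto\psi_\alpha=\psi\oplus 1\otimes S_\alpha$ intertwines M\oe glin's reduction steps: stripping a block from $\psi$ to reach $\psi'$ must correspond, after adjoining $1\otimes S_\alpha$, to stripping the same block from $\psi_\alpha$ to reach $\psi'_\alpha$, and the auxiliary data $(\eta,t)$ classifying $\pi$ inside $\Pi_\psi$ must be transported to the data classifying $\theta_{-\alpha}(\pi)$ inside $\Pi_{\psi_\alpha}$ in the expected (essentially trivial) fashion. As long as $1\otimes S_\alpha$ is in general position relative to every $(\rho_i,a_i,b_i)$, this is bookkeeping within M\oe glin's formalism, since the new block then plays no part in the chain of reductions. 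The base case is $\psi$ tempered, i.e.\ trivial on the Arthur $\SL_2$: then $\psi=\phi$ is an $L$-parameter, $\Pi_\psi$ is the $L$-packet of $\phi$, and the $L$-parameter attached to $\psi_\alpha$ is $\phi\oplus|\cdot|^{(\alpha-1)/2}\oplus|\cdot|^{(\alpha-3)/2}\oplus\cdots\oplus|\cdot|^{-(\alpha-1)/2}$; one then reads the claim off Atobe and Gan's explicit formula for the $L$-parameter of $\theta_{-\alpha}(\pi)$ for tempered $\pi$ \cite{Atobe_Gan}, combined with M\oe glin's criterion for membership of a representation in a given Arthur packet.

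The step I expect to be the main obstacle --- and, in view of the abstract, exactly where the conjecture must be corrected --- is the control of nonvanishing, of the dependence on the Witt tower, and of the degenerate interactions between $1\otimes S_\alpha$ and $\Jord(\psi)$. First, the conjecture tacitly presupposes $\theta_{-\alpha}(\pi)\neq 0$ and that it is the lift to the stated space $V_m$; when $\alpha$ is not deep in the stable range, the conservation relation can force the lift to vanish, or, when it is nonzero, to carry a parameter differing from $\psi_\alpha$ (typically by a sign twist tied to the conservation relation and the splitting character), so the statement fails verbatim. Second, when $1\otimes S_\alpha$ is \emph{not} in general position relative to $\Jord(\psi)$ --- typically because $\psi$ already involves the trivial representation of $L_F$ with $a$- or $b$-parameters near those of the new block, so that their contributions to the infinitesimal character overlap --- the parameter $\psi_\alpha$ may violate Arthur's self-duality or multiplicity constraints, or its packet may degenerate, and M\oe glin's reduction no longer treats the new block as disjoint; these cases must be handled by hand and are precisely where correction terms appear. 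I therefore anticipate that the final theorem takes the shape: the conjecture holds whenever $\theta_{-\alpha}(\pi)$ is nonzero, occurs on the stated tower, and $1\otimes S_\alpha$ is in sufficiently general position relative to $\Jord(\psi)$, with the inductive machinery above yielding an explicit (and in general different) answer in all remaining cases.
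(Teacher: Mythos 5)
The statement you are asked about is a \emph{conjecture}, and the paper's main point is that it is \emph{false} in general: the Adams conjecture holds precisely for $\alpha\geq d(\pi,\psi)$ (Theorems A and B) and fails for every $\alpha<d(\pi,\psi)$ (Theorem C). So there is no ``proof of the statement'' in the paper to compare against; the correct deliverable is a determination of $\mathcal{A}(\pi,\psi)$, and your proposal should have been framed from the outset as such. You do arrive at this realization in your last paragraph, which is to your credit, but the framing as a proof attempt for the conjecture obscures that.

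On the methodology, your inductive framework is genuinely close to the paper's. Your ``key lemma'' --- that $\pi$ being the distinguished constituent of $\tau\rtimes\pi'$ forces $\theta_{-\alpha}(\pi)$ to be the distinguished constituent of $\tau\rtimes\theta_{-\alpha}(\pi')$, proved via Kudla's filtration --- is exactly the content of Lemma \ref{lem_Kudla} and Proposition \ref{prop_wwtawwtaKudla}, and the reduction along M\oe glin's construction (replacing $\psi$ by the dominating DDR parameter, then shifting back with Jacquet modules) is the engine of the whole paper. However, the base case of the paper's induction is not the tempered case: it is M\oe glin's theorem that the conjecture holds for $\alpha\gg 0$ (Proposition \ref{prop_visoko}), and Theorem A descends from $\alpha$ to $\alpha-2$ under the hypothesis that the Recipe's candidate $\pi_{\alpha-2}$ is nonzero. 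Using the tempered lift formulas of Atobe--Gan as a base case is not obviously simpler, because the translation from Arthur data to $L$-parameters is itself involved; indeed, the paper explicitly opts to stay inside the Arthur world.

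Two substantive inaccuracies in your outline. First, the claim that the transport of $(\eta,t)$ is ``essentially trivial'' when the added block is in general position is wrong: Proposition \ref{prop_visoko} already flips $\eta$ on every block $\rho_i\cong\chi_V$, and as the added block $\chi_W\otimes S_1\otimes S_\alpha$ descends past an existing block $(A,B,\zeta)$ (the case $B=\frac{\alpha-1}{2}$), the change-of-order formulas of Lemma \ref{lem_xu} modify both $\eta$ and $t$ in a non-obvious way --- this is the heart of the Recipe. Second, your predicted failure criterion (``$1\otimes S_\alpha$ not in general position relative to $\Jord(\psi)$'') is too crude and in fact points in the wrong direction: the conjecture can hold with $\alpha$ deep inside the Jordan blocks of $\psi$ and fail in apparently generic-looking situations; the precise criterion is the combinatorial non-vanishing of $\pi_\alpha$, which Lemma \ref{lem_where_can_it_stop} and Remark \ref{rem_descent} describe in terms of the signs $\eta$ and the function $t$ on the encountered block, and which in general requires Xu's algorithm. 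Moreover your proposal says nothing about the converse direction, that the conjecture \emph{always} fails below $d(\pi,\psi)$; that is Theorem C, and it is by far the longest and most delicate part of the paper (removing the top Jordan block by repeated theta lifts, comparing standard modules via the explicit theta description from the authors' earlier work, and deriving a contradiction from Corollary \ref{key}). A genuine proof in this area has to address that direction head-on.
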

\noindent Note that this indeed gives a homomorphism into $\prescript{L}{}{H}$; moreover, this is the simplest way to obtain a map into $\prescript{L}{}{H}$ using $\psi$ as the input! 

\noindent Adams himself verified the conjecture in \cite{Adams} for all examples of theta correspondence available at the time. However, subsequent work on the local theta correspondence led to new examples against which the conjecture could be tested. In her paper \cite{moeglinkudla}, M\oe glin revisited the work of Adams. She showed that
\begin{itemize}
    \item the above conjecture is true for large $\alpha$ (see Proposition \ref{prop_visoko});
    \item the above conjecture fails in many examples.
\end{itemize}
This failure of the Adams conjecture is the point of departure for the present paper. To be precise, given a representation $\pi$ in $\Pi_\psi$, we consider the set of those $\alpha$ for which $\theta_{-\alpha}(\pi)$ is in the expected packet:
   \[\mathcal A(\pi,\psi) = \{\alpha\geq 0,\  \alpha\equiv 1 (\text{mod } 2): \theta_{-\alpha}(\pi) \in \Pi_{\psi_\alpha}\}\]
M\oe glin (see \cite[Section 6.3]{moeglinkudla}) then asks two crucial questions:
\begin{itemize}
    \item[1)] Can we describe  $\mathcal A(\pi,\psi)$? In particular, does $\alpha\in \mathcal A(\pi,\psi)$ imply $\alpha+2\in \mathcal A(\pi,\psi)$?
    \item[2)] If so, can we find $a(\pi,\psi) := \min \mathcal A(\pi,\psi)$ explicitly?
\end{itemize}
In this paper, we answer both questions. In particular, we show that $\alpha\in \mathcal A(\pi,\psi)$ does imply $\alpha+2\in \mathcal A(\pi,\psi)$, and we develop a method for computing $a(\pi,\psi)$. 
We provide a rough overview of our results here, and leave the details for Section \ref{sec_conj}. 

Using M\oe glin's results on lifts for $\alpha \gg 0$ as a starting point, we  define a representation $\pi_\alpha$ for every odd $\alpha > 0$. By construction, there exists an odd number, denoted $d(\pi,\psi)$, such that $\pi_\alpha$ is an element of $\Pi_{\psi_\alpha}$ for $\alpha \geq d(\pi,\psi)$, and $\pi_\alpha=0$ for $\alpha < d(\pi,\psi)$.
Moreover, for $\alpha \gg 0$ we have $\pi_\alpha = \theta_{-\alpha}(\pi)$. Our first result is
\begin{thmA}[Theorem \ref{theoremA}] Let $\alpha > 1$ be odd.
Assume $\pi_\alpha = \theta_{-\alpha}(\pi)$ and $\pi_{\alpha-2} \neq 0$. Then $\theta_{-(\alpha-2)}(\pi)=\pi_{\alpha-2}$; in particular, $\theta_{-(\alpha-2)}(\pi)\neq 0$ is in $\Pi_{\psi_{\alpha-2}}$.
\end{thmA}
\noindent Theorem \ref{theoremA} allows us to start at an arbitrary large value of $\alpha$ (where we know the conjecture is true), and inductively decrease $\alpha$ to show that the Adams conjecture holds for all odd $\alpha \geq d(\pi,\psi)$.

To utilize the so-called conservation relation (see \S\ref{subs_theta}), we simultaneously look at two towers of lifts. On one of the towers (called the going-down tower) the lifts start appearing early; on the other (the going-up tower), they appear late. We prove
\begin{thmA}[Theorem \ref{theoremB}]
     On the going-up tower, $d^\text{up}(\pi,\psi)$ corresponds to the first occurrence of $\pi$:
    \[d^\text{up}(\pi,\psi) = \min\{\alpha>0: \theta_{-\alpha}^\text{up}(\pi)\neq 0\}.\]
     Consequently, the Adams conjecture is true for all non-zero lifts. Moreover, $d^\text{down}(\pi,\psi) \leq d^\text{up}(\pi,\psi)$, and equality holds if and only if $d^\text{up}(\pi,\psi) =1$.
\end{thmA}
\setcounter{thmA}{0}
\noindent Finally, we prove
\begin{thmC}[Theorem \ref{theoremC}]
The Adams conjecture never holds for $\alpha  < d(\pi,\psi)$.
\end{thmC}
Theorems A, B, and C thus resolve questions 1) and 2) posed above: we see that $\alpha\in \mathcal A(\pi,\psi)$ implies $\alpha+2\in \mathcal A(\pi,\psi)$, and moreover, $a(\pi,\psi) = d(\pi,\psi)$. 

We describe our methods. Considering M\oe glin's construction of Arthur packets, it is not surprising that we need to have good control of Jacquet modules. We thus make frequent use of Kudla's filtration (which describes the Jacquet modules of the Weil representation), but also some general results on Jacquet modules for representations of Arthur type. These are discussed in \S \ref{sec_tech}. Given that an explicit description of the theta correspondence is available in terms of the LLC \cite{nas_clanak}, one might conceivably use this to study the Adams conjecture in a roundabout manner: find the $L$-parameter of a representation $\pi \in \Pi_\psi$, compute $\theta_{-\alpha}(\pi)$, and check whether it is in $\Pi_{\psi_\alpha}$. Using a direct approach and staying within the Arthur world seemed to us the easier of the two possible approaches. Still, we make use of the results from \cite{nas_clanak} which are therefore reviewed in the Appendix. Of course, our results also rely on those of Xu: as explained above, we ultimately show that the Adams conjecture is valid if and only if $\alpha \geq d(\pi,\psi)$. But $d(\pi,\psi)$ is defined precisely by the non-vanishing of a certain representation $\pi_{\alpha}$. The algorithm developed by Xu (and refined by Atobe) is what makes this a satisfactory criterion. 

To prove that $\theta_{-\alpha}(\pi)$ is in a certain Arthur packet, it suffices to identify a candidate representation in this packet and show that it is isomorphic to $\theta_{-\alpha}(\pi)$; this is what makes Theorem A relatively easy. The proof of Theorem B uses a trick (used a few times throughout the paper) in order to compare the relevant Arthur parameter to a simpler one. It is a nice demonstration of how the results from \cite{nas_clanak} give precise qualitative information about theta lifts. Finally, Theorem C seems to us inevitably difficult: unlike Theorem A, here we prove that our theta lift \emph{cannot} be isomorphic to a representation from a given Arthur packet. Our proof is a showcase of the main technical idea of this paper, namely that theta lifts can be used to alter Arthur packets in a controlled way. This method of using theta lifts to perform ``surgery on Arthur packets" is expounded in Section \ref{subs_removal}. It is quite general, and can be used in any problem involving local Arthur packets. Aside from our main results, it is the main novelty and the main technical contribution of the present paper.

Although we work with symplectic--even orthogonal dual pairs, our methods are applicable to other dual reductive pairs, provided the local Arthur packets are defined. Indeed, our proofs rely only on M\oe glin's construction of Arthur packets and on results about theta correspondence which hold for all dual pairs of type I. Assuming Arthur's (local) parametrization of representations in terms of A-packets (thus assuming all the issues concerning endoscopy, transfers, fundamental lemmas are resolved), M\oe glin \cite{moeglin2011multiplicite1} describes the classes of groups for which her explicit construction of A-packets applies. We thus expect the same results to hold for the metaplectic--odd orthogonal dual pair, as well as for unitary dual pairs.

In recent years, there has been considerable progress in understanding local Arthur packets. We mention some relevant work. Xu \cite{xu2021combinatorial} describes an algorithm which determines whether a given representation $\pi_{(\eta,t)}$ is non-zero; this answers the crucial question in M\oe glin's construction. The work of Atobe \cite{atobe2022construction} reinterprets those results, and constructs local A-packets explicitly. Finally, Atobe \cite{Atobe2} and Hazeltine--Liu--Lo \cite{Purdue} compute the set of A-packets which contain a given irreducible representation (in particular, given an irreducible representation, these results determine whether or not it is of Arthur type). 

There are some related questions that are not addressed by the present paper. For example, one could ask for a description of theta correspondence in terms of Arthur packets when lifting to groups of smaller rank (i.e.~for $\alpha<0$). Although this question is not studied by Adams \cite{Adams}, it would be interesting to have a description of lifts, or at least a criterion for non-vanishing of lifts that does not involve translating the question to the language of $L$-parameters. We suspect the answers in this case are unlikely to be as tidy as the ones provided by the Adams conjecture. Another interesting question is the following: if the Adams conjecture fails and the lift is not in the \emph{expected} Arthur packet, could $\theta_{-\alpha}(\pi)$ still be a representation of Arthur type (just lying in some other, unexpected packet)? Finally, we mention a question related to the work of Hazeltine--Liu--Lo \cite{Purdue}. Given a representation $\pi$ of Arthur type, one could consider all the parameters $\psi$ such that $\pi \in \Pi_\psi$. Our results attach an index $d(\pi,\psi)$ to each of these parameters. Notice that, on the going-up tower, the various $d(\pi,\psi)$ are independent of $\psi$ (they depend only on the first occurrence of $\pi$). However, on the going down-tower, they might vary with $\psi$. We suspect that $d(\pi,\psi)$ will be lower if $\psi$ is “more tempered”, suggesting a connection between theta correspondence and the notion of “\emph{the} Arthur packet” discussed in \cite{Purdue}. Exploring this connection would be an interesting problem. These and similar questions fall beyond the scope of the present paper.

\noindent Adams's original motivation for stating the conjecture was explaining the theta correspondence. However, it turned out that an explicit description of the correspondence in terms of $L$-parameters was found before the conjecture was fully investigated. It is fair to ask why one would bother obtaining a less precise description (using Arthur packets) if a fully precise recipe in terms of $L$-parameters is already available. The reasons for this are twofold. The first reason is technical: as shown by the work of Atobe \cite{atobe2022construction}, going from Arthur- to $L$-parameters and back is possible, but quite involved. For most applications involving Arthur packets, it is therefore valuable to have a recipe for theta correspondence that bypasses these translation issues. The second reason is equally interesting: the nature of local Arthur packets remains relatively mysterious, as they are defined by endoscopic character identities. However, the simplicity of Adams's formula suggests a possibly deep connection between theta correspondence and Arthur packets, that one could exploit in either direction. It is our hope that this work sheds some light on representations of Arthur type.

\subsection*{Outline} In Section \ref{sec_preliminaries} we introduce the objects and the notation we use in the paper. We give an overview of the theta correspondence as well as M\oe glin's construction of local Arthur packets.
In Section \ref{sec_conj} we state the Adams conjecture and the relevant questions; we provide a detailed description of our results. Section \ref{sec_example} contains examples which further illustrate the main results. Most of the auxiliary technical results are collected in Section \ref{sec_tech}. Finally, we prove the main results\,---\,Theorems A, B, and C\,---\,in Sections \ref{sec_thmA}, \ref{sec_thmB}, and \ref{sec_thmC}, respectively. Appendix \ref{appendix} contains a summary of the results from \cite{nas_clanak} that are used in proving Theorem C.

\subsection*{Acknowledgments} We would like to express our gratitude to Wee Teck Gan for suggesting this project as well as for his continued interest and support. We would also like to thank Hiraku Atobe and Rui Chen for useful communications. This work is supported in part by the Croatian Science Foundation under the project IP-2018-01-3628.
%Finally, thanks are due to the referee for pointing out a number of inaccuracies and improving the overall quality of the paper.

\section{Preliminaries and Notation}
\label{sec_preliminaries}
In this section we introduce the objects and the notation used throughout the paper.
\subsection{Groups}
\label{subs_groups}
Let $F$ be a nonarchimedean local field of characteristic $0$ and let $|\cdot|$ be the absolute value on $F$ with the usual normalization. We fix $\epsilon=\pm 1$ and let 
\[
\begin{cases}
W_n = \text{a }(-\epsilon)\text{-Hermitian space of even dimension }n\; \text{over } F,\\
V_m = \text{an }\epsilon\text{-Hermitian space of even dimension }m\; \text{over } F.
\end{cases}
\]
We let $G=G(W_n)$ (resp.~$H=H(V_m)$) denote the isometry group of $W_n$ (resp.~$V_m$). Thus $G$ is a symplectic group when $\epsilon = 1$, and a (full) orthogonal group when $\epsilon=-1$.
If $W$ is a symplectic space, we shall also consider the metaplectic group $\textnormal{Mp}(W)$: this is the unique non-trivial two-fold cover of $\Sp(W)$; cf.~\cite{Kudla1}, \cite{MVW_Howe}. If $X$ is a vector space over $F$, we use $\textnormal{GL}(X)$ to denote the general linear group of $X$. All the groups defined here are totally disconnected locally compact topological groups.

\subsection{Witt towers}
\label{subs_witt}
Every $\epsilon$-Hermitian space $V_m$ has a Witt decomposition
\[
V_m = V_{m_0} + V_{r,r} \quad (m = m_0 + 2r),
\]
where $V_{m_0}$ is anisotropic and $V_{r,r}$ is split (i.e.~a sum of $r$ hyperbolic planes). The space $V_{m_0}$ is unique up to isomorphism, and so is the number $r \geqslant 0$ (called the Witt index of $V_m$). The collection of spaces
\[
\mathcal{V} = \{V_{m_0} + V_{r,r} \colon r \geqslant 0\}
\]
is called a Witt tower.

\subsection{Parabolic subgroups}
\label{subs_parabolics}
Let $V_m$ be an $\epsilon$-Hermitian space of Witt index $r$. We fix a set of standard parabolic subgroups $Q_t$, $t=1,\dotsc,r$, of $H(V_m)$ like in \cite[\S 2.3]{nas_clanak}. The parabolic subgroup $Q_t$ has a Levi decomposition $Q_t=M_tN_t$ with Levi factor $M_t$ isomorphic to $\GL_t(F) \times H(V_{m-2t})$. By further partitioning $t$ we get the remaining standard parabolic subgroups. The standard maximal parabolic subgroups of $G(W_n)$ are denoted $P_t$.

\subsection{Representations}
\label{subs_reps}

Let $G = G(W_n)$ be one of the groups introduced in \S\ref{subs_groups}. We work in the category of smooth complex representations of $G$. The set of equivalence classes of irreducible representations of $G$ will be denoted by $\Irr(G)$. We denote the contragredient of $\pi$ by $\pi^\vee$.

For each parabolic subgroup $P=MN$ of $G$ we have the (normalized) induction and Jacquet functors, denoted by $\Ind_P^G$ and $r_P$. These are related by the standard Frobenius reciprocity
\[
\Hom_G(\pi, \Ind_P^G(\pi')) \cong \Hom_M(r_P(\pi), \pi')
\]
and by the second (Bernstein) form of Frobenius reciprocity,
\[
\Hom_G(\Ind_P^G(\pi'), \pi) \cong \Hom_M(\pi', r_{\overline{P}}(\pi)).
\]
Here $\overline{P} = M\overline{N}$ is the parabolic subgroup opposite to $P$. In the rest of \S \ref{subs_reps} we set up the notation and establish some basic facts about  induced representations and Jacquet functors.

\subsubsection{Notation for induction}
\label{subsubs_notation_ind}
Let $P=MN$ be the standard parabolic subgroup of $\GL_t(F)$ with Levi factor $M = \textnormal{GL}_{t_1}(F) \times \dotsm \times \textnormal{GL}_{t_k}(F)$ ($t = t_1 +\dotsb+t_k$), and let $\tau_i$ be a representation of $\textnormal{GL}_{t_i}(F)$ for $i=1,\dotsc,k$. We use 
\[
\tau_1 \times \dotsm \times \tau_k
\]
to denote $\Ind_P^{\textnormal{GL}_{t}}(\tau_1 \otimes \dotsm \otimes \tau_k)$. Similarly, let $P=MN$ be the standard parabolic subgroup of $G(W_n)$ with Levi factor $M = \textnormal{GL}_{t_1}(F) \times \dotsm \times \textnormal{GL}_{t_k}(F) \times G(W_{n-2t})$, and let $\pi_0$ a representation of $G(W_{n-2t})$. In this case we use
\[
\tau_1 \times \dotsm \times \tau_k \rtimes \pi_0
\]
to denote $\Ind_P^G(\tau_1 \otimes \dotsm \otimes \tau_k \otimes \pi_0)$.
%We point out that this notation can be used in the metaplectic case as well; see \S 2.4 in \cite{nas_clanak} for the details.

\subsubsection{Segments}
\label{subsubs_segments}
Let $\rho$ be an irreducible unitary supercuspidal representation of $\GL_k(F)$ and let $x,y$ be real numbers such that $y-x$ is a non-negative integer. Any tuple of representations of the form
\[
(\rho|\cdot|^x,\rho|\cdot|^{x+1},\dotsc, \rho|\cdot|^{y})
\]
is called a segment. We denote such a segment by $[x,{y}]_\rho$; when $\rho$ is clear from the context, we further simplify the notation and use $[x,y]$. The representation
\[
\rho|\cdot|^x \times \rho|\cdot|^{x+1} \times \dotsb \times  \rho|\cdot|^{y}
\]
has a unique irreducible quotient, and a unique irreducible subrepresentation. We denote the quotient by $\delta([x, {y}]_\rho)$, and the subrepresentation by $\zeta([x, {y}]_\rho)$. When $\rho$ is trivial these are usually shortened to $\delta(x,y)$ and $\zeta(x,y)$, respectively.
When $y=x-1$, these are to be interpreted as denoting the trivial representation of the trivial group.

Recall that the segments $[x,y]_\rho$ and $[x',y']_{\rho'}$ are said to be linked if neither contains the other, and their union is itself a segment. Similarly, they are said to be juxtaposed if their intersection is empty, but their union is a segment. We use the following well known result throughout the paper.
\begin{lem} Let $[x, {y}]_\rho$ and $[x',y']_{\rho'}$ be two segments.
\begin{enumerate}[(i)]
    \item The representation $\delta([x, {y}]_\rho) \times \delta([x',y']_{\rho'})$ reduces if and only if the segments $[x, {y}]_\rho$ and $[x',y']_{\rho'}$ are linked. The same holds if we replace $\delta$ by $\zeta$.
    \item The representation $\delta([x, {y}]_\rho) \times \zeta([x',y']_{\rho'})$ reduces if and only if the segments $[x,y]$ and $[x',y']$ are juxtaposed.
\end{enumerate}
\end{lem}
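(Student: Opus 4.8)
The plan is to reduce both statements to the classical irreducibility criteria for products of segment representations of general linear groups, due to Zelevinsky and Bernstein, together with the Aubert--Zelevinsky involution. First one disposes of the case in which the two segments lie on different cuspidal lines, i.e.\ $\rho'\not\cong\rho|\cdot|^{k}$ for any $k\in\Z$: then the cuspidal supports involved share no element and contain no linked pair, so the product is irreducible by the Bernstein--Zelevinsky criterion, while $[x,y]_\rho$ and $[x',y']_{\rho'}$ are neither linked nor juxtaposed since their union is not a segment; thus both equivalences hold vacuously. Twisting, we may therefore assume $\rho=\rho'$ and put $\Delta_1=[x,y]_\rho$, $\Delta_2=[x',y']_\rho$, with all characters lying on the single line $\{\rho|\cdot|^{j}:j\in\Z\}$.

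For part (i): the statement for $\delta$ is exactly the theorem of Zelevinsky (and Bernstein) that $\delta(\Delta_1)\times\delta(\Delta_2)$ is irreducible if and only if $\Delta_1$ and $\Delta_2$ are not linked. The statement for $\zeta$ follows by applying the Aubert--Zelevinsky involution $(\cdot)^{t}$, which is exact, preserves irreducibility, commutes with parabolic induction, and interchanges $\delta(\Delta)$ with $\zeta(\Delta)$; hence $\zeta(\Delta_1)\times\zeta(\Delta_2)\cong(\delta(\Delta_1)\times\delta(\Delta_2))^{t}$ reduces exactly when $\delta(\Delta_1)\times\delta(\Delta_2)$ does, and the condition ``linked'' is unaffected.

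Part (ii) must be treated directly, since $(\cdot)^{t}$ preserves the shape $\delta\times\zeta$. If $\Delta_1,\Delta_2$ are juxtaposed, I would realize $\zeta(\Delta_2)$ as the unique irreducible submodule of the product of characters along $\Delta_2$ and compute the Jacquet module of $\delta(\Delta_1)\times\zeta(\Delta_2)$ along a suitable maximal parabolic via the geometric lemma; one exhibits two inequivalent irreducible quotients, so the length is at least two. For the converse, assume $\Delta_1,\Delta_2$ are not juxtaposed. If they are not linked (one contains the other, or they are disjoint with a gap), then $\delta(\Delta_1)\times\rho|\cdot|^{j}$ is irreducible for each $j\in\Delta_2$ by part (i), hence isomorphic to $\rho|\cdot|^{j}\times\delta(\Delta_1)$; peeling the characters off $\zeta(\Delta_2)$ one at a time from an endpoint that keeps the remaining segment unlinked to $\Delta_1$ and commuting them past $\delta(\Delta_1)$, one deduces irreducibility of $\delta(\Delta_1)\times\zeta(\Delta_2)$ by induction on $|\Delta_2|$. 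If $\Delta_1,\Delta_2$ are linked but overlapping, this commuting argument breaks at the endpoint adjacent to the overlap; here one instead shows that $\delta(\Delta_1)\times\zeta(\Delta_2)$ coincides with the Langlands quotient of its standard module, by an induction on $|\Delta_1|+|\Delta_2|$ using Zelevinsky derivatives (or a cuspidal-support count against part (i)) that rules out any further constituent.

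The main obstacle is this last case of part (ii): overlapping linked segments make $\delta\times\delta$ and $\zeta\times\zeta$ reducible but leave $\delta\times\zeta$ irreducible, so the answer cannot simply be quoted from Zelevinsky, and the two-sided analysis — the Jacquet-module computation for the ``juxtaposed $\Rightarrow$ reducible'' half and the induction ruling out extra constituents for the overlapping half — has to be carried out by hand, even though each individual step is routine.
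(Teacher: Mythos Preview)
The paper does not actually prove this lemma: it simply cites Zelevinsky for (i) and M\oe glin--Waldspurger (Lemma I.6.3 of \emph{Le spectre r\'esiduel de $\GL(n)$}) for (ii). Your plan therefore goes well beyond what the paper does, and for (i) it is entirely correct and standard.

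For (ii), however, there is a genuine gap in your treatment of the ``not linked'' case. You assert that when $\Delta_1,\Delta_2$ are not linked, $\delta(\Delta_1)\times\rho|\cdot|^{j}$ is irreducible for \emph{every} $j\in\Delta_2$. This is false when $\Delta_1\subsetneq\Delta_2$: take $\Delta_1=[2,4]$ and $\Delta_2=[1,5]$; then $j=1$ (and $j=5$) is juxtaposed with $\Delta_1$, so $\delta([2,4])\times\rho|\cdot|^{1}$ is reducible by part (i). In fact for $\Delta_2=[x-1,y+1]\supsetneq[x,y]=\Delta_1$ \emph{both} endpoints of $\Delta_2$ fail, so no choice of peeling direction on $\zeta(\Delta_2)$ rescues the induction as written.

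The repair is to peel from $\delta(\Delta_1)$ instead in the subcase $\Delta_1\subsetneq\Delta_2$: every point of $\Delta_1$ lies inside $\Delta_2$, hence the singleton $\{j\}$ (for $j$ an endpoint of $\Delta_1$) is contained in $\Delta_2$ and therefore not linked to it, so $\rho|\cdot|^{j}\times\zeta(\Delta_2)$ is irreducible by the $\zeta$-half of (i). Since removing an endpoint from $\Delta_1$ keeps it contained in $\Delta_2$, the induction on $|\Delta_1|$ goes through. With this fix your outline for (ii) is sound; the overlapping--linked subcase is indeed the substantive part, and your proposed approach via Langlands quotients and derivatives is along the right lines, though this is exactly the content one would extract from M\oe glin--Waldspurger's argument.
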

\begin{proof}
Part (i) is shown in \cite{Zelevinsky_GL}, and (ii) is Lemma I.6.3 of \cite{Moeglin1989}. See also Section 3.1 of \cite{nas_clanak} for a more detailed analysis of the situation in (ii). 
\end{proof}

\noindent This construction extends to generalized segments. Fix $\zeta = \pm 1$. A generalized segment (of dimensions $m\times n$) is a matrix
\[
\begin{bmatrix}
x_{11} & \dotsb & x_{1n}\\
\vdots & \ddots & \vdots\\
x_{m1} & \dotsb & x_{mn}
\end{bmatrix}_\rho
\]
with $x_{i+1,j} = x_{ij} - \zeta$ and $x_{i,j+1} = x_{ij} + \zeta$, for all $i,j$. Let $\sigma_i$ be the unique irreducible subrepresentation of $\rho|\cdot|^{x_{i1}} \times \dotsb \times \rho|\cdot|^{x_{in}}$, for $i=1,\dotsc, m$. The representation attached to the above generalized segment is defined as the unique irreducible subrepresentation of $\sigma_1\times \dotsc \times \sigma_m$.
We abuse notation and simply use the above matrix to denote the corresponding representation. As before, when $\rho$ is clear from the context, we suppress it from the notation. We note that the matrix $[x_{ij}]_\rho$ and its transpose $[x_{ij}]_\rho^t$ correspond to the same representation; if not otherwise stated, we will assume that all the segments are written with decreasing rows and increasing columns.

%The reducibility criterion also extends to generalized segments. Let $[x_{ij}]_\rho$ and $[y_{ij}]_{\rho'}$ be generalized segments of dimensions $m\times n$ and $m'\times n'$,  respectively. Let $\zeta= x_{11}-x_{21}, \zeta' = y_{11}-y_{21}$. If $\zeta = \zeta'$, we say that the generalized segments are linked if
%\begin{itemize}
%\item $[x_{m1},x_{1n}]_\rho$ and $[y_{m'1},y_{1n'}]_{\rho'}$ are linked; and
%\item the four segments corresponding to the sides of the rectangle $[x_{ij}]_\rho$ do not contain, nor are contained in, the corresponding segments of $[y_{ij}]_{\rho'}$.
%\end{itemize}
%If $\zeta = -\zeta'$, we say that the generalized segments are linked if $[x_{ij}]_\rho$ and $[x_{ij}]_\rho^t$ are linked.

%\begin{lem}[M{\oe}glin--Waldspurger \cite{Moeglin1989}, Tadić \cite{tadic2014}]
%\label{lem_multi}
%The representation $[x_{ij}]_\rho \times [y_{ij}]_{\rho'}$ reduces if and only if the corresponding generalized segments are linked.
%\end{lem}

\subsubsection{Notation for Jacquet functors}
\label{subsubs_Jacquet}
%If $P=MN$ is the standard parabolic subgroup of $G(W_n)$ with Levi factor $M = \textnormal{GL}_{t_1}(F) \times \dotsm \times \textnormal{GL}_{t_k}(F) \times G(W_{n-2t})$, we use
%\[
%\tau_1 \times \dotsm \times \tau_k \rtimes \pi_0
%\]
%to denote $\Ind_P^G(\tau_1 \otimes \dotsm \otimes \tau_k \otimes \pi_0)$, where $\tau_i$ is a representation of $\textnormal{GL}_{t_i}(F)$, $i = 1,\dotsc,k$, and $\pi_0$ is a representation of $G(W_{n-2t})$ (with $t = t_1 +\dotsb+t_k$). We point out that the above notation can be used in the metaplectic case as well; see \S 2.4 in \cite{nas_clanak} for the details.

The construction of local Arthur packets involves repeated use of various Jacquet functors. To simplify the exposition, we introduce a convenient way to denote the Jacquet modules in question. Variations of this notation have become standard in the literature on local Arthur packets. Given $\pi \in \Irr (G(W_n))$ and a standard maximal parabolic $P$ with Levi factor $\GL_t(F) \times G(W_{n-2t})$, we may write the semisimplification of $r_P(\pi)$ as a direct sum
\[
\bigoplus_i\ \tau_i \otimes \sigma_i
\]
with $\tau_i \in \Irr (\GL_t(F))$ and $\sigma_i \in \Irr (G(W_{n-2t}))$. Fix $\rho$, an irreducible unitary supercuspidal representation of $\GL_t(F)$. For any real number $x$ we define
\[
\Jac^\rho_x (\pi) := \bigoplus_{\tau_i = \rho|\cdot|^x} \sigma_i.
\]
We extend this notation as follows. For a tuple $(x_1,\dotsc,x_k)$ of real numbers, we set
\[
\Jac^\rho_{x_1,\dotsc,x_k} := \Jac^\rho_{x_k} \circ \Jac^\rho_{x_{k-1}} \circ \dotsb \circ \Jac^\rho_{x_1}.
\]
Finally, let $[B,A]_\rho$ be a segment (cf.~\S\ref{subsubs_segments}) and $T>0$ an integer. For $\zeta = \pm 1$ we set
\[
\Jac^\rho_{\zeta[B+T,A+T]\to \zeta[ B, A]} :=\Jac^\rho_{\zeta (B+1),\dotsc,\zeta (A+1)} \circ  \dotsb \circ \Jac^\rho_{\zeta (B+T-1),\dotsc,\zeta (A+T-1)}\circ\Jac^\rho_{\zeta (B+T),\dotsc,\zeta (A+T)}.
\]
% Finally, suppose we have segments $S_i = [B_i,A_i]_\rho$ and $S_i' = [B_i+T_i,A_i+T_i]_\rho$ with $T_i >0$ for $i = 1,\dotsc,k$. If $\mathcal{S} = (S_1,\dotsc,S_k)$ and $\mathcal{S'} = (S_1',\dotsc,S_k')$ are the corresponding $k$-tuples of segments, we set
% \[
% \Jac_{\mathcal{S'}\to \mathcal{S}} := \Jac_{S_k'\to S_k} \circ \dotsb \circ \Jac_{S_1'\to S_1}.
% \]
See also Remark \ref{rem_zgusnjavanje} for additional notation related to Jacquet modules. When $\rho$ is implied from the context, we omit it and write $\Jac_x$, $\Jac_{x_1,\dotsc,x_k}$, $\Jac_{\zeta[B+T, A+T]\to \zeta [ B, A]}$.

\subsubsection{The Langlands classification}
\label{subsubs_Langlands}
We briefly recall the Langlands classification. Let $\delta_i \in \textnormal{GL}_{t_i}(F), i = 1,\dotsc, k$ be irreducible discrete series (unitarizable) representations, and let $\tau$ be an irreducible tempered representation of $G(W_{n-2t})$, where $t=t_1+\dotsb+t_k$. Any representation of the form
\[
\nu^{s_k}\delta_k \times \dotsb \times \nu^{s_1}\delta_1 \rtimes \tau,
\]
where $s_k \geqslant \dotsb \geqslant s_1 > 0$ (and where $\nu$ denotes the character $\lvert\det\rvert$ of the corresponding general linear group) is called a standard representation (or a standard module). It possesses a unique irreducible quotient (the so-called Langlands quotient), denoted by $L(\nu^{s_k}\delta_k, \dotsc, \allowbreak \nu^{s_1}\delta_1; \allowbreak \tau)$. Conversely, every irreducible representation of $G(W_n)$ can be represented as the Langlands quotient of a unique standard representation. In this way, one obtains a complete description of $\Irr(G(W_n))$.

\subsection{The theta correspondence}
\label{subs_theta}
We recall the basic facts about the local theta correspondence.

We begin by fixing, once and for all, a non-trivial additive character $\psi$ of $F$. The space $W_n\otimes V_m$ has a natural symplectic structure, and the choice of $\psi$ determines a Weil representation of $\Mp(W_n\otimes V_m)$. We attach a pair of characters $(\chi_W,\chi_V)$ to the spaces $W_n$ and $V_m$ as in Section 3.2 of \cite{Gan_Ichino_formal_degrees}; note that the characters depend only on the Witt tower, and not on the dimension of the space. These characters are needed to define the splitting of the metaplectic cover:
\[
G(W_n) \times H(V_m) \rightarrow \Mp(W_n\otimes V_m).
\]
Pulling back the Weil representation, we obtain the Weil representation of the dual pair $G(W_n) \times H(V_m)$. We denote this representation by $\omega_{m,n}$; since $\chi_W, \chi_V$ and $\psi$ are fixed throughout the paper, we omit them from the notation. In fact, if the dimensions are known (or irrelevant), we further simplify the notation and use just $\omega$.

For any $\pi \in \Irr(G(W_n))$, the maximal $\pi$-isotypic quotient of $\omega_{m,n}$ is of the form
\[
\pi \otimes \Theta(\pi,V_m)
\]
for a certain smooth representation $\Theta(\pi,V_m)$ of $H(V_m)$, called the full theta lift of $\pi$. When the target Witt tower is fixed, we will denote it by $\Theta_\alpha(\pi)$, where $\alpha = n + \epsilon - m$ (recall that $\epsilon$ is defined in \S\ref{subs_groups}). Note that $\alpha$ is an odd integer.

A classic result of Kudla \cite{Kudla2} shows that $\Theta(\pi,V_m)$ is either zero, or an admissible representation of finite length. The following theorem establishes the theta correspondence:
\begin{thm}[Howe duality]
\label{thm_Howe}

If $\Theta(\pi,V_m)$ is non-zero, it possesses a unique irreducible quotient, denoted by $\theta(\pi,V_m)$.
\end{thm}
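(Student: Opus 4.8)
Since $\Theta(\pi,V_m)$ has finite length by Kudla's result, its maximal semisimple quotient (the cosocle) is a finite direct sum $\bigoplus_\sigma \sigma^{\oplus m_\sigma}$, where $m_\sigma = \dim\Hom_{H(V_m)}(\Theta(\pi,V_m),\sigma)=\dim\Hom_{G\times H}(\omega_{m,n},\pi\otimes\sigma)$. So the theorem is equivalent to two assertions: (a) $m_\sigma\le 1$ for every irreducible $\sigma$; and (b) at most one $\sigma$ occurs in the cosocle. The plan is to establish (a) and (b) simultaneously by induction on $\dim W_n$, reducing everything to the case in which $\pi$ is supercuspidal.

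\emph{Reduction to $\pi$ supercuspidal.} If $\pi$ is not supercuspidal, it embeds into some $\tau\rtimes\pi'$ with $\pi'$ irreducible on a group $G(W_{n-2t})$ of smaller rank. Combining the second form of Frobenius reciprocity with Kudla's filtration of $r_{Q_t}(\omega_{m,n})$ — whose successive subquotients are built from Weil representations $\omega_{m',n-2t}$ of smaller dual pairs, tensored with explicit representations of $\GL_t(F)$ — one rewrites $\Hom_{G\times H}(\omega_{m,n},\pi\otimes\sigma)$, and the analogous space for any competing quotient $\sigma'$, in terms of Hom-spaces attached to $\pi'$ on the smaller group. The inductive hypothesis then yields both the multiplicity bound and the uniqueness. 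This step is routine but requires care: one must check that the pieces introduced by Kudla's filtration contribute no additional quotients, which is where the combinatorics of Jacquet modules and the Langlands classification of $\Irr(\GL_t(F))$ enter.

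\emph{The supercuspidal case.} Assume $\pi$ supercuspidal, and let $\sigma$ be an irreducible quotient of $\Theta(\pi,V_m)$. By Kudla's tower property there is a first-occurrence index $m_0=m_0(\pi)$: the lift vanishes for $m<m_0$, and $\Theta(\pi,V_{m_0})$ is itself supercuspidal. If $m=m_0$ then $\sigma$ is supercuspidal, and one must prove $\dim\Hom_{G\times H}(\omega_{m_0,n},\pi\otimes\sigma)\le 1$; the standard route is the doubling see-saw, which identifies this space with the multiplicity of an irreducible constituent in a degenerate principal series of the doubled group $H(V_{m_0}\oplus V_{m_0}^-)$ (or of $G(W_n\oplus W_n^-)$), a multiplicity bounded by $1$ by Kudla–Rallis; uniqueness of $\sigma$ comes from the same analysis together with the conservation relation. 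If $m>m_0$, then $\sigma$ cannot be supercuspidal, and using Kudla's filtration once more one shows that any irreducible quotient of $\Theta(\pi,V_m)$ has a nonzero Jacquet module of the form $\rho|\cdot|^{s}\otimes(\text{a quotient of }\Theta(\pi,V_{m_0}))$ for explicit $\rho,s$; since $\theta(\pi,V_{m_0})$ is already known to be unique, this pins down $\sigma$ and forces $m_\sigma\le 1$.

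\emph{Main obstacle.} Essentially all of the difficulty is concentrated in the supercuspidal–supercuspidal case, i.e.\ the bound $\dim\Hom_{G\times H}(\omega,\pi\otimes\sigma)\le 1$ with $\pi,\sigma$ both supercuspidal. The reductions above are formal consequences of Kudla's filtration and Frobenius reciprocity, but this final bound genuinely requires input from the doubling method — the structure of degenerate principal series and non-vanishing of doubling zeta integrals — and is precisely the point at which the Howe duality conjecture resisted proof for decades: it can be settled by Waldspurger's argument in odd residue characteristic, while the general case needs the more delicate analysis of Gan–Takeda.
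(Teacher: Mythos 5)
The paper does not prove Theorem~\ref{thm_Howe}. It is stated as a known deep result, with attribution to Howe for the conjecture, to Waldspurger for odd residual characteristic, and to Gan--Takeda for the general case; there is no internal argument to compare yours against. Your proposal is therefore being measured against an external literature, not against anything in this paper.

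As a sketch of that literature it is broadly faithful in outline: the reduction from general $\pi$ to supercuspidal $\pi$ via Kudla's filtration and Frobenius reciprocity, the role of the first occurrence index, the splitting into the $m=m_0$ (supercuspidal-to-supercuspidal) case and the $m>m_0$ case, and the appeal to the doubling method and degenerate principal series for the multiplicity-one bound are all genuine ingredients. You also correctly identify the crux: the bound $\dim\Hom_{G\times H}(\omega,\pi\otimes\sigma)\le 1$ with both $\pi$ and $\sigma$ supercuspidal is where the conjecture resisted proof, and your final paragraph honestly concedes that you are not supplying that argument.

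That concession is also what prevents this from being a proof. The inductive reduction is described as ``routine but requires care'' without carrying it out; the claim that the non-top pieces of Kudla's filtration ``contribute no additional quotients'' is asserted rather than established, and in fact this is precisely where extra quotients \emph{can} appear and must be ruled out by a careful analysis of $\GL$-modules and segments. In the $m>m_0$ case, pinning down $\sigma$ via a Jacquet module of the form $\rho|\cdot|^s\otimes(\text{quotient of }\Theta(\pi,V_{m_0}))$ is more delicate than stated: one must show this Jacquet module datum actually determines $\sigma$, which again needs input about irreducibility of certain parabolic inductions. And the centerpiece --- the multiplicity-one statement for the doubled degenerate principal series and its compatibility with the see-saw, plus the deduction of uniqueness of $\sigma$ from the conservation relation --- is the content of Waldspurger's and Gan--Takeda's papers, not something that can be waved at. So: a reasonable road map, correctly attributed, but not a proof, and not something the present paper even attempts to prove.
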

\noindent Originally conjectured by Howe in \cite{Howe_theta_series}, this was first proven by Waldspurger \cite{Waldspurger_howe_duality} when the residual characteristic of $F$ is different from $2$, and by Gan and Takeda \cite{Gan_Takeda_proof_of_Howe} in general.
The representation $\theta(\pi,V_m)$ is called the (small) theta lift of $\pi$; like the full lift, we will also denote it by $\theta_\alpha(\pi)$. The resulting bijection $\pi \leftrightarrow \theta(\pi)$ between representations appearing as quotients of $\omega$ is called the (local) theta correspondence. Additionally, it will be convenient to set $\theta(\pi,V_m)=0$ when $\Theta(\pi,V_m)=0$; we often use this convention.

A standard approach to the theory of theta correspondence involves considering towers of lifts (see Propositions 4.1 and 4.3 of \cite{Kudla1}):
\begin{prop}[Tower property]
\label{prop_towers}
Let $\pi$ be an irreducible representation of $G(W_n)$. Fix a Witt tower $\mathcal{V} = (V_m)$.
\begin{enumerate}[(i)]
\item If $\Theta(\pi,V_m) \neq 0$, then $\Theta(\pi,V_{m+2r}) \neq 0$ for all $r\geqslant 0$.
\item For $m$ large enough, we have $\Theta(\pi,V_m) \neq 0$.
\end{enumerate}
\end{prop}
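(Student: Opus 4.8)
\emph{Overall approach.} The first thing I would do is notice that both assertions concern only the restriction of $\omega_{m,n}$ to $G(W_n)$: by the very definition of the big theta lift, $\Theta(\pi,V_m)\neq 0$ if and only if $\Hom_{G(W_n)}(\omega_{m,n},\pi)\neq 0$, since a nonzero such homomorphism has image isomorphic to $\pi$ and hence produces a nonzero $\pi$-isotypic quotient, while conversely any nonzero $\pi$-isotypic quotient of $\omega_{m,n}$ surjects onto $\pi$. After this reformulation, everything becomes a matter of comparing the $G(W_n)$-modules $\omega_{m,n}$ as $m$ varies within the fixed tower $\mathcal V$.

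\emph{Part (i).} Inside a Witt tower one has $V_{m+2}\cong V_m\perp\mathbb H$ for a hyperbolic plane $\mathbb H$, so $W_n\otimes V_{m+2}=(W_n\otimes V_m)\oplus(W_n\otimes\mathbb H)$ as symplectic spaces, and the Weil representation of a direct sum is the (completed) tensor product of those of the summands. Since the splitting characters are constant along a tower — and the one attached to $\mathbb H$ is trivial — the splitting of $G(W_n)$ into the metaplectic cover is compatible with this factorization, so that as $G(W_n)\times H(V_m)$-modules
\[
\omega_{m+2,n}\;\cong\;\omega_{m,n}\otimes\omega_{W_n,\mathbb H},
\]
with $G(W_n)$ acting diagonally and $H(V_m)\subset H(V_{m+2})$ acting trivially on the second factor. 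In the dual pair $(G(W_n),H(\mathbb H))$ the group $G(W_n)$ stabilizes the Lagrangian $W_n\otimes L$ of $W_n\otimes\mathbb H$ (with $\mathbb H=L\oplus L^*$), so in the associated Schrödinger model $\omega_{W_n,\mathbb H}=\mathcal S(W_n)$ it acts by the purely geometric formula $(g\cdot\phi)(w)=\phi(g^{-1}w)$; hence evaluation at the origin gives a $G(W_n)$-equivariant surjection $\omega_{W_n,\mathbb H}\twoheadrightarrow\mathbf 1$. Tensoring with $\omega_{m,n}$ yields a $G(W_n)$-equivariant surjection $\omega_{m+2,n}\twoheadrightarrow\omega_{m,n}$, whence an inclusion $\Hom_{G(W_n)}(\omega_{m,n},\pi)\hookrightarrow\Hom_{G(W_n)}(\omega_{m+2,n},\pi)$; combined with the reformulation above and an induction on $r$, this settles (i). This part is essentially formal once the tensor decomposition and the triviality of the relevant splitting characters are in place.

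\emph{Part (ii).} This is the substantive point. Since every Witt tower contains spaces of arbitrarily large Witt index, it suffices to prove $\Theta(\pi,V_m)\neq 0$ once the Witt index of $V_m$ is at least $n=\dim_F W_n$, that is, in the stable range. One can form a picture of why this holds: choosing a polarization $V_m=X\oplus V_0\oplus X^*$ with $\dim X\geq n$ and passing to the mixed model $\omega_{m,n}\cong\mathcal S(X^*\otimes W_n)\otimes\omega_{W_n,V_0}$ (with $G(W_n)$ acting through its natural action on $W_n$ and through the smaller Weil representation), the hypothesis $\dim X\geq n$ forces $G(W_n)$ to act with trivial stabilizer on a dense open subset of $X^*\otimes W_n$, so the Schwartz functions supported there form a $G(W_n)$-submodule of $\omega_{m,n}$ isomorphic to a (large) direct sum of copies of the regular representation $C_c^\infty(G(W_n))$, which surjects onto every irreducible $\pi$. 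The gap in this heuristic is that it only produces a submodule, not a quotient, so surjectivity of $\omega_{m,n}$ onto $\pi$ does not follow formally; closing it is the heart of the matter, and the standard way to do so is the doubling argument of Rallis and of Li (compare Proposition 4.3 of \cite{Kudla1}), which reinterprets $\Theta(\pi,V_m)\neq 0$ as the non-vanishing of a local doubling zeta integral and then exploits its absolute convergence in the stable range. I expect this convergence and non-vanishing statement to be the main obstacle in the whole proposition.
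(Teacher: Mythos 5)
Your part (i) is the standard argument and is sound: once one has the tensor factorization $\omega_{m+2,n}\cong\omega_{m,n}\otimes\omega_{W_n,\mathbb H}$ compatibly with the splittings, evaluation at $0$ on $\mathcal S(W_n)$ gives a $G(W_n)$-equivariant surjection $\omega_{m+2,n}\twoheadrightarrow\omega_{m,n}$, and (i) follows by induction. The paper cites Kudla for this, and your sketch is a faithful reconstruction.

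For part (ii) you correctly identify the issue with your own heuristic, but you then reach for the doubling method, which is more machinery than is needed; the gap can be closed by a small change of viewpoint. Instead of looking at the whole open (full-rank) stratum of $X^*\otimes W_n$ — which is indeed only a submodule, and moreover is not a disjoint union of open $G(W_n)$-orbits, so your description of $\mathcal S_U$ as a direct sum of copies of $C_c^\infty(G(W_n))$ is not quite right — you should restrict to a \emph{single} full-rank orbit $O$. The crucial point you missed is that such an orbit is \emph{closed} in $X^*\otimes W_n$, not merely locally closed: writing points as linear maps $\phi:W_n^*\to X$ and taking $\phi_0$ injective (possible once the Witt index of $V_m$ is $\geq n$), if $\phi_0\circ g_i^{*,-1}\to\psi$ then composing on the left with a left inverse $\phi_0^+$ of $\phi_0$ shows $g_i^{*,-1}\to\phi_0^+\psi$, and since $G(W_n)$ is Zariski-closed in $\mathrm{End}(W_n)$ the limit lies in the group, whence $\psi\in O$. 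So the orbit map $G(W_n)\to O$ is a closed embedding with trivial stabilizer, and restriction of functions to $O$ produces a $G(W_n)$-equivariant \emph{quotient}
\[
\omega_{m,n}\ \cong\ \mathcal S(X^*\otimes W_n)\otimes\omega_{W_n,V_0}\ \twoheadrightarrow\ \mathcal S(O)\otimes\omega_{W_n,V_0}\ \cong\ C_c^\infty(G(W_n))\otimes\omega_{W_n,V_0}
\]
(up to a character twist of $C_c^\infty$, which is harmless). Since for any smooth $V$ one has $C_c^\infty(G)\otimes V\cong C_c^\infty(G)\otimes V_{\mathrm{triv}}$ via $f\otimes v\mapsto\bigl(g\mapsto f(g)\,g^{-1}v\bigr)$, the target is a direct sum of copies of $C_c^\infty(G(W_n))$ and therefore surjects onto every irreducible $\pi$. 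This gives (ii) directly, without the doubling zeta integral.
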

The above proposition allows us to define, for any Witt tower $\mathcal{V} = (V_m)$,
\[
m_\mathcal{V}(\pi) := \min\{m \geqslant 0: \Theta(\pi,V_m) \neq 0\}.
\]
This number\,---\,also denoted $m(\pi)$ when the choice of $\mathcal{V}$ is implicit\,---\,is called the first occurrence index of $\pi$. Note the slight abuse of terminology: we are using the term “index” even though $m(\pi)$ is the dimension.

A refinement of the tower property is the so-called conservation relation. When $\epsilon = 1$, the Witt towers of quadratic spaces we consider can be appropriately organized into pairs, with the towers comprising a pair denoted $\mathcal{V}^+$ and $\mathcal{V}^-$; a complete list of pairs of towers can be found in \cite[Chapter V]{Kudla1}. Thus, instead of observing just one target tower, we simultaneously look at two of them. This way, each $\pi \in \Irr(G(W_n))$ gives us two first occurrence indices, $m^+(\pi)$ and $m^-(\pi)$.

If $\epsilon = -1$, there is only one tower of $\epsilon$-hermitian (i.e.~symplectic) spaces. In this case $W_n$ is a quadratic space,  and we proceed as follows: since $G(W_n)$ is now equal to $O(W_n)$, any $\pi \in \Irr(G(W_n))$ is naturally paired with its twist, $\det \otimes \pi$. This allows us to define
\begin{align*}
m^+(\pi) &= \min\{m(\pi), m(\det\otimes\pi)\},\\
m^-(\pi) &= \max\{m(\pi), m(\det\otimes\pi)\}.
\end{align*}
Thus, regardless of whether $\epsilon = 1$ or $-1$, we may set
\[
m^{\text{down}}(\pi) =  \min\{m^+(\pi), m^-(\pi)\}, \quad m^{\text{up}}(\pi) =  \max\{m^+(\pi), m^-(\pi)\}.
\]
Note that when $V_m$ is a symplectic space, we have $m^{\text{down}}(\pi) = m^+(\pi) $ and $ m^{\text{up}}(\pi) = m^-(\pi)$. The Conservation relation (first conjectured by Kudla and Rallis in \cite{Kudla_Rallis_progress}, and ultimately proven by Sun and Zhu in \cite{Sun_Zhu_conservation}) states that
\[
m^{\text{up}}(\pi) + m^{\text{down}}(\pi) = 2n + 2\epsilon + 2.
\]
The tower in which $m(\pi) = m^{\text{down}}(\pi)$ (resp.~$m^{\text{up}}(\pi)$) is called the going-down (resp.~going-up) tower.
\begin{rem}
\label{rem_updown}
This labeling of towers is slightly imprecise. Indeed, the conservation relation implies 
\[
m^{\text{down}}(\pi) \leq n+\epsilon+1\quad \text{and}\quad m^{\text{up}}(\pi) \geq n+\epsilon+1.
\]
However, it may well happen that $m^{\text{down}}(\pi)= m^{\text{up}}(\pi) = n+\epsilon+1$, in which case the “going-up” and “going-down” designations are ambiguous. This ambiguity can be resolved; see e.g.~Theorem 4.1 (2) in \cite{Atobe_Gan}. However, we do not have to worry about this: the questions we study in this paper simplify dramatically when $\pi$ satisfies the above equality; in this case we can describe the results without specifying the target tower.
\end{rem}

\begin{conv}
\label{conv_dettwist}
A convenient feature of our notation is that $\theta_\alpha(\theta_{-\alpha}(\pi)) = \pi$, whenever $\theta_{-\alpha}(\pi)\neq 0$. In fact, this is \emph{almost} true: if $\pi$ is a representation of an orthogonal group, then (depending on the choice of tower), computing the lift $\theta_{-\alpha}(\pi)$ might actually mean computing the lift of $\det \otimes \pi$. Therefore, in this case, we might get $\theta_\alpha(\theta_{-\alpha}(\pi)) =\det \otimes \pi$.

To resolve this, we adopt the following (highly useful) convention: if $\theta_{-\alpha}(\pi)=\pi'$, we always write $\theta_{\alpha}(\pi')=\pi$, and interpret $\theta_\alpha$ to mean ``apply the theta lift and, if necessary, twist by det to ensure $\theta_\alpha(\theta_{-\alpha}(\pi)) = \pi$. Together with viewing the lifts of $\pi$ and $\det\otimes \pi$ as lifts to two separate towers, this convention enables one to have a uniform approach when dealing with theta lifts of classical groups.
\end{conv}

\subsection{Arthur packets}
\label{subs_Arthur}
We now recall the notion of a local A-packet.

We let $W_\F$ denote the Weil group of $\F$; then $L_\F = W_\F \times \SL_2(\mathbb{C})$ is the Weil--Deligne group. Let $G = G(W)$ be one of the groups introduced in \S\ref{subs_groups} and let $G^\circ$ denote the identity component of $G$:
\[
G^\circ = \begin{cases}
\Sp(W), \quad\text{for }\epsilon = 1;\\
\SO(W), \quad\text{for }\epsilon = -1.
\end{cases}
\]
Furthermore, when $\epsilon = -1$ (so that $G=\Ort(W)$), we let $\sigma_0$ denote the outer automorphism of $G^\circ$ given by conjugation by a fixed element $\varepsilon \in \Ort(W)\backslash\SO(W)$; to allow for uniform notation we set $\sigma_0 = \text{id}$ when $\varepsilon = 1$. We let $\Sigma_0$ denote the group generated by $\sigma_0$.

Attached to $G^\circ$ we have
\begin{align*}
    \widehat{G^\circ} &= \text{the complex dual group of } G^\circ,\\
    \prescript{L}{}{G^\circ} &=\text{the Langlands dual group of } G^\circ.
\end{align*}
An Arthur parameter for $G^\circ$ is a $\widehat{G^\circ}$-conjugacy class of admissible homomorphisms
\[
\psi: L_\F \times \SL_2(\mathbb{C}) \to \prescript{L}{}{G^\circ}
\]
such that the image of $W_\F$ is bounded. We abuse the notation and use $\psi$ to denote both the representative and its conjugacy class. We let $\Psi(G^\circ)$ denote the set of Arthur parameters. In \cite{Arthur_endoscopic}, Arthur attaches to each $\psi \in \Psi(G^\circ)$ a multiset $\Pi_\psi(G^\circ)$ of elements in $\Irr(G^\circ)/\Sigma_0$. We call $\Pi_\psi(G^\circ)$ the Arthur packet (or A-packet, for short) attached to $\psi$.

Although one does not have a systematic definition of the Langlands dual group for a disconnected reductive group, it was observed some time ago that an appropriate object in the case of $G=O(W)$ is $\widehat{G}=O(2n,\mathbb{C}).$
A detailed discussion on that topic can be found in \cite{Atobe_Gan_O(2n)}. In fact, Arthur \cite{Arthur_endoscopic} gives a parametrization of representations of  quasi-split $O(W)$ using Arthur packets, and from it, derives a parametrization of representations of $SO(W)$ (up to an action of $\Sigma_0$). For the purposes of the present paper we work with the full orthogonal group, so we view a local Arthur parameter as an admissible homomorphism
\[\psi: L_F \times \SL_2(\mathbb{C})\to \widehat{G},\]
(cf.~\S 6 of \cite{Atobe_Gan_O(2n)}). Analogously we define $\Psi(G)$ and we call $\Pi_{\psi}(G)$ the Arthur packet (or A-packet, for short) attached to $\psi.$ Although the Arthur classification has not been fully worked out for non-quasi-split orthogonal groups, we do consider them here. This is justified by the fact that the main results we use\,---\,M\oe glin's construction of packets and her results on the theta correspondence \cite[Theorem 5.1]{moeglinkudla}\,---\,take non-quasi-split groups into account. We refer the reader to the discussions in \cite[\S 1]{moeglinkudla} and \cite[Introduction]{moeglin2011multiplicite1}. To summarize, our results in cases which involve non-quasi-split orthogonal groups should be viewed as slightly speculative, although we expect them to take on precisely the form presented here.

We thus use $\Pi_\psi(G)$ to denote an Arthur packet attached to $\psi$ regardless of whether $G$ is symplectic or orthogonal. When $G$ is fixed and there is no ambiguity, we will often write simply $\Pi_\psi$ instead of $\Pi_\psi(G)$. Arthur packets for $G$ have been constructed by M\oe glin \cite{moeglin2006paquets}, \cite{moeglin2006certains}, \cite{moeglin2009comparaison}, \cite{moeglin2010holomorphie}, who also showed that the packets are in fact multiplicity-free \cite{moeglin2011multiplicite1}. We provide a rough outline of this construction.

Let $\psi \in \Psi(G)$. Composing $\psi$ with the standard representation of $\widehat{G}$, we can view it as a representation of $W_\F \times \SL_2(\mathbb{C}) \times \SL_2(\mathbb{C})$. Thus $\psi$ can be decomposed as
\[
\psi = \bigoplus_i m_i(\rterm{\rho_i}{a_i}{b_i}).
\]
Here $\rho_i$ is an equivalence class of irreducible unitary representations of $W_\F$; each such $\rho_i$ can be identified (via the local Langlands correspondence) with an irreducible unitary cuspidal representation of $\GL_{\dim \rho_i}(\F)$. Furthermore, $S_{n}$ denotes the irreducible algebraic $n$-dimensional representation of $\SL_2(\mathbb{C})$, and $m_i$ is the multiplicity of the corresponding irreducible summand. We call the triple $(\rho,a,b)$ a Jordan block of $\psi$ if the corresponding representation $\rterm{\rho}{a}{b}$ occurs in $\psi$. The multiset of all Jordan blocks will be denoted by $\Jord(\psi)$. Similarly, we often want to focus on a particular $\rho$, so we set $\Jord_\rho(\psi) = \{(a,b): (\rho,a,b) \in \Jord(\psi)\}$.

The first step of the construction of A-packets reduces the problem to the case of good parity. We say that a Jordan block $(\rho,a,b) \in \Jord(\psi)$ is of good parity if  $\rterm{\rho}{a}{b}$ factors through a group of the same type (symplectic/orthogonal) as $\widehat{G}$ (in particular, this implies that $\rho$ is self-dual). Otherwise, we say that $(\rho,a,b) \in \Jord(\psi)$ is of bad parity. Following M\oe glin, we denote the sub(multi)set of elements of good (resp.~bad) parity by $\Jord(\psi_\text{bp})$ (resp.~$\Jord(\psi_\text{mp})$). The parameter $\psi$ can then be written as
\[
\psi = \psi_\text{mp} + \psi_\text{bp} + \psi_\text{mp}^\vee.
\]
Here $\psi_\text{bp}$ corresponds to $\Jord(\psi_\text{bp})$ in an obvious manner, while the rest of the terms (which correspond to blocks in $\Jord(\psi_\text{mp})$) can be grouped as $\psi_\text{mp} +  \psi_\text{mp}^\vee$, with $\psi_\text{mp}^\vee$ denoting the dual of $\psi_\text{mp}$.
%conjugate self-dual ako uključimo unitarne, etc.
There is a bijection between $\Pi_\psi$ and $\Pi_{\psi_\text{bp}}$: $\psi_\text{mp}$ determines an irreducible representation $\pi_\text{mp}$ of a general linear group; for any representation $\pi_0 \in \Pi_{\psi_\text{bp}}$, the representation $\pi = \pi_{\text{mp}}\rtimes \pi_0$ is irreducible, and is contained in $\Pi_{\psi}$. Conversely, any $\pi \in \Pi_{\psi}$ is of the form $\pi_{\text{mp}}\rtimes \pi_0$ for some $\pi_0 \in \Pi_{\psi_\text{bp}}$. Applying Kudla's filtration (Lemma \ref{lem_Kudla}) to the map $\pi_{\text{mp}}\rtimes \pi_0 \twoheadrightarrow \pi$, one shows that the Adams conjecture (Conjecture \ref{slutnja}) holds for $\pi$ if and only if it holds for $\pi_0$. This allows us to focus on parameters of good parity: from now on, we assume $\psi = \psi_\text{bp}$.

Now let $\psi = \psi_\text{bp}$ be a parameter of good parity. M\oe glin's construction of $\Pi_\psi$ involves the following data:
\begin{itemize}[label=\textemdash]
    \item an admissible order on $\Jord(\psi)$;
    \item a function $t: \Jord(\psi) \to \mathbb{Z}_{\geq 0}$ such that $t(\rho,a,b) \in [0,\min(a,b)/2]$ for every $(\rho,a,b) \in \Jord(\psi)$;
    \item a function $\eta: \Jord(\psi) \to \{\pm 1\}$.
\end{itemize}
Loosely speaking, the function $\eta$ can be viewed as a character of the so-called component group $\mathcal{S}_\psi$ attached to $\psi$. We allow $t$ and $\eta$ to attain different values on different copies of the same Jordan block $(\rho,a,b)$. 
The functions $t$ and $\eta$ satisfy additional requirements:  Set $\epsilon_{t,\eta}(\rho,a,b)=\eta(\rho,a,b)^{\min(a,b)}(-1)^{\lfloor\min(a,b)/2\rfloor+t(\rho,a,b)}$. We then require
\begin{equation}
\label{eq_det_condition}
\prod_{(\rho,a,b)\in\Jord(\psi)} \epsilon_{t,\eta}(\rho,a,b) = \epsilon_G.
\end{equation}
Here $\epsilon_G=1$ if $\psi$ is of odd dimension; when $\psi$ is of even dimension, $\epsilon_G$ is the Hasse invariant of the orthogonal group $G$.
\begin{rem}
\label{rem_parni}
When $t(\rho,a,b)=\min(a,b)/2$, the sign $\eta(\rho,a,b)$ does not carry additional information: changing it does not change the parametrized representation. See \cite[pp.\ 5--6]{xu2021combinatorial}, where this ambiguity is resolved using an equivalence relation, and \cite{moeglinkudla} where $\eta(\rho,a,b)$ is set to $1$ whenever $t(\rho,a,b)=\min(a,b)/2$.
\end{rem}

To explain the admissibility condition for orders on $\Jord(\psi)$, we introduce the following notation, used throughout the paper. Given $(\rho,a,b)\in\Jord(\psi)$, we let
\[
A = \frac{a+b}{2}-1,\quad B = \frac{|a-b|}{2},\quad \zeta = \text{sgn}(a-b)\in\{\pm 1\} 
\]
and we write $(\rho,A,B,\zeta)$ instead of $(\rho,a,b)$. When $a-b=0$, we set $\zeta = 1$.
A total order $>$ on $\Jord(\psi)$ is said to be admissible if for all $(\rho,A,B,\zeta)$ and  $(\rho,A',B',\zeta') \in \Jord(\psi)$ we have
\[
A > A' \text{ and } B' > B \text{ and } \zeta= \zeta' \quad \text{implies}\quad (\rho,A,B,\zeta) > (\rho,A',B',\zeta').
\]
Recall that a parameter $\psi$ is said to have discrete diagonal restriction (DDR, for short) if the segments $[B, A]_\rho$ and $[{B'},{A'}]_{\rho}$ are disjoint for any two blocks $(\rho,A,B,\zeta), (\rho,A',B',\zeta') \in \Jord(\psi)$. Each choice of $(t,\eta)$ (subject to conditions described above) then corresponds to an irreducible representation in $\Pi_\psi$. DDR parameters play an important role in this paper. Although they are much simpler than general parameters of good parity, constructing the corresponding packets is non-trivial. We do not describe this construction here; instead, we refer the reader to Proposition 3.3 of \cite{moeglinkudla}. We point out that there is more than one way to parametrize the representations inside $\Pi_\psi$; see Remark \ref{rem_parametrization} below. When talking about DDR parameters, we often use the notation $(\rho,A,B,\zeta)$ interchangeably with the segment notation $[B,A]_\rho$.

One may reduce the construction of $\Pi_\psi$ for a general (good parity) parameter $\psi$ to the DDR case, as follows:
Let $\Jord(\psi) = \{(\rho_i,A_i,B_i,\zeta_i): i=1,\dotsc,r\};$ here we assume that the Jordan blocks are enumerated with respect to some admissible order. We consider a new parameter $\psi_{\gg}$ (of a larger group) with $\Jord(\psi_{\gg}) = \{(\rho_i,A_i+T_i,B_i+T_i,\zeta_i): i=1,\dotsc,r\}$, where $T_i$, $i=1,\dotsc,r$ are positive integers such that $\psi_{\gg}$ is DDR. In this situation, we say that $\psi_\gg$ dominates $\psi$. For each choice of $(t,\eta)$, we get a representation $\pi_{\gg,t,\eta}$ in $\Pi_{\psi_\gg}$. Let
\[
\pi_{t,\eta} = \Jac^{\rho_r}_{\zeta_r[A_r+T_r, B_r+T_r]\to \zeta_r[A_r,B_r]} \circ \dotsb \circ \Jac^{\rho_1}_{\zeta_1[A_1+T_1, B_1+T_1]\to \zeta_1[A_1, B_1]} (\pi_{\gg,t,\eta}).
\]
Then $\pi_{t,\eta}$ is either $0$, or is an irreducible element of the packet $\Pi_\psi$; the packet $\Pi_\psi$ consists of all non-zero representations obtained in this way. 

Let us describe an intermediate stage in this process; this will be useful later. Fix an equivalence class of $\rho$ of irreducible unitary representations of $W_F$. We may then choose an admissible order on $\Jord(\psi)$ such that $(\rho',A',B',\zeta') < (\rho,A,B, \zeta)$ whenever $\rho' \neq \rho)$. Thus, if $\Jord(\psi) = \{(\rho_i,A_i,B_i,\zeta_i): i=1,\dotsc,r\}$ (as above), we assume that there exists an index $j$ such that $\rho_i \neq \rho$ for $i\leq j$ and $\rho_i = \rho$ for $i> j$. Consider the parameter $\psi_{\gg}^\rho$ which is obtained from $\psi$ by replacing $(\rho,A_i,B_i,\zeta_i)$ with $(\rho,A_i+T_i,B_i+T_i,\zeta_i)$ for each $i > j$. In particular, the segments $[B, A]_\rho$ and $[{B'},{A'}]_{\rho}$ are disjoint for any two blocks $(A,B,\zeta), (A',B',\zeta') \in \Jord_\rho(\psi)$. Such a parameter is said to be $\rho$-DDR, and we refer to $\psi_{\gg}^\rho$ as the $\rho$-DDR parameter which dominates $\psi$. Again, for any choice of $(t,\eta)$, we get a representation $\pi_{\gg,t,\eta}^\rho$ in $\Pi_{\psi_\gg^\rho}$. This is an intermediate step between $\pi_{t,\eta}$ and $\pi_{\gg,t,\eta}$:
\[
\pi_{\gg,t,\eta}^\rho = \Jac^{\rho_j}_{\zeta_j[A_j+T_j, B_j+T_j]\to \zeta_j[A_j,B_j]} \circ \dotsb \circ \Jac^{\rho_1}_{\zeta_1[A_1+T_1, B_1+T_1]\to \zeta_1[A_1, B_1]} (\pi_{\gg,t,\eta}).
\]
Note that we are only applying the first $j$ Jacquet functors (instead of all $r$, as in the construction of $\pi_{t,\eta}$). It follows that $\pi_{t,\eta}\neq 0$ implies $\pi_{\gg,t,\eta}^\rho  \neq 0$. We will return to this construction in \S \ref{subs_Arthurtype}. 
%When there is no fear of confusion, we sometimes abuse the notation (and terminology) by referring to the triple $(\psi,\eta,t)$ as just $\psi$.

\begin{rem}
\label{rem_T}
We use the above notation throughout the paper. Any time we use $\psi_{\gg}$ it is implied that the numbers $T_i$ are defined; we will freely refer to them whenever we deal with $\psi_{\gg}$.
Moreover, this notation will be used functorially with respect to decorations: for instance, the DDR parameter which dominates $\psi'$ will be denoted by $\psi'_{\gg}$, etc.
\end{rem}

Although this reduction to the DDR case allows a simple description, it is far from being easy to work with. For instance, it is not a priori clear whether the representation $\pi_{t,\eta}$ is non-zero for a given $(\eta,t)$. This question is of course crucial for our considerations in this paper, and it turns out to be highly non-trivial. In \cite{xu2021combinatorial}, Xu gives an algorithm to determine whether $\pi_{t,\eta}$ is non-zero. The recent work of Atobe \cite{atobe2022construction} also contains a criterion that answers this question.

\begin{rem}
\label{rem_parametrization}
As noted above, there is more than one way to parametrize representations inside $\Pi_\psi$. In her work, M\oe glin uses a parametrization which differs from the one originally used by Arthur. The comparison between the two has been conducted by M\oe glin herself; more recently, this has been addressed by Xu \cite{xu2017moeglin}. In this paper, we use the parametrization introduced by M\oe glin. However, there is another choice to be made, related (roughly) to the parametrization of supercuspidal representations. Here we use the parametrization of supercuspidals provided by Arthur. This combination of M\oe glin's parametrization with Arthur's “initial conditions” is precisely the one studied by Xu in \cite{xu2017moeglin}.
\end{rem}

In the rest of the paper, we often have to specify the data $(\psi,t,\eta)$. We will do so by listing all the irreducible summands of $\psi$ with the corresponding values of $\eta$ and $t$ written on top of them:
\[
\psi = \bigoplus_{i=1}^r\overset{t_i,\eta_i}{\rho_i\otimes S_{a_i}\otimes S_{b_i}} \quad \text{or} \quad \psi = \bigoplus_{i=1}^r\overset{t=t_i,\ \eta_i}{\rho_i\otimes S_{a_i}\otimes S_{b_i}}.
\]
We usually omit those $\eta_i$'s or $t_i$'s that are not relevant to the discussion.

%\[
%\rterm{\rho}{a}{b}
%\]
%\[
%\term{a}{b}
%\]
%\[
%\pterm{a}{b}{+}
%\]
%\[
%\rseg{\rho}{A}{B}{\zeta}
%\]
%\[
%\seg{\rho}{A}{B}
%\]

\section{The Adams conjecture}
\label{sec_conj}
In his 1989 paper \cite{Adams}, Adams proposed the following:
\begin{conj}
\label{slutnja}
Suppose $\pi$ is an irreducible representation of $G$ contained in the Arthur packet attached to the parameter $\psi$. Suppose that $\alpha > 0$ is an odd integer such that $\Theta_{-\alpha}(\pi)\neq 0$. Then, $\theta_{-\alpha}(\pi)$ is contained in the A-packet parametrized by
\[
\psi_\alpha = (\chi_W\chi_V^{-1}\otimes\psi)\ \oplus\ \chi_W\otimes S_1 \otimes S_\alpha.
\]
\end{conj}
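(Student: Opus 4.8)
The plan is to prove Conjecture~\ref{slutnja} by downward induction on the odd integer $\alpha$, taking M\oe glin's description of the lift for $\alpha\gg 0$ (Proposition~\ref{prop_visoko}) as the base case and descending from $\alpha$ to $\alpha-2$ by means of Kudla's filtration (Lemma~\ref{lem_Kudla}). As a preliminary reduction, recall from \S\ref{subs_Arthur} that Conjecture~\ref{slutnja} for $\pi$ is equivalent to the same statement for the good-parity constituent $\pi_0\in\Pi_{\psi_{\mathrm{bp}}}$ attached to $\pi$; one also checks that the distinguished summand $\chi_W\otimes S_1\otimes S_\alpha$ of $\psi_\alpha$ is itself of good parity. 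We may therefore assume $\psi=\psi_{\mathrm{bp}}$, so that $\psi_\alpha$ remains of good parity.

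\textbf{Base case and the candidate lift.} For $\alpha$ larger than an explicit bound $d_0(\psi)$ (it suffices that $\tfrac{\alpha-1}{2}$ exceed every $A$-value occurring among the $\chi_W$-blocks of $\psi$), Proposition~\ref{prop_visoko} not only gives $\theta_{-\alpha}(\pi)\in\Pi_{\psi_\alpha}$ but, via M\oe glin's construction applied to a parameter dominating $\psi_\alpha$, identifies $\theta_{-\alpha}(\pi)$ with a definite member $\pi_\alpha$: the data $(t,\eta)$ on the new block $S_1\otimes S_\alpha$ are prescribed, and on the remaining blocks they are inherited from the parametrization of $\pi$. The next point is to determine how $\pi_\alpha$ changes when the distinguished block is shrunk from $S_1\otimes S_\alpha$ to $S_1\otimes S_{\alpha-2}$: at the level of M\oe glin's generalized segments this shifts the $\chi_W$-block inward by one step, which amounts to applying a single Jacquet functor $\Jac^{\chi_W}_{x}$ at the exponent $x$ (of absolute value $\tfrac{\alpha-1}{2}$) where that block terminates. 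The output is a representation $\pi_{\alpha-2}\in\Pi_{\psi_{\alpha-2}}$, possibly zero, and $\pi_{\alpha-2}$ is precisely the value of $\theta_{-(\alpha-2)}(\pi)$ predicted by the conjecture.

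\textbf{The inductive step.} Assume $\theta_{-\alpha}(\pi)=\pi_\alpha$ and $\Theta_{-(\alpha-2)}(\pi)\neq 0$; we must show $\theta_{-(\alpha-2)}(\pi)=\pi_{\alpha-2}$. The two lifts lie on consecutive rungs of a single Witt tower, $\theta_{-\alpha}(\pi)$ on $H(V_m)$ with $m=n+\epsilon+\alpha$ and $\theta_{-(\alpha-2)}(\pi)$ on $H(V_{m-2})$. The standard tower relations extracted from Kudla's filtration for $r_{Q_1}(\omega_{m,n})$, together with Howe duality and the first-occurrence and conservation bookkeeping of \S\ref{subs_theta}, express $\theta_{-(\alpha-2)}(\pi)$ as the pertinent irreducible quotient of a specific $\chi_V$-Jacquet module of $\theta_{-\alpha}(\pi)=\pi_\alpha$ — at exactly the exponent appearing in the previous step. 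Because $\pi_\alpha$ has an explicit M\oe glin description, that Jacquet module can be computed using the results on Jacquet modules of representations of Arthur type collected in \S\ref{sec_tech}; comparison with the previous paragraph then gives $\theta_{-(\alpha-2)}(\pi)=\pi_{\alpha-2}\in\Pi_{\psi_{\alpha-2}}$. Howe duality (Theorem~\ref{thm_Howe}) upgrades this to the statement about the small lift, and iterating from $\alpha\gg 0$ down to the first occurrence of $\pi$ covers every odd $\alpha>0$ with $\Theta_{-\alpha}(\pi)\neq 0$.

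\textbf{The main obstacle.} Everything hinges on verifying the hypothesis $\pi_{\alpha-2}\neq 0$ needed in the inductive step. M\oe glin's recipe for $\pi_{\alpha-2}$ runs through a dominating parameter, and the Jacquet functors used to bring the representation back down can annihilate it; the crucial point is to prove that this does not happen as long as $\Theta_{-(\alpha-2)}(\pi)\neq 0$. Establishing this — equivalently, matching Xu's combinatorial criterion \cite{xu2021combinatorial} for the non-vanishing of $\pi_{\alpha-2}$ against the non-vanishing of the theta lift, while tracking how the sign character $\eta$ on the distinguished block and the determinant constraint \eqref{eq_det_condition} are forced to evolve along the descent — is the technical heart of the argument, and I expect it to be the principal obstacle; it is essentially the phenomenon studied by M\oe glin in \cite{moeglinkudla}.
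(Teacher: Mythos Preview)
The statement you are trying to prove is a \emph{conjecture}, and the paper does not prove it --- in fact, the paper shows that it is \emph{false} in general. M\oe glin already observed in \cite{moeglinkudla} that Conjecture~\ref{slutnja} fails in many examples, and Theorem~\ref{theoremC} of the present paper sharpens this: on the going-down tower, whenever $\alpha < d^{\text{down}}(\pi,\psi)$ the lift $\theta_{-\alpha}(\pi)$ is \emph{never} in the A-A packet, even though $\Theta_{-\alpha}(\pi)\neq 0$ (indeed, on the going-down tower the lift is non-zero for all $\alpha>0$). Examples~\ref{ex_two} and~\ref{ex_four} exhibit this explicitly.

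Your outline of the descent from $\alpha$ to $\alpha-2$ is essentially the argument for Theorem~\ref{theoremA}, and the paper carries it out successfully --- but only under the additional hypothesis $\pi_{\alpha-2}\neq 0$. You correctly identify the ``main obstacle'' as showing that $\Theta_{-(\alpha-2)}(\pi)\neq 0$ forces $\pi_{\alpha-2}\neq 0$. This implication is precisely what fails: the non-vanishing of the big theta lift on the going-down tower carries no information (it is automatic), while the candidate $\pi_{\alpha-2}$ can and does vanish. Xu's non-vanishing criterion for $\pi_{\alpha-2}$ is a condition on the Arthur-packet data $(\psi,t,\eta)$ and simply does not match the non-vanishing of $\Theta_{-(\alpha-2)}(\pi)$. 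So the obstacle you flag is not a technical difficulty to be overcome but the reason the conjecture is false; your inductive step breaks at exactly the level $\alpha=d^{\text{down}}(\pi,\psi)$.
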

When $\psi$ and $\alpha$ are fixed, we often refer to $\Pi_{\psi_\alpha}$ as the A-A (Adams--Arthur) packet.
Adams himself verified this conjecture for all the examples of theta correspondence available at the time. However, subsequent work on the local theta correspondence led to new findings related to the above conjecture. In particular, in her paper \cite{moeglinkudla}, M\oe glin revisited the work of Adams. She showed that
\begin{itemize}[label=\textemdash]
    \item Conjecture \ref{slutnja} is true for large $\alpha$ (see Proposition \ref{prop_visoko} for a more precise statement);
    \item Conjecture \ref{slutnja} fails in many examples.
\end{itemize}
In view of these findings, our goal in this paper is to investigate the extent to which Conjecture \ref{slutnja} holds.
To be precise, we are interested in the following questions suggested by M\oe glin (see \cite[Section 6.3]{moeglinkudla}):
\begin{itemize}
    \item[1)] Given a representation $\pi$ in $\Pi_\psi$, consider the set
    \[\mathcal A(\pi,\psi) = \{\alpha > 0,\  \alpha\equiv 1 (\text{mod } 2): \theta_{-\alpha}(\pi) \in \Pi_{\psi_\alpha}\}.\]
    Is it true that $\alpha\in \mathcal A(\pi,\psi)$ implies $\alpha+2\in \mathcal A(\pi,\psi)$?
    \item[2)] If so, can we find $a(\pi,\psi) := \min \mathcal A(\pi,\psi)$ explicitly?
\end{itemize}

\bigskip

This paper provides complete answers to both of these questions. We briefly explain our approach and results here. We start with the following result of M\oe glin:

%To do that, before we  state the initial result of M\oe glin, we observe the following. The parametrization of representations inside an A-packet (and inside  an L-packet) depends on the choice of a Whittaker datum. For tempered A-parameters (e.g., \cite{xu2017moeglin}) we choose a Whittaker datum as in \cite{Atobe_Gan}, Remark B.2. This, specifically, gives us $t,\eta$ functions (as defined in Subsection \ref{subs_Arthur}) for cuspidal representation inside a discrete A-packet, and, from that, M\oe glin's construction gives us all the rest of the representation (the detailed construction  is given in \cite{xu2017moeglin}; also recall Remark \ref{rem_parametrization} here).  Recall that we are assuming in the rest of the paper that $\psi$ is of good parity (cf.~\ref{subs_Arthur}). 

%The following result of M\oe glin gives us the starting point and also highlights the fact that all the theta lifts of a representation $\pi$ of the group $G(W_n)$ in an A-packet (which themselves are in A-packets)  will have the parameters $t,\eta$ on Jordan blocks unchanged (with respect to the original parameters of $\pi$), unless these blocks are of the form $\chi_V\otimes S_a\otimes S_b,$ where the character $\chi_V$ is attached to the space $V_m$ corresponding to the group $H(V_m),$ as described in Subsection \ref{subs_theta}.
\begin{prop}[\cite{moeglinkudla}, Theorem 5.1]
\label{prop_visoko}
Let $\pi$ be parametrized by $(\psi,t,\eta)$, with
\[
\psi = \bigoplus_{i=1}^r{\rho_i\otimes S_{a_i}\otimes S_{b_i}}.
\]
Let $\alpha\gg0$. Then $\theta_{-\alpha}(\pi)$ is parametrized by $(\psi_\alpha,t',\allowbreak \eta')$, where $t'(\chi_W\chi_V^{-1}\rho_i,a_i, b_i) = t(\rho_i , a_i, b_i)$ for $i=1,\dotsc,r$, and $\eta'$ is defined as follows.

For $i = 1, \dotsc,r$
\[
\eta'(\chi_W\chi_V^{-1}\rho_i ,a_i, b_i) = 
\begin{cases}
-\eta(\rho_i,a_i,b_i), &\quad \text{if } \rho_i \cong \chi_V\\  
\eta(\rho_i,a_i,b_i),  &\quad \text{if }  \rho_i \ncong \chi_V.   
\end{cases}
\]
If $\pi$ is a representation of the symplectic group, then $\eta'(\chi_W \otimes S_1 \otimes S_\alpha)$ is determined by the target tower of orthogonal groups. If $\pi$ is a representation of the orthogonal group, then $\eta'(\chi_W \otimes S_1 \otimes S_\alpha)$ is uniquely determined by Equation \eqref{eq_det_condition}.
\end{prop}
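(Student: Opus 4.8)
The plan is to follow M\oe glin's strategy: use $\alpha\gg 0$ to land in the stable range, reduce by her domination procedure to parameters with discrete diagonal restriction (DDR), and then induct on the number of Jordan blocks, at each stage matching $\theta_{-\alpha}(\pi)$ against the recipe that defines the members of $\Pi_{\psi_\alpha}$. The hypothesis $\alpha\gg 0$ is exactly what makes all the Jacquet-module bookkeeping degenerate in our favour.

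\emph{Stable range and reduction to DDR.} For $\alpha$ large, $H(V_m)$ has very large rank, so we are in the stable range: $\Theta_{-\alpha}(\pi)\neq 0$ is irreducible and equals $\theta_{-\alpha}(\pi)$, and Kudla's filtration for the Jacquet modules of $\omega_{m,n}$ collapses to its top layer in the range of cuspidal supports that can occur in $\pi$. This gives two things. First, $\theta_{-\alpha}$ intertwines $\Jac^\rho_x$ on $\Irr(G)$ with $\Jac^{\chi_W\chi_V^{-1}\rho}_x$ on $\Irr(H)$ for $|x|$ bounded by the $A_i$ of $\psi$. Second, $\theta_{-\alpha}$ is compatible with parabolic induction, with $\theta_{-\alpha}(\tau\rtimes\sigma)$ controlled by $(\chi_W\chi_V^{-1}\otimes\tau)\rtimes\theta_{-\alpha}(\sigma)$; this is where the twist $\chi_W\chi_V^{-1}\otimes\psi$ in $\psi_\alpha$ comes from. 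Since the extra block $\chi_W\otimes S_1\otimes S_\alpha$ has $A=B=(\alpha-1)/2$ strictly larger than every $A_i$ occurring in $\psi$, the DDR parameter $\psi_{\gg}$ dominating $\psi$ has the property that $(\psi_{\gg})_\alpha$ still dominates $\psi_\alpha$ via the same domination Jacquet functors; combined with the collapse of Kudla's filtration, this shows that $\theta_{-\alpha}$ commutes with the passage from $\psi_{\gg}$ to $\psi$ (after replacing each $\rho_i$ by $\chi_W\chi_V^{-1}\rho_i$). Hence it suffices to treat DDR $\psi$.

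\emph{Induction on $|\Jord(\psi)|$.} Take $\psi$ DDR, ordered admissibly, with $\beta:=\chi_W\otimes S_1\otimes S_\alpha$ placed at the top of $\psi_\alpha$ (legitimate since $\alpha\gg 0$). The base case — minimal parameters, e.g.\ the empty parameter when $G$ is even orthogonal (where $\pi$ and $\theta_{-\alpha}(\pi)$ are the trivial representations), or a single Jordan block — is a direct computation with the Weil representation; it also pins down $\eta'(\beta)$, which is forced by \eqref{eq_det_condition} when $\psi_\alpha$ is odd-dimensional ($G$ orthogonal) and is the choice of target Witt tower when $\psi_\alpha$ is even-dimensional ($G$ symplectic). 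For the inductive step, peel off the largest block $(\rho_r,A_r,B_r,\zeta_r)$ of $\psi$: M\oe glin's recursive construction exhibits $\pi_{t,\eta}$ as a distinguished irreducible subrepresentation of $\sigma_r\rtimes\pi'$, where $\sigma_r$ is the generalized-segment representation attached to $(\rho_r,A_r,B_r,\zeta_r)$ and $t_r$, the sign $\eta_r$ selects the constituent, and $\pi'\in\Pi_{\psi'}$ with $\psi'=\psi\setminus(\rho_r,A_r,B_r,\zeta_r)$. Combining the inductive hypothesis for $\pi'$ with compatibility with parabolic induction, $\theta_{-\alpha}(\pi)$ becomes a distinguished irreducible subrepresentation of $(\chi_W\chi_V^{-1}\otimes\sigma_r)\rtimes\theta_{-\alpha}(\pi')$; comparing with M\oe glin's construction of the (again DDR) packet $\Pi_{\psi_\alpha}$ identifies $\theta_{-\alpha}(\pi)$ with a member $\pi_{\alpha,t',\eta'}$ and forces $t'(\chi_W\chi_V^{-1}\rho_i,a_i,b_i)=t(\rho_i,a_i,b_i)$.

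\emph{The main obstacle.} What remains, and what I expect to be the crux, is the sign rule: $\eta'(\chi_W\chi_V^{-1}\rho_i,a_i,b_i)=-\eta(\rho_i,a_i,b_i)$ exactly when $\rho_i\cong\chi_V$. Structurally the dichotomy is transparent — after the twist, a block with $\rho_i\cong\chi_V$ becomes a $\chi_W$-block, hence joins the $\rho=\chi_W$ Jordan string that contains $\beta$, and inserting $\beta$ at the top of that string and running M\oe glin's algorithm twists the component-group sign of every $\chi_W$-block below $\beta$, whereas a block with $\rho_i\not\cong\chi_V$ becomes a $\chi_W\chi_V^{-1}\rho_i$-block disjoint from that string and is left untouched. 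The real work is to make this compatible with the normalization hard-wired into the theta lift, i.e.\ to reconcile M\oe glin's normalization of the character $\eta$ of $\mathcal{S}_\psi$ with the $(\chi_W,\chi_V,\psi)$-dependent normalization of the splitting $G\times H\to\Mp(W\otimes V)$. This is an $\epsilon$-factor computation for the pairs $(\rho_i,\chi_V)$, parallel to the sign flip at the $\rho=\chi_V$ blocks in the tempered theta correspondence of Atobe--Gan; tracking it carefully is the technical heart of the proof.
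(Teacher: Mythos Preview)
The paper does not prove this proposition: it is quoted as M\oe glin's result \cite[Theorem~5.1]{moeglinkudla}, and the authors only append a clarifying sentence about the admissible order on $\Jord(\psi_\alpha)$ together with a remark (at the end of \S\ref{sec_conj}) explaining why their sign convention differs from M\oe glin's. So there is no proof in this paper to compare your attempt against.

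That said, your sketch does follow the broad shape of M\oe glin's actual argument (reduction to good parity, domination to DDR, induction on $|\Jord(\psi)|$, compatibility of $\theta_{-\alpha}$ with parabolic induction via Kudla's filtration when $\alpha\gg 0$). But your structural account of the sign flip contains a genuine misconception. You write that ``inserting $\beta$ at the top of that string and running M\oe glin's algorithm twists the component-group sign of every $\chi_W$-block below $\beta$''. This is false: adding a new block at the \emph{top} of an admissible order does not change $\eta$ on the blocks beneath it; the change-of-order formulas (Lemma~\ref{lem_xu} in this paper, or \cite[\S6.1]{xu2021combinatorial}) only act when two adjacent blocks swap. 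The sign flip $\eta\mapsto-\eta$ on the blocks with $\rho_i\cong\chi_V$ is not a combinatorial artefact of M\oe glin's parametrization; it comes from the theta correspondence itself at the level of the tempered (ultimately supercuspidal) building blocks, exactly the $\epsilon$-factor/Atobe--Gan mechanism you invoke in your final sentence. Your last paragraph has the correct source; the ``structural'' explanation in the middle should be discarded.
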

\noindent Here we assume the order on $\Jord(\psi_\alpha)$ is inherited from the order on $\Jord(\psi)$:
\[
(\chi_W\chi_V^{-1}\rho_i,a_i,b_i) > (\chi_W\chi_V^{-1}\rho_j,a_j,b_j) \iff (\rho_i,a_i,b_i) > (\rho_j,a_j,b_j),
\]
with the added block being the largest:
\[
(\chi_W,1,\alpha) > (\chi_W\chi_V^{-1}\rho_i,a_i,b_i) \quad \text{for all } i.
\]

\begin{rem}
\label{rem_ort_lifts}
If $\pi$ is a representation of the orthogonal group, recall that twisting by $\det$ allows us to still consider lifts to “two target towers”, as explained in \S \ref{subs_theta}. If $\pi$ is parametrized by $(\psi,t,\eta)$, then $\pi \otimes \det$ is parametrized by $(\psi,t,\tilde{\eta})$ where
\[
\tilde{\eta}(\rho_i, a_i, b_i) =
\begin{cases}
-\eta(\rho_i,a_i,b_i), &\quad \text{if } \dim(\rho_i \otimes S_{|a_i-b_i|+1}) \text{ is odd};\\  
\eta(\rho_i,a_i,b_i), &\quad \text{if } \dim(\rho_i \otimes S_{|a_i-b_i|+1}) \text{ is even}. 
\end{cases}
\]
Note that the transformation $\eta \mapsto \tilde{\eta}$ does not change the right-hand side of \eqref{eq_det_condition}. Therefore, if we want to consider the lift of $\pi$ to “the other tower”, we first replace $\eta$ by $\tilde{\eta}$, and then perform the transformation described in Proposition \ref{prop_visoko}. For tempered representations, this follows from e.g.\ \cite[Desideratum 3.9 (9)]{Atobe_Gan_O(2n)}. The claim then follows for all representations in Arthur packets using M\oe glin's construction (from elementary to non-DDR representations of good parity) and the fact that 
\[(\pi_1\times \pi_2\times \cdots \times \pi_k\rtimes \tau)\otimes \det\cong \pi_1\times \pi_2\times \cdots \times \pi_k\rtimes (\tau\otimes \det),\]
where $\pi_i,\;i=1,2,\ldots,k$ and $\tau$ are irreducible representations.
\end{rem}
\begin{cor}
\label{cor_removal}
Let $\pi'$ be parametrized by $(\psi',t',\eta')$ with
\[
\psi' = \bigoplus_{i=1}^r{\chi_W\chi_V^{-1}\rho_i\otimes S_{a_i}\otimes S_{b_i}}\ \oplus\ \chi_W \otimes S_1 \otimes S_\alpha.
\]
Assume that $\alpha \gg 0$. Then there exists a representation $\pi$ parametrized by $(\psi,\eta,t)$ with
\[
\psi = \bigoplus_{i=1}^r{\rho_i\otimes S_{a_i}\otimes S_{b_i}}
\]
such that, for a suitable choice of tower, $\theta_{-\alpha}(\pi) = \pi'$ or %possibly (when $\pi'$ is a representation of an orthogonal group)
$\theta_{-\alpha}(\pi) = \pi'\otimes \det$.

Here $\eta$ and $t$ are related to $\eta', t'$ as in Proposition \ref{prop_visoko}, except possibly when $\pi'$ is a representation of an orthogonal group; in that case, we may have to replace $\eta'$ by $\tilde{\eta}'$ (as in Remark \ref{rem_ort_lifts}) before applying the transformation $\eta' \to \eta$.
\end{cor}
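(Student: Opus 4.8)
The plan is to read this corollary as a formal inverse to Proposition \ref{prop_visoko}, and to prove it by going through that proposition together with the tower/conservation machinery from \S\ref{subs_theta}. The starting observation is that Proposition \ref{prop_visoko} describes exactly which A-A packet $\Pi_{\psi_\alpha}$ arises as $\theta_{-\alpha}(\Pi_\psi)$ for $\alpha \gg 0$, and it does so in a way that is \emph{reversible on the level of the combinatorial data}: the assignment $(\rho_i,a_i,b_i)\mapsto(\chi_W\chi_V^{-1}\rho_i,a_i,b_i)$ is a bijection on Jordan blocks, $t'$ is literally transported from $t$, and the rule for $\eta'$ in terms of $\eta$ (twist the sign exactly on the blocks with $\rho_i\cong\chi_V$) is an involutive bijection between admissible $\eta$'s on $\Jord(\psi)$ and admissible $\eta'$'s on $\Jord(\psi_\alpha)$ — once we also take into account the freedom to pass to $\pi\otimes\det$ in the orthogonal case, which, by Remark \ref{rem_ort_lifts}, changes $\eta$ by the transformation $\eta\mapsto\tilde\eta$ without affecting the constraint \eqref{eq_det_condition}. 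So first I would set up this dictionary precisely: given $(\psi',t',\eta')$ as in the statement, define $\psi$ by stripping off the block $\chi_W\otimes S_1\otimes S_\alpha$ and applying the inverse character twist, define $t$ by transport, and define $\eta$ from $\eta'$ by inverting the sign rule of Proposition \ref{prop_visoko} (replacing $\eta'$ by $\tilde\eta'$ first if $\pi'$ lives on the orthogonal group, as the statement already anticipates).

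The second step is to check that this candidate $(\psi,t,\eta)$ is \emph{admissible}, i.e.\ that it actually parametrizes a representation $\pi\in\Pi_\psi$ (of the appropriate group in the Witt tower we are free to choose). The order condition is immediate since we inherit the admissible order on $\Jord(\psi_\alpha)$ from $\Jord(\psi)$ in Proposition \ref{prop_visoko} and that correspondence is order-preserving. The bound $t(\rho,a,b)\in[0,\min(a,b)/2]$ is automatic because $t=t'$ on the transported blocks. The only real content is the sign/determinant condition \eqref{eq_det_condition}: one must verify that $\prod \epsilon_{t,\eta}(\rho_i,a_i,b_i)$ equals $\epsilon_G$ for the correct group $G$. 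Here I would argue that the product $\prod_i \epsilon_{t',\eta'}$ over the blocks of $\psi'$ equals $\epsilon_{G'}$ by hypothesis (since $\pi'$ is given as a genuine packet member), then track how removing the block $(\chi_W,1,\alpha)$ and performing the character/sign twists changes this product, and match it against the Hasse invariant of the group $G$ whose Witt tower we select. Since $S_1$ has $\min(1,\alpha)=1$, the factor $\epsilon_{t',\eta'}(\chi_W,1,\alpha)=\eta'(\chi_W,1,\alpha)\cdot(-1)^{0+t'(\chi_W,1,\alpha)}=\eta'(\chi_W,1,\alpha)$ is just a sign, and the freedom in choosing among the two paired towers (equivalently, the freedom $\pi\leftrightarrow\pi\otimes\det$ in the orthogonal case) is exactly what lets us absorb this sign — this is the quantitative shadow of the fact that Proposition \ref{prop_visoko} says $\eta'(\chi_W\otimes S_1\otimes S_\alpha)$ is \emph{determined} by the target tower (symplectic case) or by \eqref{eq_det_condition} (orthogonal case).

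With $(\psi,t,\eta)$ admissible, we obtain an honest $\pi\in\Pi_\psi$, and now we apply Proposition \ref{prop_visoko} directly to this $\pi$: for $\alpha\gg 0$ it says $\theta_{-\alpha}(\pi)$ is parametrized by $(\psi_\alpha,t'',\eta'')$ where $t''$ and $\eta''$ are computed from $t,\eta$ by the rules of that proposition. By construction of the dictionary in step one, $(\psi_\alpha, t'', \eta'')$ coincides with $(\psi',t',\eta')$ — in the symplectic case on the nose, and in the orthogonal case up to the $\det$-twist, because the two transformations $\eta\mapsto\tilde\eta$ and $\eta\mapsto(\text{sign rule})$ commute and the composite reproduces $\eta'$ (again using that the transformation $\eta\mapsto\tilde\eta$ preserves \eqref{eq_det_condition}, so it never obstructs admissibility). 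Hence $\theta_{-\alpha}(\pi)=\pi'$, or $\theta_{-\alpha}(\pi)=\pi'\otimes\det$ in the orthogonal case, which is exactly the claim. One should also remark that $\theta_{-\alpha}(\pi)\neq 0$ automatically holds for $\alpha\gg 0$ by the tower property (Proposition \ref{prop_towers}(ii)), so the lift in question is genuinely a representation and not zero.

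I expect the main obstacle to be purely bookkeeping in step two: carefully matching the product $\prod\epsilon_{t,\eta}$ against the correct Hasse invariant across \emph{all} the twists — the character twist $\rho_i\mapsto\chi_W\chi_V^{-1}\rho_i$ (which can change which blocks are ``$\cong\chi_V$'' and hence which $\eta$-signs flip), the removal of $(\chi_W,1,\alpha)$, and, in the orthogonal case, the $\eta\mapsto\tilde\eta$ adjustment and the parity of $\dim(\rho_i\otimes S_{|a_i-b_i|+1})$. None of these is deep, but getting the signs to cancel correctly so that the chosen Witt tower is forced (rather than merely allowed) is where the argument has to be written with care. The orthogonal case in particular genuinely needs the ``$\pi'$ or $\pi'\otimes\det$'' hedge in the conclusion, precisely because the sign $\eta'(\chi_W\otimes S_1\otimes S_\alpha)$ was only pinned down up to that ambiguity in Proposition \ref{prop_visoko}.
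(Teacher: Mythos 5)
Your argument is essentially the paper's: treat the corollary as Proposition \ref{prop_visoko} read backwards, observe that the only nontrivial constraint on the candidate $(\psi,t,\eta)$ is the determinant condition \eqref{eq_det_condition}, and absorb any residual sign discrepancy by choosing the target Witt tower (when $\pi'$ is symplectic) or by passing to $\pi'\otimes\det$ via $\eta'\mapsto\tilde\eta'$ (when $\pi'$ is orthogonal). The paper's proof is more compressed and carries out the parity count explicitly in the orthogonal case (an odd number of factors with $(\dim\rho)\,a\,b$ odd, since $\psi$ factors through an odd orthogonal group), but you identify the same key point and the same mechanism for resolving it.
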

\begin{proof}
This is simply Proposition 2.2 in reverse. The only thing one has to ensure is that $\eta$ satisfies the condition in Equation \eqref{eq_det_condition}. If $\pi'$ is the representation of a symplectic group, there are no complications with $\eta$; the right-hand side of Eq.\ \eqref{eq_det_condition} determines the tower of orthogonal groups on which we can find $\pi$. However, if $\pi'$ is a representation of an orthogonal group, one has to keep in mind Remark 2.3: if $\eta$ does not fulfill the condition of Eq.\ \eqref{eq_det_condition}, that just means we are lifting $\pi'$ to the wrong tower. To be precise, in that case, we have
\[\prod_{(\rho,a,b)\in\Jord(\psi)} \epsilon_{t',-\eta'}(\rho,a,b) = -1.\]
On the other hand, $\pi'\otimes \det$ is parametrized by $(\psi',t',\tilde{\eta'})$ and then we examine
\[\prod_{(\rho,a,b)\in\Jord(\psi)} \epsilon_{t',-\tilde{\eta'}}(\rho,a,b).\]
The factors that  have changed the sign $\epsilon_{t',-\tilde{\eta'}}(\rho,a,b)=-\epsilon_{t',-\eta'}(\rho,a,b)$ are, by the definition of $\epsilon_{t',-\tilde{\eta'}}(\rho,a,b)$, precisely the ones for which the product $(\dim \rho)ab$ is odd. Since $\psi$ parameterizes a representation of a symplectic group and thus factors through the odd orthogonal group, there is an odd number of such factors. All in all, this means that the signs are changed so that the product in Eq.\ \eqref{eq_det_condition} is equal to 1; $(\psi,t',-\tilde{\eta'})$ parameterizes a representation $\pi$ of a symplectic group, and  $\theta_{-\alpha}(\pi)=\pi'\otimes \det.$
\end{proof}

 Proposition \ref{prop_visoko} gives us the necessary starting point and allows us to identify a candidate parameter for each lift. First, for each odd $\alpha \gg 0$, we define a candidate representation $\pi_\alpha$ in $\Pi_{\psi_\alpha}$ using the formula from the above proposition. Now, starting from $\pi_\alpha$ for any $\alpha\gg0$, we define representations $\pi_{\alpha-2}$, $\pi_{\alpha-4}$, \dots, $\pi_1$ inductively, using the following recipe: 
\begin{reci}
\label{recipe}
Let $\alpha > 1$ be odd. Suppose that $\pi_\alpha$ is parametrized by $(\psi_\alpha, \eta,t)$. If $\pi_\alpha =0$, we set $\pi_{\beta}=0$ for all $\beta < \alpha$. Otherwise, we define $\pi_{\alpha-2}$ to be the representation (possibly zero) parametrized by $\psi_{\alpha-2}$ and the functions $\eta, t$ modified as follows:

To go from $\psi_\alpha$ to $\psi_{\alpha-2}$, we simply shift the added block from $\chi_W\otimes S_1 \otimes S_\alpha$ to $\chi_W\otimes S_1 \otimes S_{\alpha-2}$. By default, the order on $\Jord(\psi_\alpha)$ is the same as the order on $\Jord(\psi_{\alpha-2})$, and the functions $\eta$ and $t$ do not change. The exception is the case when $\Jord_{\chi_V}(\psi)$ contains a block $(A,B,\zeta)$ with $B=\frac{\alpha-1}{2}$. In this case, going from $\psi_\alpha$ to $\psi_{\alpha-2}$, we change the order so that the added block becomes smaller than $(A,B,\zeta)$, and use the formulas described in Lemma \ref{lem_xu} to update $\eta$ and $t$. If there are multiple blocks with $B=\frac{\alpha-1}{2}$, we change the order and $\eta,t$ for each such block.
\end{reci}
\begin{exmp}
\label{ex_recipe}
We let $\pi$ be the Steinberg representation of $\Sp_6$. Then $\pi$ is given by the Arthur parameter
\[
\psi = \overset{+}{1\otimes S_7 \otimes S_1}.
\]
Recall that the sign above a given summand indicates the corresponding value of $\eta$. For $\alpha \geq 7$ the Recipe produces
\[
\psi_\alpha = \overset{-}{1 \otimes S_7 \otimes S_1} \, \oplus \, \overset{\epsilon}{1 \otimes S_1 \otimes S_\alpha}.
\]
The choice of $\epsilon \in \{\pm 1\}$ corresponds to the choice of target tower of orthogonal groups. For $\alpha \in \{1,3,5\}$, the two summands need to switch places, so we get (using Lemma \ref{lem_xu})
\[
\psi_\alpha =  \overset{-\epsilon}{1 \otimes S_1 \otimes S_\alpha} \, \oplus \, \overset{+}{1 \otimes S_7 \otimes S_1}.
\]
\end{exmp}
With the Recipe in mind, we define
\[
 d(\pi,\psi) = \min\{\alpha > 0,\  \alpha\equiv 1 (\text{mod } 2): \pi_\alpha \neq 0\}. 
\]
The Recipe thus defines a candidate representation $\pi_\alpha$ in $\Pi_{\psi_\alpha}$ for each odd $\alpha \geq d(\pi,\psi)$. We know that this representation has the property $\pi_\alpha = \theta_{-\alpha}(\pi)\neq 0$ for $\alpha \gg 0$. Our first result is
\begin{thm}[Theorem A]
\label{theoremA} Let $\alpha > 1$ be odd.
Assume $\pi_\alpha = \theta_{-\alpha}(\pi)$ and $\pi_{\alpha-2} \neq 0$. Then $\theta_{-(\alpha-2)}(\pi)=\pi_{\alpha-2}$; in particular, $\theta_{-(\alpha-2)}(\pi)\neq 0$ is in the A-A packet.
\end{thm}
Thus Theorem \ref{theoremA} shows that the Adams conjecture holds for all $\alpha\geq d(\pi,\psi)$. As explained in \S \ref{subs_theta}, it is fruitful to look at two towers of lifts simultaneously. Therefore, we consider $d^\text{up}(\pi,\psi)$ and $d^\text{down}(\pi,\psi)$, corresponding to the going-up and the going-down tower, respectively. We prove
\begin{thm}[Theorem B]
\label{theoremB}
     On the going-up tower, $d^\text{up}(\pi,\psi)$ corresponds to the first occurrence of $\pi$:
    \[d^\text{up}(\pi,\psi) = \min\{\alpha>0: \theta_{-\alpha}^\text{up}(\pi)\neq 0\}.\]
     Consequently, the Adams conjecture is true for all non-zero lifts. Moreover, $d^\text{down}(\pi,\psi) \leq d^\text{up}(\pi,\psi)$, and equality holds if and only if $d^\text{up}(\pi,\psi) =1$.
\end{thm}
\noindent Notice that Theorem \ref{theoremB} explains why we do not have to worry about the situation $m^\text{up}(\pi)=m^\text{down}(\pi)=n+\epsilon+1$ addressed in Remark \ref{rem_updown}: In this case, $d^\text{up}(\pi,\psi) =1$, so the inequality in Theorem \ref{theoremB} implies $d^\text{down}(\pi,\psi) =1$ as well. Thus $d(\pi,\psi)=1$ on both towers, so Conjecture \ref{slutnja} holds for all $\alpha>0$.

\noindent Finally, we answer question 1) posed above:

\begin{thm}[Theorem C]
\label{theoremC}
Conjecture \ref{slutnja} is always false for $\alpha < d(\pi,\psi)$.
\end{thm}
\noindent Thus Theorems \ref{theoremA}, \ref{theoremB}, and \ref{theoremC} answer both questions 1) and 2) posed above: we have $\mathcal A(\pi,\psi) =  \{\alpha\geq d(\pi,\psi),\  \alpha\equiv 1 (\text{mod } 2)\}$; in particular, $\alpha \in \mathcal A(\pi,\psi)$ implies $\alpha+2 \in \mathcal A(\pi,\psi)$. Furthermore, $a(\pi,\psi)=d(\pi,\psi)$. 

On the going-up tower, finding $d(\pi,\psi)$ is equivalent to finding the first occurrence of $\pi$. The question of non-vanishing has been completely resolved in \cite{nas_clanak}, where the answer is stated in terms of the L-parameter of the representation. In this paper, $\pi$ is given as a member of an Arthur packet; the Recipe enables us to identify the going-up tower and determine the first occurrence index in terms of the Arthur packet data. 
%As one irreducible representation can belong to multiple Arthur packets, this leads to certain necessary compatibility conditions among them.

On the going-down tower, finding $d(\pi,\psi)$ boils down to establishing the (non-)vanishing of the candidate representation $\pi_\alpha$. This criterion is satisfactory thanks to the recent work of Xu \cite{xu2021combinatorial} and Atobe \cite{atobe2022construction}.

\begin{rem} A word on the signs. The parametrization of representations inside an A-packet\,---\,and thus, the statement of Proposition 2.2\,---\,depends on the choice of a Whittaker datum. For tempered A-parameters we fix the Whittaker datum as in \cite{Atobe_Gan}, Remark B.2. In particular, this gives us functions $t$ and $\eta$ for cuspidal representations; from that, M\oe glin's construction gives us all the remaining of the representations. The fact that our transformation $\eta \mapsto \eta'$ in Proposition \ref{prop_visoko} differs from M\oe glin's in \cite[Theorem 5.1]{moeglinkudla} is precisely due to the difference in parametrization of supercuspidals, as explained in Remark \ref{rem_parametrization}.

Note that Theorem \ref{theoremA} refines the statement of Conjecture \ref{slutnja}, regardless of which pa\-ra\-me\-tri\-zation one uses: we do not only get the packet $\psi_\alpha$, but also the functions $t'$, and $\eta'$ that correspond to $\theta_{-\alpha}(\pi)$. Recall that the function $\eta$ (resp.\ $\eta'$) represents a character of the component group $\mathcal{S}_\psi$ (resp.\ $\mathcal{S}_{\psi_\alpha}$). 
If one uses the so-called Whittaker normalization (as explained in Sections 1 and 4 of \cite{xu2017moeglin}), then $\eta$ is obtained by pulling back $\eta'$ along the natural map $\mathcal{S}_\psi  \to \mathcal{S}_{\psi_\alpha}$. This is called the \emph{refined Adams conjecture}, and can be verified using the transition formulas in Theorem 1.1 of \cite{xu2021combinatorial}.
\end{rem}

\section{Examples}
\label{sec_example}
Before proving the main theorems, we demonstrate the results on a few examples. Note that the Arthur classification is still conjectural in the non-quasi-split case (so in particular, for orthogonal groups of trivial discriminant and Hasse invariant equal to $-1$); however, to simplify our examples, we momentarily ignore this fact and work with $\chi_V=\chi_W=1$.
\begin{exmp}[Example \ref{ex_recipe} continued]
Recall that the Steinberg representation $\pi$ of $\Sp_6$ is given by
\[
\psi = \overset{+}{1\otimes S_7 \otimes S_1}.
\]
According to Theorem \ref{theoremA}, the lift $\theta_{-\alpha}(\pi)$ is given by
\[
\psi_\alpha = \overset{-}{1 \otimes S_7 \otimes S_1} \, \oplus \, \overset{\epsilon}{1 \otimes S_1 \otimes S_\alpha}
\]
for $\alpha \geq 7$. The choice of $\epsilon \in \{\pm 1\}$ corresponds to the choice of target tower of orthogonal groups. In this case, $d(\pi,\psi)=1$ on both towers; in particular, $\Theta_{-1}(\pi)\neq 0$ on both towers. The lifts for $\alpha \in \{1,3,5\}$ are given by
\[
\psi_\alpha =  \overset{-\epsilon}{1 \otimes S_1 \otimes S_\alpha} \, \oplus \, \overset{+}{1 \otimes S_7 \otimes S_1}.
\]
\end{exmp}
\begin{exmp}
\label{ex_two}
We let $\pi$ be the trivial representation of $\Sp_6$. Then $\pi$ is given by the Arthur parameter
\[
\psi = \overset{+}{1 \otimes S_1 \otimes S_7}.
\]
For $\alpha > 7$ the lift $\theta_{-\alpha}(\pi)$ is given by
\[
\psi_\alpha = \overset{-}{1 \otimes S_1 \otimes S_7} \, \oplus \, \overset{\epsilon}{1 \otimes S_1 \otimes S_\alpha}.
\]
Again, the choice of $\epsilon \in \{\pm 1\}$ corresponds to the choice of target tower of orthogonal groups.

If $\epsilon = 1$, then $\pi_7 = 0$. This shows that $d(\pi,\psi)=9$ on the corresponding tower. In other words, this is the going-up tower, and $\theta_{-9}(\pi)$ is the first occurrence of $\pi$.

If $\epsilon = -1$, then $d(\pi,\psi)=1$. The lifts for $\alpha \in \{1,3,5,7\}$ on this tower are given by
\[
\psi_\alpha =  \overset{-}{1 \otimes S_1 \otimes S_\alpha} \, \oplus \, \overset{-}{1 \otimes S_1 \otimes S_7}.
\]
\end{exmp}
\begin{exmp}
Let $\pi$ be the representation of $\Sp_{10}$ given by the Arthur parameter
\[
\psi = \overset{-}{1 \otimes S_1 \otimes S_1} \, \oplus \, \overset{+}{1 \otimes S_3 \otimes S_1} \, \oplus \, \overset{-}{1 \otimes S_1 \otimes S_7}.
\]
Then $\pi$ is the Langlands quotient of $\nu^3\rtimes \sigma$, where $\sigma$ is the supercuspidal representation of $\Sp_8$ parametrized by
\[
\overset{-}{1 \otimes S_1 \otimes S_1} \, \oplus \, \overset{+}{1 \otimes S_3 \otimes S_1} \, \oplus \, \overset{-}{1 \otimes S_5 \otimes S_1}.
\]
For $\alpha > 7$ the lift $\theta_{-\alpha}(\pi)$ is given by
\[
\psi_\alpha = \overset{+}{1 \otimes S_1 \otimes S_1} \, \oplus \, \overset{-}{1 \otimes S_3 \otimes S_1} \, \oplus \, \overset{+}{1 \otimes S_1 \otimes S_7} \, \oplus \, \overset{\epsilon}{1 \otimes S_1 \otimes S_\alpha}.
\]
If $\epsilon = -1$, then $\pi_7 = 0$. Indeed, $\pi_9=\theta_{-9}(\pi)$ is the Langlands quotient of $\delta(3,4) \rtimes \theta_{-7}(\sigma)$; note that $\theta_{-7}(\sigma)$ is supercuspidal because it is the first lift of a supercuspidal representation $\sigma$. Thus $\pi_7 = \Jac_{-4}(\pi_9) = 0$, as one verifies with a simple Jacquet module computation (see \S \ref{subs_compJac}). In other words, $\epsilon = -1 $ corresponds to the going-up tower, and $\theta_{-9}(\pi)$ is the first occurrence on this tower.

If $\epsilon = 1$, then $\pi_7$ and $\pi_5$ are non-zero, given by
\[
\psi_\alpha = \overset{+}{1 \otimes S_1 \otimes S_1} \, \oplus \, \overset{-}{1 \otimes S_3 \otimes S_1} \, \oplus \, \overset{+}{1 \otimes S_1 \otimes S_\alpha} \, \oplus \, \overset{+}{1 \otimes S_1 \otimes S_7}
\]
for $\alpha \in \{5,7\}$. However, we have $\pi_3 = 0$. Indeed, $\pi_5 = \theta_{-5}(\pi)$ on the going-down tower. This means that $\pi_5$ is the Langlands quotient of 
\[
\nu^3 \times \nu^2 \times \nu^1 \rtimes \theta_{-1}(\sigma),
\]
where $\theta_{-1}(\sigma)$ is tempered. It follows (using the results of \S \ref{subs_compJac}) that $\pi_3 = \Jac_{-2}(\pi_5) = 0$, even though $\Theta_{-3}(\pi) \neq 0$.
\end{exmp}

\begin{exmp}
\label{ex_four}
Let $V_{10}$ be the $10$-dimensional quadratic space of discriminant $1$ and Hasse invariant $-1$. Let $\pi$ be the representation of $\Ort(V_{10})$ parametrized by
\[
\psi =  \overset{-}{1 \otimes S_1 \otimes S_3} \, \oplus \, \overset{+}{1 \otimes S_1 \otimes S_7}
\]
Then $\pi$ is the Langlands quotient of $\nu^3 \times  \delta(1,2) \rtimes \sigma$, where $\sigma$ is the supercuspidal representation of $\Ort(V_4)$ parametrized by
\[
\overset{-}{1 \otimes S_1 \otimes S_1} \, \oplus \, \overset{+}{1 \otimes S_3 \otimes S_1}.
\]
Note that $\pi \otimes \det$ is parametrized by
\[
\psi =  \overset{+}{1 \otimes S_1 \otimes S_3} \, \oplus \, \overset{-}{1 \otimes S_1 \otimes S_7}.
\]
We consider the lifts of both $\pi$ and $\pi \otimes \det$ to the symplectic tower; we abuse the terminology in the standard way and refer to $\theta(\pi)$ and $\theta(\pi\otimes \det)$ as the lifts of $\pi$ to two different towers.

For $\alpha > 7$ the lifts $\theta_{-\alpha}$ are given by
\[
\psi_\alpha = \overset{\eta}{1 \otimes S_1 \otimes S_3} \, \oplus \, \overset{-\eta}{1 \otimes S_1 \otimes S_7} \, \oplus \, \overset{-}{1 \otimes S_1 \otimes S_\alpha}
\]
where $\eta = 1$ (resp.~$\eta = -1$) corresponds to the lift of $\pi$ (resp.~$\pi \otimes \det$).

\bigskip

If $\eta = -1$, then $\pi_7 = 0$. Indeed, $\pi_9=\theta_{-9}(\pi\otimes \det)$ is the Langlands quotient of $\delta(3,4) \times \delta(1,3) \rtimes \theta_{-5}(\sigma \otimes \det)$; note that $\theta_{-5}(\sigma\otimes \det)$ is supercuspidal because it is the first lift of a supercuspidal representation $\sigma \otimes \det$. Thus $\pi_7 = \Jac_{-4}(\pi_9) = 0$. In other words, $\pi \otimes \det$ is the going-up tower, and $\theta_{-9}(\pi \otimes \det)$ is its first occurrence.

If $\eta = 1$, then $\pi_7$ and $\pi_5$ are non-zero, given by
\[
\psi_\alpha = \overset{+}{1 \otimes S_1 \otimes S_3} \, \oplus \, \overset{-}{1 \otimes S_1 \otimes S_\alpha} \, \oplus \, \overset{-}{1 \otimes S_1 \otimes S_7}
\]
for $\alpha \in \{5,7\}$. However, we have $\pi_3 = 0$. Indeed, $\pi_5 = \theta_{-5}(\pi)$ is the Langlands quotient of 
\[
\nu^3 \times \nu^2 \times \delta(1,2) \times \nu^1 \rtimes \theta_{-1}(\sigma),
\]
where $\theta_{-1}(\sigma)$ is tempered. Thus $\pi_3 = \Jac_{-2}(\pi_5) = 0$, even though $\Theta_{-3}(\pi) \neq 0$.

\end{exmp}

\section{Technical results}
\label{sec_tech}
This section contains certain auxiliary results we use throughout the paper. These are mostly (variations of) known results, but we list them here in order to keep the paper self-contained. We advise the reader to merely skim this section on initial reading. 

\subsection{M\oe glin's construction}
\label{subs_Arthurtype}
Recall that the representations parametrized by a general (non-DDR) good parity parameter are constructed by passing to a DDR parameter and then shifting back the blocks by taking Jacquet modules. We give a more detailed description of this process. For us, it will suffice to focus on a single cuspidal representation $\rho$, which we now fix. Let $\psi$ be a (non-DDR) good parity parameter with $\Jord_\rho(\psi) =  \{(A_i,B_i,\zeta_i): i=1,\dotsc r\}$. Now let $\psi_\gg^\rho$ be the $\rho$-DDR parameter which dominates $\psi$ (see \S \ref{subs_Arthur}). Let $\pi$ be an irreducible representation in $\Pi_\psi$, and let $\pi_{\gg}^\rho$ denote the corresponding representation in $\Pi_{\psi_\gg^\rho}$. We wish to describe intermediate steps in going from $\pi_{\gg}^\rho$ to $\pi$:

 For each $i$, denote by $\pi_{(i)}$ the $i$-th intermediate step in going from $\pi_{\gg}^\rho$ to $\pi$: the lowest $i$ blocks in $\Jord_\rho$ have been returned to the original position they had in $\psi$, while the higher blocks remain in the position they have in $\psi_{\gg}^\rho$. In particular, $\pi_{(0)}=\pi_{\gg}^\rho$ and $\pi_{(r)}=\pi$. We let $L_i$ be the representation which corresponds to the generalized segment defined by the data $[B_i,A_i]$, $T_i$, and $\zeta_i$, namely
\[
\begin{bmatrix}
\zeta_i(B_i+T_i) & \dotsb & \zeta_i(A_i+T_i)\\
\vdots & \ddots & \vdots\\
\zeta_i(B_i+1) & \dotsb & \zeta_i(A_i+1)
\end{bmatrix}.
\] 
We have
\begin{lem}[2.8 in \cite{moeglin2010holomorphie}]
\label{lem_L}\mbox{} 
\begin{enumerate}[(i)]
    \item $\pi_{(i-1)}$ is the unique irreducible subrepresentation of $L_{i} \rtimes \pi_{(i)}$. Moreover,
    \[\Jac_{\zeta_i[B_i+T_i, A_i+T_i]\to \zeta_i[ B_i, A_i]}(L_{i} \rtimes \pi_{(i)})
    \] is irreducible, and thus equal to $\pi_{(i)}$.
    \item  Consequently, $\pi_{\gg}^\rho$ is the unique irreducible subrepresentation of $L_1 \times L_2 \times \dotsb \times L_r \rtimes \pi$. Moreover,
    \[\Jac_{L_1,\dotsc,L_r}(L_1 \times L_2 \times \dotsb \times L_r \rtimes \pi)
    \]
    is irreducible, and thus equal to $\pi$.
    Here $\Jac_{L_1,\dotsc,L_r}$ denotes 
    \[
    \Jac_{\zeta_r[A_r+T_r, B_r+T_r]\to \zeta_r[A_r,B_r]} \circ \dotsb \circ \Jac_{\zeta_1[A_1+T_1, B_1+T_1]\to \zeta_1[A_1, B_1]}.
    \]
\end{enumerate}
\end{lem}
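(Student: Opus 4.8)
\emph{Proof plan.} The content of the lemma is part (i), which is essentially Lemme~2.8 of \cite{moeglin2010holomorphie}; I would cite it for the final write-up, but the shape of its proof is as follows, and part (ii) then reduces to it formally. Fix $i$ and abbreviate $L=L_i$, $T=T_i$, $\zeta=\zeta_i$, $[B,A]=[B_i,A_i]_\rho$. The representations $\pi_{(i-1)}$ and $\pi_{(i)}$ are packet elements for two parameters of good parity differing only in the $i$-th $\rho$-block, which sits at ``height'' $T$ in the former and at ``height'' $0$ in the latter.

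For (i), I would proceed in three moves. (a) \emph{Embedding.} Let $P=MN$ be the maximal parabolic of $G$ with $L\rtimes\pi_{(i)}=\Ind_P^G(L\otimes\pi_{(i)})$. By Frobenius reciprocity it suffices to exhibit $L\otimes\pi_{(i)}$ as a quotient of $r_P(\pi_{(i-1)})$, which one reads off from M\oe glin's explicit description of the packet element $\pi_{(i-1)}$ (itself built by descending from a still more dominant DDR parameter). (b) \emph{The Jacquet identity} $\Jac_{\zeta[B+T,A+T]\to\zeta[B,A]}(L\rtimes\pi_{(i)})=\pi_{(i)}$. Here I would expand the functor into its defining composition of elementary functors $\Jac^\rho_x$, compute $r_P(L\rtimes\pi_{(i)})$ by the Geometric Lemma of Bernstein--Zelevinsky, and follow the surviving terms down the chain: every term except the one producing $\pi_{(i)}$ vanishes, because the exponents successively removed occur, in the required positions, only in the $L$-factor. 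This uses the reducibility criterion for products of segments recalled in Section~\ref{subsubs_segments}, together with the disjointness of the shifted blocks of the dominating $\rho$-DDR parameter $\psi_\gg^\rho$ and the admissibility of the fixed order. (c) \emph{Uniqueness.} Since $L$ is a generalized segment representation with all exponents shifted far into the Langlands range, $L\rtimes\pi_{(i)}$ is, up to the usual reordering and passage to contragredients, a standard module in the sense of \S\ref{subsubs_Langlands}; hence it has a unique irreducible submodule, which by (a) must be $\pi_{(i-1)}$.

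For (ii) I would induct on $r$. The embedding follows by induction in stages: applying the exact functor $L_1\rtimes(-)$ to the embedding $\pi_{(1)}\hookrightarrow L_2\times\cdots\times L_r\rtimes\pi$ (inductive hypothesis) and composing with $\pi_{(0)}=\pi_\gg^\rho\hookrightarrow L_1\rtimes\pi_{(1)}$ from (i) gives $\pi_\gg^\rho\hookrightarrow L_1\times\cdots\times L_r\rtimes\pi$. For the Jacquet identity the key point is that, for each $i$,
\[
\Jac_{\zeta_i[B_i+T_i,A_i+T_i]\to\zeta_i[B_i,A_i]}\bigl(L_i\times L_{i+1}\times\cdots\times L_r\rtimes\pi\bigr)=L_{i+1}\times\cdots\times L_r\rtimes\pi,
\]
which is proved exactly as in move (b) — the exponents stripped off appear, in the needed positions, only in $L_i$, again by the disjointness coming from $\psi_\gg^\rho$. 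Iterating over $i=1,\dots,r$ collapses $\Jac_{L_1,\dots,L_r}(L_1\times\cdots\times L_r\rtimes\pi)$ to $\pi$. Finally, if $\sigma\hookrightarrow L_1\times\cdots\times L_r\rtimes\pi$ is irreducible, then $\Jac_{L_1,\dots,L_r}(\sigma)\hookrightarrow\pi$ by exactness, so is $0$ or $\pi$; running the socle argument of (c) stage by stage forces $\sigma=\pi_\gg^\rho$.

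The step I expect to be the main obstacle is the irreducibility asserted in move (b): that $\Jac_{\zeta_i[B_i+T_i,A_i+T_i]\to\zeta_i[B_i,A_i]}(L_i\rtimes\pi_{(i)})$ is a single irreducible representation, not a sum of several. Everything else is bookkeeping with the Geometric Lemma and with segment combinatorics, but this irreducibility genuinely exploits the fine structure of M\oe glin's admissible orders and the inductive architecture of her construction of DDR packets. In a self-contained treatment I would isolate it as a separate lemma, proved by descending induction on $T_i$ — removing one row of $L_i$ at a time and invoking the already-established cases of (i) — but in the present paper it is cleanest to cite \cite{moeglin2010holomorphie}.
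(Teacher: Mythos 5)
The paper does not prove this lemma—it cites it directly as Lemme~2.8 of \cite{moeglin2010holomorphie}—and you correctly identify this as the right approach and explicitly say you would do the same, so on that score you match the paper exactly. Your accompanying sketch of how the proof goes (embedding via Frobenius and M\oe glin's construction, the Jacquet-module computation via the Geometric Lemma with the DDR disjointness ensuring that the stripped exponents come only from $L_i$, uniqueness via a standard-module/socle argument, and the formal deduction of (ii) from (i) by induction in stages) is sound and usefully flags where the real work lies; the one place to be slightly more careful in a full write-up is your intermediate identity $\Jac_{L_i}(L_i\times\cdots\times L_r\rtimes\pi)=L_{i+1}\times\cdots\times L_r\rtimes\pi$, which should be read at the level of semisimplifications since $\Jac$ is defined via semisimplified Jacquet modules and the right-hand side need not be semisimple.
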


\begin{rem}
\label{rem_zgusnjavanje}
Like in Remark \ref{rem_T}, we use the above notation throughout the paper: We will freely refer to the representations $L_i$ and use the notation $\Jac_{L_1,\dotsc,L_r}$ anytime we deal with $\pi_{\gg}^\rho$.
\end{rem}
The following result is a special case of Proposition 3.3 (2) in \cite{xu2021combinatorial} (see also Proposition 3.2. of \cite{Moeglin_discrets} for DDR representations).
{\begin{lem}
\label{lem_Jac=0}
Let $\alpha \in \mathbb{R}$ be such that $\frac{\alpha-1}{2} \neq \zeta_iB_i$, for all $i=1,\dotsc r$. Then $\Jac_{ \frac{\alpha-1}{2}}(\pi) = 0$.
\end{lem}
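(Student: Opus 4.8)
The quickest route is to quote Proposition~3.3(2) of \cite{xu2021combinatorial}; what follows is how I would argue it more directly, reducing to the $\rho$-DDR case via Lemma~\ref{lem_L}. Throughout write $x=\frac{\alpha-1}{2}$, so that the claim is $\Jac^\rho_x(\pi)=0$.

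\textbf{Reduction.} By Lemma~\ref{lem_L} we may write $\pi=\Jac_{L_1,\dotsc,L_r}(\pi_{\gg}^\rho)$ with $\pi_{\gg}^\rho\in\Pi_{\psi_{\gg}^\rho}$, where $\psi_{\gg}^\rho$ is the $\rho$-DDR parameter dominating $\psi$, $\Jord_\rho(\psi_{\gg}^\rho)=\{(A_i+T_i,B_i+T_i,\zeta_i):i=1,\dotsc,r\}$, and the $T_i$ may be chosen as large as we wish. Let $\pi_{(i)}$ denote the intermediate representations of \S\ref{subs_Arthurtype}, so $\pi_{(0)}=\pi_{\gg}^\rho$, $\pi_{(r)}=\pi$, and $\pi_{(i)}=\Jac_{\zeta_i[B_i+T_i,A_i+T_i]\to\zeta_i[B_i,A_i]}(\pi_{(i-1)})$ (a consequence of Lemma~\ref{lem_L}(i)). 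Set $\mathcal A_i=\{\zeta_1B_1,\dotsc,\zeta_iB_i\}\cup\{\zeta_j(B_j+T_j):j>i\}$. The plan is to prove, by induction on $i$, the statement $(\star_i)$: \emph{$\Jac^\rho_y(\pi_{(i)})=0$ for every $y\notin\mathcal A_i$}. The case $i=0$ is the $\rho$-DDR instance, i.e.\ Proposition~3.2 of \cite{Moeglin_discrets} (it also follows from M\oe glin's construction of DDR packets, Proposition~3.3 of \cite{moeglinkudla}, by tracking the leftmost $\rho$-exponent); the case $i=r$ is the lemma.

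\textbf{Inductive step.} Assume $(\star_{i-1})$ and take $y\notin\mathcal A_i$. Since $\mathcal A_{i-1}$ arises from $\mathcal A_i$ by replacing $\zeta_iB_i$ with $\zeta_i(B_i+T_i)$, we have either $y\notin\mathcal A_{i-1}$ or $y=\zeta_i(B_i+T_i)$. The operator $\Jac_{\zeta_i[B_i+T_i,A_i+T_i]\to\zeta_i[B_i,A_i]}$ is a composition of functors $\Jac^\rho_{\zeta_i\ell}$ with $\ell\in\{B_i+1,\dotsc,A_i+T_i\}$. If $\operatorname{sgn}(y)\neq\zeta_i$ (or $y=0$), or if $\operatorname{sgn}(y)=\zeta_i$ with $|y|\leq B_i-1$, then $y$ is neither equal nor adjacent to any of the exponents $\zeta_i\ell$ above; by the standard fact that $\Jac^\rho_y$ and $\Jac^\rho_z$ commute whenever $|y-z|\neq1$, and since $y\notin\mathcal A_{i-1}$ in all of these cases, we get $\Jac^\rho_y(\pi_{(i)})=\Jac_{\zeta_i[B_i+T_i,A_i+T_i]\to\zeta_i[B_i,A_i]}(\Jac^\rho_y(\pi_{(i-1)}))=0$. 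After enlarging $T_i$ so that $A_i+T_i>|x|$, the only remaining possibility is $y=\zeta_i\ell$ with $B_i+1\leq\ell\leq A_i+T_i$, i.e.\ $y$ is one of the exponents appearing inside the generalized segment $L_i$ (this includes $y=\zeta_i(B_i+T_i)$). For this case the commutation fails, and I would instead use the embedding $\pi_{(i-1)}\hookrightarrow L_i\rtimes\pi_{(i)}$ of Lemma~\ref{lem_L}(i): applying the (exact) Jacquet functor and computing the relevant Jacquet module of $L_i\rtimes\pi_{(i)}$ via the geometric lemma, one checks that the only $\rho$-exponents $L_i$ can contribute on the left of $\pi_{(i)}$, after it has been shifted down to $(A_i,B_i)$, already lie in $\mathcal A_i$; together with $(\star_{i-1})$ and the irreducibility statements of Lemma~\ref{lem_L} this yields $\Jac^\rho_y(\pi_{(i)})=0$.

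\textbf{Main obstacle.} The hard part is precisely the last case, where $y$ lies in the interior of the block being shifted: the Jacquet functors no longer commute, and one must exploit the rigid combinatorial shape of $L_i$ (its rows and columns) together with the irreducibility of each $\pi_{(i)}$ to rule out a leftmost occurrence of $\rho|\cdot|^y$. This is the type of careful Jacquet-module bookkeeping carried out in \cite{moeglin2010holomorphie} and \cite{xu2021combinatorial}; everything else reduces to commuting Jacquet functors and to the known DDR case.
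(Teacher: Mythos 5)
The paper offers no proof beyond a pointer: it cites Proposition~3.3(2) of \cite{xu2021combinatorial} (and Proposition~3.2 of \cite{Moeglin_discrets} for the DDR case), which is exactly your opening sentence, so on that count your proposal matches the paper. Your more direct sketch is a reasonable reduction scheme, and the commutation argument correctly disposes of every $y$ that is neither equal nor adjacent to the exponents $\zeta_i\ell$ appearing in $\Jac_{L_i}$.

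However, the remaining case $y\in\zeta_i\cdot\{B_i+1,\dotsc,A_i+T_i\}$ is precisely where the substance of Xu's proposition lies, and your treatment of it does not go through as stated. The embedding $\pi_{(i-1)}\hookrightarrow L_i\rtimes\pi_{(i)}$ controls Jacquet modules of $\pi_{(i-1)}$, not of $\pi_{(i)}$, which is the object you need. If one instead starts from $\Jac^\rho_y(\pi_{(i)})\neq 0$, so $\pi_{(i)}\hookrightarrow\rho|\cdot|^y\rtimes\sigma$ and hence $\pi_{(i-1)}\hookrightarrow L_i\times\rho|\cdot|^y\rtimes\sigma$, the factor $\rho|\cdot|^y$ cannot simply be moved to the left of $L_i$ (since $y$ lies among the entries of $L_i$), so no contradiction with $(\star_{i-1})$ follows from irreducibility alone; a genuinely combinatorial analysis of the generalized segment is needed. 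You flag this honestly, which is fair, but it means the ``direct'' argument still rests on \cite{xu2021combinatorial} (or \cite{moeglin2010holomorphie}) for the part that carries the weight --- which is also what the paper does.
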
}

%%cuspidal sequence

%Now let $\psi$ be a parameter with $\Jord_\rho(\psi) =  \{(A_i,B_i,\zeta_i)\}$; assume that the lowest $j$ blocks of $\Jord_\rho(\psi)$ are DDR (i.e.~there are no overlapping segments). 
%\begin{defn}
%\label{defn_cuspidal}
%We say that the lowest $j$ blocks of  $\Jord_\rho(\psi,t,\eta)$ form a cuspidal sequence if
%\begin{enumerate}[(i)]
%    \item $t(A_i,B_i,\zeta_i) = 0$ for $i=1,\dotsc,j$;
%    \item $\eta(A_{i+1},B_{i+1},\zeta_{i+1})=-\eta(A_{i},B_{i},\zeta_{i})(-1)^{A_{i}-B_{i}}$;
%    \item $B_{i+1}=A_i+1$ for for $i=1,\dotsc,j-1.$
%\end{enumerate}
%\end{defn}
%We sometimes describe this situation by saying that $\psi$ \emph{begins with a cuspidal sequence} of length (at least) $A_j$. Note that in this case $\Jac_{x}\pi = 0$ for any $x \leq A_j$.

\subsection{Change of order formulas}
\label{subs_BX}
A key technical result in Xu's work \cite{xu2021combinatorial} describes the so-called change of order formulas. Let $\pi$ be parametrized by $(\psi,\eta,t)$. Assume that the blocks $(A,B,\zeta)$ and $(A',B',\zeta')$ in $\Jord_\rho(\psi)$ are adjacent under some fixed order $<$. Now consider the order $<'$ obtained by swapping $(A,B,\zeta)$ and $(A',B',\zeta')$ and keeping all other blocks in their positions. If $<'$ is admissible, one would like to know $(\eta', t')$ in order for $\pi$ to be parametrized by $(\psi,\eta',t')$ using the order $<'$ on $\Jord_\rho$. Section 6.1 of \cite{xu2021combinatorial} contains formulas for $(\eta',t')$ in this situation. The formulas in question are somewhat complicated, so we only record them in the special case that we need, namely, when one of the segments is a singleton, contained in the other segment.
\begin{lem}
\label{lem_xu}
Let $\psi$ be a parameter containing the blocks $(A,B,\zeta)$ and $(x,x,-)$ for $x \in [B,A]$. Let $>$ be an order on $\Jord(\psi)$ under which the singleton segment is the immediate successor of $(A,B,\zeta)$. Using this order, let $\pi$ be parametrized by $(\psi,\eta,t)$. Let $>'$ be the order obtained from $>$ by swapping the two segments in question. Let $(\psi,\eta',t')$ be the parameter for $\pi$ with respect to $>'$. Assuming $\zeta=-1$, we have:
\begin{itemize}
\item If $\eta(A,B,-) = (-1)^{A-B}\eta(x,x,-)$ and $t(A,B,-) < \lfloor(A-B+1)/2\rfloor$, then
\begin{align*}
t'(A,B,-) &= t(A,B,-)+1;\\
\eta'(x,x,-) &= (-1)^{A-B}\eta(x,x,-);\\
\eta'(A,B,-) &=-\eta(A,B,-).
\end{align*}
\item If $\eta(A,B,-) = (-1)^{A-B}\eta(x,x,-)$ and $t(A,B,-) = \lfloor(A-B+1)/2\rfloor$, then
\begin{align*}
t'(A,B,-) &= \begin{cases}t(A,B,-), &\text{ if $A-B$ is even};\\
t(A,B,-)-1, &\text{ if $A-B$ is odd};
\end{cases}\\
\eta'(x,x,-) &= (-1)^{A-B}\eta(x,x,-);\\
\eta'(A,B,-) &=\eta(A,B,-). 
\end{align*}
\item If $\eta(A,B,-) \neq (-1)^{A-B}\eta(x,x,-)$, then
\begin{align*}
t'(A,B,-) &= t(A,B,-)-1;\\
\eta'(x,x,-) &= (-1)^{A-B}\eta(x,x,-);\\
\eta'(A,B,-) &=-\eta(A,B,-). 
\end{align*}
\end{itemize}
If $\zeta=+1$, then $t' = t$, $\eta'(x,x,-) = (-1)^{A-B+1}\eta(x,x,-)$, and $\eta'(A,B,+) = -\eta(A,B,+)$.
\end{lem}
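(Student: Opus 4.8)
The plan is to extract this from Xu's change-of-order formulas \cite[\S6.1]{xu2021combinatorial}, of which it is a special case. Recall that those formulas describe, for two \emph{adjacent} Jordan blocks $(\rho,A,B,\zeta)$ and $(\rho,A',B',\zeta')$ in $\Jord_\rho(\psi)$, how the pair $(\eta,t)$ must be modified when the two blocks are transposed so that the \emph{same} representation $\pi$ is parametrized with respect to the new order. The general formulas branch according to the relative position of the segments $[B,A]_\rho$ and $[B',A']_\rho$ and, within each branch, according to an auxiliary sign condition on $\eta$ restricted to the two blocks. Our hypothesis $x\in[B,A]$ puts us in the case where $[B',A']_\rho=[x,x]_\rho$ is the singleton segment nested inside $[B,A]_\rho$.

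First I would fix conventions: rewrite Xu's notation for blocks and admissible orders in the $(\rho,A,B,\zeta)$ and segment language used here, keeping track of the precise normalization (M\oe glin's parametrization with Arthur's initial conditions, cf.\ Remark \ref{rem_parametrization}) and of the direction in which Xu's transposition is written, so as to apply his formula (or its inverse) correctly. Then I would substitute $(A',B',\zeta')=(x,x,-1)$ into the nested-case formula. The sign condition organizing that case specializes exactly to the dichotomy in the statement, namely $\eta(A,B,-)=(-1)^{A-B}\eta(x,x,-)$ versus the opposite inequality, and the displayed values of $t'$ and $\eta'$ are read off. The subcase $t(A,B,-)=\lfloor(A-B+1)/2\rfloor$ has to be separated out because there $t(A,B,-)$ already equals its maximal value $\lfloor\min(a,b)/2\rfloor$, so the transposition cannot raise it; the update of $t'$ then depends on the parity of $A-B$, and one must check that the resulting $(\eta',t')$ is consistent with the equivalence of Remark \ref{rem_parni} (altering $\eta$ on a block with maximal $t$ does not change $\pi$), which is precisely what makes the formula well-posed in this subcase.

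For $\zeta=+1$ the inner singleton cannot be absorbed into the outer block, whose segment runs the other way, so no block changes its $t$-value; Xu's formula then merely permutes the $\eta$-values up to the predicted signs, yielding $t'=t$, $\eta'(x,x,-)=(-1)^{A-B+1}\eta(x,x,-)$ and $\eta'(A,B,+)=-\eta(A,B,+)$.

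The substitution and the bookkeeping are routine; the delicate part\,---\,and where I would concentrate my attention\,---\,is matching Xu's conventions exactly (the orientation of the transposition and the precise shape of the auxiliary sign condition, including the $(-1)^{A-B}$ factor) and checking the boundary $t$-subcase against the $t=\min(a,b)/2$ ambiguity, since a sign slip or an off-by-one there produces a wrong but innocuous-looking parameter. As a sanity check I would run M\oe glin's construction by hand on small cases (e.g.\ $(A,B)=(1,0)$ with $x\in\{0,1\}$) and verify that transposing twice recovers $(\eta,t)$.
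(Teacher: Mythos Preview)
Your proposal is correct and matches the paper's treatment: the paper does not give an independent proof of this lemma at all, but simply records it as the relevant special case of Xu's change-of-order formulas in \cite[\S6.1]{xu2021combinatorial}, exactly as you describe. Your additional remarks about matching conventions, the boundary subcase $t=\lfloor(A-B+1)/2\rfloor$, and sanity checks via small examples go beyond what the paper provides but are appropriate due diligence for the specialization.
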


\subsection{Computing Jacquet modules}
\label{subs_compJac}
Considering the way Arthur packets are constructed, it is not surprising that we often need to compute various Jacquet modules. A very useful tool here is Tadić's formula \cite{Tadic_structure}. For a representation $\pi$ of $G_n$ we let $\mu^*(\pi)$ denote the sum of (the semi-simplifications) of Jacquet modules taken with respect to the maximal standard parabolic subgroups and the whole group itself. We let $m^*$ denote the analogous construction for general linear groups. Then
\begin{equation}
\label{eq_tadic}
\mu^*(\delta \rtimes \pi) = M^*(\delta) \rtimes \mu^*(\pi).
\end{equation}
In place of a precise (but lengthy) description of $M^*$, we offer a rough (but brief) outline:
\begin{equation}
\label{eq_tadic_2}
M^* = (m \otimes 1) \circ (\sim \otimes\ m^*) \circ s \circ m^*,
\end{equation}
where
\begin{align*}
    m &: x \otimes y \mapsto x \times y\\
    s &: x\otimes y \mapsto y \otimes x\\
    \sim\ &= \text{ contragredient}\\
    1 &= \text{ identity mapping}.
\end{align*}
We refer the reader to Theorem 5.4 of \cite{Tadic_structure} for further details. Although these results were originally proven for symplectic and special odd orthogonal groups, they can be extended to full even orthogonal groups (see \cite{MR1739616}), even in the non-quasi-split case (see \cite[\S 16]{MR1896238}). We will need the following special case of the above formula:
\begin{equation}
    \label{Tadic_zeta}
M^*(\zeta(a,b))=  \sum_{i=a-1}^b\sum_{j=i}^b 
\zeta(-b,-(j+1))
\times \zeta(a,i) \otimes \zeta(i+1,j).
\end{equation}
Finally, we record a result on Jacquet modules of generalized segment representations:
\begin{lem}
\label{lem_KretLapid}
Let $\sigma$ be an irreducible representation attached to the generalized segment
\[
\begin{bmatrix}
x_{11} & \dotsb & x_{1n}\\
\vdots & \ddots & \vdots\\
x_{m1} & \dotsb & x_{mn}
\end{bmatrix}.
\]
We assume (without loss of generality) that the rows of the generalized segment are decreasing. Let $[x,y]$ be a segment. Then $m^*(\sigma)$ contains a term of the form $\zeta(x,y) \otimes \xi$ (for some irreducible $\xi$) if and only if $x = x_{11},\ y=x_{k1} $ for some $k\leq m$.
%In particular, $\nu^x\otimes \xi$ does not appear unless $x \in \{-x_{mn}, x_{11}\}$.
\end{lem}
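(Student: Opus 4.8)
The representation $\sigma$ is of Speh type: unravelling the definition, it is the unique irreducible subrepresentation of $\sigma_1\times\dotsb\times\sigma_m$ with $\sigma_i=\delta([x_{in},x_{i1}]_\rho)$ a generalized Steinberg representation, and the segments $[x_{in},x_{i1}]$ have strictly increasing endpoints in $i$ (the columns of the generalized segment increase). My plan is to compute $m^*(\sigma)$ in two stages. First, the embedding $\sigma\hookrightarrow\sigma_1\times\dotsb\times\sigma_m$ together with the multiplicativity of $m^*$ (Tadić, cf.\ \eqref{eq_tadic}) yields the crude bound $m^*(\sigma)\le m^*(\sigma_1)\times\dotsb\times m^*(\sigma_m)$, where $m^*(\sigma_i)=\sum_{c}\delta([c+1,x_{i1}]_\rho)\otimes\delta([x_{in},c]_\rho)$ has for left tensor factors the suffixes of the $i$-th row. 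Then, for the exact list of constituents, I would invoke the explicit formula for Jacquet modules of ladder/Speh representations (Kret--Lapid), which refines this bound with an interlacing condition on the splitting points $c_i$.

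For the ``only if'' direction, assume $\zeta([x,y]_\rho)\otimes\xi$ occurs in $m^*(\sigma)$ (with $x\le y$). By the crude bound $\zeta([x,y]_\rho)$ is a constituent of some $\delta([c_1+1,x_{11}]_\rho)\times\dotsb\times\delta([c_m+1,x_{m1}]_\rho)$, so its cuspidal support $\{\rho|\cdot|^z:z\in[x,y]\}$ is a disjoint union of the nonempty suffixes $[c_i+1,x_{i1}]$; being a disjoint union of intervals whose union is an interval, these suffixes partition $[x,y]$ into consecutive blocks, whose right endpoints are distinct terms of the consecutive sequence $x_{11}<x_{21}<\dotsb<x_{m1}$. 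Reading off the last block gives $y=x_{k1}$ for some $k\le m$. The equality $x=x_{11}$ is not accessible from cuspidal support alone (the crude bound permits the leftmost block to be a suffix of a row other than the first), and this is where the Speh structure enters: iterating Jacquet functors, the occurrence of $\zeta([x,y]_\rho)$ on the left of $m^*(\sigma)$ forces $\Jac^\rho_x(\sigma)\neq0$; but $\sigma$ admits a unique left $\rho$-exponent, namely $x_{11}$. Indeed $\delta([x_{1n},x_{11}]_\rho)\hookrightarrow\rho|\cdot|^{x_{11}}\times\delta([x_{1n},x_{11}-1]_\rho)$ (the Jacquet module of a generalized Steinberg along a maximal parabolic), whence $\sigma\hookrightarrow\rho|\cdot|^{x_{11}}\times\bigl(\delta([x_{1n},x_{11}-1]_\rho)\times\sigma_2\times\dotsb\times\sigma_m\bigr)$, and the uniqueness of the left $\rho$-derivative of a Speh representation (Kret--Lapid) gives $x=x_{11}$. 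Hence $x=x_{11}$ and $y=x_{k1}$ with $k\le m$.

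For the ``if'' direction, fix $k\le m$; in the Jacquet module formula I would take, for the first $k$ rows, the suffix to be the singleton $\{x_{i1}\}$ (that is, $c_i=x_{i1}-1$), and, for the remaining rows, the empty suffix ($c_i=x_{i1}$) — a choice one checks to satisfy the required interlacing. The resulting term of $m^*(\sigma)$ has left tensor factor the representation attached to the multisegment $\{[x_{i1},x_{i1}]_\rho:1\le i\le k\}$ of consecutive singletons, which is exactly $\zeta([x_{11},x_{k1}]_\rho)$; equivalently, one produces an embedding $\sigma\hookrightarrow\zeta([x_{11},x_{k1}]_\rho)\times\xi$ for a suitable irreducible $\xi$ and applies Frobenius reciprocity. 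The main obstacle will be the ``only if'' claim that $x$ is forced to equal $x_{11}$: the bound coming from $m^*(\sigma_1)\times\dotsb\times m^*(\sigma_m)$ is too coarse, so one genuinely needs either the socle presentation displayed above or the precise interlacing in the Kret--Lapid formula in order to exclude $\zeta$-segments starting below the top-left corner $x_{11}$.
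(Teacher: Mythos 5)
Your proof is correct and takes essentially the same route as the paper, whose entire argument is to cite the main theorem of Kret--Lapid; your writeup supplies the unpacking that the paper leaves implicit. The auxiliary observations you provide (the cuspidal-support count giving $y=x_{k1}$, the unique left $\rho$-exponent of a ladder giving $x=x_{11}$, and the explicit splitting/embedding for the ``if'' direction) are all sound consequences of the Kret--Lapid Jacquet-module formula for ladder representations.
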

\begin{proof}
This is a direct consequence of the main theorem in \cite{Kret_Lapid}
%(In fact, since $\sigma$ is a generalized Speh representation, the results of Tadić \cite{Tadic_Speh} suffice).
\end{proof}

\subsection{On standard modules}
\label{subs_stdmod}
Recall that any $\pi \in \Irr(G)$ is the unique irreducible quotient of a (unique) standard module $\nu^{s_k}\delta_k \times \dotsb \times \nu^{s_1}\delta_1 \rtimes \tau$.
We may use this quotient form of the Langlands classification interchangeably with the subrepresentation form, thanks to the so-called MVW involution:
\begin{lem}[Lemma 2.2 in \cite{Atobe_Gan}]
\label{lemma:MVWinv}
Let $\tau_i \in \Irr(\textnormal{GL}_{t_i}(F))$, $i=1,\dotsc,k$ and $\pi_0 \in \Irr(G(W_{n_0}))$. Let $P$ be a standard parabolic subgroup of $G(W_n)$ ($n = n_0 + 2\sum t_i$) with Levi component equal to $\textnormal{GL}_{t_1}(F) \times \dotsm \times \textnormal{GL}_{t_k}(F) \times G(W_{n_0})$. Then, for any $\pi \in \Irr(G(W_{n}))$, the following statements are equivalent:
\begin{enumerate}[(i)]
\item $\pi \hookrightarrow \tau_1 \times \dotsm \times \tau_k \rtimes \pi_0$; 
\item $\tau_1^\vee \times \dotsm \times \tau_k^\vee \rtimes \pi_0 \twoheadrightarrow \pi$.
\end{enumerate}
% Similarly, let $P'$ be a standard parabolic subgroup of $\textnormal{GL}_{m}(F)$ ($m = \sum t_i$) with Levi component equal to $\textnormal{GL}_{t_1}(F) \times \dotsm \times \textnormal{GL}_{t_k}(F)$. Then, for any $\pi' \in \Irr(\textnormal{GL}_{m}(F))$ the following statements are equivalent:
% \begin{enumerate}
% \item[(iii)]	$\pi' \hookrightarrow \tau_1 \times \dotsm \times \tau_k$; 
% \item[(iv)] $\tau_k \times \dotsm \times \tau_1 \twoheadrightarrow \pi'.$
% \end{enumerate}
\end{lem}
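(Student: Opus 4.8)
The plan is to deduce the equivalence from two standard functorial facts, exactly as in \cite[Lemma 2.2]{Atobe_Gan}. The first is that the smooth contragredient $(\,\cdot\,)^\vee$ is an exact contravariant auto-equivalence of the category of smooth representations of $G(W_n)$ — so it interchanges irreducible subrepresentations and irreducible quotients — and that it commutes with normalized parabolic induction up to replacing the inducing parabolic by its opposite. Here one uses that, in contrast with $\GL_n$, the parabolic of a classical group opposite to a standard $P$ with Levi $\GL_{t_1}(F)\times\dotsm\times\GL_{t_k}(F)\times G(W_{n_0})$ is conjugate in $G(W_n)$ to $P$ itself, by a representative of the longest element of the relevant Weyl group; this conjugation acts on each $\GL$-block by $\tau\mapsto\tau^\vee$ and by an inner automorphism on $G(W_{n_0})$, which yields $(\tau_1\times\dotsm\times\tau_k\rtimes\sigma)^\vee\cong\tau_1^\vee\times\dotsm\times\tau_k^\vee\rtimes\sigma^\vee$. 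The second ingredient is the M\oe glin--Vign\'{e}ras--Waldspurger involution \cite{MVW_Howe}: there is an automorphism $\theta$ of $G(W_n)$ with $\pi\circ\theta\cong\pi^\vee$ for all $\pi\in\Irr(G(W_n))$, which stabilizes the standard parabolics and acts on the Levi above by inner automorphisms on the $\GL$-factors and by the analogous MVW automorphism on $G(W_{n_0})$. Thus $\mathcal{F}=(\,\cdot\,)\circ\theta$ is an exact covariant auto-equivalence with $\mathcal{F}(\rho)\cong\rho^\vee$ for irreducible $\rho$ and $\mathcal{F}(\tau_1\times\dotsm\times\tau_k\rtimes\sigma)\cong\tau_1\times\dotsm\times\tau_k\rtimes\mathcal{F}(\sigma)$.

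Given these, the argument is short. Starting from (i), i.e.\ an embedding $\pi\hookrightarrow\tau_1\times\dotsm\times\tau_k\rtimes\pi_0$, I would first apply $(\,\cdot\,)^\vee$ to get a surjection $\tau_1^\vee\times\dotsm\times\tau_k^\vee\rtimes\pi_0^\vee\twoheadrightarrow\pi^\vee$, and then apply $\mathcal{F}$: being exact and covariant it preserves surjections, and since $\pi_0$ and $\pi$ are irreducible we have $\mathcal{F}(\pi_0^\vee)\cong(\pi_0^\vee)^\vee\cong\pi_0$ and $\mathcal{F}(\pi^\vee)\cong\pi$, so the compatibility of $\mathcal{F}$ with induction gives $\tau_1^\vee\times\dotsm\times\tau_k^\vee\rtimes\pi_0\twoheadrightarrow\pi$, which is (ii). For the converse I would run the same two steps in the opposite order (apply $\mathcal{F}$, then $(\,\cdot\,)^\vee$), using that both functors are involutions up to natural isomorphism; alternatively, (ii)$\Rightarrow$(i) follows by feeding $\pi^\vee$ (and the same inducing data) into the implication (i)$\Rightarrow$(ii) and then applying $\mathcal{F}$.

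The part I expect to require the most care is the bookkeeping of the $\GL$-factors through the composite $\mathcal{F}\circ(\,\cdot\,)^\vee$: one must verify that passing to the opposite parabolic and conjugating it back to a standard parabolic does not permute the $\GL$-blocks (this is where the sign changes in the Weyl group of a classical group — absent in $\GL_n$ — are used), and that $\theta$ is genuinely inner on each $\GL_{t_i}(F)$, so that the $\GL$-part undergoes exactly one contragredient overall. A second, setting-specific point is that $G(W_n)$ may here be a full even orthogonal group, possibly non-quasi-split; for this I would cite the extensions of the MVW involution and of the relevant structure theory to such groups (\cite{MR1739616}, \cite[\S16]{MR1896238}), noting that for the \emph{full} orthogonal group there is no $\Ort$-versus-$\SO$ sign discrepancy between $\pi^\vee$ and $\pi\circ\theta$. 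When $G(W_n)$ is symplectic the whole argument simplifies, since then every irreducible representation is self-contragredient.
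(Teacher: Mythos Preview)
Your proposal is correct and follows the standard MVW argument. Note that the paper does not actually prove this lemma: it simply states it with a pointer to \cite[Lemma 2.2]{Atobe_Gan}, so there is no ``paper's own proof'' to compare against beyond that citation. Your sketch is precisely the argument behind that cited result, including the care you take with the full even orthogonal (possibly non-quasi-split) case.
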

Because each $\nu^{s_i}\delta_i$ is given by a segment, we often view the ($\GL$ part of the) standard module simply as a collection of segments. 
In a few places in this paper we refer to the \emph{standard module of $\pi$ above} (resp.~\emph{below}) $z$ for an irreducible representation $\pi$ and some fixed $z>0$. Let us explain this.
%This just means the collection of segments $[x,y]$ that appear in the standard module, for which $y \geq z$ (resp.~$y<z$).
In \S\ref{subsubs_Langlands} the segments were sorted by their midpoints; we point out that we can also sort them lexicographically with respect to their endpoints. 
If we collect all the segments $[x,y]$ with $x \geq z$ (resp.\,$x<z$), we obtain a standard representation of a general linear group; let $\Sigma_{\geq z}$ (resp.~$\Sigma_{< z}$) denote the Langlands quotient of this representation. Then we can write any irreducible representation $\pi$ as the unique irreducible quotient of
\[
\Sigma_{\geq z} \times \Sigma_{< z} \rtimes \tau.
\]
We will need:
\begin{lem}
\label{lem_standardni_mod_iznad}
Let $z > 0$. 
Then
$\Jac_{-z}(\pi) \neq 0$ if and only if $m^*(\Sigma_{\geq z})$ contains a subquotient of the form $\xi \otimes \nu^{z}$.
\end{lem}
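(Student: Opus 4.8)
The plan is to reduce the statement to the structure of the standard module of $\pi$ via Frobenius reciprocity and Tadić's formula. Write $\pi$ as the unique irreducible quotient of $\Sigma_{\geq z} \times \Sigma_{<z} \rtimes \tau$, so that by Lemma \ref{lemma:MVWinv} we equivalently have an embedding $\pi \hookrightarrow \Sigma_{<z}^\vee \times \Sigma_{\geq z}^\vee \rtimes \tau$ (the order of the two $\GL$-factors can be arranged as convenient because the segments in $\Sigma_{<z}$ and $\Sigma_{\geq z}$ are unlinked in the relevant way). The key observation is that $\Jac_{-z}(\pi)\neq 0$ means exactly that $\pi$, restricted to the appropriate maximal parabolic, has $\nu^{-z}\otimes(\text{something})$ appearing in its Jacquet module; equivalently $\pi \hookrightarrow \nu^{-z}\rtimes \pi'$ for some irreducible $\pi'$, i.e.\ by MVW again $\nu^{z}\rtimes \pi' \twoheadrightarrow \pi$, which is the "$\xi\otimes\nu^z$ in $m^*(\Sigma_{\geq z})$" phenomenon pushed to the level of $\pi$.

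First I would set up the exact translation: $\Jac_{-z}(\pi)\neq 0$ iff $r_{P_1}(\pi)$ has a subquotient of the form $\nu^{-z}\otimes\sigma$, where $P_1$ is the maximal parabolic with Levi $\GL_1\times G(W_{n-2})$. Next, apply Tadić's formula \eqref{eq_tadic}–\eqref{eq_tadic_2} to $\Sigma_{<z}^\vee \times \Sigma_{\geq z}^\vee \rtimes \tau$ to compute which $\nu^{-z}\otimes(\cdots)$ terms can occur in $\mu^*$. Since every segment $[x,y]$ appearing in $\Sigma_{<z}$ has $x<z$, hence all exponents in $\Sigma_{<z}^\vee$ lie in $(-z,\ \cdot\ ]$-type ranges that are strictly greater than $-z$ on the relevant end, and $\tau$ is tempered (so its Jacquet modules have exponents that are non-negative in the Langlands sense, in particular cannot produce $\nu^{-z}$ with $z>0$), the only source of a leading $\nu^{-z}$ is the factor $\Sigma_{\geq z}^\vee$. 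Thus $\Jac_{-z}$ of the induced representation is governed entirely by whether $m^*(\Sigma_{\geq z}^\vee)$ — equivalently, by contragredience, $m^*(\Sigma_{\geq z})$ — contains a term $\xi\otimes\nu^{z}$.

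The two directions then go as follows. For the "only if" direction: if $\Jac_{-z}(\pi)\neq 0$ then the term $\nu^{-z}\otimes(\cdots)$ survives in $\mu^*(\pi)$, hence (by exactness and the computation above) must come from a $\xi'\otimes\nu^{-z}$ term in $m^*(\Sigma_{\geq z}^\vee)$, which dualizes to the desired $\xi\otimes\nu^{z}$ in $m^*(\Sigma_{\geq z})$. For the "if" direction: a term $\xi\otimes\nu^z$ in $m^*(\Sigma_{\geq z})$ produces, after dualizing and combining with the Langlands data, a quotient map $\nu^{z}\rtimes\pi' \twoheadrightarrow \pi$ for a suitable $\pi'$; by Lemma \ref{lemma:MVWinv} this gives $\pi\hookrightarrow \nu^{-z}\rtimes\pi'$, whence $\Jac_{-z}(\pi)\supseteq$ a copy of $\pi'\neq 0$. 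Here one uses the fact that for standard modules the Langlands quotient "sees" the extreme Jacquet terms — concretely, that $\Jac_{-z}$ of the standard module equals $\Jac_{-z}$ of its Langlands quotient up to the nonvanishing we care about, which follows because $\nu^{-z}$ with $z>0$ is an extremal (maximally negative) exponent and any such term in the full standard module descends to the irreducible quotient.

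\textbf{Main obstacle.} The delicate point is the "if" direction: passing from the existence of a $\xi\otimes\nu^z$ term in the purely $\GL$-theoretic object $m^*(\Sigma_{\geq z})$ to an actual nonzero contribution in $\Jac_{-z}(\pi)$ for the irreducible quotient $\pi$ (rather than just for the standard module). One must rule out the possibility that this term is "killed" in passing to the Langlands quotient. The cleanest way is to exploit that $z>0$ makes $\nu^{-z}$ the most negative possible exponent on the $\GL_1$-level, so the corresponding Jacquet term cannot be cancelled: it lies at an extreme of the exponent support, and extreme terms of $\mu^*$ of a standard module are inherited by its Langlands quotient (an argument via the geometry of exponents / the fact that the kernel of standard-module $\to$ Langlands-quotient has strictly "smaller" exponents in the appropriate cone). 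I would make this precise using the description of $\Sigma_{\geq z}$ as a Langlands quotient of a product of segments all of whose left endpoints are $\geq z$, together with Lemma \ref{lem_KretLapid}-type control on which $\zeta$-segments appear in $m^*$ of the generalized-segment building blocks.
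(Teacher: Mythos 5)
Your overall approach is the same as the paper's: apply Tadić's formula to the decomposition $\Sigma_{\geq z}\times\Sigma_{<z}\rtimes\tau\twoheadrightarrow\pi$ and argue that any $\nu^{-z}$-contribution must come from $\Sigma_{\geq z}$. The ``only if'' direction is essentially right, though you conflate $m^*(\Sigma_{\geq z}^\vee)$ with $m^*(\Sigma_{\geq z})$ ``by contragredience''\,---\,the actual relation involves both dualizing the two tensor factors and swapping them, and the paper instead carefully unpacks $M^*(\Sigma_{\geq z})$ directly, showing that $\widetilde{B}\times A_1=\nu^{-z}$ forces $A_1=1$ and $B=\nu^z$, so $A\otimes\nu^z\leq m^*(\Sigma_{\geq z})$.

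The genuine gap is in your ``if'' direction, and it is located exactly where you flagged it. You want to extract an \emph{irreducible} $\pi'$ with $\nu^z\rtimes\pi'\twoheadrightarrow\pi$ (so that MVW converts this to an embedding), and then you invoke an ``extreme exponent'' argument to ensure the $\nu^{-z}$-term isn't killed when passing from the standard module to the Langlands quotient. This extremality argument is not made precise and is not obviously true as stated (a subquotient of the Jacquet module of the standard module need not descend to a subquotient of the Jacquet module of its Langlands quotient in general). The paper sidesteps this difficulty entirely: from $\xi\otimes\nu^z\leq m^*(\Sigma_{\geq z})$ one obtains $\Sigma_{\geq z}\hookrightarrow\xi\times\nu^z$, hence $\nu^z\times\xi\twoheadrightarrow\Sigma_{\geq z}$, and composing with the Langlands quotient map $\Sigma_{\geq z}\times\Sigma_{<z}\rtimes\tau\twoheadrightarrow\pi$ gives a surjection
\[
\nu^z\times\xi\times\Sigma_{<z}\rtimes\tau\twoheadrightarrow\pi.
\]
This surjection already forces $\Jac_{-z}(\pi)\neq 0$ by Bernstein's second Frobenius reciprocity, with no irreducibility requirement on the inducing data and no claim about extreme terms surviving passage to the quotient. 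So the ``main obstacle'' you identify dissolves if one keeps everything in quotient form rather than detouring through MVW; the extra step of reordering to $\Sigma_{<z}^\vee\times\Sigma_{\geq z}^\vee$ is likewise unnecessary and would itself need justification.
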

\begin{proof}
This follows from Tadić's formula \eqref{eq_tadic} and \eqref{eq_tadic_2}. First, assume $\Jac_{-z}(\pi) \neq 0$. Then $\Jac_{-z}(\Sigma_{\geq z} \times \Sigma_{< z} \rtimes \tau) \neq 0$. Applying formula \eqref{eq_tadic_2} to $\Sigma_{< z}$, and using the fact $\tau$ is tempered, we see that $\mu^*(\Sigma_{< z} \rtimes \tau)$ cannot contain a subquotient of the form $\nu^{-z} \otimes  \dotsb$. Thus $\Jac_{-z}(\Sigma_{\geq z} \times \Sigma_{< z} \rtimes \tau) \neq 0$ implies $M^*(\Sigma_{\geq z})$ contains a subquotient of this form. Now we use formula \eqref{eq_tadic_2}: $M^*=(m \otimes 1) \circ (\sim \otimes\ m^*) \circ s \circ m^*$ shows the subquotients of $M^*(\Sigma_{\geq z})$ are of the form 
\[
\tilde{B}\times A_1 \otimes A_2,
\]
where $A\otimes B \leq m^*(\Sigma_{\geq z})$ and $A_1 \otimes A_2 \leq m^*(A)$. But $\widetilde{B}\times A_1 = \nu^{-z}$ if and only if $A_1 = 1$ (trivial) and $B = \nu^z$. This shows $A\otimes \nu^z \leq m^*(\Sigma_{\geq z})$, which we needed to prove.

Conversely, assume $\xi \otimes \nu^{z} \leq m^*(\Sigma_{\geq z})$. This implies that the Jacquet module of $\Sigma_{\geq z}$ (with respect to the appropriate parabolic subgroup) contains a quotient of this form. But $r_P(\Sigma_{\geq z}) \to \xi \otimes \nu^{z} $
implies, using Frobenius, that $\Sigma_{\geq z} \hookrightarrow \xi \times \nu^{z}$.
Equivalently, we have $\nu^{z} \times \xi \twoheadrightarrow \Sigma_{\geq z}$, which in turns shows
\[
\nu^{z} \times \xi \times \Sigma_{< z} \rtimes \tau \twoheadrightarrow \pi.
\]
This implies $\Jac_{-z}(\pi) \neq 0$, which we needed to show.
\end{proof}
%One of the recurring technical questions in this paper is the following: suppose $\pi'$ is the unique irreducible quotient of $\nu^\frac{\alpha-1}{2} \rtimes \pi$; how are the standard modules of $\pi$ and $\pi'$ related? At least in one important case, the situation is simple:
%\begin{lem}
%\label{lem_standard_mod_add}
%Suppose $\nu^\frac{\alpha-1}{2} \rtimes \pi$ is irreducible. Then its standard module is obtained by simply adding the singleton segment $\nu^\frac{\alpha-1}{2}$ to the standard module of $\pi$.
%\end{lem}
%\begin{proof}
%The fact that $\nu^\frac{\alpha-1}{2} \rtimes \pi$ is irreducible implies that the socles of $\nu^{\frac{\alpha +1}{2}}\rtimes \pi=\nu^{-\frac{\alpha +1}{2}}\rtimes \pi$ are equal. Now the result follows from the descriptions of these socles in Proposition 6.1 and Theorem 7.1. of \cite{Atobe_Minguez}.
%\end{proof}

\subsection{Kudla's filtration}
\label{subs_Kudla}
One of the basic tools we use is Kudla's filtration, which describes the Jacquet modules (with respect to maximal parabolics) of the Weil representation. The result was originally proved as Theorem 2.8 in Kudla's paper \cite{Kudla2}; we state it here using the notation of the present paper:
\begin{prop}
\label{prop_Kudla}
The Jacquet module $R_{P_k}(\omega_{m,n})$ has a $\GL_k\times G(W_{n-2k}) \times H(V_m)$-equivariant filtration
\[
R_{P_k}(\omega_{m, n}) = R^0 \supset R^1 \supset \dotsb \supset R^k \supset R^{k+1} = 0
\]
in which the successive quotients $J^a = R^a/R^{a+1}$ are given by
\[
J^a = \text{Ind}_{P_{k-a,a}\times G_{n-2k}\times Q_a}^{\GL_k\times G_{n-2k}\times H_m}\left({\chi_V|\emph{\text{det}}_{\GL_{k-a}}|^{\lambda_{k-a}}\otimes \Sigma_a \otimes \omega_{m-2a, n-2k}}\right).
\]
Here
\begin{itemize}
\item $\lambda_{k-a} = (m-n+k-a-\epsilon)/2$;
\item $P_{k-a,a} =$ standard parabolic subgroup of $\GL_k$ with Levi factor $\GL_{k-a}\times \GL_a$;
\item $\Sigma_a = C_c^\infty(\GL_a)$, the space of locally constant compactly supported functions on $\GL_a$. The action of $\GL_a \times \GL_a$ on $\Sigma_a$ is given by
\[
[(g,h)\cdot f](x) = \chi_V(\det(g))\chi_W(\det(h))f(g^{-1}\cdot x\cdot h).
\]
\end{itemize}
If $m-2a$ is less than the dimension of the first (anisotropic) space in $\mathcal{V}$, we put $R^a=J^a=0$.
\end{prop}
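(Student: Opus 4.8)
The plan is to reprove Kudla's filtration directly, by realizing $\omega_{m,n}$ in a mixed Schrödinger model adapted to $P_k$ and computing the $N_k$-coinvariants geometrically, stratifying by rank of a partial isometry.

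First I would fix a decomposition $W_n = X_k \oplus W_{n-2k}\oplus X_k^*$ with $X_k,X_k^*$ dual totally isotropic of dimension $k$ and $W_{n-2k}=(X_k\oplus X_k^*)^\perp$, so that $P_k$ is the stabilizer of $X_k$ with Levi $M_k=\GL(X_k)\times G(W_{n-2k})\cong\GL_k\times G(W_{n-2k})$. Tensoring with $V_m$ gives an orthogonal decomposition $W_n\otimes V_m=(X_k\otimes V_m)\oplus(W_{n-2k}\otimes V_m)\oplus(X_k^*\otimes V_m)$ of the ambient symplectic space into the nondegenerate piece $W_{n-2k}\otimes V_m$ and the complementary pair of isotropic subspaces $X_k\otimes V_m$, $X_k^*\otimes V_m$. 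The associated mixed model realizes $\omega_{m,n}$ on $\mathcal{S}(X_k^*\otimes V_m)\otimes\omega_{m,n-2k}$, where the second factor carries the Weil representation of $G(W_{n-2k})\times H(V_m)$ and $X_k^*\otimes V_m\cong\Hom(X_k,V_m)$. I would then record the (classical) formulas for the action of $M_k\times H(V_m)$ and of $N_k$: $H(V_m)$ acts diagonally on $\Hom(X_k,V_m)$ and naturally on $\omega_{m,n-2k}$; $\GL(X_k)$ acts by precomposition together with a transformation character $\chi_V|\det|^{\,\cdot\,}$ coming from the metaplectic cocycle; $G(W_{n-2k})$ acts only on $\omega_{m,n-2k}$; and $N_k$, which fits in $1\to Z_k\to N_k\to\Hom(W_{n-2k},X_k)\to 1$, acts so that the central part $Z_k$ (symmetric, resp.\ alternating, $k\times k$ matrices according to type) scales a function $\varphi$ on $\Hom(X_k,V_m)$ by $x\mapsto\psi(\mathrm{tr}(z\cdot q_V(x)))$, where $q_V(x)$ is the Gram matrix of $x$ against the form on $V_m$, while the $\Hom(W_{n-2k},X_k)$-part translates and genuinely intertwines the two tensor factors.

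Next I would compute the $N_k$-coinvariants in stages. Taking $Z_k$-coinvariants of $\mathcal{S}(\Hom(X_k,V_m))$ localizes the space to the closed subvariety $\{x:q_V(x)=0\}$, i.e.\ the set of $x\colon X_k\to V_m$ with totally isotropic image — the standard computation of coinvariants of $\mathcal{S}(Y)$ under a group acting through a family of additive characters. This locus stratifies into locally closed pieces $\mathcal{O}_a=\{x:\mathrm{rank}(x)=a,\ \mathrm{im}(x)\text{ isotropic}\}$ for $0\le a\le k$; since $a$ cannot exceed the Witt index of $V_m$, $\mathcal{O}_a=\emptyset$ once $m-2a<\dim V_{m_0}$, which is exactly the source of the vanishing clause $R^a=J^a=0$. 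The open subsets $\{\mathrm{rank}\ge a\}$ give the filtration $R^0\supset R^1\supset\dots\supset R^{k+1}=0$ with graded pieces $\mathcal{S}(\mathcal{O}_a)$-valued. On $\mathcal{O}_a$ the group $\GL_k\times H(V_m)$ acts transitively, with stabilizer of a base point equal to the product of the parabolic $P_{k-a,a}$ of $\GL_k$ (with $\GL_{k-a}$ fixing $\ker x$) and the parabolic $Q_a$ of $H(V_m)$ (Levi $\GL_a\times H(V_{m-2a})$ stabilizing the isotropic image), the two copies of $\GL_a$ identified via the isomorphism $x$ induces. Pushing the remaining data through this orbit description: the marking by an isomorphism onto the isotropic image contributes $\Sigma_a=C_c^\infty(\GL_a)$ with its $\GL_a\times\GL_a$-action twisted by $\chi_V,\chi_W$; the residual $\Hom(W_{n-2k},X_k)$-coinvariants of $\omega_{m,n-2k}$ collapse onto the Weil representation $\omega_{m-2a,n-2k}$ of $G(W_{n-2k})\times H(V_{m-2a})$ on the form reduction $V_{m-2a}=\mathrm{im}(x)^\perp/\mathrm{im}(x)$; and the Schrödinger transformation character on the $\ker x$-directions leaves $\chi_V|\det_{\GL_{k-a}}|^{\lambda_{k-a}}$ with $\lambda_{k-a}=(m-n+k-a-\epsilon)/2$. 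Folding in the normalizing modulus factors $\delta_{P_k}^{1/2}$, $\delta_{P_{k-a,a}}^{1/2}$, $\delta_{Q_a}^{1/2}$ so that all inductions are normalized yields
\[
J^a=\Ind_{P_{k-a,a}\times G_{n-2k}\times Q_a}^{\GL_k\times G_{n-2k}\times H_m}\left(\chi_V|\det_{\GL_{k-a}}|^{\lambda_{k-a}}\otimes\Sigma_a\otimes\omega_{m-2a,\,n-2k}\right).
\]

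The main obstacle is the explicit $N_k$-coinvariant computation rather than the orbit bookkeeping: showing cleanly that the $Z_k$-action localizes to the isotropic locus, and then that the $\Hom(W_{n-2k},X_k)$-part of $N_k$ — the one piece of $N_k$ mixing the two tensor factors of the mixed model — has coinvariants reproducing exactly $\omega_{m-2a,n-2k}$ on the reduced symplectic space $W_{n-2k}\otimes V_{m-2a}$, with the correct $\Sigma_a$ and the correct exponent $\lambda_{k-a}$. Tracking the splitting characters $\chi_V,\chi_W$ and all the modulus factors consistently through this step is the delicate part; everything else is Mackey theory on the stratified $\GL_k\times H(V_m)$-variety of partial isometries $X_k\to V_m$.
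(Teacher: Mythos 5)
The paper does not prove this proposition; it quotes it verbatim from Kudla (\cite{Kudla2}, Theorem 2.8), only translating the notation. So there is no ``paper's proof'' to compare against, and what you have written is a reconstruction of Kudla's own argument. Your reconstruction is structurally correct and follows the standard line: mixed Schr\"odinger model adapted to $P_k$, computing $N_k$-coinvariants in stages (center $Z_k$ first, then $\Hom(W_{n-2k},X_k)$), localizing via the $Z_k$-characters to the locus of totally isotropic $x\colon X_k\to V_m$, stratifying that locus by rank $a$, and then running Mackey theory on the transitive $\GL(X_k)\times H(V_m)$-action on each stratum. The three structural ingredients you need all check out: $Z_k\cong\mathrm{Sym}^2 X_k$ (resp.\ $\wedge^2 X_k$) is indeed central in $N_k$, so taking coinvariants in stages is legitimate; $\{\,\mathrm{rank}\geq a\,\}$ is open in the isotropic locus, so extension by zero gives the decreasing filtration with $\mathcal{S}(\mathcal{O}_a)$ as graded pieces; the stabilizer computation on $\mathcal{O}_a$ correctly produces the fiber product of $P_{k-a,a}$ and $Q_a$ over the diagonal $\GL_a$, which is where $\Sigma_a=C_c^\infty(\GL_a)=\Ind_{\Delta\GL_a}^{\GL_a\times\GL_a}$ comes from. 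The vanishing clause $R^a=J^a=0$ for $m-2a<\dim V_{m_0}$ is also placed correctly: it is exactly the emptiness of $\mathcal{O}_a$ once $a$ exceeds the Witt index of $V_m$.

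What you are, by your own admission, not supplying is the detailed verification that the $\Hom(W_{n-2k},X_k)$-coinvariants at a stalk over $\mathcal{O}_a$ produce $\omega_{m-2a,n-2k}$ for the reduced space $W_{n-2k}\otimes(\mathrm{im}(x)^\perp/\mathrm{im}(x))$, and that the modulus and metaplectic-cocycle bookkeeping gives precisely the normalizing character $\chi_V|\det_{\GL_{k-a}}|^{\lambda_{k-a}}$ with $\lambda_{k-a}=(m-n+k-a-\epsilon)/2$ and the $\chi_V,\chi_W$-twist on $\Sigma_a$. These are indeed the places where Kudla's original proof spends most of its effort, and they are not trivial: the $\Hom(W_{n-2k},X_k)$ part of $N_k$ genuinely mixes the two tensor factors of the mixed model (it is a ``Fourier transform plus translation'' rather than a pure translation), so one has to pick a polarization of $W_{n-2k}\otimes V_m$ compatible with the flag $\mathrm{im}(x)\subset\mathrm{im}(x)^\perp\subset V_m$ in order to see the collapse cleanly; and $\lambda_{k-a}$ arises from a combination of the $\delta_{P_k}^{1/2}$ already present in the normalized Jacquet functor, the $|\det|^{\dim V/2}$ coming from the $\GL(X_k)$-action on the Schr\"odinger model, and the renormalizations by $\delta_{P_{k-a,a}}^{1/2}$ and $\delta_{Q_a}^{1/2}$. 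You flag this as the delicate part rather than carry it out, which is fair for a sketch, but be aware that this is where the content of the proposition actually lives --- the rest is routine once those two computations are in place.
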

Let us derive some corollaries. The following lemma is useful:
\begin{lem}
\label{lem_Kudla}
Let $\pi\in \Irr(G_n), \pi_0 \in \Irr(G_{n-2k})$. Let $l = n - m + \epsilon$ and let $\sigma$ be a generalized segment representation (\S \ref{subsubs_segments}), written with decreasing rows. Assume that
\[
\tag{K}
\text{the first column of the generalized segment does not contain }\frac{l-1}{2}.
\]
Then
$
\chi_V\sigma \rtimes \pi_0 \twoheadrightarrow \pi 
$
implies
$
\chi_W\sigma \rtimes \Theta_l(\pi_0) \twoheadrightarrow  \Theta_l(\pi)
$.
\end{lem}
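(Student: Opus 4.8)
The plan is to run an induction on $k$ (the number of $\GL$ rows, i.e.\ the rank of the $\GL$-part being peeled off), reducing the general generalized segment to the case of a single segment, and then to apply Kudla's filtration (Proposition \ref{prop_Kudla}) directly in the single-segment case. First I would set up the base case. When $\sigma$ is a single segment $\zeta([x,y]_{\rho})$ with $\rho=1$ (trivial), write $\sigma = \zeta(x,y)$ so that $\chi_V\sigma \rtimes \pi_0 \twoheadrightarrow \pi$. The hypothesis (K) says $\frac{l-1}{2}\notin\{x,x-1,\dots,y\}$; since we are working with the $\chi_V$-twist and $l=n-m+\epsilon$, the exponents appearing in the relevant Jacquet module of $\omega_{m,n}$ are governed by $\lambda_{k-a}=(m-n+k-a-\epsilon)/2 = (k-a-l)/2$. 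Applying $R_{P_k}$ to $\omega_{m,n}$ via Kudla's filtration and composing with the surjection $\chi_V\sigma\rtimes\pi_0\twoheadrightarrow\pi$, one examines which layers $J^a$ can contribute a term matching $\chi_V\sigma\otimes(\cdot)$; condition (K) is exactly what rules out the layers with $a<k$, leaving only the top layer $J^k = \mathrm{Ind}(\Sigma_k\otimes\omega_{m-2k,n-2k})$, which carries the $\chi_W$-action on the $\GL_k$-factor. Reading off the resulting surjection gives $\chi_W\sigma\rtimes\Theta_l(\pi_0)\twoheadrightarrow\Theta_l(\pi)$.

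For the inductive step, suppose $\sigma$ corresponds to a generalized segment with $m\geq 2$ decreasing rows, say with first row $\sigma_1 = \zeta([x_{11},x_{1n}]_\rho)$ (the unique irreducible subrepresentation of $\rho|\cdot|^{x_{11}}\times\cdots\times\rho|\cdot|^{x_{1n}}$) and with $\sigma'$ the generalized segment formed by the remaining rows. By the definition of generalized-segment representations, $\sigma \hookrightarrow \sigma'\times\sigma_1$, or rather, dually, there is a surjection involving $\sigma_1$ and $\sigma'$ that lets us write $\chi_V\sigma\rtimes\pi_0$ as a quotient of $\chi_V\sigma_1\times\chi_V\sigma'\rtimes\pi_0$ up to the usual reorderings. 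The point is that the first column of $\sigma$ is $(x_{11},x_{21},\dots,x_{m1})$ with $x_{i+1,1}=x_{i1}-\zeta$, and (K) for $\sigma$ is equivalent to (K) holding simultaneously for the single segment $\sigma_1$ (whose first — and only — column entry is $x_{11}$) \emph{and} for the generalized segment $\sigma'$ (whose first column is $(x_{21},\dots,x_{m1})$), since the first column of $\sigma$ is the disjoint union of these. Hence one first applies the single-segment case to peel off $\chi_V\sigma_1$, then applies the inductive hypothesis to $\sigma'$, and finally reassembles; the reassembly is legitimate because the $\chi_W\sigma_1,\chi_W\sigma'$ intertwine correctly to give $\chi_W\sigma\rtimes\Theta_l(\pi_0)\twoheadrightarrow\Theta_l(\pi)$, again using that $\sigma$ embeds in the product of its rows. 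Here one also uses the compatibility of $\Theta_l$ with parabolic induction on the relevant factors (this is where one invokes that big theta is exact/compatible in the appropriate sense).

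The main obstacle, and the step that needs the most care, is the bookkeeping in the single-segment base case: one must verify that, under hypothesis (K), \emph{every} layer $J^a$ with $a<k$ fails to produce a term of the form $\chi_V\sigma\otimes(\text{something})$ after taking the further Jacquet module along $P_{k-a,a}$, so that the surjection necessarily factors through the top layer $J^k$. Concretely this means checking that the exponent $\lambda_{k-a}=(k-a-l)/2$ — the exponent of the $\chi_V|\det_{\GL_{k-a}}|$-factor in $J^a$ — forces $\frac{l-1}{2}$ to lie in the first column of $\sigma$ whenever such a contribution is nonzero, contradicting (K). This is a finite but slightly delicate computation with the exponents in Kudla's filtration combined with the structure of $m^*$ of a (generalized) segment as recorded in Lemma \ref{lem_KretLapid}; once it is done, everything else is formal manipulation with induction and parabolic induction. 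A secondary subtlety is that the statement is about $\Theta_l$ (big theta), not $\theta_l$, so one should be careful to phrase the argument purely at the level of Jacquet modules of $\omega_{m,n}$ and the induced exact sequences, rather than passing to irreducible quotients prematurely.
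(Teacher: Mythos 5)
Your core technical observation is the right one — that Kudla's filtration kills the layers $J^a$ with $a<k$ because condition (K) prevents the exponent $\lambda_{k-a}$ from matching the first column of $\sigma$, and that this matching question is controlled by $m^*(\sigma)$ via Lemma~\ref{lem_KretLapid}. The paper's proof does exactly this, and does it \emph{directly} for the generalized segment $\sigma$: it passes to the subrepresentation form $\pi\hookrightarrow\chi_V\sigma^\vee\rtimes\pi_0$, applies Frobenius, and then checks that $R_{P_{k-a,a}}(\chi_V\sigma)$ cannot contain $\zeta(\tfrac{1+l}{2}+a-k,\tfrac{l-1}{2})\otimes\xi$, which is precisely what Lemma~\ref{lem_KretLapid} rules out under (K). No induction on rows is needed, because Kret--Lapid is already formulated for generalized segments.

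The induction-on-rows wrapper you propose introduces a genuine gap in both directions of the ``peel and reassemble'' step. From $\sigma\hookrightarrow\sigma_1\times\sigma'$ (the defining property of the generalized segment) you do \emph{not} get $\chi_V\sigma_1\times\chi_V\sigma'\rtimes\pi_0\twoheadrightarrow\pi$; a subrepresentation inclusion of induced representations does not convert into a surjection, and the quotient form $\sigma_m\times\cdots\times\sigma_1\twoheadrightarrow\sigma$ puts the rows in the opposite order from what your peeling step needs. Even if one arranges a surjection $\chi_V\sigma_1\times\chi_V\sigma'\rtimes\pi_0\twoheadrightarrow\pi$, the intermediate $\pi_1$ with $\chi_V\sigma_1\rtimes\pi_1\twoheadrightarrow\pi$ is only a sub\emph{quotient} of $\chi_V\sigma'\rtimes\pi_0$, not a quotient, so you cannot feed it into the inductive hypothesis. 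The final ``reassembly'' has the same difficulty in reverse: $\chi_W\sigma_1\times\chi_W\sigma'\rtimes\Theta_l(\pi_0)\twoheadrightarrow\Theta_l(\pi)$ does not give $\chi_W\sigma\rtimes\Theta_l(\pi_0)\twoheadrightarrow\Theta_l(\pi)$ from the embedding $\sigma\hookrightarrow\sigma_1\times\sigma'$; a surjection would be needed. The phrase ``big theta is exact/compatible in the appropriate sense'' is doing a lot of unstated work and is not a known general fact. The fix is simply to drop the induction and run the Kudla-filtration argument once on $\sigma$ itself, as the paper does, with Lemma~\ref{lem_KretLapid} supplying the required control over $R_{P_{k-a,a}}(\chi_V\sigma)$.
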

\begin{proof}
This is a straightforward application of Kudla's filtration. The case of a single segment is proven in (e.g.) Corollary 5.3 of \cite{Atobe_Gan}; the proof for generalized segments is the same, and we sketch it here. Instead of $\chi_V\sigma \rtimes \pi_0 \twoheadrightarrow \pi$, we write $ \pi \hookrightarrow \chi_V\sigma^\vee \rtimes \pi_0$. Thus
\begin{align*}
\Theta_l(\pi)^\vee \cong \Hom_{G_n}(\omega_{m,n},\pi)_\infty
&\hookrightarrow \Hom_{G_n}(\omega_{m,n}, \chi_V\sigma^\vee \rtimes \pi_0)_\infty\\
&\cong \Hom_{\GL_k\times G_{n-2k}}(r_{P_k}(\omega_{m,n}), \chi_V\sigma^\vee \otimes \pi_0)_\infty,
\end{align*}
using Frobenius reciprocity. Here, and elsewhere, the subscript $\infty$ indicates the subspace comprised of all the smooth vectors.

We now use Kudla's filtration; the result will follow if we can show that $\chi_V\sigma^\vee \otimes \pi_0$ cannot appear as a quotient of $J^a$ for $a<k$. Indeed, assuming this, we can continue the above computation to get
\begin{align*}
\Theta_l(\pi)^\vee &\hookrightarrow \Hom_{\GL_k\times G_{n-2k}}(J^k, \chi_V\sigma^\vee \otimes \pi_0)_\infty\\
&\cong \Hom_{\GL_k\times G_{n-2k}}(\Sigma_k \otimes \omega_{n-2k,m-2k}, \chi_V\sigma^\vee \otimes \pi_0)_\infty\\
&\cong \left(\chi_W\sigma \rtimes \Theta_{l}(\pi_0)\right)^\vee,
\end{align*}
which is what we want. It remains to show that $\Hom_{\GL_k\times G_{n-2k}}(J^a, \chi_V\sigma^\vee \otimes \pi_0) = 0$ for $a < k$. Bernstein's Frobenius reciprocity shows that this hom-space is isomorphic to
\[
 \Hom_{\GL_{k-a}\times \GL_a\times G_{n-2k}}({\chi_V|\text{det}_{\GL_{k-a}}|^{\lambda_{k-a}}\otimes \Sigma_a \otimes \omega_{m-2a, n-2k}}, R_{\overline{P}_{k-a,a}}(\chi_V\sigma^\vee) \otimes \pi_0).
\]
We show that this space is trivial by proving that $R_{\overline{P}_{k-a,a}}(\chi_V\sigma^\vee) = R_{{P}_{k-a,a}}(\chi_V\sigma)^\vee$ cannot have a subquotient of the form $\chi_V|\text{det}_{\GL_{k-a}}|^{\lambda_{k-a}} \otimes \xi$ (for a representation $\xi$ of $\GL_{a}$). Indeed, $|\text{det}_{\GL_{k-a}}|^{\lambda_{k-a}} = \zeta(\frac{1-l}{2},\frac{-l-1}{2}+k-a)$, so $R_{{P}_{k-a,a}}(\chi_V\sigma)$ would have to contain $\zeta(\frac{1+l}{2}+a-k,\frac{l-1}{2})$ for the above hom-space to be non-zero. But Lemma \ref{lem_KretLapid} shows that this is impossible if $\frac{l-1}{2}$ does not appear in the first column of $\sigma$. Thus $\Hom_{\GL_k\times G_{n-2k}}(J^a, \chi_V\sigma^\vee \otimes \pi_0) = 0$. 
\end{proof}
We often use the above lemma to show (non-)vanishing:
\begin{lem}
%\label{rem_Kudla_nonvanishing}
\label{lem_Kudla_nonvanishing}
Assume $\chi_V\sigma \rtimes \pi_0 \twoheadrightarrow \pi$, where $\sigma$ is a generalized segment representation.
\begin{enumerate}[(i)]
\item If $\sigma$ satisfies condition (K) with $l=\alpha$, then $\Theta_\alpha(\pi_0)=0$ implies $\Theta_\alpha(\pi)=0$.
\item If $\sigma$ satisfies condition (K) with $l = -\alpha$, then $\Theta_\alpha(\pi)=0$ implies $\Theta_\alpha(\pi_0)=0$.
\end{enumerate}
\end{lem}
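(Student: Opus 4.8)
The plan is to dispatch (i) directly from Lemma~\ref{lem_Kudla} and to obtain (ii) by running the same mechanism on the other member of the dual pair. For (i): apply Lemma~\ref{lem_Kudla} with $l=\alpha$. The hypothesis is exactly that $\sigma$ satisfies condition~(K) with this value of $l$, so the lemma gives a surjection $\chi_W\sigma\rtimes\Theta_\alpha(\pi_0)\twoheadrightarrow\Theta_\alpha(\pi)$; if $\Theta_\alpha(\pi_0)=0$ then the source is zero, hence $\Theta_\alpha(\pi)=0$.

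For (ii) I would prove the contrapositive: assuming $\Theta_\alpha(\pi_0)\neq0$, show $\Theta_\alpha(\pi)\neq0$. Rather than realizing $\Theta_\alpha(\pi)$ as a quotient, the idea is to compute its Jacquet module along the maximal parabolic $\overline{Q}_k$ of $H(V_m)$, using Kudla's filtration (Proposition~\ref{prop_Kudla}) applied with the two members of the dual pair interchanged --- i.e.\ to $\omega_{m,n}$ regarded as a representation of $H(V_m)$, so that $W$, $V$ and hence $\epsilon$, $-\epsilon$ swap roles. From $\chi_V\sigma\rtimes\pi_0\twoheadrightarrow\pi$ and Frobenius reciprocity we get $\chi_V\sigma\otimes\pi_0\hookrightarrow r_{\overline{P}_k}(\pi)$. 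Feeding this into the $H$-side filtration and repeating the argument from the proof of Lemma~\ref{lem_Kudla}: the graded pieces $J^a$ with $a<k$ contribute nothing, because the exponent now governing them is $(n-m+k-a+\epsilon)/2$ instead of $(m-n+k-a-\epsilon)/2$, so by Lemma~\ref{lem_KretLapid} a nonzero contribution would force the first column of $\sigma$ to contain $(-\alpha-1)/2$ --- which is exactly what condition~(K) with $l=-\alpha$ forbids. The surviving top piece $J^k$ then exhibits $\chi_W\sigma\otimes\Theta_\alpha(\pi_0)$ as a subquotient of $r_{\overline{Q}_k}(\Theta_\alpha(\pi))$, so $\Theta_\alpha(\pi_0)\neq0$ forces $r_{\overline{Q}_k}(\Theta_\alpha(\pi))\neq0$, and therefore $\Theta_\alpha(\pi)\neq0$.

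The part that genuinely needs checking is the bookkeeping just sketched: one has to confirm that moving to the $H$-side of the dual pair turns the forbidden value $(l-1)/2$ with $l=\alpha$ (the one appearing in the proof of Lemma~\ref{lem_Kudla}) into the value with $l=-\alpha$; this sign change is precisely why the hypotheses of (i) and (ii) differ. Once the exponent in the $H$-side filtration is pinned down, the vanishing of the lower graded pieces is the same Kret--Lapid computation as in Lemma~\ref{lem_Kudla}. The one mild technicality is that $\Theta_\alpha(\pi)$ need not be irreducible, so throughout one works with the full lift and its exact Jacquet functors and tracks contragredients when passing between $r_{Q_k}$ and $r_{\overline{Q}_k}$ --- harmless for a non-vanishing statement.
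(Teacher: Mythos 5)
Part (i) of your argument matches the paper exactly and is fine.

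For (ii), however, there is a genuine gap, and the paper's route is quite different from yours. You try to prove the contrapositive directly by computing $r_{\overline{Q}_k}(\Theta_\alpha(\pi))$ from the $H$-side Kudla filtration. But the claim that "the surviving top piece $J^k$ exhibits $\chi_W\sigma\otimes\Theta_\alpha(\pi_0)$ as a subquotient of $r_{\overline{Q}_k}(\Theta_\alpha(\pi))$" is circular: to speak about Jacquet modules of $\Theta_\alpha(\pi)$ at all, you need $\Theta_\alpha(\pi)\neq 0$, which is exactly what you are trying to establish. More concretely, the mechanism of Lemma~\ref{lem_Kudla} produces an \emph{upper bound}: it realizes $\Theta_l(\pi)^\vee=\Hom_G(\omega,\pi)_\infty$ inside a Hom-space computed from the bottom graded piece, giving a surjection $\chi_W\sigma\rtimes\Theta_l(\pi_0)\twoheadrightarrow\Theta_l(\pi)$. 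This forces the target to vanish if the source does; it cannot force the target to be nonzero. Your input $\chi_V\sigma\otimes\pi_0\hookrightarrow r_{\overline{P}_k}(\pi)$ lives entirely on the $G$-side, and there is no map taking it "into" the $H$-side filtration of $r_{Q_k}(\omega_{m,n})$; the $H$-side pieces $\tilde{J}^a$ involve $\omega_{m-2k,n-2a}$ and are matched against $H$-data, not $G$-data. Even if one instead starts from $\pi_0'=\theta_\alpha(\pi_0)$ and a quotient $\pi'$ of $\chi_W\sigma\rtimes\pi_0'$, the reverse Kudla argument only shows that \emph{some} irreducible quotient of $\chi_V\sigma\rtimes\Theta_{-\alpha}(\pi_0')$ is a theta lift; it does not single out $\pi$, since $\chi_V\sigma\rtimes\pi_0$ may have several irreducible quotients.

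The paper avoids all of this by turning a non-vanishing statement into a vanishing statement via the Conservation relation: assuming $\Theta_\alpha(\pi_0)\neq 0$, conservation gives $\Theta'_{-\alpha}(\pi_0)=0$ on the other tower; since $\sigma$ satisfies (K) with $l=-\alpha$, one can apply Lemma~\ref{lem_Kudla} (to the other tower) to get $\chi_W\sigma\rtimes\Theta'_{-\alpha}(\pi_0)\twoheadrightarrow\Theta'_{-\alpha}(\pi)$, so $\Theta'_{-\alpha}(\pi)=0$; and then conservation for $\pi$ forces $\Theta_\alpha(\pi)\neq 0$. This is the missing ingredient in your argument: the only statement Lemma~\ref{lem_Kudla} can propagate is a vanishing one, and the Conservation relation is precisely what converts the desired non-vanishing into a vanishing on the opposite tower.
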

\begin{proof}
\begin{enumerate}[(i)]
\item Since $\sigma$ satisfies (K), we may apply Lemma \ref{lem_Kudla} to get $\chi_W\sigma \rtimes \Theta_\alpha(\pi_0) \twoheadrightarrow \Theta_\alpha(\pi)$. Now $\Theta_\alpha(\pi_0)=0$ clearly implies $\Theta_\alpha(\pi)=0$.
\item Assume $\Theta_\alpha(\pi_0) \neq 0$. Then the Conservation relation shows $\Theta_{-\alpha}'(\pi_0)= 0$, where $\Theta'$ denotes the lift to the other tower. Since $\sigma$ satisfies (K) with $l = -\alpha$, we may use Lemma \ref{lem_Kudla} to compute $\Theta_{-\alpha}'$; we get $\chi_W\sigma \rtimes \Theta_{-\alpha}'(\pi_0) \twoheadrightarrow \Theta_{-\alpha}'(\pi)$. Now $\Theta_{-\alpha}'(\pi_0)= 0$ implies $\Theta_{-\alpha}'(\pi)= 0$.
But then the Conservation relation shows $\Theta_{\alpha}(\pi) \neq  0$.\qedhere
\end{enumerate}
\end{proof}
Lemma \ref{lem_Kudla} can often be refined to give information about the small theta lifts:
\begin{lem}
%\label{rem_Kudla}
\label{lem_Kudla_small}
Assume $\chi_V\sigma \rtimes \pi_0 \twoheadrightarrow \pi$ like in Lemma \ref{lem_Kudla}. Assume $\sigma$ satisfies condition (K) for $l= \alpha$. Then we may deduce
\[
\chi_W\sigma \rtimes \theta_\alpha(\pi_0) \twoheadrightarrow \theta_\alpha(\pi)
\]
in either of the following two situations:
\begin{enumerate}[(i)]
    \item $r_{\overline{P}}(\theta_\alpha(\pi))$ has only one irreducible subquotient on which $\GL_k(F)$ acts by $\chi_W\sigma$; or
    \item $r_{\overline{P}}(\pi)$ has only one irreducible subquotient on which $\GL_k(F)$ acts by $\chi_V\sigma$, and additionally, $\sigma$ satisfies (K) for $l=-\alpha$.
\end{enumerate}
Here $P$ is the appropriately chosen maximal standard parabolic subgroup, and $\overline{P}$ denotes its opposite. 
\end{lem}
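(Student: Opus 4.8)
\emph{Proof plan.} We may assume $\Theta_\alpha(\pi)\neq 0$, since otherwise the assertion is vacuous; by Lemma~\ref{lem_Kudla_nonvanishing}(i) this also forces $\Theta_\alpha(\pi_0)\neq 0$. Applying Lemma~\ref{lem_Kudla} with $l=\alpha$ (legitimate because $\sigma$ satisfies (K) for $l=\alpha$) to $\chi_V\sigma\rtimes\pi_0\twoheadrightarrow\pi$ gives a surjection $\chi_W\sigma\rtimes\Theta_\alpha(\pi_0)\twoheadrightarrow\Theta_\alpha(\pi)$, and composing with $\Theta_\alpha(\pi)\twoheadrightarrow\theta_\alpha(\pi)$ (Howe duality) we obtain a surjection $\Phi\colon\chi_W\sigma\rtimes\Theta_\alpha(\pi_0)\twoheadrightarrow\theta_\alpha(\pi)$. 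Let $N\subset\Theta_\alpha(\pi_0)$ be the unique maximal proper subrepresentation, so $\Theta_\alpha(\pi_0)/N\cong\theta_\alpha(\pi_0)$; since parabolic induction is exact, $\chi_W\sigma\rtimes N$ is a subrepresentation of $\chi_W\sigma\rtimes\Theta_\alpha(\pi_0)$ with quotient $\chi_W\sigma\rtimes\theta_\alpha(\pi_0)$. Thus it suffices to show that $\Phi$ annihilates $\chi_W\sigma\rtimes N$: then $\Phi$ factors through $\chi_W\sigma\rtimes\theta_\alpha(\pi_0)\twoheadrightarrow\theta_\alpha(\pi)$, which is the claim.

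For case (i), pass to Frobenius adjoints: $\Phi$ corresponds via the Bernstein form of Frobenius reciprocity to a nonzero map $\overline\Phi\colon\chi_W\sigma\otimes\Theta_\alpha(\pi_0)\to r_{\overline P}(\theta_\alpha(\pi))$, and by functoriality $\Phi|_{\chi_W\sigma\rtimes N}=0$ is equivalent to $\overline\Phi|_{\chi_W\sigma\otimes N}=0$. Since $\chi_W\sigma$ is irreducible, every irreducible subquotient of a quotient of $\chi_W\sigma\otimes\Theta_\alpha(\pi_0)$ (resp.\ of $\chi_W\sigma\otimes N$) has the form $\chi_W\sigma\otimes\xi$ with $\xi$ a subquotient of $\Theta_\alpha(\pi_0)$ (resp.\ of $N$); moreover, using $\operatorname{cosoc}(\Theta_\alpha(\pi_0))=\theta_\alpha(\pi_0)$, the cosocle of a nonzero quotient of $\chi_W\sigma\otimes\Theta_\alpha(\pi_0)$ is $\chi_W\sigma\otimes\theta_\alpha(\pi_0)$. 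Applying this to $\operatorname{im}\overline\Phi$ shows $\chi_W\sigma\otimes\theta_\alpha(\pi_0)$ is an irreducible subquotient of $r_{\overline P}(\theta_\alpha(\pi))$. Now if $\overline\Phi|_{\chi_W\sigma\otimes N}\neq 0$, its image is a nonzero subrepresentation of $r_{\overline P}(\theta_\alpha(\pi))$ that is a quotient of $\chi_W\sigma\otimes N$, hence has an irreducible subquotient $\chi_W\sigma\otimes\xi_1$ with $\xi_1$ a subquotient of $N$; by the hypothesis of (i), $\chi_W\sigma\otimes\xi_1=\chi_W\sigma\otimes\theta_\alpha(\pi_0)$, so $\theta_\alpha(\pi_0)$ occurs as a subquotient of $N=\operatorname{rad}\Theta_\alpha(\pi_0)$. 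This contradicts the fact that $\theta_\alpha(\pi_0)$ occurs with multiplicity one in $\Theta_\alpha(\pi_0)$ (a standard property of theta lifts, which I would cite rather than reprove). Hence $\overline\Phi|_{\chi_W\sigma\otimes N}=0$ and case (i) follows.

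For case (ii), the plan is to reduce to (i) by deducing its hypothesis from the hypothesis on $r_{\overline P}(\pi)$. The idea is that the multiplicity of $\chi_W\sigma$ in $r_{\overline P}(\theta_\alpha(\pi))$ is controlled by the multiplicity of $\chi_V\sigma$ in $r_{\overline P}(\pi)$: one computes $r_{\overline P}(\Theta_\alpha(\pi))$ by feeding Kudla's filtration (Proposition~\ref{prop_Kudla}) into Tadi\'c's formula, or equivalently runs Lemma~\ref{lem_Kudla} in the downward direction for the lift $\theta_{-\alpha}$ — available precisely because $\sigma$ also satisfies (K) for $l=-\alpha$ — so that every irreducible subquotient $\chi_W\sigma\otimes\xi$ of $r_{\overline P}(\theta_\alpha(\pi))$ forces a subquotient $\chi_V\sigma\otimes\xi'$ of $r_{\overline P}(\pi)$ with $\xi'$ a lift of $\xi$; the uniqueness assumption of (ii) then pins $\xi'=\pi_0$, hence $\xi=\theta_\alpha(\pi_0)$. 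Thus $r_{\overline P}(\theta_\alpha(\pi))$ has a unique $\chi_W\sigma$-isotypic irreducible subquotient, and we are reduced to case (i).

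The main obstacle is the reduction in case (ii): making precise the passage from a $\chi_W\sigma$-subquotient of $r_{\overline P}(\theta_\alpha(\pi))$ to a $\chi_V\sigma$-subquotient of $r_{\overline P}(\pi)$ requires running the Kudla-filtration computation while tracking exactly which of its terms can produce a $\chi_W\sigma$-isotypic piece, using condition (K) for $l=-\alpha$ to discard the ``mixed'' terms just as in the proof of Lemma~\ref{lem_Kudla}, and being careful about the distinction between mere subquotients and genuine sub- or quotient-modules when moving between $\pi$ and $\theta_\alpha(\pi)$. The multiplicity-one statement for $\theta_\alpha(\pi_0)$ in $\Theta_\alpha(\pi_0)$, used in both cases, is the other external input.
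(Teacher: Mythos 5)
Your argument for case (i) does not match the paper's and it has a genuine gap. You try to prove the \emph{stronger} statement that the specific surjection $\Phi\colon\chi_W\sigma\rtimes\Theta_\alpha(\pi_0)\twoheadrightarrow\theta_\alpha(\pi)$ kills $\chi_W\sigma\rtimes N$ (where $N=\operatorname{rad}\Theta_\alpha(\pi_0)$), and your contradiction rests on the claim that $\theta_\alpha(\pi_0)$ occurs with multiplicity one in $\Theta_\alpha(\pi_0)$. You call this a ``standard property of theta lifts,'' but it is not part of Howe duality as stated in the paper (Theorem~\ref{thm_Howe} only gives a unique irreducible quotient), it is nowhere cited, and to my knowledge it is not available in this generality; the paper's proof is built precisely so as to avoid it. The paper argues differently and more cheaply: the Bernstein--Frobenius adjoint $\overline\Phi\colon\chi_W\sigma\otimes\Theta_\alpha(\pi_0)\to r_{\overline P}(\theta_\alpha(\pi))$ is nonzero, its image is a nonzero \emph{subrepresentation} of $r_{\overline P}(\theta_\alpha(\pi))$, and every composition factor of that image has $\GL_k$-part $\chi_W\sigma$; hypothesis (i) then forces every such factor --- in particular the (nonzero) socle of the image --- to be $\chi_W\sigma\otimes\theta_\alpha(\pi_0)$, so $\chi_W\sigma\otimes\theta_\alpha(\pi_0)\hookrightarrow r_{\overline P}(\theta_\alpha(\pi))$, and Frobenius gives the desired surjection. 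No multiplicity statement is needed, and one does not need $\Phi$ itself to factor.

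For case (ii), the paper again does not reduce to (i). It extracts an irreducible subquotient $\pi_0'$ of $\Theta_\alpha(\pi_0)$ with $\chi_W\sigma\rtimes\pi_0'\twoheadrightarrow\theta_\alpha(\pi)$, applies Lemma~\ref{lem_Kudla} in the \emph{reverse} direction (possible because (K) holds with $l=-\alpha$) to get $\chi_V\sigma\rtimes\Theta_{-\alpha}(\pi_0')\twoheadrightarrow\pi$, then uses the uniqueness hypothesis in (ii) on $r_{\overline P}(\pi)$ to conclude $\theta_{-\alpha}(\pi_0')=\pi_0$, hence $\pi_0'=\theta_\alpha(\pi_0)$. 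Your proposed reduction ``(ii) $\Rightarrow$ (i)'' would require controlling \emph{all} $\chi_W\sigma$-isotypic subquotients of $r_{\overline P}(\theta_\alpha(\pi))$ via $r_{\overline P}(\pi)$, which is a substantially stronger Jacquet-module statement than what Lemma~\ref{lem_Kudla} actually delivers (it only handles a single, carefully chosen surjection, not arbitrary subquotients of the Jacquet module). You flag this as the main obstacle, and indeed it is not filled in; moreover, even if it were, this route would still import the multiplicity-one gap from your case (i). The paper's direct argument avoids both issues.
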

\begin{proof}
\begin{enumerate}[(i)]
\item Lemma \ref{lem_Kudla} implies $\chi_W\sigma \rtimes \Theta_\alpha(\pi_0) \twoheadrightarrow \Theta_\alpha(\pi)\twoheadrightarrow \theta_\alpha(\pi)$. Bernstein's Frobenius reciprocity then shows there is a non-zero map $\chi_W\sigma \otimes \Theta_\alpha(\pi_0) \to r_{\overline{P}}(\theta_\alpha(\pi))$. Since $\Theta_\alpha(\pi_0)$ has a unique irreducible quotient, this implies that $\chi_W\sigma \otimes \theta_\alpha(\pi_0)$ is an irreducible subquotient of $ r_{\overline{P}}(\theta_\alpha(\pi))$. Assuming (i), we find that it is a subrepresentation, and using Frobenius again, the claim follows.

\item Lemma \ref{lem_Kudla} shows $\chi_W\sigma \rtimes \Theta_\alpha(\pi_0) \twoheadrightarrow \Theta_\alpha(\pi)\twoheadrightarrow \theta_\alpha(\pi)$, which means that there is an irreducible subquotient $\pi_0'$ of $\Theta_\alpha(\pi_0)$ such that $\chi_W\sigma \rtimes \pi_0' \twoheadrightarrow \theta_\alpha(\pi)$. Because of the additional assumption in (ii), we can now use the lemma in reverse (to compute $\Theta_{-\alpha}$); we get $\chi_V\sigma \rtimes \Theta_{-\alpha}(\pi_0') \twoheadrightarrow \pi$, i.e.~a non-zero map $\chi_V\sigma \otimes \Theta_{-\alpha}(\pi_0') \to r_{\overline{P}}(\pi)$. Since $\Theta_{-\alpha}(\pi_0')$ has a unique irreducible quotient, it follows that there is a non-zero map $\chi_V\sigma \otimes \theta_{-\alpha}(\pi_0') \to r_{\overline{P}}(\pi)$. But if $r_{\overline{P}}(\pi)$ only has one subquotient on which $\GL_k(F)$ acts by $\chi_V\sigma$, this implies $\theta_{-\alpha}(\pi_0') = \pi_0$.
Therefore $\pi_0' = \theta_{\alpha}(\pi_0)$, so $\chi_W\sigma \rtimes \theta_{\alpha}(\pi_0) \twoheadrightarrow \theta_\alpha(\pi)$. Note that we are using Convention \ref{conv_dettwist} here.\qedhere
\end{enumerate}
\end{proof}
We can now use the above results to show that theta lifts are well-behaved with respect to stretching the parameters. To that end, we recall the setting of Lemma \ref{lem_L}. Let $\pi \in \Pi_\psi$ be a representation of Arthur type parametrized by $\psi$, and let $\Jord_{\chi_V}(\psi) = \{(A_i,B_i,\zeta_i): i=1,\dotsc,r\}$. Let $\psi_{\gg}^{\chi_V}$ be a $\chi_V$-DDR parameter which dominates $\psi$, and let $\pi_{\gg}^{\chi_V}$ be the corresponding representation, so that
\[
\pi_{\gg}^{\chi_V} \hookrightarrow \chi_VL_1\times \dotsb\times \chi_VL_r \rtimes \pi.
\]
We also have $\pi_{(i)}$ for each $i\in \{0,\dotsc,r\}$: this is the $i$-th intermediate step in going from $\pi_{\gg}^{\chi_V}$ to $\pi$. For $i < r$, the representation $\pi_{(i)}$ is the unique irreducible subrepresentation of $\chi_VL_{i+1} \rtimes \pi_{(i+1)}$, and
\[
\pi_{(i)} \hookrightarrow \chi_VL_{i+1}\times \dotsb\times \chi_VL_r \rtimes \pi.
\]
\begin{prop}
\label{prop_wwtawwtaKudla}
Fix $k \in \{0,\dotsc,r-1\}$ and consider
\[
\pi_{(k)} \hookrightarrow \chi_VL_{k+1}\times \dotsb\times \chi_VL_r \rtimes \pi
\]
as above. Let $\alpha$ be an odd integer. We make the following assumption about the blocks $(A_i,B_i,\zeta_i)$, $i>k$ that are being shifted to go from $\psi$ to $\psi_{(k)}$:
\begin{itemize}
\item if $\zeta_i = 1$, then $\frac{|\alpha|-1}{2} < B_i$; and
\item if $\zeta_i = -1$, then $\frac{|\alpha|-1}{2} \leq A_i$.
\end{itemize}
Then 
\begin{enumerate}[a)]
\item $\Theta_\alpha(\pi_{(k)})\neq 0$ if and only if $\Theta_\alpha(\pi) \neq 0$.
\item If $B_i > 0$ for all $i>k$, then  $\pi_{(k)}$ and $\pi$ share the same going-up tower (and thus also the same going-down tower).
\item Finally, 
\[
\theta_\alpha(\pi_{(k)}) \hookrightarrow \chi_WL_{k+1}\times \dotsb\times \chi_WL_r \rtimes \theta_\alpha(\pi).
\]
\end{enumerate}
\end{prop}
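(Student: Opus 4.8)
The plan is to reduce to the case $\Theta_\alpha(\pi)\neq 0$ and then peel off the generalized segments $L_{k+1},\dots,L_r$ one at a time, following the chain $\pi=\pi_{(r)},\pi_{(r-1)},\dots,\pi_{(k)}$ of Lemma \ref{lem_L}. If $\Theta_\alpha(\pi)=0$, then part a) gives $\Theta_\alpha(\pi_{(k)})=0$, both sides of the claimed embedding vanish, and there is nothing to prove; so assume $\Theta_\alpha(\pi)\neq 0$. Then $\theta_\alpha$ is defined on both sides, and by part b) the going-up/going-down labels of $\pi_{(k)}$ and $\pi$ agree, so the lifts in question really are the lifts on the intended tower. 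I would prove, by descending induction on $i\in\{k,\dots,r\}$, that
\[
\theta_\alpha(\pi_{(i)})\hookrightarrow \chi_WL_{i+1}\times\dots\times\chi_WL_r\rtimes\theta_\alpha(\pi);
\]
the case $i=r$ is the tautology $\theta_\alpha(\pi)\hookrightarrow\theta_\alpha(\pi)$, and $i=k$ is the assertion of the proposition. Granting the statement for $i$, exactness of $\times$ and $\rtimes$ together with transitivity of parabolic induction reduce the case $i-1$ to the single-block embedding $\theta_\alpha(\pi_{(i-1)})\hookrightarrow\chi_WL_i\rtimes\theta_\alpha(\pi_{(i)})$.

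For that single-block step I would start from $\pi_{(i-1)}\hookrightarrow\chi_VL_i\rtimes\pi_{(i)}$ (Lemma \ref{lem_L}(i)), rewrite it via the MVW involution (Lemma \ref{lemma:MVWinv}) as a surjection $\chi_VL_i^\vee\rtimes\pi_{(i)}\twoheadrightarrow\pi_{(i-1)}$, feed this into the Kudla-filtration machinery of \S\ref{subs_Kudla}---specifically Lemma \ref{lem_Kudla_small}---to deduce $\chi_WL_i^\vee\rtimes\theta_\alpha(\pi_{(i)})\twoheadrightarrow\theta_\alpha(\pi_{(i-1)})$, and then apply MVW once more (now on the group $H$) to obtain $\theta_\alpha(\pi_{(i-1)})\hookrightarrow\chi_WL_i\rtimes\theta_\alpha(\pi_{(i)})$. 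Verifying the hypotheses of Lemma \ref{lem_Kudla_small} is the technical heart. The condition (K) is a computation comparing the first column of the relevant generalized segment with $\tfrac{\alpha-1}{2}$ (and with $\tfrac{-\alpha-1}{2}$, if one invokes part (ii) of that lemma); it is exactly what the two bulleted hypotheses on $(A_i,B_i,\zeta_i)$ are designed to guarantee, together with the freedom to take the stretching parameters $T_j$ arbitrarily large (any sufficiently large choice keeps $\psi_\gg^{\chi_V}$ DDR, and it pushes the entries of that column away from the forbidden values). The cases $\zeta_i=1$ and $\zeta_i=-1$ behave differently here, and in the more delicate direction I would, if necessary, strip $L_i$ off not in one step but in finer pieces (its rows, or columns) so that the first-column condition holds at each stage.

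The step I expect to be the real obstacle is the other hypothesis of Lemma \ref{lem_Kudla_small}: the multiplicity-one statement that $r_{\overline P}(\pi_{(i-1)})$ (equivalently, for variant (i), $r_{\overline P}(\theta_\alpha(\pi_{(i-1)}))$) has a \emph{unique} irreducible subquotient on which the relevant $\GL$-factor acts by the prescribed generalized segment. I would derive this from the rigidity of M\oe glin's construction: Lemma \ref{lem_L}(i) says not only that $\pi_{(i-1)}$ is the unique irreducible subrepresentation of $\chi_VL_i\rtimes\pi_{(i)}$, but also that $\Jac_{\zeta_i[B_i+T_i,A_i+T_i]\to\zeta_i[B_i,A_i]}(\chi_VL_i\rtimes\pi_{(i)})$ is irreducible and equals $\pi_{(i)}$; this should force the corresponding cuspidal line in the top Jacquet module of $\pi_{(i-1)}$ to occur exactly once. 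Making this precise---rather than the bookkeeping behind (K)---is where I anticipate the main work lies.
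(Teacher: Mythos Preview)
Your approach is essentially the paper's own proof. The paper also peels off one $L_i$ at a time, rewrites $\pi_{(i-1)}\hookrightarrow\chi_VL_i\rtimes\pi_{(i)}$ as $\chi_VL_i^\vee\rtimes\pi_{(i)}\twoheadrightarrow\pi_{(i-1)}$, verifies that $L_i^\vee$ satisfies condition (K) for both $l=\alpha$ and $l=-\alpha$, and then invokes case (ii) of Lemma \ref{lem_Kudla_small}, citing Lemma \ref{lem_L} for the required uniqueness of the Jacquet module; parts a) and b) fall out of the same (K) verification via Lemma \ref{lem_Kudla_nonvanishing}.

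Two of your hedges are unnecessary. First, you never need to strip $L_i$ into finer pieces or enlarge the $T_j$: the bulleted hypotheses alone force (K) for $L_i^\vee$ at both $l=\pm\alpha$. Concretely, if $\zeta_i=1$ then every entry of $L_i^\vee$ is negative (so (K) is automatic for $l=|\alpha|$) and its first column is $-(A_i+1),\dots,-(B_i+1)$, which misses $\tfrac{-|\alpha|-1}{2}$ since $\tfrac{|\alpha|-1}{2}<B_i$; if $\zeta_i=-1$ then every entry of $L_i^\vee$ is positive (so (K) is automatic for $l=-|\alpha|$) and its first column is $A_i+1,\dots,A_i+T_i$, which misses $\tfrac{|\alpha|-1}{2}$ since $\tfrac{|\alpha|-1}{2}\le A_i$. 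Second, the multiplicity-one condition you flag as the ``real obstacle'' is exactly what the irreducibility statement in Lemma \ref{lem_L}(i) delivers, and the paper simply cites it without further argument; so your instinct there is right and no additional work is needed.
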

\begin{proof}
Recall that $L_i$ is the generalized segment representation which corresponds to shifting the block $(A_i+T_i,B_i+T_i,\zeta_i)$ back to $(A_i,B_i,\zeta_i)$. Instead of $\pi_{(i-1)}\hookrightarrow \chi_VL_i\rtimes \pi_{(i)}$, we may write
\[
\chi_VL_i^\vee\rtimes \pi_{(i)} \twoheadrightarrow \pi_{(i-1)}.
\]
The point of the two bullets above is that they guarantee that $L_i^\vee$ satisfies condition (K) with both $l = \alpha$ and $l = -\alpha$. Indeed, suppose that $\zeta_i = 1$. Then all the entries in $L_i^\vee$ are negative, so condition (K) clearly holds for $l = |\alpha|$. On the other hand, the first column of $L_i^\vee$ is $-(A_i+1), \dotsc, -(B_i+1)$. By the assumption in the first bullet, $\frac{-|\alpha|-1}{2} > -(B_i+1)$, so $\frac{-|\alpha|-1}{2}$ does not appear in the first column of $L_i^\vee$. When $\zeta_i = -1$, all the entries in $L_i^\vee$ are positive, so condition (K) clearly holds for $l = -|\alpha|$. On the other hand, the first column of $L_i^\vee$ is $A_i+1, \dotsc, A_i+T_i$. Therefore, the condition in the second bullet ensures that $\frac{|\alpha|-1}{2}$ does not appear in the first column. We now observe:
\begin{enumerate}[a)]
\item Because (K) is satisfied for both $l = \pm \alpha$, we may use both (i) and (ii) of Lemma \ref{lem_Kudla_nonvanishing}. We get $\Theta_\alpha(\pi_{(i-1)}) \neq 0 \iff \Theta_\alpha(\pi_{(i)}) \neq 0$.
\item For any representation $\pi$, the going-up tower is the one on which $\Theta_1(\pi) = 0$. Taking $\alpha = 1$ in part a), we see that both bullets are satisfied when $B_i > 0$. Thus $\Theta_1(\pi_{(i-1)}) \neq 0 \iff \Theta_1(\pi_{(i)}) \neq 0$. In other words, both $\pi_{(i-1)}$ and $\pi_{(i)}$ have the same going-up tower.
\item Because (K) holds for $l=\pm \alpha$, and because of the uniqueness of the appropriate Jacquet modules (see Lemma \ref{lem_L}) we may apply case (ii) of Lemma \ref{lem_Kudla_small} to get $
\chi_WL_i^\vee\rtimes \theta_\alpha(\pi_{(i)}) \twoheadrightarrow \theta_\alpha(\pi_{(i-1)})
$, or equivalently,
\[
\theta_\alpha(\pi_{(i-1)}) \hookrightarrow \chi_WL_i \rtimes \theta_\alpha(\pi_{(i)}).
\]
\end{enumerate}
Repeating this reasoning for each $i=k+1,\dotsc, r$, we get the desired result.
\end{proof}

% The following result serves as a complement to Lemma \ref{lem_Kudla}. The proof is a direct application of Kudla's filtration.
% \begin{lem}
% \label{lem_Kudla_zeta}
% Let $\pi\in \Irr(G_n), \pi_0 \in \Irr(G_{n-2k})$ be representations such that
% \[
% \chi_V\zeta(\frac{\alpha+1}{2}-k, \frac{\alpha-1}{2})\rtimes \pi_0 \twoheadrightarrow \pi.
% \]
% Then one of the following is true:
% \begin{enumerate}[(i)]
%    \item $\chi_W\zeta(\frac{\alpha+1}{2}-k, \frac{\alpha-1}{2})\rtimes \Theta_\alpha(\pi_0) \twoheadrightarrow \Theta_\alpha(\pi)$;
%    \item $\theta_\alpha(\pi) = \theta_{\alpha-2k}(\pi_0)$.
% \end{enumerate}
% \end{lem}

Finally, we will often use the following (cf.~Lemma 5.1 of \cite{muic2004howe}):
\begin{lem}
\label{lem_descent}
Let $\pi \in \Irr(G_n)$. Assume  $\Theta_{-\alpha}(\pi)\neq 0$ for some $\alpha>0$. If $\Jac_{{\frac{1-\alpha}{2}}}^{\chi_V}(\pi)=0$, but $\Jac_{{\frac{1-\alpha}{2}}}^{\chi_W}(\theta_{-\alpha}(\pi)) \allowbreak\neq 0$, then $\Theta_{2-\alpha}(\pi)\neq 0$. Moreover, if $\Jac_{{\frac{1-\alpha}{2}}}^{\chi_W}(\theta_{-\alpha}(\pi))$ is irreducible, then $\theta_{2-\alpha}(\pi)=\Jac_{{\frac{1-\alpha}{2}}}^{\chi_W}(\theta_{-\alpha}(\pi))$.
\end{lem}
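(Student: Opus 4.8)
The plan is to use Kudla's filtration directly, following the same strategy as in Lemma \ref{lem_Kudla} and its corollaries, but now tracking the descent from one rung to the next within a single tower. The key idea is that $\Jac^{\chi_W}_{(1-\alpha)/2}$ applied to $\theta_{-\alpha}(\pi)$ should be identified (up to the usual ambiguities) with $\theta_{2-\alpha}(\pi)$: going down one step on the target tower of lifts corresponds, on the level of the Weil representation, to stripping off one copy of the character $\chi_W|\cdot|^{(1-\alpha)/2}$ from the $\GL_1$-part. Concretely, I would start from the surjection $\omega_{m,n} \twoheadrightarrow \pi \otimes \Theta_{-\alpha}(\pi)$ and relate it to $\omega_{m-2,n}$ via the parabolic $Q_1 \subset H(V_{m-2})$, using Proposition \ref{prop_Kudla} with $k$ chosen appropriately so that the filtration on $R_{Q_1}(\omega_{m,n})$ (viewed now from the $H$-side) has exactly two graded pieces. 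The piece indexed by $a=0$ contributes a term built from $\chi_W|\cdot|^{\lambda}$ together with $\omega_{m-2,n}$, and $\lambda$ works out to $(1-\alpha)/2$ (or its negative, depending on how one sets up the cover); the piece indexed by $a=1$ contributes the $\Sigma_1$-term that is irrelevant once we impose the hypothesis $\Jac^{\chi_V}_{(1-\alpha)/2}(\pi)=0$.

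The main steps, in order, would be: (1) Set up the $H$-side Jacquet module $r_{Q_1}(\Theta_{-\alpha}(\pi))$ and apply the $H$-analogue of Kudla's filtration (or equivalently, reinterpret Proposition \ref{prop_Kudla} with the roles of $G$ and $H$ exchanged). This gives a two-step filtration whose quotients involve $\omega_{m-2,n}$ on the one hand and, on the other, a term where the relevant $\GL_1$-action forces a nonvanishing Jacquet module of $\pi$ itself. (2) Use the hypothesis $\Jac^{\chi_V}_{(1-\alpha)/2}(\pi)=0$ to kill the ``wrong'' graded piece, exactly as condition (K) is used in the proof of Lemma \ref{lem_Kudla}; this is where the vanishing assumption on $\pi$ enters, and it is the analogue of checking that a certain $\zeta$-segment cannot occur in the relevant Jacquet module (invoke Lemma \ref{lem_KretLapid} or a direct computation via Tadić's formula \eqref{eq_tadic_2}). (3) Conclude that $\Jac^{\chi_W}_{(1-\alpha)/2}(\Theta_{-\alpha}(\pi))$ is a quotient of (something built from) $\Theta_{2-\alpha}(\pi)$, so that the hypothesis $\Jac^{\chi_W}_{(1-\alpha)/2}(\theta_{-\alpha}(\pi))\neq 0$ forces $\Theta_{2-\alpha}(\pi)\neq 0$. (4) For the ``moreover'' clause, upgrade from big theta to small theta: if $\Jac^{\chi_W}_{(1-\alpha)/2}(\theta_{-\alpha}(\pi))$ is irreducible, then since $\theta_{-\alpha}(\pi)$ is the unique irreducible quotient of $\Theta_{-\alpha}(\pi)$, the Jacquet module of $\theta_{-\alpha}(\pi)$ provides the unique relevant quotient, and an argument like the one in Lemma \ref{lem_Kudla_small}(i)--(ii) identifies it with $\theta_{2-\alpha}(\pi)$; one uses Frobenius reciprocity and the uniqueness of the irreducible quotient of a big theta lift.

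I expect the main obstacle to be step (2): carefully pinning down which graded piece of the filtration survives and showing that the surviving contribution really is governed by $\Theta_{2-\alpha}(\pi)$ rather than by some larger or smaller lift. The subtlety is that the filtration on $r_{Q_1}(\omega_{m,n})$ mixes a ``$G$-descent'' (which would change $n$) with an ``$H$-descent'' (which changes $m$, i.e.\ moves along the tower of lifts); one must check that with $k=0$ the $G$-part is not touched, so that only the tower parameter $\alpha$ changes, from $-\alpha$ to $2-\alpha$. The index bookkeeping $\lambda_{k-a} = (m-n+k-a-\epsilon)/2$ has to be matched precisely with the exponent $(1-\alpha)/2$ appearing in the statement; getting the signs and the $\chi_V$ versus $\chi_W$ distinction right (recalling $\alpha = n+\epsilon-m$ and the splitting characters) is the delicate part. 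A secondary, more routine obstacle is the passage from big to small theta in the ``moreover'' part, but this should follow the template of Lemma \ref{lem_Kudla_small} essentially verbatim once the irreducibility hypothesis is in force. A cited reference (Muić, \cite{muic2004howe}, Lemma 5.1) handles the analogous statement, so the argument is expected to be an adaptation rather than something genuinely new.
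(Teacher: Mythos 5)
Your proposal matches the paper's argument: the paper also starts from an embedding $\theta_{-\alpha}(\pi)\hookrightarrow \chi_W|\cdot|^{(1-\alpha)/2}\rtimes\xi$, transfers it to a nonvanishing $\Hom$-space via the isotypic-quotient property and Frobenius reciprocity, applies Kudla's filtration on the $H$-side with $k=1$, uses the hypothesis $\Jac^{\chi_V}_{(1-\alpha)/2}(\pi)=0$ to kill one graded piece, and reads off $\Theta_{\alpha-2}(\xi)\twoheadrightarrow\pi$ from the other, whence $\theta_{2-\alpha}(\pi)=\xi$. Two small remarks: the dichotomy indices you should be tracking are $a=0$ versus $a=1$ (not $k$, which is fixed at $1$ since we take the Jacquet module along $Q_1\subset H_m$), and the ``moreover'' clause is in fact immediate\,---\,$\Theta_{\alpha-2}(\xi)\twoheadrightarrow\pi$ forces $\theta_{\alpha-2}(\xi)=\pi$ by Howe duality, so $\theta_{2-\alpha}(\pi)=\xi$, and irreducibility of the Jacquet module pins down $\xi$ without needing the full machinery of Lemma \ref{lem_Kudla_small}.
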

\begin{proof}
The non-vanishing of $\Jac_{{\frac{1-\alpha}{2}}}^{\chi_W}(\theta_{-\alpha}(\pi))$ implies that there exists an irreducible representation $\xi$ such that $\theta_{-\alpha}(\pi)\hookrightarrow \chi_W|\cdot|^{\frac{1-\alpha}{2}}\rtimes \xi$.
We then have 
\begin{align*}
\pi^\vee &\hookrightarrow \Hom_{G_n}(\omega_{m,n}, \theta_{-\alpha}(\pi))_\infty\\
&\hookrightarrow \Hom_{G_n}(\omega_{m,n}, \chi_W|\cdot|^{\frac{1-\alpha}{2}}\rtimes \xi)_\infty\\
&\cong \Hom_{\GL_1\times G_{n-2}}(r_P(\omega_{m,n}), \chi_W|\cdot|^{\frac{1-\alpha}{2}}\otimes \xi)_\infty,
\end{align*}
where $P$ is the appropriate maximal standard parabolic subgroup. This proves that the last $\Hom$-space is non-zero. Now Kudla's filtration shows that this implies one of the following:
\begin{itemize}
    \item $\Jac_{{\frac{1-\alpha}{2}}}^{\chi_V}(\pi)\neq 0$: or
    \item $\Theta_{\alpha-2}(\xi) \twoheadrightarrow \pi$, so $\theta_{\alpha-2}(\xi) \twoheadrightarrow \pi$ (in particular, $\theta_{2-\alpha}(\pi) = \xi \neq 0$).
\end{itemize}
Since we are assuming the first option is not true, the second must hold.
If $\Jac_{{\frac{1-\alpha}{2}}}^{\chi_W}(\theta_{-\alpha}(\pi))$ is irreducible (and thus equal to $\xi$), this shows $\theta_{2-\alpha}(\pi)\cong \xi =\Jac_{{\frac{1-\alpha}{2}}}^{\chi_W}(\theta_{-\alpha}(\pi))$.
\end{proof}

\begin{rem}
\label{rem_chi}
%Notice that the characters $\chi_V$ and $\chi_W$\,---\,which are required to fix the splitting above the dual pair\,---\,feature prominently in this section. To simplify the exposition in the remainder of the paper, we choose to omit them from the notation. So, for example, we write $\Jac_{\frac{1-\alpha}{2}}$ and $\nu^{\frac{\alpha-1}{2}} \rtimes \pi$ instead of $\Jac_{\chi_W|\cdot|^{\frac{1-\alpha}{2}}}$ and $\chi_V\nu^{\frac{\alpha-1}{2}} \rtimes \pi$. This slight abuse of notation is compensated by a drastic improvement in readability.
Kudla's filtration shows that all the action in theta correspondence occurs along a single cuspidal line: From now on, in all the constructions involving segments (e.g.~\S \ref{subs_Arthurtype}), we have either $\rho=\chi_V$ or $\rho=\chi_W$. In the remainder of the paper, we omit $\chi_V$ and $\chi_W$ from the notation whenever it is possible to do so without causing confusion. In particular, we write $\Jac_{x}$ and $L \rtimes \pi$ instead of $\Jac_{x}^{\chi_V}$ and $\chi_VL \rtimes \pi$. 
%Notice that this is done in a systematic way: the implied character is always $\chi_V$ when we are dealing with representations of $G(W)$, and $\chi_W$ when we have a $H(V)$-representation.
\end{rem}

%
%\subsection{Higher lifts}
%Finally, we occasionally make use of the results of \cite{nas_clanak}, which provide a description of lifts in terms of L-parameters. We do not need the full extent of these results, but we record three useful observations:
%\begin{lem}
%\label{lem_nas_clanak}\mbox{ }
%\begin{enumerate}[a)]
%    \item Suppose $\theta_{-\alpha}(\pi) \neq 0$ for some $\alpha>0$. Then the standard module of $\theta_{-\alpha-2}(\pi)$ is obtained by adding the singleton segment $\nu^{\frac{\alpha+1}{2}}$ to the standard module of $\theta_{-\alpha}(\pi)$.
%    \item Let $z > \frac{\alpha-1}{2}$. Let $\xi$ be the representation whose standard module is obtained by adding the singleton segment $\nu^z$ to the standard module of $\pi$. Then the standard module of $\theta_{-\alpha}(\xi)$ is obtained by adding the singleton segment $\nu^z$ to the standard module of $\theta_{-\alpha}(\pi)$.
%    \item Let $\theta^+$ and $\theta^-$ denote lifts to two different Witt towers. Suppose that $\theta_{2-\alpha}^+(\pi)$ and $\theta_{2-\alpha}^-(\pi)$ are both non-zero. Then the standard modules of $\theta_{-\alpha}^+(\pi)$ and $\theta_{-\alpha}^-(\pi)$ are the same above $\frac{\alpha-1}{2}$ (see \S \ref{subs_stdmod}).
%\end{enumerate}
%\end{lem}
%\begin{proof}
%This follows directly from theorems 6.7 and 6.8 of \cite{nas_clanak}.
%\end{proof}

\section{Theorem A}
\label{sec_thmA}
Let $\pi$ be parametrized by
\[
\psi = \bigoplus_{i=1}^r\overset{t_i,\eta_i}{\chi_V\otimes S_{a_i}\otimes S_{b_i}}\,\oplus\, \bigoplus_{\rho\neq \chi_V}\overset{t,\eta}{\rho\otimes S_a\otimes S_b}.
\]
Proposition \ref{prop_visoko} shows that $\theta_{-\alpha}(\pi)$ is in the A-A packet for $\alpha\gg0$, with parameter
\[
\psi_\alpha = \bigoplus_{i=1}^r\overset{t_i,-\eta_i}{\chi_W \otimes S_{a_i}\otimes S_{b_i}}\oplus \overset{\epsilon}{\chi_W\otimes  S_1\otimes S_{\alpha}}  \,\oplus \, \bigoplus_{\rho\neq \chi_V}\overset{t,\eta}{\chi_W\chi_V^{-1}\rho\otimes S_a\otimes S_b}.
\]
Note that $\epsilon$ determines the target tower. 

From now on, we omit the $\bigoplus_{\rho\neq \chi_V}$ part from the parameter of $\pi$ since it remains unchanged in\,---\,and does not affect\,---\,the theta correspondence. We also adhere to notational conventions explained in Remark \ref{rem_chi}. Accordingly, we speak of DDR-representations even though we really mean $\rho$-DDR for $\rho=\chi_V$ or $\chi_W$.

We now prove Theorem \ref{theoremA}. Assuming $\pi_\alpha=\theta_{-\alpha}(\pi)$ and $\pi_{\alpha-2}\neq 0$, we need to prove that $\theta_{-(\alpha-2)}(\pi)=\pi_{\alpha-2}$.
We first prove this in a special case:

\begin{lem}
\label{lem_A_babycase}
Assume $\zeta_jB_j \neq -\frac{\alpha-1}{2}$ for $j=1,\dotsc, r$. Suppose $\pi_\alpha = \theta_{-\alpha}(\pi)$ for some $\alpha > 2$. If $\pi_{\alpha-2}\neq 0$ then $\theta_{-(\alpha-2)}(\pi)=\pi_{\alpha-2}$. In particular, $\theta_{-(\alpha-2)}(\pi)\neq 0$.
\end{lem}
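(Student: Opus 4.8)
The plan is to exploit the candidate representation $\pi_{\alpha-2}$ directly. Under the hypothesis $\zeta_j B_j \neq -\tfrac{\alpha-1}{2}$ for all $j$, the Recipe produces $\pi_{\alpha-2}$ by simply shifting the added block from $\chi_W\otimes S_1\otimes S_\alpha$ to $\chi_W\otimes S_1\otimes S_{\alpha-2}$, \emph{without} changing the order on $\Jord(\psi)$ and without modifying $\eta$ or $t$. The key observation is that this block shift is realized, at the level of M\oe glin's construction, by a single Jacquet module: one expects
\[
\pi_{\alpha-2} = \Jac^{\chi_W}_{-\frac{\alpha-1}{2}}\bigl(\pi_\alpha\bigr) = \Jac^{\chi_W}_{\frac{1-\alpha}{2}}\bigl(\theta_{-\alpha}(\pi)\bigr),
\]
and moreover that this Jacquet module is irreducible (nonzero precisely when $\pi_{\alpha-2}\neq 0$). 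This is the technical heart of the argument and should follow from Lemma \ref{lem_L} together with the combinatorics of M\oe glin's construction: passing to the dominating DDR parameter $\psi_{\gg}$, the added block $\chi_W\otimes S_1\otimes S_\alpha$ corresponds (via $A=\tfrac{\alpha-1}{2}$, $B=0$, $\zeta=1$) to a generalized segment, and stripping one step of it — i.e.\ applying $\Jac_{-\frac{\alpha-1}{2}}$ — lowers $\alpha$ to $\alpha-2$; one then commutes this through the remaining $\Jac_{L_i}$'s, which is legitimate because the hypothesis $\zeta_j B_j\neq -\tfrac{\alpha-1}{2}$ ensures no interference with the other blocks (here one invokes Lemma \ref{lem_Jac=0} or a direct support computation).

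**Applying the descent lemma.** Granting the displayed identity, the result follows from Lemma \ref{lem_descent} applied with the roles $\pi \leadsto \pi$ and $\alpha \leadsto \alpha$. I would check its two hypotheses: first, $\Theta_{-\alpha}(\pi)\neq 0$, which holds since $\theta_{-\alpha}(\pi)=\pi_\alpha\neq 0$ (as $\pi_{\alpha-2}\neq 0$ forces $\pi_\alpha\neq 0$ by the Recipe, since $\pi_\beta=0$ for $\beta<\alpha$ once $\pi_\alpha=0$). Second, $\Jac^{\chi_V}_{\frac{1-\alpha}{2}}(\pi)=0$: this is exactly where Lemma \ref{lem_Jac=0} enters — the condition $\zeta_jB_j\neq -\tfrac{\alpha-1}{2}$ for all $j$ is precisely the hypothesis ``$\tfrac{\alpha'-1}{2}\neq \zeta_iB_i$'' of that lemma with $\alpha'=1-\alpha$ (note $\tfrac{(1-\alpha)-1}{2}=-\tfrac{\alpha}{2}$... here one must be slightly careful: the relevant value for $\Jac^{\chi_V}_{\frac{1-\alpha}{2}}$ is $\tfrac{1-\alpha}{2}$, matching $\zeta_iB_i$ up to sign, so I would apply Lemma \ref{lem_Jac=0} to both signs, or note that the $\chi_V$-blocks of $\psi$ have $\zeta_iB_i\neq \pm\tfrac{\alpha-1}{2}$ under our assumption). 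Then $\Jac^{\chi_W}_{\frac{1-\alpha}{2}}(\theta_{-\alpha}(\pi))=\pi_{\alpha-2}\neq 0$ by hypothesis, so Lemma \ref{lem_descent} yields $\Theta_{2-\alpha}(\pi)\neq 0$, and since the Jacquet module is irreducible, $\theta_{-(\alpha-2)}(\pi)=\Jac^{\chi_W}_{\frac{1-\alpha}{2}}(\theta_{-\alpha}(\pi))=\pi_{\alpha-2}$.

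**Main obstacle.** The delicate point is establishing $\pi_{\alpha-2}=\Jac^{\chi_W}_{-\frac{\alpha-1}{2}}(\pi_\alpha)$ together with the irreducibility of this Jacquet module. One direction — that $\pi_\alpha\neq 0$ implies $\Jac^{\chi_W}_{-\frac{\alpha-1}{2}}(\pi_\alpha)$ is either $0$ or equals $\pi_{\alpha-2}$ — should be extracted from Lemma \ref{lem_L}(ii) by realizing both $\pi_\alpha$ and $\pi_{\alpha-2}$ as images of the appropriate $\Jac_{L_1,\dots,L_r}$ from a common dominating parameter and tracking the added block through the composition; the commutation of $\Jac_{-\frac{\alpha-1}{2}}^{\chi_W}$ past the other $\Jac_{L_i}$ operators is the computation requiring care, and is controlled by the two bullet-point support conditions of the type appearing in Proposition \ref{prop_wwtawwtaKudla}. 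The reverse inclusion (nonvanishing: $\pi_{\alpha-2}\neq 0 \Rightarrow \Jac^{\chi_W}_{-\frac{\alpha-1}{2}}(\pi_\alpha)\neq 0$) I expect to handle by a Frobenius-reciprocity / Langlands-data argument, writing $\pi_\alpha$ as a subrepresentation of an induced representation with $\chi_W\lvert\cdot\rvert^{-\frac{\alpha-1}{2}}$ in the inducing data (available because the added block sits at the ``top'' of the parameter and $\alpha>2$), which forces $\Jac^{\chi_W}_{-\frac{\alpha-1}{2}}$ to be nonzero.
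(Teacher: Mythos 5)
Your strategy — showing $\Jac^{\chi_W}_{-\frac{\alpha-1}{2}}(\pi_\alpha)$ is irreducible and equal to $\pi_{\alpha-2}$, checking $\Jac^{\chi_V}_{\frac{1-\alpha}{2}}(\pi)=0$ via Lemma \ref{lem_Jac=0}, and then invoking Lemma \ref{lem_descent} — is precisely the paper's. Two small corrections first: the added block $\chi_W\otimes S_1\otimes S_\alpha$ corresponds to $(A,B,\zeta)=(\tfrac{\alpha-1}{2},\tfrac{\alpha-1}{2},-1)$, a singleton segment with $\zeta=-1$, not $(\tfrac{\alpha-1}{2},0,+1)$ as you write; and the sign worry you raise in applying Lemma \ref{lem_Jac=0} is unnecessary, since $\Jac_{\frac{1-\alpha}{2}}(\pi)=0$ requires exactly $\frac{1-\alpha}{2}\neq\zeta_iB_i$ for all $i$, which is verbatim the hypothesis $\zeta_jB_j\neq -\frac{\alpha-1}{2}$.

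The step you flag as the ``technical heart'' is indeed where the work lies, and your sketch stops short of an argument. Rather than commuting $\Jac_{-\frac{\alpha-1}{2}}$ past the other Jacquet operators one at a time, the paper applies Lemma \ref{lem_L}(ii) in the dominating DDR parameter $(\psi_\alpha)_\gg$, choosing an admissible order that places the added singleton between the blocks with $B_j\leq\frac{\alpha-3}{2}$ and those with $B_j\geq\frac{\alpha-1}{2}$, and realizes $(\pi_\alpha)_\gg$ as a subrepresentation of $L_1\times\cdots\times L_i\times\zeta(-\frac{\beta-1}{2},-\frac{\alpha+1}{2})\times\nu^{-\frac{\alpha-1}{2}}\times L_{i+1}\times\cdots\times L_r\rtimes\pi_{\alpha-2}$. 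The hypothesis $\zeta_jB_j\neq-\frac{\alpha-1}{2}$ is used at exactly one point: to commute $\nu^{-\frac{\alpha-1}{2}}$ past $L_{i+1},\ldots,L_r$. Applying $\Jac_{L_1,\ldots,L_i,\,-\frac{\beta-1}{2},\ldots,-\frac{\alpha+1}{2},\,L_{i+1},\ldots,L_r}$ and using the irreducibility built into Lemma \ref{lem_L} then yields the single embedding $\pi_\alpha\hookrightarrow\nu^{-\frac{\alpha-1}{2}}\rtimes\pi_{\alpha-2}$, which delivers nonvanishing, irreducibility of $\Jac_{-\frac{\alpha-1}{2}}(\pi_\alpha)$, and its identification with $\pi_{\alpha-2}$ all at once — so the separate Frobenius-reciprocity argument for nonvanishing you anticipate needing is not required. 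With that in hand, your appeal to Lemma \ref{lem_descent} is exactly the paper's closing step.
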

\begin{proof}
Let $(\psi_{\alpha})_{\gg}$ be a DDR parameter which dominates $\psi_{\alpha}$; denote the corresponding representation by $(\pi_{\alpha})_{\gg}$. The precise order we use on $\Jord(\psi_\alpha)$ is non-important, but we require the following: there exists an index $i$ such that for all $j\le i$ the blocks $(A_j,B_j,\zeta_j)$ satisfy $B_j\le \frac{\alpha-3}{2}$, and for $j>i$ we have $B_j\ge \frac{\alpha -1}{2}.$
Let $\beta$ denote the position to which $\chi_W\otimes  S_1\otimes S_{\alpha}$ gets taken when $\psi_\alpha$ is stretched to $(\psi_{\alpha})_{\gg}$. Then, 
\[
(\pi_{\alpha})_{\gg}\hookrightarrow L_1\times \cdots \times L_i\times \zeta(-\frac{\beta-1}{2},-\frac{\alpha+1}{2})\times \nu^{-\frac{\alpha-1}{2}}\times L_{i+1}\times \cdots \times L_r\times \pi_{\alpha-2}.\]
The above is a typical example of how we stretch parameters. To go from $\pi_{\alpha-2}$ to $(\pi_{\alpha})_{\gg}$, we first move the blocks $j=1,\dotsc,i$, then the added singleton block (first from $\alpha-2$ to $\alpha$, and then from $\alpha$ to $\beta$), followed by the blocks $j=i+1,\dots,r$.
Our assumption that $\zeta_jB_j \neq -\frac{\alpha-1}{2}$ ensures that $\nu^{-\frac{\alpha-1}{2}}$ can switch places with $L_{i+1},\ldots,L_r.$  After applying $\Jac_{L_1,\ldots,L_i,-\frac{\beta-1}{2},\dotsc,-\frac{\alpha+1}{2},L_{i+1},\ldots,L_r}$ we get
\[\pi_{\alpha}\hookrightarrow \nu^{-\frac{\alpha-1}{2}}\rtimes \pi_{\alpha-2}.\]
Since $\pi_{\alpha}= \theta_{-\alpha}(\pi)$, this shows $\Jac_{-\frac{\alpha-1}{2}}(\theta_{-\alpha}(\pi))$ is irreducible and isomorphic to $\pi_{\alpha-2}$. Moreover, $\Jac_{-\frac{\alpha-1}{2}}(\pi)\allowbreak =0$ by Lemma \ref{lem_Jac=0}. The result now follows from Lemma \ref{lem_descent}.
\end{proof}
We can now deduce Theorem A for general $\pi$ from Lemma \ref{lem_A_babycase}.
\begin{proof}[Proof of Theorem \ref{theoremA}]
First, we fix an admissible order $>$ on $\Jord(\psi)=\{(A_i,B_i,\zeta_i): i=1,\dotsc,r\}$. The precise nature of this order is non-important, but we require that there exist an index $k\in \{1\dotsc,r\}$ such that $j>k$ if and only if 
\begin{itemize}
    \item $B_j > \frac{\alpha-1}{2}$ or
    \item $B_j = \frac{\alpha-1}{2}$ and $\zeta_j = -1$.
\end{itemize}
 We extend the order $>$ to $\psi_{\alpha}= \psi \oplus 1\otimes S_1 \otimes S_\alpha$ by inserting $1\otimes S_1 \otimes S_\alpha$ between $(A_k,B_k,\zeta_k)$ and $(A_{k+1},B_{k+1},\zeta_{k+1})$.
Now let $\pi_{>}$ and $(\pi_{\alpha})_{>}$ denote the representations obtained from $\pi$ and $\pi_{\alpha}$ by shifting up all the blocks $(A_i,B_i,\zeta_i)$ for $i > k$.
Note that $\pi_{>}$ then satisfies the conditions of Lemma \ref{lem_A_babycase}. Furthermore, note that the blocks we are shifting satisfy the bullet points of Proposition \ref{prop_wwtawwtaKudla}. To summarize, we work with the following representations:
\begin{itemize}
\item $\pi$ and its stretched version, $\pi_>$;
\item $\pi_\alpha$ (resp.\,$(\pi_{\alpha})_{>}$) obtained by inserting $1\otimes S_1 \otimes S_\alpha$ into the parameter $\psi$ (resp.\,$\psi_{>}$);
\item $\pi_{\alpha-2}$ and $(\pi_{\alpha-2})_{>}$, obtained from $\pi_{\alpha}$ and $(\pi_{\alpha})_{>}$ by replacing $1\otimes S_1 \otimes S_\alpha$ with $1\otimes S_1 \otimes S_{\alpha-2}$.
\end{itemize}
We break up the proof into three steps:
\bigskip

\noindent \textbf{Step 1:} $\pi_{\alpha}=\theta_{-\alpha}(\pi)$ implies $(\pi_{\alpha})_{>}=\theta_{-\alpha}(\pi_>)$. Indeed, by construction, 
\[
(\pi_{\alpha})_{>} \hookrightarrow L_{k+1} \times \dotsb \times L_r\rtimes \pi_{\alpha}.
\]
Since $\theta_\alpha(\pi_\alpha)=\pi\neq 0$, Proposition \ref{prop_wwtawwtaKudla} a) shows $\theta_\alpha((\pi_{\alpha})_{>})\neq 0$. Furthermore, part c) of the same proposition shows
\[
\theta_\alpha((\pi_{\alpha})_{>}) \hookrightarrow L_{k+1} \times \dotsb \times L_r\rtimes \pi.
\]
Since $\pi_{>}$ is precisely the unique irreducible subrepresentation of the representation on the right, we conclude $\theta_\alpha((\pi_{\alpha})_{>}) = \pi_{>}$. Equivalently, $\theta_{-\alpha}(\pi_{>}) = (\pi_{\alpha})_{>}$. 

\bigskip

\noindent \textbf{Step 2:} $(\pi_{\alpha-2})_{>} \neq 0 \iff \pi_{\alpha-2} \neq 0$.

\medskip

\noindent By construction, $\pi_{\alpha-2} = \Jac_{L_{k+1}, \dotsc, L_r}((\pi_{\alpha-2})_{>})$. Thus, $(\pi_{\alpha-2})_{>} = 0$ implies $\pi_{\alpha-2} = 0$.

Also by construction, $(\pi_{\alpha-2})_{>} =\Jac_{-\frac{\alpha-1}{2}}((\pi_{\alpha})_{>})$. Thus, if $(\pi_{\alpha-2})_{>}\neq 0$, we get 
\[
(\pi_{\alpha})_{>} \hookrightarrow \nu^{-\frac{\alpha-1}{2}} \rtimes (\pi_{\alpha-2})_{>}.
\]
Since we know that $\pi_{\alpha} = \Jac_{L_{k+1}, \dotsc, L_r}((\pi_{\alpha})_{>})  \neq 0$ (and none of the $L_i$'s contain $\pm \frac{\alpha-1}{2}$), the above embedding forces $\Jac_{L_{k+1}, \dotsc, L_r}((\pi_{\alpha-2})_{>})  \neq 0$. But this means precisely $\pi_{\alpha-2}\neq 0$.

\bigskip

\noindent \textbf{Step 3:} $\theta_{2-\alpha}(\pi)=\pi_{\alpha-2}$. By construction, $\pi_>$ satisfies the conditions of Lemma \ref{lem_A_babycase}. Step 1 shows $(\pi_{\alpha})_{>}=\theta_{-\alpha}(\pi_>)$, and Step 2 shows $(\pi_{\alpha-2})_{>} \neq 0$ (because we are assuming $\pi_{\alpha-2}\neq 0$ in Theorem \ref{theoremA}). All of this combined shows that we can apply Lemma \ref{lem_A_babycase} to $\pi_>$: we get that $\theta_{2-\alpha}(\pi_>)$ is non-zero and isomorphic to $ (\pi_{\alpha-2})_{>}:=\Jac_{-\frac{\alpha-1}{2}}((\pi_{\alpha})_{>})$. We now have
\[
\theta_{2-\alpha}(\pi_>) = (\pi_{\alpha-2})_{>} \hookrightarrow L_{k+1} \times \dotsb \times L_r\rtimes \pi_{\alpha-2},
\]
and it remains to compute $\theta_{\alpha-2}$. We apply Proposition \ref{prop_wwtawwtaKudla} again (the bullet points are still satisfied when we replace $\alpha$ by $\alpha-2$) to get
\[
\pi_> \hookrightarrow L_{k+1} \times \dotsb \times L_r\rtimes \theta_{\alpha-2}(\pi_{\alpha-2}).
\]
Note that we are using Convention \ref{conv_dettwist} when applying $\theta_{\alpha-2}$ here.
Since $\Jac_{L_{k+1}, \dotsc, L_r}(\pi_>) = \pi$, we conclude $\theta_{\alpha-2}(\pi_{\alpha-2}) = \pi$. Equivalently, $\pi_{\alpha-2}=\theta_{2-\alpha}(\pi)$, which we needed to prove.
\end{proof}

Theorem \ref{theoremA} suggests the following question: assuming $\pi_\alpha \neq 0$, when is $\pi_{\alpha-2} \neq 0$? As mentioned in \S \ref{sec_conj}, the question of non-vanishing has been resolved by the work of Xu \cite{xu2021combinatorial} and Atobe \cite{atobe2022construction}: given any $\pi$, one can determine the (non-)vanishing of $\pi_{\alpha-2}$ using the algorithms they developed. However, we do not need to go through the entire algorithm every time we change $\alpha$ to $\alpha-2$. It is often easier to deduce the non-vanishing of $\pi_{\alpha-2}$ by comparing $\psi_{\alpha-2}$ to $\psi_\alpha$; after all, the two parameters are very similar. Indeed, there are certain simple situations\,---\,that is, positions of the added term $\chi_W\otimes S_{1} \otimes S_{\alpha}$ relative to the other terms in $\psi_\alpha$\,---\,in which we always know that $\pi_\alpha \neq 0$ implies $\pi_{\alpha-2} \neq 0$. These will be useful to keep in mind going forward:
\begin{lem}
\label{lem_where_can_it_stop}
Let $\pi$ be parametrized by 
\[
\psi = \bigoplus_{i=1}^r\overset{t_i,\eta_i}{\chi_V\otimes S_{a_i} \otimes S_{b_i}} 
\]
where we have indexed the terms according to the order on $\Jord({\psi})$. Fixing $j\in \{1,\dotsc,r\}$, assume that we have $A_i \ll B_j$ for $i < j$ ($0 \ll B_1$ if $j=1$) and $B_i \gg A_j$ for $i > j$.

Now suppose $\pi_\alpha\neq 0$, where $B_j \leq \frac{\alpha-1}{2} < B_{j+1}$.
Then $\pi_{\alpha-2} \neq 0$ if
\begin{itemize}
\item[(i)] $\frac{\alpha-1}{2} >  A_j+1 $; or
\item[(ii)] $\frac{\alpha-1}{2} \leq A_j$.
\end{itemize}
Finally, if $\frac{\alpha-1}{2}= A_j+1$, then 
\begin{itemize}
\item[(iii)] $
\pi_{\alpha-2} =  0\quad \text{if and only if}\quad a_j < b_j,\ t_j=0, \text{ and }\eta \neq (-1)^{A_j-B_j}\eta_j$.
\end{itemize}
%Let $\alpha > 1$ and suppose that $\pi_\alpha \neq 0$ corresponds to the parameter
%\[
%\psi_\alpha = \bigoplus_{i=1}^r\overset{t_i,\eta_i}{\chi_W\otimes S_{a_i} \otimes S_{b_i}} \,\oplus\, \overset{\eta}{\chi_W\otimes S_{1} \otimes S_{\alpha}}
%\]
%obtained by applying the Recipe to $\psi$. Then $\pi_{\alpha-2} \neq 0$ if
%\begin{itemize}
%\item[(i)] $\{\frac{\alpha-1}{2}, \frac{\alpha-3}{2}\} \cap [B_i,A_i] = \emptyset$, $\forall i$; or
%\item[(ii)] $\exists i :$ $\frac{\alpha-1}{2} \in [B_i,A_i]$ and for all $j\neq i$, $B_i \gg A_j$ or $A_i \ll B_j$.
%\end{itemize}
%Moreover, suppose $\frac{\alpha-1}{2}= A_i+1$, and for all $j\neq i$, $B_i \gg A_j$ or $A_i \ll B_j$. Then 
%\begin{itemize}
%\item[(iii)] $
%\pi_{\alpha-2} =  0\quad \text{if and only if}\quad a_i < b_i,\ t_i=0, \text{ and }\eta \neq (-1)^{A_i-B_i}\eta_i$.
%\end{itemize}
\end{lem}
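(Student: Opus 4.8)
The plan is to read off the non-vanishing of $\pi_\alpha$ and $\pi_{\alpha-2}$ from M\oe glin's construction (Lemma \ref{lem_L}) and the change-of-order formulas (Lemma \ref{lem_xu}), after using the stretching hypotheses to localize everything to block $j$ together with the added singleton. Write $x=\tfrac{\alpha-1}{2}$, so that the block added to $\psi$ to form $\psi_\alpha$ (resp.\ $\psi_{\alpha-2}$) is the singleton $(x,x,-)$ (resp.\ $(x-1,x-1,-)$), and recall $B_j\le x<B_{j+1}$. Because $A_i\ll B_j$ for $i<j$ and $B_i\gg A_j$ for $i>j$, the segments $[B_i,A_i]$ with $i\ne j$ are pairwise disjoint and disjoint from $[B_j,A_j]$, from $[x,x]$ and from $[x-1,x-1]$, so the Jacquet functors of M\oe glin's construction that reposition those far blocks commute past all the remaining data and, by Lemma \ref{lem_Jac=0}, influence neither the non-vanishing of $\pi_\alpha$ nor that of $\pi_{\alpha-2}$ (this is the mechanism already exploited in Proposition \ref{prop_wwtawwtaKudla}). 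I would therefore first reduce to the two-block parameter carried by block $j$ and the singleton. Case (i), where $x>A_j+1$, is then immediate: both $x$ and $x-1$ lie strictly above $[B_j,A_j]$ and strictly below $[B_{j+1},A_{j+1}]$, so $\psi_\alpha$ and $\psi_{\alpha-2}$ are DDR; no block of $\psi$ has $B=x$, so the Recipe carries the (admissible) data of $\pi_\alpha$ over to $\pi_{\alpha-2}$ unchanged, the determinant condition \eqref{eq_det_condition} being preserved because $\epsilon_{t,\eta}$ of the added singleton equals $\eta$ for both $\alpha$ and $\alpha-2$; hence $\pi_{\alpha-2}\ne0$ by M\oe glin's construction of DDR packets (or Xu's non-vanishing criterion).

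Case (ii), where $B_j\le x\le A_j$: now the singleton of $\psi_\alpha$ sits inside $[B_j,A_j]$ and $\psi_\alpha$ is not DDR. The key observation I would make is that the assumption $\pi_\alpha\ne0$ already rules out the configuration $\zeta_j=-1$, $t_j=0$, and $\eta(A_j,B_j,-)\ne(-1)^{A_j-B_j}\eta(x,x,-)$: in that case the ``third case'' of Lemma \ref{lem_xu}, applied to move the singleton past block $j$ in the order, sends $t_j\mapsto t_j-1=-1$ and forces $\pi_\alpha=0$. With that configuration excluded I pass to $\psi_{\alpha-2}$. If $x=B_j$, then $\psi_{\alpha-2}$ is DDR (the singleton has dropped just below $[B_j,A_j]$) and the Recipe modifies the data of block $j$ by one application of Lemma \ref{lem_xu}; a short inspection of its three cases, using $A_j-B_j=a_j-1$, $t_j\le\lfloor a_j/2\rfloor$ and the exclusion when $t_j=0$, shows the modified data is still admissible, so $\pi_{\alpha-2}\ne0$. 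If $B_j<x\le A_j$, both parameters are non-DDR and the Recipe changes nothing; here I would dominate both by the same DDR parameter (shifting block $j$ up) and compare the two Jacquet-module constructions directly, the passage from $x$ to $x-1$ being harmless since $x-1$ still lies in $[B_j,A_j]$ and the excluded configuration is the only obstruction to non-vanishing.

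Case (iii), where $x=A_j+1$: here $\psi_\alpha$ is DDR (the singleton sits just above $[B_j,A_j]$), so $\pi_\alpha\ne0$ automatically, while in $\psi_{\alpha-2}$ the singleton has dropped to $A_j$, the top of $[B_j,A_j]$; no block of $\psi$ has $B=x$, so the Recipe keeps the data and puts the singleton immediately above block $j$ in the order. I would apply Lemma \ref{lem_xu} to slide it immediately below block $j$. If $\zeta_j=+1$ (that is $a_j\ge b_j$) then $t$ is unchanged and only the signs move, so $\pi_{\alpha-2}\ne0$. If $\zeta_j=-1$ and the two signs match, we are in the first or second case of Lemma \ref{lem_xu}, under which (again using $A_j-B_j=a_j-1$, $t_j\le\lfloor a_j/2\rfloor$) the $t$-value stays in range, so $\pi_{\alpha-2}\ne0$. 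If $\zeta_j=-1$ and the signs are mismatched, the third case sends $t_j\mapsto t_j-1$: admissible precisely when $t_j\ge1$, and forcing $\pi_{\alpha-2}=0$ precisely when $t_j=0$. Translating ``$\zeta_j=-1$, $t_j=0$, signs mismatched'' through the sign conventions of Proposition \ref{prop_visoko} (which flips the signs of the $\chi_V$-blocks) and the Recipe produces exactly the stated condition, $a_j<b_j$, $t_j=0$, $\eta\ne(-1)^{A_j-B_j}\eta_j$.

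The step I expect to be hardest is the sub-case $B_j<x\le A_j$ of case (ii), where neither $\psi_\alpha$ nor $\psi_{\alpha-2}$ is DDR, so ``DDR plus admissible data implies non-zero'' is unavailable and one must follow the Jacquet-module construction honestly---most plausibly by inducting on the length $A_j-B_j$ of the segment, peeling off one endpoint at a time and invoking Lemma \ref{lem_xu} at each step. The localization to a two-block parameter in the first paragraph also needs care (one must check that shifting and un-shifting the far blocks $i\ne j$, which is how M\oe glin's construction treats them, genuinely decouples from the vanishing behaviour at block $j$, using disjointness of segments and Lemma \ref{lem_Jac=0}), while the sign bookkeeping in (iii) is fiddly but mechanical.
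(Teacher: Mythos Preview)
Your reduction to the two-block situation (block $j$ together with the added singleton) is exactly what the paper does, and your treatment of case (i) is correct and matches the paper: once the other blocks are pushed far away, $(\psi_{\alpha-2})_>$ is DDR and non-vanishing is automatic.

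The divergence is in cases (ii) and (iii). The paper does not try to argue non-vanishing via the change-of-order formulas; instead, it invokes Xu's Proposition~5.2 (the compatibility conditions for two overlapping blocks) as a black box, checking that if the pair $\bigl((A_j,B_j,\zeta_j),\,(x,x,-)\bigr)$ satisfies Xu's conditions then so does $\bigl((A_j,B_j,\zeta_j),\,(x-1,x-1,-)\bigr)$, and that in case (iii) the only way the conditions can fail is the stated one. It then writes ``we omit the details''.

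Your route via Lemma~\ref{lem_xu} has a genuine gap: you repeatedly infer $\pi_{\alpha-2}\neq 0$ from the fact that the change-of-order formula produces \emph{admissible} data (e.g.\ ``$t$ is unchanged and only the signs move, so $\pi_{\alpha-2}\neq 0$''). But Lemma~\ref{lem_xu} only tells you how to reparametrize a possibly-zero representation in a different admissible order; obtaining valid $(t',\eta')$ after swapping does not by itself certify non-vanishing when the parameter is still non-DDR. This is precisely the content of Xu's compatibility criterion, which you are tacitly using without citing. You correctly flag the sub-case $B_j<x\le A_j$ of (ii) as the hard spot, but the same missing step is present throughout (iii): after you slide the singleton below block~$j$, the parameter is still not DDR (the singleton sits at $A_j\in[B_j,A_j]$), so you still owe a non-vanishing argument. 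The cleanest fix is simply to cite \cite[Proposition~5.2]{xu2021combinatorial} at that point, as the paper does.
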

\begin{proof}
First, we reduce the proof to the case where $\psi$ is DDR. We let $(\psi_\alpha)_\gg$ (resp.\ $(\psi_{\alpha-2})_\gg$) be a DDR parameter which dominates $\psi_\alpha$ (resp.\ $\psi_{\alpha-2}$).
Because we are assuming that $[B_j,A_j]$ is far away from all the other segments, we may construct this DDR parameter without moving the term $\chi_W\otimes S_{a_j} \otimes S_{b_j}$ (i.e.\ with $T_j=0$). For the same reason, we may assume that in this construction $1 \otimes S_1 \otimes S_\alpha$ gets replaced by $1 \otimes S_1 \otimes S_\beta$ where $\beta$ satisfies $\frac{\beta-1}{2} < B_{i}$ for $i>j$. Under these assumptions, we have
\[
\pi_\alpha = \Jac_{L_{j+1},\dotsc,L_r}\Jac_{-\frac{\beta-1}{2},\dotsc,-\frac{\alpha+1}{2}}\Jac_{L_1,\dotsc,L_{j-1}}(\pi_\alpha)_\gg.
\]
Moreover, $\Jac_{-\frac{\beta-1}{2},\dotsc,-\frac{\alpha+1}{2}}$ commutes with $\Jac_{L_i}$ for $i\neq j$. We may therefore write the above equality as
\[
\pi_\alpha = \Jac_{L_1,\dotsc,L_{j-1},L_{j+1},\dotsc,L_r}(\pi_\alpha)_>,
\]
where $(\pi_\alpha)_>$ denotes the (non-zero) representation $\Jac_{-\frac{\beta-1}{2},\dotsc,-\frac{\alpha+1}{2}}(\pi_\alpha)_\gg$. We let $(\psi_{\alpha})_>$ denote the corresponding parameter. One constructs $(\psi_{\alpha-2})_>$ and $(\pi_{\alpha-2})_>$ analogously, by computing $\Jac_{-\frac{\beta-1}{2},\dotsc,-\frac{\alpha+1}{2},-\frac{\alpha-1}{2}}$ instead of $\Jac_{-\frac{\beta-1}{2},\dotsc,-\frac{\alpha+1}{2}}$. In particular, we have $(\pi_{\alpha-2})_> = \Jac_{-\frac{\alpha-1}{2}}(\pi_{\alpha})_>$ (possibly zero).

But once more, $\Jac_{-\frac{\alpha-1}{2}}$ commutes with $\Jac_{L_i}$ for all $i\neq j$. This implies
\[
\Jac_{-\frac{\alpha-1}{2}}\pi_\alpha \neq 0 \Rightarrow \Jac_{-\frac{\alpha-1}{2}}(\pi_\alpha)_>\neq 0.
\]
The reverse implication follows from $(\pi_\alpha)_> \hookrightarrow L_1 \times \dotsb \times L_{j-1} \times L_{j+1} \times \dotsb \times L_{r} \rtimes \pi_\alpha$ combined with the fact that none of the $L_i$'s contain $\pm \frac{\alpha-1}{2}$. Indeed, all the entries in the generalized segment $L_i$ are either smaller (by absolute value) than $\frac{\alpha-1}{2}$ (if $i<j$) or larger, if $i>j$.

Thus $\pi_{\alpha-2}\neq 0$ if and only if $(\pi_{\alpha-2})_>\neq 0$. With this, we are ready to prove the proposition.

In case (i), $(\psi_{\alpha-2})_>$ is DDR, so $(\pi_{\alpha-2})_>\neq 0$ is automatic. In cases (ii) and (iii), the non-vanishing of $(\pi_{\alpha-2})_>$ comes down to checking the compatibility conditions of \cite[Proposition 5.2]{xu2021combinatorial}; one needs to verify the following:
\begin{itemize}
\item[(ii)] If $\chi_W\otimes S_{1} \otimes S_{\alpha}$ and $\chi_W\otimes S_{a_j} \otimes S_{b_j}$ satisfy the compatibility conditions of Proposition 5.2 [ibid.], then the same is true of $\chi_W\otimes S_{1} \otimes S_{\alpha-2}$ and $\chi_W\otimes S_{a_j} \otimes S_{b_j}$.
\item[(iii)] Here $\alpha=a_j+b_j+1$. The only situation in which $\chi_W\otimes S_{1} \otimes S_{a_j+b_j-1}$ and $\chi_W\otimes S_{a_j} \otimes S_{b_j}$ do not satisfy these compatibility conditions is if $a_j < b_j$, $t_j=0$, and $\eta \neq (-1)^{A_j-B_j}\eta_j$.
\end{itemize}
We omit the details.
\end{proof}
\begin{rem}
\label{rem_descent}\begin{enumerate}[a)]
\item The above lemma justifies the following heuristic. Loosely speaking, to determine $d(\pi,\psi)$, one starts by forming $\psi_\alpha$ for $\alpha \gg 0$, and proceeds to lower the added summand through the parameter. One can always lower the added summand $\chi_W\otimes S_{1} \otimes S_{\alpha}$ through “empty space” (case (i) above). Furthermore, the added summand can skip over any “isolated” summands it encounters; the only exception is described by case (iii) above.
\item In particular, case (iii) provides some intuition as to why the Recipe might stop working: the added summand cannot go past a block which has $\zeta = -1$ and $t=0$ if the signs $\eta$ are incompatible. This observation makes it easy to determine $d(\pi,\psi)$ in certain situations (see Examples \ref{ex_two} and \ref{ex_four}).
\end{enumerate}

\end{rem}
We close this section by noting another consequence of Xu's algorithm. In the following lemma, we write $\chi$ and $\chi'$ to signify that we might be dealing with representations of two groups of different type (orthogonal or symplectic):
\begin{lem}
\label{lem_high_blocks_dont_matter}
Suppose $\pi$ and $\pi'$ are two representations parametrized by
\begin{align*}
\psi &= \overbrace{\bigoplus_{i=1}^k\overset{t_i,\eta_i}{\chi\otimes S_{a_i} \otimes S_{b_i}}}^{\text{low blocks}} \,\oplus\, \overbrace{\bigoplus_{i=k+1}^r\overset{t_i,\eta_i}{\chi\otimes S_{a_i} \otimes S_{b_i}}}^{\text{high blocks}} \quad \text{and}\\
\psi'&= \underbrace{\bigoplus_{i=1}^k\overset{t_i,\eta_i'}{\chi' \otimes S_{a_i} \otimes S_{b_i}}}_{\text{low blocks}} \,\oplus\, \underbrace{\bigoplus_{i=k+1}^{r'}\overset{t_i',\eta_i'}{\chi'\otimes S_{a_i'} \otimes S_{b_i'}}}_{\text{high blocks}}
\end{align*}
whose low blocks coincide. Moreover, let $\eta_i' = \eta_i$ for $i=1,\dotsc,k$. We assume all the high blocks (indexed by $i > k$) are far away from the low blocks and from each other.

Suppose that there exists a level $\alpha$ such that $\frac{\alpha-1}{2} < \textnormal{min}\{B_{k+1}, B_{k+1}'\}$ and $d(\pi,\psi), d(\pi',\psi') \leq \alpha$. If the value $\eta(\chi_W\otimes S_1 \otimes S_\alpha)$ is the same in $\psi_\alpha$ and $\psi_\alpha'$, then $d(\pi,\psi) = d(\pi',\psi')$. The same conclusion holds if we assume $\eta_i' = -\eta_i$ for $i=1,\dotsc,k$ and require that $\psi_\alpha$ and $\psi_\alpha'$ have different values of $\eta(\chi_W\otimes S_1 \otimes S_\alpha)$.
\end{lem}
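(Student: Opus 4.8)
\emph{Plan.} My plan is to discard the high blocks, reducing both $\psi$ and $\psi'$ to parameters built only from the (common) low blocks and the added summand $\chi_W\otimes S_1\otimes S_\alpha$, and then to compare the two resulting Recipes directly. The point is that $d(\pi,\psi)$ depends only on which of the representations $\pi_\beta$ are non-zero, and that — below level $\alpha$ — the Recipe never touches the high blocks.

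First I would make the reduction precise. Order $\Jord(\psi)$ so that $B_{k+1}$ is minimal among the $B$'s of the high blocks, and likewise for $\psi'$. For every odd $\beta$ with $0<\beta\le\alpha$ the added summand $\chi_W\otimes S_1\otimes S_\beta$ is the singleton Jordan block with $A=B=\tfrac{\beta-1}{2}$; since $\tfrac{\beta-1}{2}\le\tfrac{\alpha-1}{2}<B_{k+1}$, no high block has $B=\tfrac{\beta-1}{2}$, so along the whole Recipe from level $\alpha$ downwards the added summand is never swapped past a high block, and the only reorderings that occur involve the added summand and a low block. Moreover, at level $\alpha$ the configuration formed by the transformed low blocks together with the added summand is ``far'' from the (transformed) high blocks, so $\psi_\alpha$ splits as a separated sum; by the standard argument with Kudla's filtration and M\oe glin's construction (cf.\ the proof of Lemma~\ref{lem_where_can_it_stop}), $\pi_\alpha$ is obtained from the two parts by parabolic induction followed by Langlands quotient, hence is non-zero iff both parts are. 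Writing $\overline\pi_\alpha$ for the representation parametrized by the low-plus-added part of $\psi_\alpha$, with the $(t,\eta)$ it carries there, and $\overline\pi_\beta$ ($\beta\le\alpha$) for what the Recipe produces from it, the fact that the descent below level $\alpha$ sees only low blocks shows that the low-plus-added part of $\psi_\beta$ is exactly $\overline\psi_\beta$ for all $\beta\le\alpha$, while the high part stays frozen; hence $\pi_\beta\ne0\iff\overline\pi_\beta\ne0$ for $0<\beta\le\alpha$, and, since $d(\pi,\psi)\le\alpha$, $d(\pi,\psi)=\min\{\beta\le\alpha:\overline\pi_\beta\ne0\}$. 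The analogue holds for $\pi'$, so it remains to compare the two reduced Recipes started at level $\alpha$.

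Next I would run the case analysis. The transformation of Proposition~\ref{prop_visoko} acts the same way on the low blocks of $\psi$ and of $\psi'$, so the low parts of $\overline\psi_\alpha$ and $\overline\psi_\alpha'$ carry the same Jordan blocks, the same $t$-values, and $\eta$-values that (per the two scenarios) agree or differ by a global sign. In the first scenario — $\eta_i'=\eta_i$ and equal values of $\eta(\chi_W\otimes S_1\otimes S_\alpha)$ — the data $(\overline\psi_\alpha,t,\eta)$ and $(\overline\psi_\alpha',t,\eta)$ coincide; the Recipe being deterministic, $\overline\pi_\beta=\overline\pi_\beta'$ for all $\beta$, and $d(\pi,\psi)=d(\pi',\psi')$. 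In the second scenario — $\eta_i'=-\eta_i$ and opposite values of $\eta(\chi_W\otimes S_1\otimes S_\alpha)$ — the data for $\overline\pi'$ is that for $\overline\pi$ with every $\eta$-value negated, and the Recipe is equivariant under this: its reordering trigger (``some block has $B=\tfrac{\alpha-1}{2}$'') is $\eta$-free, and the change-of-order formulas of Lemma~\ref{lem_xu} commute with $\bigl(\eta(A,B,\zeta),\eta(x,x,-)\bigr)\mapsto\bigl(-\eta(A,B,\zeta),-\eta(x,x,-)\bigr)$ — every output $\eta$ flips, $t'$ is unchanged. So at each level $\beta\le\alpha$ the Recipe data for $\overline\pi'$ is that for $\overline\pi$ with all $\eta$'s negated, and I would finish by observing that negating all $\eta$'s preserves non-vanishing of a representation of Arthur type: the criterion one uses (Xu's compatibility conditions \cite[Proposition~5.2]{xu2021combinatorial}, as in the proof of Lemma~\ref{lem_where_can_it_stop}) is built from $t$-values and from equalities $\eta(\cdot)=\pm\eta(\cdot)$, all insensitive to a uniform sign change; only the normalization \eqref{eq_det_condition} responds, and it just replaces $\epsilon_G$ by $\pm\epsilon_G$ (the sign depending only on the $a_i,b_i$), i.e.\ passes to the companion group — exactly the latitude built into writing $\chi$ versus $\chi'$. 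Hence $d(\pi,\psi)=d(\pi',\psi')$ in this scenario too.

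The step I expect to be the main obstacle is making the reduction in the second paragraph airtight: one must check that $\psi_\alpha$, and then each $\psi_\beta$ for $\beta\le\alpha$, genuinely splits as a separated sum whose high part is left alone by the descent, and that the shift-down Jacquet functors never meet the exponents $\pm\tfrac{\beta-1}{2}$ or the high-block entries. This is bookkeeping of the same kind as in Lemmas~\ref{lem_A_babycase} and~\ref{lem_where_can_it_stop}, but it is more delicate here because the high blocks, though ``far away'', may have had their $(t,\eta)$ altered during the initial descent from $\alpha\gg0$, so one cannot simply quote Proposition~\ref{prop_visoko} for their values at level $\alpha$.
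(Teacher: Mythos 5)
Your plan follows the same high-level idea as the paper's proof — reduce the comparison to the low blocks together with the added summand, then observe that the Recipe is deterministic on this data — and your treatment of the two sign scenarios at the end is fine. However, there is a genuine gap in the reduction step, which is indeed the "main obstacle" you flag.

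The claim that $\pi_\alpha$ is "obtained from the two parts by parabolic induction followed by Langlands quotient, hence is non-zero iff both parts are" is false for good-parity parameters. When $\psi$ is of good parity and its Jordan blocks along a cuspidal line split into two far-apart groups, the representation $\pi$ is generally \emph{not} a Langlands quotient of an induced representation from representations attached to the two groups separately. A concrete illustration: the discrete series of $\Sp_{12}$ attached to the tempered parameter $1\otimes S_1\oplus 1\otimes S_5\oplus 1\otimes S_7$ is a discrete-series representation (hence not a Langlands quotient of a proper standard module) even though $S_1$ is far from $S_5\oplus S_7$; one can make the gap as wide as one likes by going to bigger groups. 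M\oe glin's construction does produce such reps by applying Jacquet functors to a DDR representation, not by inducing from a splitting of the parameter — the reference you give (Lemma \ref{lem_where_can_it_stop}) in fact uses the Jacquet-module mechanism, not induction. Only the bad-parity part of a parameter peels off as a genuine parabolic induction.

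What is actually needed, and what the paper invokes, is the locality built into Xu's non-vanishing criterion: the compatibility conditions of \cite[Proposition 5.2]{xu2021combinatorial} that govern whether $\pi_\beta\neq 0$ involve only \emph{adjacent} Jordan blocks, and any block sufficiently far away is automatically compatible with everything nearby. Thus for $\beta\leq\alpha$ the (non-)vanishing of $\pi_\beta$ is determined by the low blocks and the added block alone — not because the representation factors as an induction, but because the combinatorial test does not see the high blocks. If you replace your parabolic-induction justification with this locality statement (which also makes Corollary \ref{cor_removal} and the well-definedness of $\overline{\pi}_\beta$ unnecessary as intermediate steps), the remainder of your argument — that below level $\alpha$ the Recipe never reorders past a high block, and that the reduced data for $\pi$ and $\pi'$ agree up to a global sign flip under which the Recipe and Xu's test are equivariant — goes through and reproduces the paper's proof.
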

\begin{proof}
We are assuming that $d(\pi,\psi)\leq  \alpha$ and $d(\pi',\psi') \leq \alpha$: therefore, we can apply the Recipe to obtain non-zero representations $\pi_\alpha$ and $\pi_\alpha'$. The lower parts of $\psi_\alpha$ and $\psi_\alpha'$ will still look the same, and we are assuming that both parameters have the same value of $\eta$ on the added block. To find $d(\pi,\psi)$ and $d(\pi',\psi')$ we keep lowering the added block (using the Recipe) until we get parameters that correspond to the zero representation. The starting observation in Xu's algorithm is that the non-vanishing of a representation is independent of the far-away blocks in the corresponding parameter. Since the high blocks are irrelevant, and the low blocks are the same in $\psi_\alpha$ and $\psi'_\alpha$, we conclude $d(\pi,\psi)=d(\pi',\psi')$. 

For the last claim of the Lemma, we note that the outcome of Xu's algorithm remains unchanged if all the signs in the low blocks are flipped.
\end{proof}
\section{Theorem B}
\label{sec_thmB}
We recall the setting of Theorem \ref{theoremB}. We are simultaneously lifting $\pi$ to two different towers, $\mathcal V^+$ and $\mathcal V^-$; we denote the corresponding lifts by $\theta^\pm(\pi)$. For $\alpha \gg 0$, the lifts $\theta_{-\alpha}^\pm(\pi)$ are given by parameters
\[
\psi_{\alpha}^{\pm} = \chi_W\chi_V^{-1}\psi \oplus \overset{\epsilon}{\chi_W \otimes S_1 \otimes S_\alpha}.
\]
To obtain the lower lifts, we descend using the Recipe. At some point, the Recipe will give a parameter $\psi_\alpha^+$ such that $\pi_{\alpha}^+ =0 $; recall that then  $d^{+}(\pi,\psi)=\alpha+2$; $d^{-}(\pi,\psi)$ is defined in an analogous way (see \S \ref{sec_conj}).

It is possible that $\pi_\alpha^\pm \neq 0$ for all $\alpha > 0$. (If that is the case, we set $d^{\pm}(\pi,\psi) =1$.) From the point of view of this paper, this is uninteresting, albeit nice: it implies (using Theorem \ref{theoremA}) that $\theta_{-\alpha}(\pi)$ is non-zero and contained in the A-A packet for all $\alpha>0$. Because of this, we assume that at least one of the indices $d^{\pm}(\pi,\psi)$ is greater than $1$. 

Without loss of generality, assume $d^-(\pi,\psi)\geq d^+(\pi,\psi)$. To simplify notation, let $\alpha = d^-(\pi,\psi)$. Thus the assumption is $\pi_\alpha^\pm \neq 0$, but $\pi_{\alpha-2}^- = 0$ (recall $\alpha \geq 3$). Proving Theorem \ref{theoremB} then amounts to proving the following claims:
\begin{enumerate}[a)]
    \item $\theta_{2-\alpha}^-(\pi)=0$. (This shows that $\mathcal{V}^-$ is the going-up tower and that $\theta_{-\alpha}^-(\pi)$ is the first occurrence of $\pi$ on $\mathcal{V}^-$.)
    \item $\pi_{\alpha-2}^+ \neq 0$.
\end{enumerate}
To prove this, we use the same reduction we used in the proof of Theorem \ref{theoremA}. Namely, we let $(\pi_{\alpha}^\pm)_>$ (resp.~$\pi_>$) be the expanded version of $\pi_{\alpha}^\pm$ (resp.~$\pi$) constructed there. Now Step 1 from the same proof shows:
\begin{itemize}
\item $\theta_{-\alpha}^\pm(\pi_>)=(\pi_{\alpha}^\pm)_> \neq 0$.
\end{itemize}
\begin{rem}
Thus, we may also denote this representation by $(\pi_>)^\pm_\alpha$: Step 1 shows that the representation $(\pi_{\alpha}^\pm)_>$ obtained by stretching the parameter $\psi_\alpha$ is the same as the representation $(\pi_>)^\pm_\alpha$ obtained by applying the Recipe to $\pi_>$. 
\end{rem}
Then, because $\pi_{\alpha-2}^-=0$, Step 2 shows
\begin{itemize}
\item $(\pi_{\alpha-2}^-)_> =0$ \ \ (in view of the Remark above, this is the same as saying  $(\pi_>)^-_{\alpha-2} =0$).
\end{itemize}
Step 2 also shows that we can prove b) $\pi_{\alpha-2}^+ \neq 0$ by proving the following Lemma:
\begin{lem}
\label{lem_Bb}
$(\pi_{\alpha-2}^+)_>\neq 0$.
\end{lem}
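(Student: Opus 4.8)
The plan is to deduce the Lemma from the conservation relation applied to $\pi_{>}$, exploiting that $\pi_{>}$ was constructed so as to satisfy the hypotheses of Lemma~\ref{lem_A_babycase}. Let $n'$ be the dimension of the space on which $\pi_{>}$ acts and put $m=n'+\epsilon+\alpha$, so that for either tower $\theta_{-\alpha}^{\pm}(\pi_{>})$ is the theta lift of $\pi_{>}$ to the $m$-dimensional space on $\mathcal{V}^{\pm}$. Since $\pi_{>}$ satisfies $\zeta_jB_j\neq-\tfrac{\alpha-1}{2}$ for all $j$, Lemma~\ref{lem_Jac=0} gives $\Jac_{-\frac{\alpha-1}{2}}(\pi_{>})=0$; and, as noted in Step~2 of the proof of Theorem~\ref{theoremA} together with Step~1, on both towers
\[
(\pi_{\alpha-2}^{\pm})_{>}=\Jac_{-\frac{\alpha-1}{2}}\bigl((\pi_{\alpha}^{\pm})_{>}\bigr)=\Jac_{-\frac{\alpha-1}{2}}\bigl(\theta_{-\alpha}^{\pm}(\pi_{>})\bigr).
\]
Thus the Lemma is equivalent to $\Jac_{-\frac{\alpha-1}{2}}\bigl(\theta_{-\alpha}^{+}(\pi_{>})\bigr)\neq0$.

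Besides the conservation relation, the only extra input I would use is the ``first occurrence'' companion of Lemma~\ref{lem_descent} (see e.g.\ \cite{muic2004howe}): if $\sigma$ lies strictly past its first occurrence on a Witt tower $\mathcal{V}$, i.e.\ the theta lifts of $\sigma$ to the spaces of dimensions $m$ and $m-2$ on $\mathcal{V}$ are both non-zero, and if $\Jac^{\chi_V}_{-\frac{\alpha-1}{2}}(\sigma)=0$, then $\theta(\sigma,V_m)\hookrightarrow\chi_W|\cdot|^{-\frac{\alpha-1}{2}}\rtimes\theta(\sigma,V_{m-2})$, so in particular $\Jac^{\chi_W}_{-\frac{\alpha-1}{2}}(\theta(\sigma,V_m))\neq0$. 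This is the converse direction of Lemma~\ref{lem_descent}; if one prefers not to cite it, it can be read off from the $H$-side version of Kudla's filtration (Proposition~\ref{prop_Kudla}): applying $r_{Q_1}$ to $\Theta(\pi_{>},V_m)$ produces a two-step filtration, one of whose quotients is $\chi_W|\cdot|^{-\frac{\alpha-1}{2}}\otimes\Theta(\pi_{>},V_{m-2})$ while the other is controlled by $\Jac^{\chi_V}_{-\frac{\alpha-1}{2}}(\pi_{>})$, which vanishes here, so the first quotient passes to the small lift.

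Now I would finish as follows. By Step~1 the lifts $\theta_{-\alpha}^{+}(\pi_{>})$ and $\theta_{-\alpha}^{-}(\pi_{>})$ are both non-zero, so the first occurrence indices of $\pi_{>}$ on the two towers satisfy $m^{+}(\pi_{>}),m^{-}(\pi_{>})\leq m=n'+\epsilon+\alpha$. By the conservation relation $m^{+}(\pi_{>})+m^{-}(\pi_{>})=2n'+2\epsilon+2$, hence $\min\{m^{+}(\pi_{>}),m^{-}(\pi_{>})\}\leq n'+\epsilon+1\leq m-2$, the last inequality because $\alpha\geq3$. Therefore on at least one of the two towers the lift $\theta_{2-\alpha}(\pi_{>})$ is non-zero. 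If this were the case on $\mathcal{V}^{-}$, then $\pi_{>}$ would be strictly past its first occurrence on $\mathcal{V}^{-}$, and the companion to Lemma~\ref{lem_descent} (using $\Jac_{-\frac{\alpha-1}{2}}(\pi_{>})=0$) would give $\Jac_{-\frac{\alpha-1}{2}}\bigl(\theta_{-\alpha}^{-}(\pi_{>})\bigr)\neq0$, i.e.\ $(\pi_{\alpha-2}^{-})_{>}\neq0$, contradicting the conclusion of Step~2. So it must hold on $\mathcal{V}^{+}$: we get $\theta_{2-\alpha}^{+}(\pi_{>})\neq0$, and applying the same companion statement on $\mathcal{V}^{+}$ yields $\Jac_{-\frac{\alpha-1}{2}}\bigl(\theta_{-\alpha}^{+}(\pi_{>})\bigr)\neq0$, that is $(\pi_{\alpha-2}^{+})_{>}\neq0$, which is the Lemma.

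The step I expect to be the main obstacle is the ``first occurrence'' companion to Lemma~\ref{lem_descent} (the converse direction of that lemma, which is available precisely because $\Jac_{-\frac{\alpha-1}{2}}(\pi_{>})=0$); granting it, the remainder is just the conservation relation together with the reduction already carried out in Steps~1 and~2.
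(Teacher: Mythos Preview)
Your argument is correct and takes a genuinely different (and more economical) route than the paper's.

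\textbf{Comparison with the paper.} The paper's proof passes through Lemma~\ref{lem_high_blocks_dont_matter} (which ultimately rests on Xu's non-vanishing algorithm) to replace the high blocks of $\psi_{>}$ by blocks with $\zeta=+1$, and then invokes the explicit description of standard modules of theta lifts from Appendix~\ref{appendix} (Corollary~\ref{cor_first_lift} and Theorem~\ref{downlifts}) to pin down which tower is going-up and to produce the required singleton segment $\frac{\alpha-1}{2}$. Your argument avoids all of this: it uses only the conservation relation, the vanishing $\Jac_{-\frac{\alpha-1}{2}}(\pi_{>})=0$ already established, and the converse to Lemma~\ref{lem_descent}. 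This is more elementary and in fact simultaneously yields Lemma~\ref{lem_Ba} (your contradiction step shows directly that $\theta_{2-\alpha}^{-}(\pi_{>})=0$), whereas the paper deduces Lemma~\ref{lem_Ba} from Lemma~\ref{lem_Bb} in a separate step.

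\textbf{One point to tighten.} The ``companion'' statement you invoke is true and standard (it is indeed in Mui\'c's work and follows from the $H$-side Kudla filtration), but your sketch is not quite accurate as written. The Kudla filtration is a filtration of $r_{Q_1}(\omega_{m,n})$, not of $r_{Q_1}(\Theta(\pi_{>},V_m))$; passing to the $\pi_{>}$-isotypic quotient is only right exact, so you do obtain a surjection $r_{Q_1}(\Theta(\pi_{>},V_m))\twoheadrightarrow\chi_W|\cdot|^{-\frac{\alpha-1}{2}}\otimes\Theta(\pi_{>},V_{m-2})$ from the $J^0$-piece, but the kernel (the image of $(J^1)_{\pi_{>}}$) is not controlled by $\Jac^{\chi_V}_{-\frac{\alpha-1}{2}}(\pi_{>})$ alone---it involves all of $r_{\bar P_1}(\pi_{>})$. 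What actually follows cleanly is a nonzero map $\Theta(\pi_{>},V_m)\to\chi_W|\cdot|^{-\frac{\alpha-1}{2}}\rtimes\theta(\pi_{>},V_{m-2})$; the passage from this to an embedding of the \emph{small} lift $\theta(\pi_{>},V_m)$ requires an extra argument (this is where the Jacquet-module hypothesis on $\pi_{>}$, or a socle computation, enters). Rather than reproving this, you should cite it precisely: the embedding $\theta(\pi,V_m)\hookrightarrow\chi_W|\cdot|^{-\frac{\alpha-1}{2}}\rtimes\theta(\pi,V_{m-2})$ whenever $\Theta(\pi,V_{m-2})\neq0$ is standard and appears in Mui\'c's papers on the structure of theta lifts. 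With that reference in place, your proof is complete.
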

\begin{proof}
The idea is to use Lemma \ref{lem_high_blocks_dont_matter}. 
By construction, for any block $(A,B,\zeta) \in \Jord(\psi_{\alpha,>}^\pm)$ with $B > \frac{\alpha-1}{2}$ we have $B \gg \frac{\alpha-1}{2}$ (in other words, the blocks above $\alpha$ are far away). By Lemma \ref{lem_high_blocks_dont_matter}, the question of non-vanishing of $(\pi_{\alpha-2}^\pm)_>$ is, roughly speaking, independent of these far-away blocks. We use this observation to alter the blocks of $\psi_>$:
if $(A,B,\zeta)$ has $B \gg \frac{\alpha-1}{2}$, replace it by $(A,B,+)$ (we may keep the same $\eta$ and $t$ on this block). Call the resulting parameter $\psi'_>$ and let $\pi'_>$ be the corresponding representation. Now, Remark \ref{rem_descent} shows that we can apply the Recipe to get $(\pi'_>)_\alpha^\pm \neq 0$ (the added summand can always skip over blocks which have $\zeta = +1$). 
%Thus, by Lemma \ref{lem_high_blocks_dont_matter}, it suffices to prove $(\pi'_>)_{\alpha-2}^- \neq 0$ instead of $(\pi_>)_{\alpha-2}^- \neq 0$.

By construction, $\psi_>'$ contains no blocks $(A,B,\zeta)$ with $B > \frac{\alpha-1}{2}$ and $\zeta = -1$. By Lemma \ref{lem_Jac=0}, this implies $\Jac_{-x}(\pi'_>)_{\alpha}^\pm \allowbreak= 0$ for any $x > \frac{\alpha-1}{2}$. Now Lemma \ref{lem_standardni_mod_iznad} shows that there are no segments $[x,y]$ in the standard module of $(\pi'_>)_{\alpha}^\pm$ with $x > \frac{\alpha-1}{2}$.

Furthermore, we have already shown $(\pi_>)_{\alpha-2}^- = 0$ (see the second bullet point above). By Lemma \ref{lem_high_blocks_dont_matter}, this implies that $(\pi'_>)_{\alpha-2}^\epsilon= 0$, where the sign $\epsilon \in \{\pm 1\}$ is chosen so that on this tower, $\eta(\chi_W\otimes S_1 \otimes S_\alpha)$ is the same in $(\psi_>')_\alpha^\epsilon$ and $(\psi_>)_\alpha^-$.

But $(\pi'_>)_{\alpha-2}^\epsilon = \Jac_{-\frac{\alpha-1}{2}}(\pi'_>)_{\alpha}^\epsilon$, so we conclude that this Jacquet module vanishes. In addition to the above claim about the standard module, this shows that there are no segments $[x,y]$ in the standard module of $(\pi'_>)_{\alpha}^\epsilon$ with $x = \frac{\alpha-1}{2}$. Specifically, the singleton $\nu^{\frac{\alpha-1}{2}}$ does not appear. By Corollary \ref{cor_first_lift} b), this means that $(\pi'_>)_{\alpha}^\epsilon$ is the first lift of $\pi'$ on $\mathcal{V}^\epsilon$. In particular, $\mathcal{V}^\epsilon$ is the going-up tower for $\pi'_>$.

But then $\mathcal{V}^{-\epsilon}$ is the going-down tower for $\pi'_>$. By Theorem \ref{downlifts} the standard module of $(\pi'_>)_{\alpha}^{-\epsilon} = \theta_{-\alpha}^{-\epsilon}(\pi'_>) $ contains the singleton segment corresponding to $\nu^{\frac{\alpha-1}{2}}$. Since there are no segments $[x,y]$ with $x>\frac{\alpha -1}{2}$, this implies $\Jac_{-\frac{\alpha-1}{2}}(\pi'_>)_{\alpha}^{-\epsilon} \neq 0$. Therefore $(\pi'_>)_{\alpha-2}^{-\epsilon} \neq 0$. By Lemma \ref{lem_high_blocks_dont_matter} again (comparing the lifts of $\pi_>$ on $\mathcal{V}^+$ and $\pi_>'$ on $\mathcal{V}^{-\epsilon}$), this implies $(\pi_>)_{\alpha-2}^{+} \neq 0$, which we needed to show.
\end{proof}
This takes care of b); it remains to prove a). We prove the following Lemma.
\begin{lem}
\label{lem_Ba}
$\theta_{2-\alpha}^-(\pi_>)=0$.
\end{lem}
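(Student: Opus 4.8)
The plan is to show that $\theta_{-\alpha}^-(\pi_>) = (\pi_\alpha^-)_>$ is \emph{the first occurrence} of $\pi_>$ on the tower $\mathcal V^-$; the desired vanishing $\theta_{2-\alpha}^-(\pi_>) = 0$ then follows immediately from the tower property (Proposition \ref{prop_towers}).

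First I would assemble two vanishing statements. From Step~1 of the reduction, $\theta_{-\alpha}^-(\pi_>) = (\pi_\alpha^-)_>$; from Step~2, $(\pi_{\alpha-2}^-)_> = \Jac_{-\frac{\alpha-1}{2}}\big((\pi_\alpha^-)_>\big)$. Since the hypothesis of Theorem \ref{theoremB} in the case at hand gives $\pi_{\alpha-2}^- = 0$, Step~2 yields $(\pi_{\alpha-2}^-)_> = 0$, i.e.\ $\Jac^{\chi_W}_{-\frac{\alpha-1}{2}}\big(\theta_{-\alpha}^-(\pi_>)\big) = 0$. On the other side, by construction the parameter $\psi_>$ contains no block $(A,B,-1)$ with $B = \frac{\alpha-1}{2}$: every such block has index $>k$ in the chosen order and was stretched so that its $B$-value became $\gg \frac{\alpha-1}{2}$. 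Hence Lemma \ref{lem_Jac=0} gives $\Jac^{\chi_V}_{-\frac{\alpha-1}{2}}(\pi_>) = 0$.

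With both $\Jac^{\chi_W}_{-\frac{\alpha-1}{2}}\big(\theta_{-\alpha}^-(\pi_>)\big) = 0$ and $\Jac^{\chi_V}_{-\frac{\alpha-1}{2}}(\pi_>) = 0$ in hand, the next step is to invoke the first-occurrence criterion to conclude that $\theta_{-\alpha}^-(\pi_>)$ cannot be descended further. Morally this is the converse of Lemma \ref{lem_descent}: that lemma produces the descent $\theta_{2-\alpha}(\pi)$ precisely from a nonzero Jacquet module of $\theta_{-\alpha}(\pi)$ together with vanishing of the corresponding Jacquet module of $\pi$; when neither Jacquet module is available, no descent can exist, so $\Theta_{2-\alpha}^-(\pi_>) = 0$. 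Should the available form of this criterion (the corollary used in Lemma \ref{lem_Bb}) require it, I would first pass to the auxiliary representation $\pi'_>$ obtained, as in that proof, by replacing all far-away blocks of $\psi_>$ by their $\zeta=+1$ analogues (Lemma \ref{lem_high_blocks_dont_matter} guarantees the non-vanishing is unchanged and Remark \ref{rem_descent} that the Recipe still produces $(\pi'_>)_\alpha^\pm \neq 0$); then Lemma \ref{lem_Jac=0} together with Lemma \ref{lem_standardni_mod_iznad} removes every segment $[x,y]$ with $x \geq \frac{\alpha-1}{2}$ from the standard module of $(\pi'_>)_\alpha^\epsilon$ — those with $x>\frac{\alpha-1}{2}$ since $\psi_>'$ has no block $(A,B,-1)$ with $B>\frac{\alpha-1}{2}$, and the one at $x=\frac{\alpha-1}{2}$ since $\Jac_{-\frac{\alpha-1}{2}}\big((\pi'_>)_\alpha^\epsilon\big) = (\pi'_>)_{\alpha-2}^\epsilon = 0$ by Lemma \ref{lem_high_blocks_dont_matter} — so that Corollary \ref{cor_first_lift}(b) applies to $(\pi'_>)_\alpha^\epsilon$.

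The step I expect to be the real obstacle is the bookkeeping that closes the loop. The passage $\pi_> \rightsquigarrow \pi$ is clean: Proposition \ref{prop_wwtawwtaKudla}(a),(b) applied with $\alpha$ replaced by $2-\alpha$ upgrades $\Theta_{2-\alpha}^-(\pi_>) = 0$ to $\Theta_{2-\alpha}^-(\pi) = 0 = \theta_{2-\alpha}^-(\pi)$, and its hypotheses on the stretched blocks hold because each has $B \geq \frac{\alpha-1}{2} > \frac{\alpha-3}{2}$. The genuinely delicate point, if one routes through $\pi'_>$, is relating $\pi'_>$ back to $\pi_>$: these differ not by stretching but by a change of $\zeta$ on far-away blocks, so Proposition \ref{prop_wwtawwtaKudla} does not literally apply, and one must argue — in the same spirit as Lemma \ref{lem_high_blocks_dont_matter} and the irrelevance of far-away blocks in Xu's algorithm — that the non-vanishing of $\Theta_{2-\alpha}$ is insensitive to this replacement.
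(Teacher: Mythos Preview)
Your overall target is right --- showing that $\theta_{-\alpha}^-(\pi_>)$ is the first occurrence on $\mathcal V^-$ --- but the argument you propose for reaching it has a genuine gap, and you have correctly diagnosed where it lies.

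The ``converse of Lemma \ref{lem_descent}'' that you invoke is not valid as stated, and is not available in the paper. Vanishing of both $\Jac^{\chi_V}_{-\frac{\alpha-1}{2}}(\pi_>)$ and $\Jac^{\chi_W}_{-\frac{\alpha-1}{2}}(\theta_{-\alpha}^-(\pi_>))$ does not by itself force $\Theta_{2-\alpha}^-(\pi_>)=0$. The first-occurrence criterion you actually have access to is Corollary \ref{cor_first_lift}(b), and that requires the absence of the \emph{singleton} $\frac{\alpha-1}{2}$ in the standard module --- which in turn requires knowing there are no segments $[x,y]$ with $x>\frac{\alpha-1}{2}$. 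For $\pi_>$ this fails: the stretched high blocks of $\psi_>$ have $\zeta=-1$ and $B \gg \frac{\alpha-1}{2}$, so they can produce such segments. Passing to $\pi'_>$ fixes this, but then (as you say) you cannot transfer the first-occurrence conclusion back: Lemma \ref{lem_high_blocks_dont_matter} only transfers vanishing of the \emph{Recipe representations} $\pi_\beta$, not vanishing of actual theta lifts.

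The paper closes the loop differently, and the missing idea is to play the two towers against each other via Corollary \ref{cor_first_lift}(c). From Lemma \ref{lem_Bb} (and Theorem \ref{theoremA}) one has $\theta_{2-\alpha}^+(\pi_>)=(\pi_>)_{\alpha-2}^+\neq 0$, so $\Jac_{-\frac{\alpha-1}{2}}(\pi_>)_\alpha^+\neq 0$ while $\Jac_{-\frac{\alpha-1}{2}}(\pi_>)_\alpha^-=0$. Now suppose neither $(\pi_>)_\alpha^+$ nor $(\pi_>)_\alpha^-$ is a first occurrence. Corollary \ref{cor_first_lift}(c) says their standard modules coincide above $\frac{\alpha-1}{2}$, and by Lemma \ref{lem_standardni_mod_iznad} the (non-)vanishing of $\Jac_{-\frac{\alpha-1}{2}}$ depends only on that part of the standard module --- contradiction. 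Hence one of them is a first occurrence; since $\theta_{2-\alpha}^+(\pi_>)\neq 0$, it must be $(\pi_>)_\alpha^-$, giving $\theta_{2-\alpha}^-(\pi_>)=0$. This argument never leaves $\pi_>$, so the transfer problem you flagged does not arise.
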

\begin{proof}
Lemma \ref{lem_Bb} shows that we have the following situation:
\[
(\pi_>)^\pm_\alpha \neq 0, \quad (\pi_>)^-_{\alpha-2} = 0,\quad \text{and} \quad (\pi_>)^+_{\alpha-2} \neq 0.
\]
Let $\xi$ be an irreducible representation and let $z > 0$. It follows from Lemma \ref{lem_standardni_mod_iznad} that the question of whether or not $\Jac_{-z}(\xi)$ vanishes is determined solely by the standard module of $\xi$ above $z$ (denoted $\Sigma_{\geq z}$ in \S \ref{subs_stdmod}). If neither $\theta_{-\alpha}^+(\pi_>)=(\pi_>)^+_{\alpha}$ nor  $\theta_{-\alpha}^-(\pi_>)=(\pi_>)^-_{\alpha}$ are the first lifts on their respective towers, then Corollary \ref{cor_first_lift} c) shows that their standard modules are identical above $\frac{\alpha-1}{2}$.
This would imply that $\Jac_{-\frac{\alpha-1}{2}}(\pi_>)^+_{\alpha}$ and $\Jac_{-\frac{\alpha-1}{2}}(\pi_>)^-_{\alpha}$ are either both zero, or both non-zero. But we have  $\Jac_{-\frac{\alpha-1}{2}}(\pi_>)^-_{\alpha} = (\pi_>)^-_{\alpha-2}=0$ and $\Jac_{-\frac{\alpha-1}{2}}(\pi_>)^+_{\alpha} = (\pi_>)^+_{\alpha-2}\neq 0$. This shows that one of the lifts $(\pi_>)^\pm_\alpha$ is the first occurrence on the respective tower. Since we know that $\theta_{2-\alpha}^+(\pi_>)\neq 0$, it follows that $\theta_{-\alpha}^-(\pi_>)$ is the first occurrence of $\pi_>$ on $\mathcal{V}^-$, which we needed to show.
\end{proof}

We have thus shown $\theta_{2-\alpha}^-(\pi_>)=0$, and it remains to show that this implies $\theta_{2-\alpha}^-(\pi)=0$.
Recall the construction of $\pi_>$ from \S \ref{sec_thmA}. We have
\[
\pi_{>} \hookrightarrow L_{k+1} \times \dotsb \times L_r\rtimes \pi
\]
where none of the (segments corresponding to) $L_i$'s contain $-\frac{\alpha-3}{2}$. Now Lemma \ref{lem_Kudla_nonvanishing} (ii) shows that $\theta_{2-\alpha}^-(\pi_>)=0$ implies $\theta_{2-\alpha}^-(\pi)=0$. This concludes the proof of Theorem \ref{theoremB}.

\section{Theorem C}
\label{sec_thmC}
Let $\pi$ be an irreducible representation pa\-ra\-me\-trized by
\[
\psi = \bigoplus_{i=1}^r\overset{t_i,\eta_i}{\chi_V\otimes S_{a_i} \otimes S_{b_i}}.
\]
In this section we prove Theorem \ref{theoremC}: for $\alpha < d^{\text{down}}(\pi,\psi)$, $\theta_{-\alpha}(\pi)$ is never in the A-A packet. The proof is long, but the main idea is simple. To avoid  obscuring it by technical details, we begin by outlining our strategy:
\subsection{Proof outline}
\label{subs_outline}
Consider a special case: suppose $a_r = 1$, so that $\pi$ is parametrized by 
\[
\psi = \bigoplus_{i=1}^{r-1}\chi_V\otimes S_{a_i} \otimes S_{b_i} \,\oplus \, \chi_V\otimes S_{1} \otimes S_{b_r}.
\]
Assume that $\chi_V\otimes S_{1} \otimes S_{b_r}$ is above all the other summands: $b_r > a_i+b_i-1$, for all $i < r$. Under these assumptions, Corollary \ref{cor_removal} shows that $\pi$ (or $\pi \otimes \det$) is the lift $\theta_{-b_r}(\pi')$ of a representation $\pi'$ parametrized by 
\[
\psi' = \bigoplus_{i=1}^{r-1}\chi_W\otimes S_{a_i} \otimes S_{b_i}.
\]
In other words, we are able to \emph{remove the highest term} of $\psi$ by computing a theta lift: $\pi' = \theta_{b_r}(\pi)$. This is the key idea in our proof.

\bigskip

\noindent We explain the strategy. Assume that Theorem \ref{theoremC} fails for $\pi$: there exists an $\alpha < d^{\text{down}}(\pi,\psi)$ such that $\theta_{-\alpha}(\pi)$ is in the A-A packet. Note that $d^{\text{up}}(\pi,\psi) =  b_r+2$, so $d^{\text{down}}(\pi,\psi) \leq  b_r$ (see Remark \ref{rem_descent}). We have $\pi = \theta_{-b_r}(\pi')$, and we distinguish two cases, depending on whether this is a going-up or a going-down lift:
\begin{itemize}
\item[1.] $\pi = \theta_{-b_r}^{\text{down}}(\pi')$. We know that
\[
\theta_{-\alpha}(\pi) = \theta_{-\alpha}(\theta_{-b_r}(\pi')) 
\]
is in the A-A packet, parametrized by
\[
\psi_\alpha = \bigoplus_{i=1}^{r-1}\chi_W\otimes S_{a_i} \otimes S_{b_i} \,\oplus \, \chi_W\otimes S_{1} \otimes S_{\alpha} \,\oplus \, \chi_W\otimes S_{1} \otimes S_{b_r}.
\]
The highest summand in $\psi_\alpha$ is still $\chi_W\otimes S_{1} \otimes S_{b_r}$, and we use the same idea as before: we can remove it by computing $\theta_{b_r}$. The key observation is given in Corollary \ref{downlifts_commute}, which says that the going-down lifts commute: $  \theta_{-\alpha}(\theta_{-b_r}(\pi')) = \theta_{-b_r}(\theta_{-\alpha}(\pi')) $. 
 %Therefore, the representation $ \theta_{-b_r}(\theta_{-\alpha}(\pi')) $ 
Applying $\theta_{b_r}$ to this representation (and using Convention \ref{conv_dettwist}), we conclude that the representation
\[
\theta_{b_r}( \theta_{-b_r}(\theta_{-\alpha}(\pi'))) = \theta_{-\alpha}(\pi')
\]
is parametrized by
\[
\psi'_\alpha = \bigoplus_{i=1}^{r-1}\chi_V\otimes S_{a_i} \otimes S_{b_i} \,\oplus \, \chi_V\otimes S_{1} \otimes S_{\alpha}.
\]
But this is precisely the A-A packet for $\theta_{-\alpha}(\pi')$! 
Furthermore, it is not hard to show that in this case, $d^{\text{down}}(\pi,\psi) = d^{\text{down}}(\pi',\psi')$. In other words, assuming that Theorem \ref{theoremC} fails for $\pi$, we conclude that it must also fail for $\pi'$.

This allows us to proceed inductively, and simplify the parameter. Ultimately, one arrives at the following case, in which it is easy to obtain a contradiction.

\item[2.] $\pi = \theta_{-b_r}^{\text{up}}(\pi')$. Again, we are assuming that $\theta_{-\alpha}^{\text{down}}(\pi)= \theta_{-\alpha}^{\text{down}}(\theta_{-b_r}^{\text{up}}(\pi'))$ is in the A-A packet, parametrized by
\[
\psi_\alpha = \bigoplus_{i=1}^{r-1}\chi_W\otimes S_{a_i} \otimes S_{b_i} \,\oplus \, \chi_W\otimes S_{1} \otimes S_{\alpha} \,\oplus \, \chi_W\otimes S_{1} \otimes S_{b_r}.
\]
Once more, because the highest summand is $\chi_W\otimes S_{1} \otimes S_{b_r}$ we can compute the theta lift $\theta_{b_r}$ of this representation. In particular,
\[
\theta_{b_r}(\theta_{-\alpha}^{\text{down}}(\theta_{-b_r}^{\text{up}}(\pi'))) \neq 0.
\]
However, Corollary \ref{key} shows that this is impossible, so we reach a contradiction.
\end{itemize}

\noindent In the rest of Section \S \ref{sec_thmC}, we provide all the technical details needed to turn the above strategy into a proof. The main difficulty is reducing the proof to the case where the highest term of the parameter is $\chi_V\otimes S_{1} \otimes S_{b_r}$, as assumed above. We do this in two stages:
\begin{itemize}
\item First, we show that we may assume $b_r > a_r$ in the highest summand. This is explained in \S \ref{subs_stretch}.
\item Then, we explain how to replace this summand with one that has $a_r=1$. This is more difficult. We split the proof into two subsections: \S \ref{subs_removal} (where we set up the main idea) and \S \ref{subs_replacement} (where we provide the technical details).
\end{itemize}
Finally, in \S \ref{subs_removing}, we finish the proof; this resembles the strategy outlined above.

\bigskip
\noindent Before going into the remainder of Section \ref{sec_thmC}, we advise the reader to skim over Appendix \ref{appendix}, where we have summarized the results of \cite{nas_clanak} that are used in this proof.
To simplify the notation, we let $d=d^{\text{down}}(\pi,\psi)$ throughout \S \ref{sec_thmC}. We also assume $d > 1$, because Theorem \ref{theoremC} is trivial otherwise.

\subsection{Stretching the parameter}
\label{subs_stretch}
We begin by taking care of the first bullet of the above outline.
\begin{lem}
\label{lem_D_reduction1}
It suffices to prove Theorem \ref{theoremC} for $\psi$ such that
\begin{itemize}
    \item for all $(A,B,\zeta) \in \Jord(\psi)$
    \[
   \zeta=-1 \text{ and } 2A+1\geq d  \quad \Rightarrow \quad 2B+1 \gg d;
    \]
    \item the blocks $(A,B,-)$ with $2B+1 > d$ are far apart from each other.
\end{itemize}
In plain terms, we may assume that all the $\zeta=-1$ segments above $d$ are far above $d$, and far away from each other. 
\end{lem}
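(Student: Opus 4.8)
The plan is to pass from $(\pi,\psi)$ to a suitably ``stretched'' pair $(\pi_>,\psi_>)$ which already satisfies the two bullets, which has the same going-down tower and the same index $d=d^{\mathrm{down}}(\pi,\psi)$, and for which any failure of Conjecture~\ref{slutnja} below level $d$ propagates from $\pi$ to $\pi_>$; then Theorem~\ref{theoremC} for parameters of the special form yields it for $\psi$.

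\emph{Construction and easy parts.} Write the Jordan blocks of $\psi$ as $(A_i,B_i,\zeta_i)$ and let $S$ collect those with $\zeta_i=-1$, $2A_i+1\ge d$, which either violate $2B_i+1\gg d$ or lie close to another such block. First note that there is an admissible order on $\Jord(\psi)$ in which the blocks of $S$ are the largest: admissibility could force a block $(A,B,-)\in S$ below another $\zeta=-1$ block $(A',B',-)$ only if $A'>A$ and $B>B'$, and then $2A'+1\ge 2A+1\ge d$ while $2B'+1<2B+1\not\gg d$, so $(A',B',-)\in S$ too and no conflict occurs. Replace each block of $S$ by $(A_i+T_i,B_i+T_i,-)$ with the even $T_i$ huge and pairwise very different, obtaining $\psi_>$ with all its $\zeta=-1$ blocks above $d$ far above $d$ and far apart; let $\pi_>\in\Pi_{\psi_>}$ carry the same data $(t,\eta)$. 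By M\oe glin's construction (Lemma~\ref{lem_L}), $\psi_>$ is an intermediate parameter $\psi_{(k)}$ dominating $\psi$ and $\pi_>\hookrightarrow \chi_V L\rtimes\pi$, where $L$ is the product of the generalized segments attached to the stretched blocks. Since each stretched block has $\zeta_i=-1$ with $A_i\ge(d-1)/2>0$ and $B_i\ge1>0$, Proposition~\ref{prop_wwtawwtaKudla}(b) shows $\pi_>$ and $\pi$ share the same going-up, hence going-down, tower.

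\emph{Invariance of $d$ (the main obstacle).} One must show $d^{\mathrm{down}}(\pi_>,\psi_>)=d$. The subtlety here is that a $\zeta=-1$ block genuinely can obstruct the Recipe, so far-away $\zeta=-1$ blocks are not \emph{a priori} irrelevant. The key observations are: (a) a $\zeta=-1$ block $(A,B,-)$ obstructs the descending added block $\chi_W\otimes S_1\otimes S_\alpha$ only at level $\alpha=2A+3$ (Lemma~\ref{lem_where_can_it_stop}(iii)); (b) stretching $(A,B,-)$ to $(A+T,B+T,-)$ leaves $\zeta$, $\min(a,b)=A-B+1$ and $(-1)^{A-B}$ unchanged, so by Lemma~\ref{lem_xu} every interaction with the added block has the same outcome, merely relocated. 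Since each block of $S$ has $2A+1\ge d$, its obstruction level is $\ge d+2$; as $\pi_\beta=\theta_{-\beta}(\pi)\ne0$ for all odd $\beta\ge d$ (Theorem~\ref{theoremA}), the descent from $\beta\gg0$ already passes these levels unobstructed, and by (b) so does the descent for $\psi_>$; thus $(\pi_>)_d\ne0$. For the converse, the (non-)vanishing of $(\pi_>)_{d-2}$ and of $\pi_{d-2}$ is controlled by the blocks near level $d-2$ (Lemma~\ref{lem_high_blocks_dont_matter} with Xu's algorithm): in $\psi_>$ the stretched blocks are now far away, while in $\psi$ a block of $S$ can interact below level $d$ only through the reordering at level $2B+1$, which by Lemma~\ref{lem_xu} alters neither the $t$-value governing a later obstruction nor the relevant signs; so the blocks of $S$ are irrelevant at level $d-2$, and $\pi_{d-2}=0$ forces $(\pi_>)_{d-2}=0$. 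Hence $d^{\mathrm{down}}(\pi_>,\psi_>)=d$, and $\psi_>$ satisfies the conclusion of the lemma.

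\emph{Transfer.} Assume Theorem~\ref{theoremC} for all special parameters, in particular for $(\pi_>,\psi_>)$, and suppose Conjecture~\ref{slutnja} failed for $\pi$ at some odd $\alpha<d$, i.e.\ $\theta_{-\alpha}(\pi)\in\Pi_{\psi_\alpha}$. Then $\Theta_{-\alpha}(\pi)\ne0$, so $\Theta_{-\alpha}(\pi_>)\ne0$ by Proposition~\ref{prop_wwtawwtaKudla}(a) (its hypotheses hold, each stretched block having $\zeta=-1$ and $A_i\ge(d-1)/2>(\alpha-1)/2$), and Proposition~\ref{prop_wwtawwtaKudla}(c) gives $\theta_{-\alpha}(\pi_>)\hookrightarrow \chi_W L\rtimes\theta_{-\alpha}(\pi)$. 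But $(\psi_>)_\alpha$ dominates $\psi_\alpha$ by stretching the same blocks, so by Lemma~\ref{lem_L} the member of $\Pi_{(\psi_>)_\alpha}$ with the data of $\theta_{-\alpha}(\pi)$ is the unique irreducible subrepresentation of $\chi_W L\rtimes\theta_{-\alpha}(\pi)$, which must therefore equal $\theta_{-\alpha}(\pi_>)$. Thus $\theta_{-\alpha}(\pi_>)\in\Pi_{(\psi_>)_\alpha}$ with $\alpha<d=d^{\mathrm{down}}(\pi_>,\psi_>)$, contradicting Theorem~\ref{theoremC} for $\pi_>$. Hence Conjecture~\ref{slutnja} holds for $\pi$ at every $\alpha<d$, which is exactly Theorem~\ref{theoremC} for $\psi$.
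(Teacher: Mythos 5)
Your construction of $\psi_>$, $\pi_>$ and the transfer step (showing that a failure of Conjecture~\ref{slutnja} for $\pi$ at level $\alpha<d$ propagates to a failure for $\pi_>$ via Proposition~\ref{prop_wwtawwtaKudla} and the uniqueness in Lemma~\ref{lem_L}) match the paper's proof. Where you diverge is the crucial auxiliary fact $d^{\text{down}}(\pi_>,\psi_>)=d$, which the paper isolates as a separate lemma (Lemma~\ref{lem_D_reduction1aux}); your argument there has a genuine gap.

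The inequality $d^{\text{down}}(\pi_>,\psi_>)\leq d$ is handled similarly in both approaches, and your version is sound. The problem is the reverse inequality, i.e.\ $\pi_{d-2}=0\Rightarrow(\pi_>)_{d-2}=0$. To prove it you invoke Lemma~\ref{lem_high_blocks_dont_matter}, but that lemma requires that \emph{both} parameters being compared have their high blocks far away from the low blocks and from each other. That holds for $\psi_>$ by construction, but it is precisely what \emph{fails} for $\psi$: the blocks you place in $S$ are, by definition, the ones whose $B$-coordinate is \emph{not} $\gg d$. A block $(A,B,-)\in S$ can perfectly well have $B\leq\frac{d-3}{2}\leq A$, so its segment contains the added singleton $(\frac{d-3}{2},\frac{d-3}{2},-)$ at level $d-2$ and can also overlap low blocks. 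In that regime the (non-)vanishing of $\pi_{d-2}$ is governed by Xu's full algorithm, not merely by the change-of-order formulas of Lemma~\ref{lem_xu}; the reordering at $\alpha=2B+1$ is not the only interaction, and the assertion that "blocks of $S$ are irrelevant at level $d-2$" is not justified. One cannot conclude that $\pi_{d-2}$ and $(\pi_>)_{d-2}$ vanish simultaneously from Lemma~\ref{lem_xu} and Lemma~\ref{lem_high_blocks_dont_matter} alone.

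The paper circumvents exactly this issue by arguing with theta lifts and Jacquet modules rather than with Xu's combinatorics: assuming $(\pi_>)_{d-2}\neq 0$, one writes $\theta_{-\beta}(\pi_>)\hookrightarrow\zeta(-\frac{\beta-1}{2},-\frac{d-1}{2})\rtimes(\pi_>)_{d-2}$ for $\beta\gg 0$ (Corollary~\ref{higher_lift}), and separately, from $\pi_>\hookrightarrow L_{k+1}\times\cdots\times L_r\rtimes\pi$, that $\Jac_{L_{k+1},\dots,L_r}(\theta_{-\beta}(\pi_>))\neq0$ via Lemma~\ref{lem_Kudla_small}. Combining these with Tadi\'c's formula~\eqref{Tadic_zeta} forces $\Jac_{L_{k+1},\dots,L_r}((\pi_>)_{d-2})=\pi_{d-2}\neq0$, contradiction. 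This route never needs the $S$-blocks to be "irrelevant"; it extracts the nonvanishing of $\pi_{d-2}$ directly. You should replace the combinatorial paragraph in your proof by an argument along these lines, or else supply a genuine proof (not just an appeal to Lemma~\ref{lem_xu}) that overlapping $S$-blocks cannot alter the outcome of Xu's non-vanishing criterion at level $d-2$.
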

\begin{rem}
\label{rem_spacing}
The above notion of “far away” can be made more precise, cf.\,\S 2 in \cite{xu2021combinatorial}.
\end{rem}
\begin{proof}[Proof of Lemma \ref{lem_D_reduction1}]
Suppose that $\theta_{-\alpha}(\pi)$ is isomorphic to a representation $\pi^{\alpha}$ in the A-A packet for some $\alpha < d$. We use the notation $\pi^{\alpha}$ (as opposed to $\pi_\alpha$) because we want to emphasize that we are not assuming that this is the representation suggested by the Recipe.  To simplify notation, let $(A_0,B_0,\zeta_0)$ denote the added block $(\frac{\alpha-1}{2},\frac{\alpha-1}{2},-)$. We fix an admissible order $>$ on $\Jord(\psi_\alpha)=\{(A_i,B_i,\zeta_i): i=0,\dotsc,r\}$. The only requirement is that there exist an index $k\in \{0,\dotsc,r\}$ such that $i>k$ if and only if
\begin{itemize}
    \item $\zeta_i=-1$ and $2A_i+1 \geq d$.
\end{itemize}
This order restricts to an order on $\Jord(\psi)$. We now form parameters $\psi_{>}$ and $(\psi_{\alpha})_>$ by shifting up all the blocks $(A_i,B_i,\zeta_i)$ for $i>k$. (If we shift high enough, the blocks of $\Jord(\psi_>)$ will satisfy the conditions of the Lemma.) Let $\pi_{>}$ and $(\pi^{\alpha})_>$ denote the corresponding representations. We then have
\begin{align}
    \pi_{>} &\hookrightarrow L_{k+1}\times \dotsb \times L_r \rtimes \pi \label{eq_D_prva}\\
   (\pi^{\alpha})_> &\hookrightarrow L_{k+1}\times \dotsb \times L_r \rtimes \pi^{\alpha}. \label{eq_D_druga}
\end{align}
By Proposition \ref{prop_wwtawwtaKudla} a), $\theta_{-\alpha}(\pi_>)\neq 0$. By part c) of the same proposition, we get
\[
\theta_{-\alpha}(\pi_{>}) \hookrightarrow L_{k+1}\times \dotsb \times L_r \rtimes \theta_{-\alpha}(\pi) \cong L_{k+1}\times \dotsb \times L_r \rtimes \pi^{\alpha}.
\]
But \eqref{eq_D_druga} shows that the right-hand side has a unique irreducible representation, isomorphic to $(\pi^{\alpha})_>$ (see Lemma \ref{lem_L}, (ii)). 
Thus $(\pi^{\alpha})_> = \theta_{-\alpha}(\pi_{>})$.

In short, we have shown that $\theta_{-\alpha}(\pi)=\pi^{\alpha}$ implies $\theta_{-\alpha}(\pi_>)=(\pi^{\alpha})_>$; in particular, $\theta_{-\alpha}(\pi_>)$ is in the A-A packet. It remains to show  the following (we postpone the proof):
\begin{lem}
\label{lem_D_reduction1aux}
$d^{\text{down}}(\pi_>,\psi_>)=d$.
\end{lem}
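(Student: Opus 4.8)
The plan is to show that shifting the high blocks affects neither the going-down tower nor the value of $d^{\text{down}}$, the crux being an identity $(\pi_>)_\alpha=(\pi_\alpha)_>$ valid for $\alpha\le d$.

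First I would observe that $\pi_>$ and $\pi$ have the same going-down tower: each shifted block $(A_i,B_i,\zeta_i)$ with $i>k$ has $\zeta_i=-1$, hence $a_i<b_i$ and $B_i=(b_i-a_i)/2>0$, so Proposition \ref{prop_wwtawwtaKudla} b) applies. All that follows takes place on this common tower. Writing $(\pi_\alpha)_>$ for the representation\,---\,zero if $\pi_\alpha=0$\,---\,obtained from $\pi_\alpha$ by shifting up its blocks indexed by $i>k$, I would then prove that $(\pi_>)_\alpha=(\pi_\alpha)_>$ for every odd $\alpha$ with $0<\alpha\le d$. Once this is established the Lemma follows immediately: since $\pi_d\neq0$ (definition of $d=d^{\text{down}}(\pi,\psi)$), the representation $(\pi_>)_d=(\pi_d)_>$ is non-zero\,---\,it is the unique irreducible subrepresentation of $L_{k+1}\times\dotsb\times L_r\rtimes\pi_d$ by Lemma \ref{lem_L}(ii)\,---\,so $d^{\text{down}}(\pi_>,\psi_>)\le d$; while $\pi_{d-2}=0$ forces $(\pi_>)_{d-2}=(\pi_{d-2})_>=0$, so $d^{\text{down}}(\pi_>,\psi_>)\ge d$, and equality holds.

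To prove the identity I would argue as follows. The parameters coincide, $(\psi_>)_\alpha=(\psi_\alpha)_>$, and for $\alpha\gg0$ the representations coincide as well, because the transformation of Proposition \ref{prop_visoko} that yields $\theta_{-\alpha}(\pi_>)$ and $\theta_{-\alpha}(\pi)$ acts block by block\,---\,the only per-block inputs being the block itself and whether $\rho_i\cong\chi_V$, neither of which is touched by shifting\,---\,so it commutes with shifting; this is the observation already made in Step 1 of the proof of Theorem \ref{theoremA}. Descending from $\alpha\gg0$ through the Recipe, the descents for $\psi_>$ and for $\psi$ differ only in the change-of-order moves (Lemma \ref{lem_xu}) that slide the added summand $\chi_W\otimes S_1\otimes S_\beta$ past a shifted block: for $\psi_>$ such a crossing happens at level $\beta=2(B_i+T_i)+1\gg d$, whereas for $\psi$ it happens at level $\beta=2B_i+1$. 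Consequently, at any level $\beta\le d$, the descent for $\psi_>$ has already passed the added summand beyond all shifted blocks, while that for $\psi$ has passed it beyond exactly those with $2B_i+1>\beta$; the two resulting parametrizations of the common parameter $(\psi_>)_\alpha$ therefore differ only by the moves of Lemma \ref{lem_xu} sliding the added summand past the remaining shifted blocks. Since the quantities $A_i-B_i$ entering those formulas are invariant under shifting and the value of $\eta$ on the added summand is independent of $\beta$ (Proposition \ref{prop_visoko}), these are genuine change-of-order moves; hence they do not change the represented representation, nor whether it vanishes, and $(\pi_>)_\alpha=(\pi_\alpha)_>$ follows.

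The main obstacle is exactly this last identity: making rigorous the assertion that the Recipe commutes with shifting below level $d$. The delicate point is that shifting alters the levels at which\,---\,and, relative to the low blocks, the order in which\,---\,the added summand crosses the Jordan blocks during the Recipe. What saves the argument is Xu's theorem that the parametrization of a representation in a local Arthur packet is independent of the chosen admissible order, so that only the set of blocks already crossed, and not the history, matters for the output of the Recipe.
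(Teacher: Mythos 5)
The identity $(\pi_>)_\alpha=(\pi_\alpha)_>$, which your proof hinges on, is not established by the argument you give, and the gap is most serious exactly where the Lemma has real content.

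For the upper bound $d^{\text{down}}(\pi_>,\psi_>)\le d$ your reasoning is essentially the paper's: $(\pi_d)_>$ is the unique irreducible subrepresentation of $L_{k+1}\times\dotsb\times L_r\rtimes\pi_d$, hence non-zero, and the descent for $\psi_>$ can be matched with the intermediate steps of M\oe glin's construction for $\pi_d$ once the added summand is below all shifted blocks. But for the lower bound $d^{\text{down}}(\pi_>,\psi_>)\ge d$ you write ``$\pi_{d-2}=0$ forces $(\pi_>)_{d-2}=(\pi_{d-2})_>=0$'', where $(\pi_{d-2})_>=0$ holds \emph{by your definition}, not by an argument. The substantive claim $(\pi_>)_{d-2}=0$ is precisely the hard half of the Lemma, and your change-of-order sketch does not deliver it: the two Recipes (for $\psi$ and for $\psi_>$) are not two admissible orders on one fixed parameter, but two distinct dynamical descents in which the added block crosses the Jordan blocks at \emph{different levels} $\alpha$ and in a \emph{different interleaving order} relative to the low blocks. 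In particular the assertion that ``the value of $\eta$ on the added summand is independent of $\beta$'' is not true along the descent (it flips at every crossing), so the claim that the two endpoint data differ only by the Lemma \ref{lem_xu} moves is not justified. Moreover, even granting the change-of-order equivalence for $\alpha\ge d$, the passage to $\alpha<d$ would require knowing that the \emph{stretched} data $((\psi_{d-2})_>,\eta_{d-2},t_{d-2})$ still parametrizes the zero representation, and this does not follow formally: $\pi_{d-2}=\Jac_{L_{k+1},\dots,L_r}\bigl((\pi_{d-2})_>\bigr)$, and a non-zero representation can certainly have a vanishing Jacquet module, so $(\pi_{d-2})_>\ne 0$ with $\pi_{d-2}=0$ is a priori possible. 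The paper closes this gap by a concrete argument: assuming $(\pi_>)_{d-2}\ne 0$, it identifies $(\pi_>)_{d-2}=\theta_{2-d}(\pi_>)$ via Theorem \ref{theoremA}, embeds $\theta_{-\beta}(\pi_>)$ for $\beta\gg 0$ two ways (Corollary \ref{higher_lift} and Lemma \ref{lem_Kudla_small}), and uses Tadi\'c's formula \eqref{Tadic_zeta} to force $\Jac_{L_{k+1},\dots,L_r}(\pi_>)_{d-2}=\pi_{d-2}\ne 0$, a contradiction. You would need an argument of comparable strength to replace it; the appeal to Xu's order-independence theorem alone does not suffice, because what is being compared is not two orders on one parametrization but two genuinely different constructions that you must first prove agree.
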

\noindent The key point is that $\pi_{>}$ then satisfies the assumption from the statement of Lemma \ref{lem_D_reduction1}.

Now assume that we have proven Theorem \ref{theoremC} for $\pi_>$. Suppose that $\theta_{-\alpha}(\pi)=\pi^{\alpha}$ is in the A-A packet for some $\alpha < d$. The above discussion shows that this would imply $\theta_{-\alpha}(\pi_>)=(\pi^{\alpha})_>$. But this is impossible if Theorem \ref{theoremC} is valid for $\pi_>$. We thus get Theorem \ref{theoremC} for $\pi$ as well.
\end{proof}
\begin{proof}[Proof of Lemma \ref{lem_D_reduction1aux}]
Recall that $d^{\text{down}}(\pi_>,\psi_>)$ is defined by the Recipe in \S \ref{sec_conj}: we know $(\pi_>)_{\alpha}\neq 0$ for $\alpha \gg 0$, and we descend through the parameter ($\alpha \mapsto \alpha-2$) until we get $(\pi_>)_{\alpha} = 0$. 

We begin by noticing that $(\pi_>)_{\alpha} = 0$ cannot happen until after $\alpha$ has become lower than all the blocks above $d$ which have $\zeta = -1$ (i.e.\ the blocks discussed in Lemma \ref{lem_D_reduction1}). Indeed, since the blocks of $\psi_{>}$ above $d$ are far apart, the only way the Recipe could result in $(\pi_>)_{\alpha} = 0$ is if the block $1 \otimes S_1 \otimes S_\alpha$ were to encounter a block $(A+T, B+T, -)\in \Jord(\psi_>)$ with $t=0$ and incompatible $\eta$ (see Remark \ref{rem_descent}). In this case, $(\pi_>)_{\alpha}=0$ for $\alpha = 2(A+T)+1$. But the same block\,---\,shifted back\,---\,appears in the original parameter $\psi$: we have $(A,B,-)\in \Jord(\psi)$. This would imply that $\pi_\alpha = 0$ for $\alpha = 2A+1$. However, this is impossible, since (by construction) $2A+1\geq d$; recall that  $\pi_\alpha \neq 0$ whenever $\alpha \geq d$.

The above discussion shows that we can lower $1 \otimes S_1 \otimes S_\alpha$ until it becomes the lowest block (with respect to the order on $\Jord(\psi_>)$) with $\zeta = -1$ above $d$. It now follows that we can lower it all the way to $d$ to get $(\pi_>)_{\alpha}\neq 0$ for all $\alpha \geq d$. Indeed, if $1 \otimes S_1 \otimes S_\alpha$ is the lowest block with $\zeta=-1$ above $d$, then we can view $(\psi_>)_{\alpha}$ as one of the intermediate parameters between $\psi_{d}$ and the DDR parameter which dominates $\psi_{d}$ (see \S \ref{subs_Arthurtype}; of course, we really mean $\rho$-DDR representation with $\rho = \chi_V$ or $\chi_W$). Thus, by M\oe glin's construction, $\pi_{d}$ is obtained by computing the relevant Jacquet modules of $(\pi_>)_{\alpha}$; this shifts the blocks of $(\psi_>)_\alpha$ back to their original places in $\psi_d$. Therefore $\pi_{d}\neq 0$ means
\[
0 \neq \pi_d = \Jac_{L_{k+1},\dotsc,L_r}\Jac_{-\frac{\alpha-1}{2}, -\frac{\alpha-3}{2},\dots,-\frac{d+1}{2}}((\pi_>)_{\alpha}).
\]
In particular, 
\[
(\pi_>)_{d}=\Jac_{-\frac{\alpha-1}{2}, -\frac{\alpha-3}{2},\dots,-\frac{d+1}{2}}((\pi_>)_{\alpha})\neq 0.
\]
This shows that $(\pi_>)_{d} \neq 0$, so we conclude $d^{\text{down}}(\pi_>,\psi_>) \leq d$.

We now prove the reverse inequality: $d^{\text{down}}(\pi_>,\psi_>) \geq d$. Assume the contrary, i.e.~that $d^{\text{down}}(\pi_>,\psi_>) < d$. We prove that this implies $\pi_{d-2}\neq 0$, thus leading to a contradiction\,---\,recall that $\pi_{d-2}=0$ by definition of $d=d(\pi,\psi)$. Assuming $d^{\text{down}}(\pi_>,\psi_>) < d$, and in particular, $(\pi_>)_{d-2}\neq 0$, Theorem \ref{theoremA} shows that $(\pi_>)_{d-2} = \theta_{2-d}(\pi_>)$. Thus, taking $\beta \gg 0$, we get (see Corollary \ref{higher_lift})
\begin{equation}
\label{eq_D_treca}
\theta_{-\beta}(\pi_>) \hookrightarrow \zeta(-\frac{\beta-1}{2}, -\frac{d-1}{2}) \rtimes (\pi_>)_{d-2}.
\end{equation}
On the other hand, applying Lemma \ref{lem_Kudla_small} to \eqref{eq_D_prva}, we get
\[
\theta_{-\beta}(\pi_{>}) \hookrightarrow L_{k+1}\times \dotsb \times L_r \rtimes \theta_{-\beta}(\pi),
\]
which shows that $\Jac_{L_{k+1},\dotsc,L_r}(\theta_{-\beta}(\pi_{>}))\neq 0$. Combining this with \eqref{eq_D_treca} we get
\[
\Jac_{L_{k+1},\dotsc,L_r}\zeta(-\frac{\beta-1}{2}, -\frac{d-1}{2}) \rtimes (\pi_>)_{d-2}\neq 0.
\]
A simple Jacquet module computation using formula \eqref{Tadic_zeta} for $M^*$ shows that this is possible only if $\Jac_{L_{k+1},\dotsc,L_r}(\pi_>)_{d-2} \neq 0$. But this means precisely $\pi_{d-2} \neq 0$. 
\end{proof}

The results proved above allow us to assume that we are in the situation described in Lemma \ref{lem_D_reduction1}: from this point on, we assume that the order on $\Jord(\psi)$ looks like
\begin{equation}
\label{eq_parameter}
\underbrace{\begin{pmatrix}
\text{blocks with}\\
A < \frac{d-1}{2}
\end{pmatrix}}_{\text{low blocks}} \quad < \quad 
\underbrace{\begin{pmatrix}
\text{blocks with}\\
\zeta=+1 \text{ and } A \geq \frac{d-1}{2}
\end{pmatrix}}_{\text{middle blocks}} \quad \ll \quad 
\underbrace{\begin{pmatrix}
\text{blocks with}\\
\zeta=-1 \text{ and } B \gg \frac{d-1}{2}
\end{pmatrix}}_{\text{high blocks}}.
\end{equation}
Furthermore, we assume that all the high blocks are far away from each other and from the middle blocks. This takes care of the first bullet in our plan. We will use the term “high blocks” throughout the rest of Section \ref{sec_thmC} to describe this part of the parameter. Before moving on, we record a lemma that will be useful later.
\begin{lem}
\label{lem_obstacle}
At least one of the high blocks has $t=0$.
\end{lem}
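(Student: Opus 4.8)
The plan is to pass to the going-up tower and feed in Theorem \ref{theoremB}. Since we are assuming $d = d^{\text{down}}(\pi,\psi) > 1$, Theorem \ref{theoremB} forces $d^{\text{down}}(\pi,\psi) < d^{\text{up}}(\pi,\psi)$ (the equality case being possible only when $d^{\text{up}} = 1$), and as both numbers are odd we get $d^{\text{up}}(\pi,\psi) \geq d + 2$. Here $d$, and hence everything that follows, is computed for the reduced parameter fixed after Lemma \ref{lem_D_reduction1}, which is legitimate by Lemma \ref{lem_D_reduction1aux}. Running the Recipe on the going-up tower, we obtain representations $\pi^{\text{up}}_\alpha$ which are nonzero exactly for $\alpha \geq d^{\text{up}}(\pi,\psi)$; in particular $\pi^{\text{up}}_{d^{\text{up}}} \neq 0$ while $\pi^{\text{up}}_{d^{\text{up}}-2} = 0$, and $d^{\text{up}} - 2 \geq d > 0$, so this really is a step of the Recipe.

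Next I would pin down where this vanishing occurs. Passing from $\psi^{\text{up}}_{d^{\text{up}}}$ to $\psi^{\text{up}}_{d^{\text{up}}-2}$ amounts to lowering the added summand $\chi_W \otimes S_1 \otimes S_{d^{\text{up}}}$, a singleton sitting at level $\tfrac{d^{\text{up}}-1}{2}$, by one step. By the $\zeta = +1$ clause of Lemma \ref{lem_xu} the added summand slides past every middle block without altering any value of $t$, so a middle block cannot be responsible for the vanishing; and since $\tfrac{d^{\text{up}}-1}{2} > \tfrac{d-1}{2}$, at this stage the summand is still well above every low block. Hence, by Remark \ref{rem_descent} (i.e.\ the third bullet of Lemma \ref{lem_xu}, the only change-of-order formula that can output the zero representation), the obstruction is a block $(\chi_W, A, B, -)$ with $t(A,B,-) = 0$ and incompatible signs, sitting one step below the summand, so $A + 1 = \tfrac{d^{\text{up}}-1}{2}$, i.e.\ $A = \tfrac{d^{\text{up}}-3}{2} \geq \tfrac{d-1}{2}$. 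After Lemma \ref{lem_D_reduction1} every $\zeta = -1$ block of $\psi$ either has $A < \tfrac{d-1}{2}$ or is one of the (mutually far, isolated) high blocks; since our block has $A \geq \tfrac{d-1}{2}$, it must be a high block, and it carries $t = 0$. To make the last sentences rigorous one applies Lemma \ref{lem_where_can_it_stop} to this isolated high block: its cases (i) and (ii) would force $\pi^{\text{up}}_{d^{\text{up}}-2} \neq 0$, so we must be in case (iii), which delivers $t(A,B,-) = 0$.

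The step I expect to be most delicate is making precise that the obstruction genuinely sits at a high block, rather than somewhere inside a cluster of small blocks where the clean dichotomy of Lemma \ref{lem_where_can_it_stop} is unavailable; the inequality $d^{\text{up}}(\pi,\psi) \geq d + 2$ is exactly the leverage that resolves this, since an obstruction coming from a block with $A < \tfrac{d-1}{2}$ would give $d^{\text{up}}(\pi,\psi) = 2A + 3 < d + 2$, a contradiction — in particular the descending summand cannot have slipped past all the high blocks before getting stuck. One further point to record: at the moment the summand first reaches the obstructing high block it has not yet been lowered past any high block, so the value of $t$ appearing in the formulas is precisely the value carried by that block in $\psi$, which is what the statement is about. (As a byproduct the argument also shows that high blocks exist at all when $d>1$.)
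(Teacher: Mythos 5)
Your proof is correct and takes a genuinely different route. The paper argues by contradiction: assuming no high block has $t=0$, it replaces all high blocks with $\zeta=+1$ versions (legitimate by Lemma \ref{lem_high_blocks_dont_matter}) to get a representation $\pi'$ with $d^{\text{down}}(\pi',\psi')=d$ but whose standard module has no segment $[x,y]$ with $x\ge\frac{d-1}{2}$; Theorem \ref{downlifts} then forces the singleton $\frac{d-1}{2}$ into the standard module of $\theta_{-d}^{\text{down}}(\pi')=\pi'_d$, giving $\pi'_{d-2}=\Jac_{-\frac{d-1}{2}}(\pi'_d)\neq 0$ and a contradiction with $d(\pi',\psi')=d$. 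You instead feed $d>1$ into Theorem \ref{theoremB} to get $d^{\text{up}}\ge d+2$, run the Recipe on the going-up tower, and locate the obstruction at level $\frac{d^{\text{up}}-3}{2}\ge\frac{d-1}{2}$, whence it must be a high block with $t=0$ via Lemma \ref{lem_where_can_it_stop}(iii). This is more self-contained (no appendix material such as Theorem \ref{downlifts} is needed) and is in fact the same style of reasoning the paper uses internally in the proof of Lemma \ref{lem_D_reduction1aux}; what the paper's version buys is sidestepping the question of whether the Recipe can stall inside a cluster of non-isolated middle ($\zeta=+1$) blocks, since those blocks simply get absorbed by Lemma \ref{lem_high_blocks_dont_matter}. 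Your handling of that point (citing the $\zeta=+1$ clause of Lemma \ref{lem_xu}) leans on an intuition that is really justified by Xu's compatibility conditions rather than by a change-of-order formula alone; it is the same intuition the paper adopts in Remark \ref{rem_descent} and in Lemma \ref{lem_D_reduction1aux}, so it is fair to use, but if you want the write-up airtight you should cite the relevant clause of \cite[Proposition 5.2]{xu2021combinatorial} instead of Lemma \ref{lem_xu} there. Your closing observation that the obstructing block's $t$-value is unmodified at the moment of obstruction (because the Recipe only updates $t$ on a block once the singleton reaches its $B$-value) is exactly the right thing to record and is correct.
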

\begin{proof}
%Lemma \ref{lem_D_reduction1aux} proves $d^{\text{down}}(\pi_>,\psi_>) = d$; by Theorem \ref{theoremB}, this implies $d^{\text{up}}(\pi_>,\psi_>) > d$. Suppose none of the high blocks in $\psi_>$ have $t=0$. This means that there are no obstacles in applying the Recipe to lifts $\theta_{-\alpha}(\pi)$: on either tower, we may descend until $1 \otimes S_1 \otimes S_\alpha$ has become smaller than all the high blocks. Now we use the same trick we used in the proof of Theorem \ref{theoremB}: Lemma \ref{lem_high_blocks_dont_matter} shows that $d^{\text{up/down}}(\pi,\psi)$ does not depend on the high blocks, so we can once again replace them with blocks which have $\zeta = +$. Call the resulting parameter $\psi'$ and let $\pi'$ be the corresponding representation. Thus, by construction, there are no $\zeta=-$ blocks above $d-2$ in $\psi'$.
%By Theorem \ref{theoremB}, this implies $d^{\text{up}}(\pi,\psi) > d$. 
Recall that we are assuming $d>1$.
Suppose none of the high blocks in $\psi$ have $t=0$. By Remark \ref{rem_descent}, this means that there are no obstacles in applying the Recipe to lifts $\theta_{-\alpha}(\pi)$: on either tower, we may descend until $1 \otimes S_1 \otimes S_\alpha$ has become smaller than all the high blocks. Now we use the same trick we used in the proof of Theorem \ref{theoremB}: Lemma \ref{lem_high_blocks_dont_matter} shows that $d^{\text{up/down}}(\pi,\psi)$ does not depend on the high blocks, so we can once again replace them with blocks which have $\zeta = +1$. Call the resulting parameter $\psi'$ and let $\pi'$ be the corresponding representation. Thus, by construction, there are no blocks $(A,B,\zeta) \in \Jord(\psi')$ with $\zeta = -1$ and $B \geq \frac{d-1}{2}$.

By Lemma \ref{lem_high_blocks_dont_matter},  $d^{\text{up/down}}(\pi,\psi) = d^{\text{up/down}}(\pi',\psi')$. In particular, $d^{\text{down}}(\pi',\psi') = d$. But since $\psi'$ has no blocks $(A,B,-)$ with $B \geq \frac{d-1}{2}$, it follows from Lemma \ref{lem_Jac=0} and \ref{lem_standardni_mod_iznad} that the standard module of $\pi'$ contains no segments $[x,y]$ with $x \geq \frac{d-1}{2}$. Thus, by Theorem \ref{downlifts}, the standard module of $\theta_{-d}^{\text{down}}(\pi')=\pi'_d$ has only one such segment: the singleton segment $[\frac{d-1}{2},\frac{d-1}{2}]$. It follows that there is an irreducible representation $\xi$ such that $\nu^\frac{d-1}{2} \rtimes \xi \twoheadrightarrow \pi'_d$. But this implies $\pi_{d-2}'=\Jac_{-\frac{d-1}{2}}(\pi_{d}')\neq 0$, which is impossible by definition of $d=d(\pi',\psi')$.
\end{proof}

\begin{rem}
\label{rem_tower} The highest block with $t=0$ carries important information. Recall Remark \ref{rem_descent}: on one of the towers, the Recipe will stop working when the added block encounters the highest block with $\zeta =-1$ and $t=0$; this happens on the tower where the signs $\eta$ on these two blocks are incompatible. (On the other tower, we will be able to descend further.) By Theorem \ref{theoremB}, this level also indicates the first occurrence on the going-up tower. Therefore, under the assumptions of Lemma \ref{lem_D_reduction1}, the going-up tower is determined by the sign $\eta$ on the highest summand with $t=0$.
\end{rem}
\subsection{Surgery on Arthur packets}
\label{subs_removal}

Let $\chi_V\otimes S_{a_r} \otimes S_{b_r}$ be the highest summand in $\psi$ and, as before, let $(A_r, B_r, \zeta_r)$ denote the corresponding block in $\Jord(\psi)$. In outlining the proof (\S \ref{subs_outline}), we assumed that the highest summand was of the form $ \chi_V\otimes S_{1} \otimes S_{b_r}$; the corresponding segment was a singleton with $A_r = B_r = \frac{b_r-1}{2}$. This made it easy to remove it: $\pi'$ was obtained as $\theta_{b_r}(\pi)$. In general, however, we might have $A_r > B_r$, which complicates things. Reducing the proof to the case $A_r = B_r$ is the main technical issue of our approach. Let us outline the plan.

If $t(A_r,B_r,\zeta_r) = 0$, then a different parameter for $\pi$ may be obtained by replacing the big segment $[B_r,A_r]$ with the corresponding sequence of singleton segments. In other words, we replace $\chi_V \otimes S_{a_r} \otimes S_{b_r}$ with 
\[
\chi_V \otimes S_{1} \otimes S_{b_r-a_r+1}, \quad \chi_V \otimes S_{1} \otimes S_{b_r-a_r+3},\quad  \dots \quad \chi_V \otimes S_{1} \otimes S_{a_r+b_r-1}.
\]
The signs on these summands will alternate. Now, we are once again in a situation where the highest term is a singleton. Thus we may remove these summands\,---\,effectively removing $(A_r,B_r,\zeta_r)$\,---\,by computing successive theta lifts,
\[
\theta_{b_r-a_r+1}(\dots \theta_{a_r+b_r-3}(\theta_{a_r+b_r-1}(\pi))\dots).
\]

If $t(A_r,B_r,\zeta_r) > 0$, we may fall back to the above case of $t=0$ by computing a few additional lifts. Indeed, if 
\[
\psi =  \bigoplus_{i=1}^{r-1}{\chi_V\otimes S_{a_i} \otimes S_{b_i}} \, \oplus \,   \overset{t=t_r}{\chi_V\otimes S_{a_r} \otimes S_{b_r}},
\]
with $t_r > 0$, then we can compute $\theta_{-\gamma}(\pi)$ for some $\gamma$ such that $\frac{\gamma-1}{2} < B_r$ and $\frac{\gamma-1}{2} > A_i$ for $i<r$. (Recall that we can choose such $\gamma$ because we are assuming $\psi$ satisfies the conditions of Lemma \ref{lem_D_reduction1}; in particular, $B_r \gg A_i$ for $i<r$.) By Theorem \ref{theoremA} and Lemma \ref{lem_xu}, on one of the towers this lift will be parametrized by
\[
\psi_\gamma =  \bigoplus_{i=1}^{r-1}{\chi_W\otimes S_{a_i} \otimes S_{b_i}} \, \oplus \,  \chi_W\otimes S_{1} \otimes S_{\gamma} \, \oplus \,   \overset{t=t_r-1}{\chi_W\otimes S_{a_r} \otimes S_{b_r}}.
\]
Thus the extra lift allows us to go from $t_r$ to $t_r-1$. Doing this $t_r$ times will give us a representation whose highest summand has $t=0$. 

We elaborate on this, as we will need to be precise. Suppose $\pi$ is parametrized by 
\[
\psi = \text{(lower terms)}\oplus  \overset{\eta_0,\ t=t_r}{\chi_V\otimes S_{a_r} \otimes S_{b_r}}.
\]
Again, we assume $B_r \gg 0$. By Proposition \ref{prop_visoko}, the lift $\theta_{-\gamma}(\pi)$ for $\gamma > A_r$ is given by
\[
\psi_\gamma = \text{(lower terms) }  \oplus \, \overset{-\eta_1,\ t=t_r}{\chi_W\otimes S_{a_r} \otimes S_{b_r}} \, \oplus \, \overset{\eta'}{\chi_W\otimes S_{1} \otimes S_{\gamma}},
\]
where the signs $\eta_1$ and  $\eta'$ are determined using Proposition \ref{prop_visoko} and Remark \ref{rem_ort_lifts}. Note that the choice of the target tower determines the ratio $\eta'/\eta_1$. Indeed, this follows from Proposition \ref{prop_visoko} if $\pi$ is a representation of a symplectic group, and from Remark \ref{rem_ort_lifts} in case of orthogonal groups. We choose the tower for which $\eta' = (-1)^{A_r-B_r}\eta_1$. Let us see what happens when $\frac{\gamma-1}{2}$ becomes smaller than $B_r$ (but still bigger than $A_i$ for $i<r$). Then, following the Recipe and using Lemma \ref{lem_xu} to change the order of the summands, we see that $\theta_{-\gamma}(\pi)$ is parametrized by
\[
\psi_\gamma = \text{(lower terms) }  \oplus \, \overset{\eta_1}{\chi_W\otimes S_{1} \otimes S_{\gamma}} \, \oplus \, \overset{\eta_1,\ t=t_r-1}{\chi_W\otimes S_{a_r} \otimes S_{b_r}}.
\]
If we do this again, and compute $\theta_{-(\gamma+2)}(\theta_{-\gamma}(\pi))$, we get the parameter
\[
(\psi_\gamma)_{(\gamma+2)} = \text{(lower terms) }  \oplus \, \overset{-\eta_2}{\chi_V\otimes S_{1} \otimes S_{\gamma}} \, \oplus \,  \overset{\eta_2}{\chi_V\otimes S_{1} \otimes S_{\gamma+2}} \, \oplus \, \overset{\eta_2,\ t=t_r-2}{\chi_V\otimes S_{a_r} \otimes S_{b_r}}.
\]
Here $\eta_2$ is a sign that is determined by $\eta_1$. However, it is not important for us to determine it explicitly; the only thing that matters is how the signs on these last three summands relate to each other.
If we continue the process and perform a total of $t_r$ lifts, the resulting parameter is
%\begin{align*}
%\psi_\uparrow = &\text{(lt) }  \oplus \, \\
%&\overset{(-1)^{t_r-1}\eta_r}{\chi\otimes S_{1} \otimes S_{\gamma}}
% \, \oplus \,  \overset{(-1)^{t_r-2}\eta_r}{\chi\otimes S_{1} \otimes S_{\gamma+2}}
% \, \oplus \, \dots  \, \oplus \, \overset{\eta_r}{\chi\otimes S_{1} \otimes S_{\gamma+2(t_r-1)}} \, \oplus \\
%& \overset{\eta_r,\ t=0}{\chi\otimes S_{a_r} \otimes S_{b_r}}.
%\end{align*}
\[
\psi_\uparrow = \text{(l.t.)}  \oplus \, 
\overset{(-1)^{t_r-1}\eta_r}{\chi\otimes S_{1} \otimes S_{\gamma}}
 \, \oplus \,  \overset{(-1)^{t_r-2}\eta_r}{\chi\otimes S_{1} \otimes S_{\gamma+2}}
 \, \oplus \, \dots  \, \oplus \, \overset{\eta_r}{\chi\otimes S_{1} \otimes S_{\gamma+2(t_r-1)}} \, \oplus 
 \overset{\eta_r,\ t=0}{\chi\otimes S_{a_r} \otimes S_{b_r}}.
\]
Here $\chi$ denotes $\chi_V\cdot(\chi_V\chi_W)^{t_r}$. The assumptions of Lemma \ref{lem_D_reduction1} (the high blocks in $\psi$ being far apart) ensure that we can choose $\gamma$ so that these added terms do not overlap with any of the existing segments in $\psi$.
\begin{rem}
\label{rem_uparrow}
The following observations are important:
\begin{enumerate}[(i)]
\item The signs of the added terms alternate. This means that each subsequent lift is the (first) going-up lift of the previous representation (cf.~Remark \ref{rem_tower}). Thus $\psi_\uparrow$ parametrizes the representation
\[
\theta_{-(\gamma+2(t-1))}^{\text{up}}(\theta_{-(\gamma+2(t-2))}^{\text{up}}(\dots \theta_{-(\gamma+2)}^{\text{up}}(\theta_{-\gamma}(\pi))\dots)).
\]
Note that the target tower for $\theta_{-\gamma}(\pi)$ is determined by $\eta$, but this doesn't tell us whether it is the going-up or the going-down lift (this turns out to be unimportant).
\item Writing the above representation is cumbersome, so we shorten the notation to $\theta_{\uparrow}(\pi)$.
\item The highest summand in $\psi_\uparrow$ now has $t=0$, so we can parametrize the same representation by replacing the highest segment in $\Jord(\psi_\uparrow)$ with the corresponding collection of singleton segments with alternating signs. Thus $\theta_{\uparrow}(\pi)$ is also parametrized by
\begin{equation}
\label{eq_psi_up}
\begin{aligned}
\psi_\uparrow' &= \text{(l.t.) }  \oplus \, 
\overset{(-1)^{t_r-1}\eta_r}{\chi\otimes S_{1} \otimes S_{\gamma}}
 \, \oplus \,  \overset{(-1)^{t_r-2}\eta_r}{\chi\otimes S_{1} \otimes S_{\gamma+2}}
 \, \oplus \, \dots  \, \oplus \, \overset{\eta_r}{\chi\otimes S_{1} \otimes S_{\gamma+2(t_r-1)}}  \\
& \oplus \overset{\eta_r}{\chi\otimes S_{1} \otimes S_{b_r-a_r+1}}  \, \oplus \, \overset{-\eta_r}{\chi\otimes S_{1} \otimes S_{b_r-a_r+3}}  \, \oplus \, \dots \, \oplus\, \overset{(-1)^{A_r-B_r}\eta_r}{\chi\otimes S_{1} \otimes S_{b_r+a_r-1}}.
\end{aligned}
\end{equation}
\item The above parametrization gives us crucial information about the representation $\theta_{\uparrow}(\pi)$. Namely, Corollary \ref{cor_removal} shows that this representation (or its twist by $\det$) can be obtained as a series of theta lifts of a smaller representation $\pi^0$ whose parameter corresponds to the “lower terms” in $\psi$: in other words, the parameter of $\pi^0$ is obtained by removing the largest term from $\psi$. To be precise, we need the following lifts to get from $\pi^0$ to $\theta_\uparrow(\pi)$:
\begin{enumerate}[1.]
\item Compute $\theta_{-\gamma}(\pi^0)$; this adds $\chi \otimes S_1 \otimes S_\gamma$ to the parameter.
\item Compute the \emph{first} going-up lift $t_r-1$ times. This will add the remaining terms in the first row of \eqref{eq_psi_up}. Call the resulting representation $\pi^1$.
\item Now, compute the \emph{going-down} lift $\theta_{-(b_r-a_r+1)}(\pi^1)$. This adds the first summand in the second row of \eqref{eq_psi_up}. We know that we are supposed to take the going-down lift because the signs on  $\chi\otimes S_{1} \otimes S_{\gamma+2(t-1)}$ and $\chi\otimes S_{1} \otimes S_{b_r-a_r+1}$ are compatible.
\item The remaining signs alternate, so it remains to compute the first going-up lift another $A_r-B_r$ times.
\end{enumerate}
To summarize: we can remove the highest term of the parameter using a specific sequence of lifts.
%\item In constructing $\theta_\uparrow(\pi)$, the target tower for the initial lift $\theta_{-\gamma}$ and the number of subsequent going-up lifts are determined by the values of $\eta$ and $t$ on the highest block of $\psi$:
%
%Suppose $t_r<\text{min}(a_r,b_r)/2$. Then there exists a unique choice of target tower for $\theta_{-\gamma}$, and a unique number (smaller than $\text{min}(a_r,b_r)/2$) of subsequent going-up lifts, which produce a parameter with $t=0$ on the highest summand, and thus a representation for which $\theta_{a_r+b_r-1}$ does not vanish. In other words, the sequence of lifts that we call $\theta_\uparrow$ is unique. The key to this uniqueness is the fact that all of the lifts after $\theta_{-\gamma}$ are required to be going-up (otherwise, there are many sequences of theta lifts that produce $t=0$).
%%
%%No other choice of target tower for $\theta_{-\gamma}$, or of the number of subsequent going-up lifts, can produce a representation which has $t=0$ on the highest summand (and thus a representation for which $\theta_{a_r+b_r-1}$ does not vanish); the key here is that we are using at most $\text{min}(a_r,b_r)/2$ lifts and that all of the lifts we compute after $\theta_{-\gamma}$ are going-up.
%
%The case $t_r=\text{min}(a_r,b_r)/2$ is an exception. In that case, we can choose either tower as the target for $\theta_{-\gamma}$, followed by precisely $t_r-1$ first going-up lifts (cf.\ Remark \ref{rem_parni}). Thus, if $t_r=\text{min}(a_r,b_r)/2$, there are two sequences of lifts that could be called $\theta_\uparrow$.
\end{enumerate}
\end{rem}
In constructing $\theta_\uparrow(\pi)$, the target tower for the initial lift $\theta_{-\gamma}$ and the number of subsequent going-up lifts are determined by the values of $\eta$ and $t$ on the highest block of $\psi$. This is the content of the following lemma (we set $a=a_r$ and $b=b_r$ to simplify notation):
\begin{lem}
\label{lem_76v}
Let $\psi$ and $\gamma$ be as above. Let $k \in \{0,1,\dotsc,\lfloor \min(a,b)/2\rfloor-1\}$ and let $* \in \{\text{up},\text{down}\}$. Then the representation
\[
\theta_{(k,*)}(\pi) := \theta_{-(\gamma+2k)}^{\text{up}}(\theta_{-(\gamma+2k-2)}^{\text{up}}(\dots \theta_{-(\gamma+2)}^{\text{up}}(\theta_{-\gamma}^{*}(\pi))\dots))
\]
is either $0$, or is in the expected Arthur packet parametrized by
\[
\psi_{(k,*)} = (\textnormal{l.t.})\ \oplus\ \chi \otimes S_1 \otimes S_\gamma\ \oplus\ \chi \otimes S_1 \otimes S_{\gamma+2}\ \oplus\  \dots\ \oplus\  \chi \otimes S_1 \otimes S_{\gamma+2k}\ \oplus\  \chi \otimes S_{a} \otimes S_{b}.
\]
If $t < \min(a,b)/2$, then there exists only one pair $(k,*)$ for which the parameter corresponding to $\theta_{(k,*)}(\pi)$ has $t(a,b)=0$. 

If $t = \min(a,b)/2$, then there are two possibilities: $k=t-1$ and $* \in \{\text{up},\text{down}\}$.
\end{lem}
\begin{proof} It is worthwhile to look at examples \ref{ex_prvi}--\ref{ex_zadnji}, which illustrate the Lemma. The proof is a matter of careful bookkeeping and applying the change of order formulas of Lemma \ref{lem_xu}. We first assume $a$ and $b$ are odd. The case when $a$ and $b$ are even is similar, and we address it later. (We are assuming $\psi$ is of good parity, which implies $a$ and $b$ have the same parity).

Taking $\alpha \gg 0$, Proposition 2.2 says that $\theta_{-\alpha}(\pi)$ is parametrized by
\[
(\text{lt})\ \oplus \ \overset{-\eta_1,\ t}{\chi_W \otimes S_{a} \otimes S_{b}} \ \oplus \ \overset{\eta'}{\chi_W \otimes S_{1} \otimes S_{\alpha}},
\]
where the signs $\eta_1$ and $\eta'$ are determined by Proposition 2.2 and Remark 2.3. 

We take a moment to explain this. If $\pi$ is a representation of a symplectic group, then $\eta_1=\eta$ and the choice of $\eta'$ corresponds to the choice of target tower. If $\pi$ is a representation of an orthogonal group, then $\eta'$ is determined by Equation (1), whereas $\eta_1$ reflects the choice of “target tower”, i.e.\ the choice between lifting $\pi$ and its twist, $\pi \otimes \det$. In either case, the choice of target tower controls the ratio $\eta'/\eta_1$.

We first consider the target tower on which $\eta' = (-1)^{A-B}\eta_1$. 

We begin by computing $\theta_{-\gamma}(\pi)$ on this tower. According to the Recipe, on this tower $\theta_{-\gamma}(\pi)$ is parametrized by
\[
(\text{lt})\ \oplus \ \overset{\eta_1}{\chi_W \otimes S_{1} \otimes S_{\gamma}}\ \oplus \ \overset{\eta_1,\ t-1}{\chi_W \otimes S_{a} \otimes S_{b}}.
\]
Note that we used the third bullet of Lemma \ref{lem_xu} in following the Recipe here. The next step is $\theta_{-(\gamma+2)}^\text{up}$. Following the Recipe, we see that $\theta_{-(\gamma+2)}^{\text{up}}(\theta_{-\gamma}(\pi))$ is given by
\[
\text{(lt) }  \oplus \, \overset{-\eta_2}{\chi_V\otimes S_{1} \otimes S_{\gamma}} \, \oplus \,  \overset{\eta_2}{\chi_V\otimes S_{1} \otimes S_{\gamma+2}} \, \oplus \, \overset{\eta_2,\ t-2}{\chi_V\otimes S_{a} \otimes S_{b}}.
\]
Here $\eta_2$ is a sign determined by $\eta_1$. Again, to explicate $\eta_2$, we would have to know whether $\pi$ is a representation of an orthogonal or a symplectic group. This does not concern us: all we need to know is that the signs on $\chi_V\otimes S_{1} \otimes S_{\gamma}$ and $\chi_V\otimes S_{1} \otimes S_{\gamma+2}$ differ, which is dictated by the fact that we are lifting to the going-up tower. The fact that the signs on $\chi_V\otimes S_{1} \otimes S_{\gamma+2}$ and $\chi_V\otimes S_{a} \otimes S_{b}$ are the same then follows automatically.

We may continue in this fashion. Each subsequent going-up lift will decrease the value of $t(a,b)$, as we are using the \textbf{third bullet} of Lemma \ref{lem_xu} in each step. After a total of $t$ lifts, we arrive at $\theta_{(t-1,*)}(\pi)$, parametrized by
\[
\text{(l.t.)}  \oplus \, 
\overset{(-1)^{t-1}\eta_r}{\chi\otimes S_{1} \otimes S_{\gamma}}
 \, \oplus \,  \overset{(-1)^{t-2}\eta_r}{\chi\otimes S_{1} \otimes S_{\gamma+2}}
 \, \oplus \, \dots  \, \oplus \, \overset{\eta_r}{\chi\otimes S_{1} \otimes S_{\gamma+2(t-1)}} \, \oplus 
\overset{\eta_r,\ 0}{\chi\otimes S_{a} \otimes S_{b}}.
\]
Here, once again, the precise value of $\eta_r$ depends on whether $\pi$ is a representation of an orthogonal or a symplectic group. To summarize, on this tower, $\theta_{(t-1,*)}(\pi)$ has $t(a,b)=0$.

After this, the next lift\,---\,that is, $\theta_{-(\gamma+2t)}^\text{up}$\,---\,vanishes. Indeed, by Remark \ref{rem_tower}, we have $\theta_{-(a+b-1)}^{\text{up}}(\theta_{(t-1,*)}(\pi)) \allowbreak = 0$. We are assuming $\frac{\gamma-1}{2} \ll B$ (and thus $\gamma+2t \ll a+b-1$), so $\theta_{-(\gamma+2t)}^\text{up}(\theta_{(t-1,*)}(\pi))=0$ as well. In short, on this tower, $\theta_{(k,*)}(\pi)=0$ when $k\geq t$.

It remains to see what happens on the other tower, that is, the tower for which $\eta' = -(-1)^{A-B}\eta_1$. If $t < \lfloor\min(a,b)/2\rfloor$, then the \textbf{first bullet} of Lemma \ref{lem_xu} shows that on this tower $\theta_{-\gamma}(\pi)$ is given by
\[
(\text{l.t.})\ \oplus \ \overset{-\eta_1}{\chi_W \otimes S_{1} \otimes S_{\gamma}}\ \oplus \ \overset{\eta_1,\ t+1}{\chi_W \otimes S_{a} \otimes S_{b}}.
\]
Assuming $t+1 < \lfloor\min(a,b)/2\rfloor$, the next lift is then parametrized by
\[
(\text{l.t.})\ \oplus \ \overset{\eta_2}{\chi_V \otimes S_{1} \otimes S_{\gamma}}\ \oplus \ \overset{-\eta_2}{\chi_V \otimes S_{1} \otimes S_{\gamma+2}}\ \oplus \ \overset{\eta_2,\ t+2}{\chi_V \otimes S_{a} \otimes S_{b}}.
\]
After exactly $s:=\lfloor\min(a,b)/2\rfloor-t$ lifts (using the first bullet of Lemma \ref{lem_xu} each time), we get
\[
(\text{l.t.})\ \oplus \ \overset{(-1)^s\eta_s}{\chi \otimes S_{1} \otimes S_{\gamma}}\ \oplus \  \dotsb \ \oplus \ \overset{-\eta_s}{\chi \otimes S_{1} \otimes S_{\gamma+2(s-1)}}\ \oplus \ \overset{\eta_s,\ \lfloor\min(a,b)/2\rfloor}{\chi \otimes S_{a} \otimes S_{b}}.
\]
Crucially, after this step, in computing the next lift we use the \textbf{second bullet} of Lemma \ref{lem_xu} (as opposed to the first bullet). This means that $t(a,b)$ does not change. To be precise, the next lift is $\theta_{-(\gamma+2s)}^\text{up}$, and we get
\[
(\text{l.t.})\ \oplus \ \overset{(-1)^{s}\eta_{s+1}}{\chi \otimes S_{1} \otimes S_{\gamma}}\ \oplus \  \dotsb \ \oplus \ \overset{-\eta_{s+1}}{\chi \otimes S_{1} \otimes S_{\gamma+2(s-1)}}\ \oplus \ \overset{\eta_{s+1}}{\chi \otimes S_{1} \otimes S_{\gamma+2s}}\ \oplus \ \overset{\eta_{s+1},\ \lfloor\min(a,b)/2\rfloor}{\chi \otimes S_{a} \otimes S_{b}}.
\]
Observe that the signs on the last two terms are now equal. This implies that the subsequent lifts will start decreasing the value of $t(a,b)$, because we now need to use the \textbf{third bullet} of Lemma \ref{lem_xu}. Indeed, computing $\theta_{-(\gamma+2(s+1))}^\text{up}$ will give us
\begin{gather*}
(\text{l.t.})
\ \oplus \ \overset{(-1)^{s+1}\eta_{s+2}}{\chi \otimes S_{1} \otimes S_{\gamma}}
\ \oplus \  \dotsb \ \oplus \ \overset{\eta_{s+2}}{\chi \otimes S_{1} \otimes S_{\gamma+2(s-1)}}
\ \oplus \ \overset{-\eta_{s+2}}{\chi \otimes S_{1} \otimes S_{\gamma+2s}}\\
\ \oplus \ \overset{\eta_{s+2}}{\chi \otimes S_{1} \otimes S_{\gamma+2(s+1)}}
\ \oplus \ \overset{\eta_{s+2},\ \lfloor\min(a,b)/2\rfloor-1}{\chi \otimes S_{a} \otimes S_{b}}.
\end{gather*}

With each subsequent lift, $t(a,b)$ will keep decreasing (because we are using the third bullet of Lemma \ref{lem_xu} in each step) until it reaches $0$.

Let us summarize the above discussion. We have explained how the value of $t(a,b)$ changes with every step of the construction $\pi \mapsto \theta_{-\gamma}(\pi) \mapsto \theta_{(1,*)}(\pi) \mapsto \theta_{(2,*)}(\pi) \mapsto \dots$ For one choice of $*$, the values of $t(a,b)$ are
\[
t,\ t-1,\ t-2,\ \dotsc,\ 0 \quad \text{(after which the lifts vanish)}.
\]
For the other choice of $*$, the values of $t(a,b)$ are
\[
t,\ t+1,\ \dotsc,\ \lfloor\min(a,b)/2\rfloor-1, \  \lfloor\min(a,b)/2\rfloor, \  \lfloor\min(a,b)/2\rfloor, \ \lfloor\min(a,b)/2\rfloor-1,\ \  \dotsc, \ 1 ,\ 0.
\]
Even though this sequence eventually reaches $0$, to reach $0$ we need a total of $2\lfloor\min(a,b)/2\rfloor+1-t$ steps. In particular, since $t \leq \lfloor\min(a,b)/2\rfloor$, the total number of steps is larger than $\lfloor\min(a,b)/2\rfloor$. However, in the statement of the Lemma we require the total number of steps to be at most $\lfloor\min(a,b)/2\rfloor$ (recall that $k < \lfloor\min(a,b)/2\rfloor$). In short, for this choice of $*$ we cannot reach $0$ using at most $\lfloor\min(a,b)/2\rfloor$ lifts.

This proves the Lemma when $a$ and $b$ are odd: there is exactly one pair $(k,*)$ with $k < \lfloor\min(a,b)/2\rfloor$ such that the parameter corresponding to $\theta_{(k,*)}(\pi)$ has $t(a,b)=0$. (The above discussion shows that $k=t-1$ in that case.)

It remains to explain what happens when $a$ and $b$ are even. In that case, the discussion is essentially the same, the only difference being the second bullet of Lemma \ref{lem_xu}.
We skip straight to the conclusion by listing what happens to $t(a,b)$ as we go through the steps $\pi \mapsto \theta_{-\gamma}(\pi) \mapsto \theta_{(1,*)}(\pi) \mapsto \theta_{(2,*)}(\pi) \mapsto \dots$

For one choice of $*$, we get (as before)
\[
t,\ t-1,\ t-2,\ \dotsc,\ 0 \quad \text{(after which the lifts vanish)}.
\]
For the other choice of $t$, we get
\[
t,\ t+1,\ \dotsc,\ \min(a,b)/2-1,\ \min(a,b)/2,\ \min(a,b)/2-1,\ \min(a,b)/2-2,\ \dotsc 
\]
Eventually, this sequence will reach $0$. Indeed, for $k=\min(a,b)-1-t$, we see that the parameter for $\theta_{(k,*)}(\pi)$ has $t(a,b)=0$. In particular, if we assume $t < \min(a,b)/2$, then $k \geq \min(a,b)/2$. However, in the statement of the Lemma we require $k < \min(a,b)/2$. Thus, when $t < \min(a,b)/2$, there still exists only one pair $(k,*)$ such that $t(a,b)=0$ in the parameter of $\theta_{(k,*)}(\pi)$.

Finally, the exceptional case is $t=\min(a,b)/2$ for $a,b$ even. In that case, $t(a,b)$ varies as
\[
\min(a,b)/2,\ \min(a,b)/2-1, \dotsc,\ 1,\ 0
\]
for both choices of $*$. As a result, we have two pairs $(k,*)$ such that $t(a,b)=0$ in the parameter of $\theta_{(k,*)}(\pi)$: $k=\min(a,b)/2-1$ and we may choose $*$ freely.
\end{proof}
Before moving on to \S \ref{subs_replacement}, we illustrate the above lemma with a few examples.
\begin{exmp}
\label{ex_prvi}
Let $\pi$ be a representation of $\Sp_{44}$ parametrized by
\[
\psi = \overset{+,\ t=1}{1 \otimes S_3 \otimes S_{15} }.
\]
Let us consider the lift $\theta_{-1}(\pi)$. On one tower, we get
\[
\overset{+}{1 \otimes S_1 \otimes S_{1} }\ \oplus \ \overset{+,\ t=0}{1 \otimes S_3 \otimes S_{15} }.
\]
On the other tower, we get
\[
\overset{-}{1 \otimes S_1 \otimes S_{1} }\ \oplus \ \overset{-,\ t=1}{1 \otimes S_3 \otimes S_{15} }.
\]
\end{exmp}
This example is almost trivial: in the Lemma, we consider $k$ going-up lifts after the initial lift, where $k < \lfloor \min(a,b)/2 \rfloor$. In this example, $\min(a,b)=3$, so $k < \lfloor\min(a,b)/2\rfloor$ implies $k=0$, which means that we do not consider going-up lifts after the initial lift. Yet in the above example we still get the expected result: there is only one pair $(k,*)$ for which $t(a,b)=0$ in the parameter for $\theta_{(k,*)}(\pi)$. Indeed, $k=0$, so the only choice that remains is the choice of target tower for the initial lift. On one tower we get $t(a,b)=0$, whereas $t(a,b)=1$ on the other tower.
\begin{exmp} Let $V_{76}$ be the split orthogonal space of dimension $76$, and let $\pi$ be a representation of $\text{O}(V_{76})$ parametrized by
\[
\psi =\overset{+}{1 \otimes S_1 \otimes S_{1} }\ \oplus \  \overset{-,\ t=1}{1 \otimes S_5 \otimes S_{15} }.
\]
We recall that $\pi\otimes \det$ is then parametrized by
\[
\psi =\overset{-}{1 \otimes S_1 \otimes S_{1} }\ \oplus \  \overset{+,\ t=1}{1 \otimes S_5 \otimes S_{15} }.
\]
We consider the lift $\theta_{-3}(\pi)$. On the going-down tower, we get
\[
\overset{+}{1 \otimes S_1 \otimes S_{1} }\ \oplus \ \overset{+}{1 \otimes S_1 \otimes S_{3} }\ \oplus \ \overset{+,\ t=0}{1 \otimes S_5 \otimes S_{15} }.
\]
The next step is $\theta_{-5}^{\text{up}}$. However, as shown in the Lemma, we see that $l(\theta_{-3}^{\text{down}}(\pi))= 19$, so  $\theta_{-5}^{\text{up}}(\theta_{-3}^{\text{down}}(\pi))=0$.

On the going-up tower, we get $\theta_{-3}^{\text{up}}(\pi)$ parametrized by
\[
\overset{-}{1 \otimes S_1 \otimes S_{1} }\ \oplus \ \overset{+}{1 \otimes S_1 \otimes S_{3} }\ \oplus \ \overset{-,\ t=2}{1 \otimes S_5 \otimes S_{15} }.
\]
Computing the next going-up lift will not change $t(a,b)=2$. Indeed, we get $\theta_{-5}^{\text{up}}(\theta_{-3}^{\text{up}}(\pi))$ parametrized by
\[
\overset{+}{1 \otimes S_1 \otimes S_{1} }\ \oplus \ \overset{-}{1 \otimes S_1 \otimes S_{3} }\ \oplus \ \overset{+}{1 \otimes S_1 \otimes S_{5} }\ \oplus \ \overset{+,\ t=2}{1 \otimes S_5 \otimes S_{15} }.
\]
Since we are restricting the total number of lifts to be at most $\lfloor\min(a,b)/2\rfloor = 2$, we need not compute further lifts. As we go through the steps $\pi \mapsto \theta_{-3}^*(\pi) \mapsto \theta_{-5}^{\text{up}}(\theta_{-3}^*(\pi))$, the number $t(a,b)$ exhibits the same pattern that we found in the proof of the Lemma:

When $*=\text{down}$: $1, 0$ (and $\theta_{-5}^{\text{up}}(\theta_{-3}^{\text{down}}(\pi))$ vanishes).

When $*=\text{up}$: $1, 2, 2$.
\end{exmp}

Of course, as mentioned in the proof of the Lemma, the sequence $1,2,2$ may continue further and will eventually reach $0$, but that will require strictly more than $\lfloor\min(a,b)/2\rfloor = 2$ steps. Let us see what happens in the example above:

Having applied two lifts to $\pi$, we obtained $\theta_{(1,\text{up})}(\pi)=\theta_{-5}^{\text{up}}(\theta_{-3}^{\text{up}}(\pi))$ parametrized by
\[
\overset{+}{1 \otimes S_1 \otimes S_{1} }\ \oplus \ \overset{-}{1 \otimes S_1 \otimes S_{3} }\ \oplus \ \overset{+}{1 \otimes S_1 \otimes S_{5} }\ \oplus \ \overset{+,\ t=2}{1 \otimes S_5 \otimes S_{15} }.
\]
The next going-up lift is $\theta_{-7}^{\text{up}}$, giving us $\theta_{(2,\text{up})}(\pi)$ parametrized by
\[
\overset{+}{1 \otimes S_1 \otimes S_{1} }\ \oplus \ \overset{-}{1 \otimes S_1 \otimes S_{3} }\ \oplus \ \overset{+}{1 \otimes S_1 \otimes S_{5} }\ \oplus \ 
\overset{-}{1 \otimes S_1 \otimes S_{7} }\ \oplus \ 
\overset{-,\ t=1}{1 \otimes S_5 \otimes S_{15} }.
\]
Finally, the next lift is $\theta_{-9}^{\text{up}}$, giving us $\theta_{(3,\text{up})}(\pi)$ parametrized by
\[
\overset{-}{1 \otimes S_1 \otimes S_{1} }\ \oplus \ \overset{+}{1 \otimes S_1 \otimes S_{3} }\ \oplus \ \overset{-}{1 \otimes S_1 \otimes S_{5} }\ \oplus \ 
\overset{+}{1 \otimes S_1 \otimes S_{7} }\ \oplus \ 
\overset{-}{1 \otimes S_1 \otimes S_{9} }\ \oplus \ 
\overset{-,\ t=0}{1 \otimes S_5 \otimes S_{15} }.
\]
Here is a similar example, but with $t=2$:
\begin{exmp}
\label{ex_zadnji}
Let $V_{76}$ be the split orthogonal space of dimension $76$, and let $\pi$ be a representation of $\text{O}(V_{76})$ parametrized by
\[
\psi =\overset{+}{1 \otimes S_1 \otimes S_{1} }\ \oplus \  \overset{+,\ t=2}{1 \otimes S_5 \otimes S_{15} }.
\]
We recall that $\pi\otimes \det$ is then parametrized by
\[
\psi =\overset{-}{1 \otimes S_1 \otimes S_{1} }\ \oplus \  \overset{-,\ t=2}{1 \otimes S_5 \otimes S_{15} }.
\]
We consider the lift $\theta_{-3}(\pi)$. When $*=\text{up}$, we get $\theta_{-3}^\text{up}(\pi)$:
\[
\overset{-}{1 \otimes S_1 \otimes S_{1} }\ \oplus \ \overset{+}{1 \otimes S_1 \otimes S_{3} }\ \oplus \ \overset{+,\ t=1}{1 \otimes S_5 \otimes S_{15} }.
\]
The next step is $\theta_{-5}^{\text{up}}$; we get
\[
\overset{+}{1 \otimes S_1 \otimes S_{1} }\ \oplus \ \overset{-}{1 \otimes S_1 \otimes S_{3} }\ \oplus \
\overset{+}{1 \otimes S_1 \otimes S_{5} }\ \oplus \
\overset{+,\ t=0}{1 \otimes S_5 \otimes S_{15} }.
\]

By contrast, when $*=\text{down}$, we get $\theta_{-3}^\text{down}(\pi)$:
\[
\overset{+}{1 \otimes S_1 \otimes S_{1} }\ \oplus \ \overset{+}{1 \otimes S_1 \otimes S_{3} }\ \oplus \ \overset{+,\ t=2}{1 \otimes S_5 \otimes S_{15} }.
\]
The next step is $\theta_{-5}^{\text{up}}$; we get
\[
\overset{-}{1 \otimes S_1 \otimes S_{1} }\ \oplus \ \overset{-}{1 \otimes S_1 \otimes S_{3} }\ \oplus \
\overset{+}{1 \otimes S_1 \otimes S_{5} }\ \oplus \
\overset{+,\ t=1}{1 \otimes S_5 \otimes S_{15} }.
\]
Since we are restricting the total number of lifts to be at most $\lfloor\min(a,b)/2\rfloor = 2$, we need not compute further lifts. As we go through the steps $\pi \mapsto \theta_{-3}^*(\pi) \mapsto \theta_{-5}^{\text{up}}(\theta_{-3}^*(\pi))$, the number $t(a,b)$ exhibits the familiar pattern:

When $*=\text{up}$: $2, 1, 0$.

When $*=\text{down}$: $2, 2, 1$.
\end{exmp}

Again, there is only one combination $(k,*)$ with $k < \lfloor\min(a,b)/2\rfloor$ which yields $t(a,b)=0$.

\subsection{Replacing the highest term}
\label{subs_replacement}
We now use the approach outlined above to modify the parameter we are working with. The end result of this section is Proposition \ref{prop_step} below, which enables us to replace the highest summand in $\psi$ with a singleton ($A_r = B_r$). We use the results from \cite{nas_clanak}, summarized in Appendix \ref{appendix}. At this point we advise the reader to read the introductory part of Appendix \ref{appendix}  to get acquainted with the terminology. Throughout this section, we assume that the parameter $\psi$ satisfies the conditions of Lemma \ref{lem_D_reduction1}; see \eqref{eq_parameter}. We also assume that the corresponding representation $\pi$ has $l(\pi) > 0$; this is justified by Theorem \ref{theoremB}.

Suppose $\beta < d$ is an odd integer such that $\theta_{-\beta}(\pi)$ is in the A-A packet. Although we know the parameter $\psi_\beta$, the fact that $\beta$ is below the range of our Recipe means we lose all information about $\eta$ and $t$. There is a fix, however: if we compute another going-down lift, we regain control of $\eta$ and $t$. This is the content of Lemma \ref{lem_alphabet} below.

In this section, we assume that $A_r > B_r$, i.e.~that the highest segment in $\psi$ is not a singleton. Let $\beta < b_r-a_r+1$ be an odd positive integer such that $\theta_{-\beta}^{\text{down}}(\pi)$ is in the A-A packet. Then, $\chi_W\otimes S_{a_r} \otimes S_{b_r}$ is still the highest term in $\Jord(\psi_\beta)$.
Now let $\alpha > \beta$ be another odd positive integer, which is smaller than $b_r-a_r+1$ but bigger than $\beta$ and $a_i+b_i -1$ for all $i<r$. The Recipe (along with Remark \ref{rem_descent}) then guarantees that $\theta_{-\alpha}^{\text{down}}(\theta_{-\beta}^{\text{down}}(\pi))$ is also in the A-A packet. Call this representation $\pi_{\beta,\alpha}$. The above setup is important: whenever we work with $\pi_{\beta,\alpha}$, we tacitly assume $\beta < \alpha < b_r-a_r+1$ and $\alpha > a_i+b_i-1$ for $i<r$.

The main idea of this section is to compare $\pi_{\beta,\alpha}$ to $\pi_{\beta',\alpha}$ for different  $\beta$ and $\beta'$. The key observation is that these two representations look very much alike. Theorem \ref{downlifts} describes what happens to the standard module when we go from $\pi$ to $\pi_{\beta,\alpha}$. The tempered part gets modified, but this modification is independent of $\beta$. Furthermore, we add two thin chains to the standard module:
\[
1, 2, \dots, \frac{\beta-1}{2} \quad \text{and} \quad 1, 2, \dots, \frac{\alpha-1}{2}.
\]
Thus, the only difference between $\pi_{\beta,\alpha}$ and $\pi_{\beta',\alpha}$ is in the shorter of the two added chains: one stops at $\frac{\beta-1}{2}$, while the other stops at $\frac{\beta'-1}{2}$. In both cases, we still add the same longer chain, $1, 2, \dots, \frac{\alpha-1}{2}$. We use the following convenient notation to summarize this calculus of standard module segments:
\[
\{\pi_{\beta,\alpha}\} = \{\pi\} \cup \{1, 2, \dots, \frac{\beta-1}{2}\} \cup \{1, 2, \dots, \frac{\alpha-1}{2}\}.
\]
Here, and elsewhere, we use $\{\pi\}$ to denote the collection of segments that appear in the $\GL$ part of the standard module of $\pi$; the notation forgets the tempered part of $\pi$.

We now apply the construction $\theta_\uparrow$ from \S \ref{subs_removal} to $\pi_{\beta,\alpha}$ and $\pi_{\beta',\alpha}$. Because we are assuming $B_r \gg A_i$ for $i<r$, we may also assume that $B_r \gg \frac{\alpha-1}{2}$; in particular, $\theta_\uparrow(\pi_{\beta,\alpha})$ and $\theta_\uparrow(\pi_{\beta',\alpha})$ make sense. Recall that $\theta_\uparrow$ is a sequence of lifts determined by the values $t(A_r,B_r,\zeta_r)$ and $\eta(A_r,B_r,\zeta_r)$ corresponding to the last term in the parameter $\psi_{\beta,\alpha}$. Eventually, we will show that these values do not change if we replace $\beta$ by $\beta'$. However, a priori, we do not know this. Hence, we need to make the distinction between $\theta_\uparrow$ (the sequence of lifts determined by $\psi_{\beta,\alpha}$) and $\theta_\uparrow'$ (the analogous sequence of lifts determined by $\psi_{\beta',\alpha}$). In the following lemma, we apply $\theta_\uparrow'$ to both representations:
\begin{lem}
\label{lem_compare}
Let $\alpha > \beta'> \beta$ be as above. Then $\theta_\uparrow'(\pi_{\beta,\alpha})$ is non-zero, and the standard module of $\theta_\uparrow'(\pi_{\beta',\alpha})$ is obtained from the standard module of $\theta_\uparrow'(\pi_{\beta,\alpha})$ by adding the singleton segments $\frac{\beta+1}{2}, \frac{\beta+3}{2}, \dots, \frac{\beta'-1}{2}$:
\[
\{\theta_\uparrow'(\pi_{\beta',\alpha})\} = \{\theta_\uparrow'(\pi_{\beta,\alpha})\} \cup \{\frac{\beta+1}{2}, \frac{\beta+3}{2}, \dots, \frac{\beta'-1}{2}\}.
\]
\end{lem}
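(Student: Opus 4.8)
The plan is to track the standard modules through the sequence of lifts making up $\theta_\uparrow'$, using the ``calculus of segments'' notation introduced just above the statement together with the structural results from Appendix \ref{appendix} (in particular Theorem \ref{downlifts} and the corollaries \ref{cor_removal}, \ref{cor_first_lift}, \ref{higher_lift}). Recall that $\theta_\uparrow'$ is defined as a specific chain of theta lifts: one going-down (or going-up) lift $\theta_{-\gamma}$, followed by $t_r-1$ first going-up lifts, then one going-down lift $\theta_{-(b_r-a_r+1)}$, then $A_r-B_r$ further first going-up lifts. Since $\theta_\uparrow'$ is the sequence \emph{determined by} $\psi_{\beta',\alpha}$, it is tautologically well-suited to $\pi_{\beta',\alpha}$: applying it produces a representation whose parameter has $t=0$ on the highest block, so by Corollary \ref{cor_removal} the lifts along the way are all nonzero and the standard module transforms in the controlled way described in \S\ref{subs_removal}. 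The content of the lemma is that the \emph{same} sequence also behaves well on $\pi_{\beta,\alpha}$, and that the difference of the two outputs is exactly the short chain of singletons $\frac{\beta+1}{2},\dots,\frac{\beta'-1}{2}$.

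First I would record the starting point: by the discussion preceding the lemma,
\[
\{\pi_{\beta,\alpha}\} = \{\pi\} \cup \{1,\dots,\tfrac{\beta-1}{2}\} \cup \{1,\dots,\tfrac{\alpha-1}{2}\}, \qquad
\{\pi_{\beta',\alpha}\} = \{\pi\} \cup \{1,\dots,\tfrac{\beta'-1}{2}\} \cup \{1,\dots,\tfrac{\alpha-1}{2}\},
\]
so $\{\pi_{\beta',\alpha}\} = \{\pi_{\beta,\alpha}\} \cup \{\tfrac{\beta+1}{2},\dots,\tfrac{\beta'-1}{2}\}$, and these extra singletons all lie strictly below $\frac{\alpha-1}{2} < \frac{b_r-a_r+1-1}{2} \le B_r$, hence below every block that $\theta_\uparrow'$ manipulates and below all the new segments that $\theta_\uparrow'$ introduces. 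Next I would argue, one lift at a time, that each of the lifts comprising $\theta_\uparrow'$ is nonzero on the $\pi_{\beta,\alpha}$-branch and adds the \emph{same} segment to the standard module as it does on the $\pi_{\beta',\alpha}$-branch. For the initial lift $\theta_{-\gamma}$ and the subsequent going-up lifts, nonvanishing on the $\pi_{\beta',\alpha}$-branch is guaranteed by construction; on the $\pi_{\beta,\alpha}$-branch one invokes the fact (Lemma \ref{lem_high_blocks_dont_matter}, and the spacing hypotheses of Lemma \ref{lem_D_reduction1}) that nonvanishing and the identity of the going-up tower depend only on the blocks near $B_r$, which are identical for $\pi_{\beta,\alpha}$ and $\pi_{\beta',\alpha}$ since those two parameters differ only in the low singleton chain. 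Concretely: the extra singletons $\frac{\beta+1}{2},\dots,\frac{\beta'-1}{2}$ are ``low blocks'' in the sense of \eqref{eq_parameter}, far below everything relevant, so they are inert for every Jacquet-module computation underlying $\theta_\uparrow'$. Thus at each stage the standard modules on the two branches differ by exactly $\{\tfrac{\beta+1}{2},\dots,\tfrac{\beta'-1}{2}\}$, and this is preserved since each lift either tacks on a new segment near $B_r$ (by Theorem \ref{downlifts} / Corollary \ref{higher_lift}, independent of the low part) or removes such a segment.

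The main obstacle, I expect, is making rigorous the claim that each lift in $\theta_\uparrow'$ acts ``the same way'' on both branches even though, a priori, the intermediate parameters carry possibly different $\eta$ and $t$ on the highest block (this is precisely why the lemma is phrased with $\theta_\uparrow'$ applied to \emph{both}, rather than $\theta_\uparrow$ to one and $\theta_\uparrow'$ to the other). The point to nail down is that the $(\eta,t)$-values on the high block of $\psi_{\beta,\alpha}$ and of $\psi_{\beta',\alpha}$ are in fact equal --- intuitively because the transition formulas of Lemma \ref{lem_xu}, which govern how $(\eta,t)$ change as the added block $\chi_W\otimes S_1\otimes S_\bullet$ is lowered past the high block, depend only on the relative position and signs of the two blocks involved, and the high block is untouched in passing from $\beta$ to $\beta'$. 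Once this is established, the going-up designation and hence the entire sequence $\theta_\uparrow'$ is literally the same recipe on both sides, each step is nonzero by Corollary \ref{cor_removal} (the $t=0$ parameter at the end certifies non-vanishing all the way up, via the tower property), and the segment bookkeeping goes through as above. A secondary technical point is verifying that the newly added segments near $B_r$ never collide with the short low chain $\frac{\beta+1}{2},\dots,\frac{\beta'-1}{2}$, which follows from the spacing assumptions ($B_r \gg \frac{\alpha-1}{2} > \frac{\beta'-1}{2}$) built into the setup of $\pi_{\beta,\alpha}$.
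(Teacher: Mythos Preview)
Your proposal has a genuine logical gap: you propose to establish that the $(\eta,t)$-values on the high block of $\psi_{\beta,\alpha}$ and $\psi_{\beta',\alpha}$ coincide, but this is precisely the content of Lemma~\ref{lem_alphabet}, which is proved \emph{using} Lemma~\ref{lem_compare}. Reversing the dependence is circular. More concretely, your suggested route via the transition formulas of Lemma~\ref{lem_xu} cannot work: those formulas tell you how $(\eta,t)$ transform when the added block crosses another block, but since $\beta<d$, the representation $\theta_{-\beta}^{\text{down}}(\pi)$ is only \emph{assumed} to lie in the A-A packet; the Recipe does not apply to it, so you have no handle on its $(\eta,t)$-data. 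Thus the values on the high block of $\psi_{\beta,\alpha}$ are genuinely unknown at this stage, and no amount of Lemma~\ref{lem_xu} bookkeeping will pin them down.

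The paper avoids Arthur parameters entirely here and instead tracks standard modules using the Appendix~\ref{appendix} machinery (Theorems~\ref{lofpi}, \ref{downlifts}, \ref{uplifts} and especially Corollary~\ref{multiple_uplifts}). The argument is: first show $l(\pi_{\beta,\alpha})=l(\pi_{\beta',\alpha})=\alpha$ (via Theorem~\ref{lofpi}, since the longer thin chain $\{1,\dots,\frac{\alpha-1}{2}\}$ dominates); this guarantees that the first lift $\theta_{-\gamma}$ (whether up or down) is nonzero on both branches and that the affected chain can be chosen from the common part $\{\pi\}\cup\{1,\dots,\frac{\alpha-1}{2}\}$. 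Then Corollary~\ref{multiple_uplifts} is invoked at each subsequent going-up step: it says the relevant chain is thick below $\frac{l+3}{2}$, so the low singletons $\frac{\beta+1}{2},\dots,\frac{\beta'-1}{2}$ are never touched. Nonvanishing at each step is propagated by repeatedly checking $l(\cdot)$ agrees on the two branches (Theorem~\ref{lofpi} plus Remark~\ref{rem_first_lift}). Your references to Lemma~\ref{lem_high_blocks_dont_matter} and Corollary~\ref{cor_removal} do not supply any of this: the former compares Arthur-parameter data (which you do not control), and the latter is about inverting Proposition~\ref{prop_visoko}, not about nonvanishing of intermediate lifts in $\theta_\uparrow'$.
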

\begin{proof}
By the discussion preceding this lemma, we have
\begin{align*}
\{\pi_{\beta,\alpha}\} &= \{\pi\} \cup \{1, 2, \dots, \frac{\alpha-1}{2}\} \cup \{1, 2, \dots, \frac{\beta-1}{2}\} ,\\
\{\pi_{\beta',\alpha}\} &= \{\pi\}\cup \{1, 2, \dots, \frac{\alpha-1}{2}\} \cup \{1, 2, \dots, \frac{\beta-1}{2},\frac{\beta+1}{2},\dotsc,\frac{\beta'-1}{2}\} .
\end{align*}
In particular,
\[
\{\pi_{\beta',\alpha}\} = \{\pi_{\beta,\alpha}\} \cup \{\frac{\beta+1}{2},\dotsc,\frac{\beta'-1}{2}\}.
\]
It thus suffices to show that the sequence of lifts $\theta_\uparrow'$ affects the standard modules of $\pi_{\beta,\alpha}$ and $\pi_{\beta',\alpha}$ in the same way. 

We begin by noticing that $l(\pi_{\beta,\alpha})=l(\pi_{\beta',\alpha})=\alpha$. Indeed, $l(\pi_{\beta',\alpha})=\alpha$ holds by construction (and Remark \ref{rem_tower}). On the other hand, $\pi_{\beta,\alpha}$ differs from $\pi_{\beta',\alpha}$ only in $\{\frac{\beta+1}{2},\dotsc,\frac{\beta'-1}{2}\}$. Since both representations contain a longer thin chain $\{1,\dotsc,\frac{\alpha-1}{2}\}$, Theorem \ref{lofpi} shows $l(\pi_{\beta,\alpha})=l(\pi_{\beta',\alpha})$.

The first lift we compute in $\theta_\uparrow'$ is $\theta_{-\gamma}$. We first assume that this is the lift to the going-up tower ($\pi_{\beta,\alpha}$ and $\pi_{\beta',\alpha}$ share the same going-up tower, because their standard modules have the same tempered part\,---\,see Remark \ref{towerispasseddown}). Consider $\theta_{-\gamma}(\pi_{\beta,\alpha})$. We know $l(\pi_{\beta,\alpha})=l(\pi_{\beta',\alpha})=\alpha$; in particular, since $\gamma > \alpha$, we have $\theta_{-\gamma}^\text{up}(\pi_{\beta,\alpha})\neq 0$. By Theorem \ref{uplifts}, in computing this lift, we affect a certain chain in the standard module of $\pi_{\beta,\alpha}$. The fact that $\alpha > \beta$ means that we can choose the relevant chain of segments from $\{\pi\} \cup \{1, 2, \dots, \frac{\alpha-1}{2}\}$, without using $\{1, 2, \dots, \frac{\beta-1}{2}\}$. Of course, the analogous statement holds for $\pi_{\beta',\alpha}$. Thus, in both cases, the relevant chain of segments comes from $ \{\pi\} \cup \{1, 2, \dots, \frac{\alpha-1}{2}\}$. As a result, the relevant chain is the same for both representations. This shows
\[
\{\theta_{-\gamma}(\pi_{\beta',\alpha})\} = \{\theta_{-\gamma}(\pi_{\beta,\alpha})\} \cup \{\frac{\beta+1}{2}, \frac{\beta+3}{2}, \dots, \frac{\beta'-1}{2}\}.
\]
We now claim that the next lift\,---\,that is, $\theta_{-\gamma-2}^\text{up}$\,---\,will not affect any of the singleton segments from $\{\frac{\beta+1}{2}, \frac{\beta+3}{2}, \dots, \frac{\beta'-1}{2}\}$. To show this, we use Corollary \ref{multiple_uplifts}, with $\theta_{-\gamma}(\pi_{\beta,\alpha})$ (resp.\ $\theta_{-\gamma}(\pi_{\beta',\alpha})$) playing the role of $\pi'$. Since $l(\pi_{\beta,\alpha})=l(\pi_{\beta',\alpha})=\alpha$, Corollary \ref{multiple_uplifts} shows that no singleton segments below $\frac{\alpha+3}{2}$ are affected when computing the lift $\theta_{-\gamma-2}^\text{up}$. In particular, the singletons $\{\frac{\beta+1}{2}, \frac{\beta+3}{2}, \dots, \frac{\beta'-1}{2}\}$ remain unaffected.

For the next step\,---\,that is, $\theta_{-\gamma-4}^\text{up}$\,---\,we can use the same argument (Corollary \ref{multiple_uplifts}) as soon as we show that $l(\theta_{-\gamma}(\pi_{\beta,\alpha})) = l(\theta_{-\gamma}(\pi_{\beta',\alpha}))= \gamma$. The second equality here holds by construction (and Remark \ref{rem_tower}), while $l(\theta_{-\gamma}(\pi_{\beta,\alpha})) = l(\theta_{-\gamma}(\pi_{\beta',\alpha}))$ holds by Theorem \ref{lofpi}. To see this, note that the two representations differ only in $\{\frac{\beta+1}{2}, \frac{\beta+3}{2}, \dots, \frac{\beta'-1}{2}\}$. Moreover, we are assuming that $\theta_{-\gamma}$ is a going-up lift, so Remark \ref{rem_first_lift} shows that the chain that determines $l(\theta_{-\gamma}(\pi_{\beta',\alpha}))$ cannot be thin. Hence, the same chain is present in $\theta_{-\gamma}(\pi_{\beta,\alpha})$, which shows $l(\theta_{-\gamma}(\pi_{\beta,\alpha})) = l(\theta_{-\gamma}(\pi_{\beta',\alpha}))$. 

The remaining steps are analogous: the same argument (equality of first occurrence indices combined with Corollary \ref{multiple_uplifts}) shows that all the subsequent lifts  used to compute $\theta_\uparrow'$ are non-zero, and that none of these subsequent lifts can affect the singletons $\{\frac{\beta+1}{2}, \frac{\beta+3}{2}, \dots, \frac{\beta'-1}{2}\}$. We thus get
\[
\{\theta_\uparrow'(\pi_{\beta',\alpha})\} = \{\theta_\uparrow'(\pi_{\beta,\alpha})\} \cup \{\frac{\beta+1}{2}, \frac{\beta+3}{2}, \dots, \frac{\beta'-1}{2}\},
\]
which is what we needed to show.

It remains to address the case when $\theta_{-\gamma}$ (the first lift needed for $\theta_\uparrow$) is a going-down lift. In that case, $\theta_{-\gamma}$ adds the chain $\{1,\dotsc, \frac{\gamma-1}{2}\}$ to the standard module, by Theorem \ref{downlifts}. The remaining lifts are going-up, so from that point on, we can repeat the same arguments (equality of first occurrence indices and Corollary \ref{multiple_uplifts}) as in the previous case.
\end{proof}
\begin{lem}
\label{lem_alphabet}
Suppose that $\pi$ is parametrized by
\[
\psi =  \bigoplus_{i=1}^{r-1}{\chi_V\otimes S_{a_i} \otimes S_{b_i}} \, \oplus \,   \overset{t, \eta}{\chi_V\otimes S_{a_r} \otimes S_{b_r}},
\]
Let $\alpha > \beta$ be odd positive integers as above. Assume $\theta_{-\beta}^{\text{down}}(\pi)$ and $\pi_{\beta,\alpha} = \theta_{-\alpha}^{\text{down}}(\theta_{-\beta}^{\text{down}}(\pi))$ are both in their respective A-A packets; in particular, $\pi_{\beta,\alpha}$ is parametrized by 
\[
\psi_{\beta,\alpha} = \bigoplus_{i=1}^{r-1}{\chi_V\otimes S_{a_i} \otimes S_{b_i}} \, \oplus \,  \chi_V\otimes S_{1} \otimes S_{\beta}  \, \oplus \,  \overset{\eta''}{\chi_V\otimes S_{1} \otimes S_{\alpha}}  \, \oplus \,   \overset{t', \eta'}{\chi_V\otimes S_{a_r} \otimes S_{b_r}}
\]
for some $\eta''$, $t'$, and $\eta'$.

Assume that the order on $\Jord(\psi_{\beta,\alpha})$ satisfies
$
(\chi_V,a_r,b_r) > (\chi_V, 1, \alpha)\allowbreak > (\chi_V, a_i, b_i)  $ for all $i<r$. Then $t'=t$ and, if $t < \textnormal{min}(a_r,b_r)/2$, $\eta' = \eta$. In particular, $\eta'$ and $t'$ do not depend on the choice of $\beta$. Furthermore, $\eta''$ is also independent of $\beta$.
\end{lem}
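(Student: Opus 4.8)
The strategy is to compare $\pi_{\beta,\alpha}$ with $\pi_{\beta',\alpha}$ for an auxiliary odd integer $\beta'$ with $\beta<\beta'<\alpha$ (still subject to the running conventions of this section), to conclude that the data on the top block $(A_r,B_r,\zeta_r)$ cannot depend on which of $\beta,\beta'$ is used, and then to read off the actual values $t'=t$, $\eta'=\eta$ at a convenient value of $\beta$. Write $\theta_\uparrow$ for the sequence of lifts attached (as in \S\ref{subs_removal}) to the top-block data of $\psi_{\beta,\alpha}$ and $\theta_\uparrow'$ for the one attached to $\psi_{\beta',\alpha}$; a priori these may differ, and the point of the comparison is to show that they do not.

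First I would apply Lemma~\ref{lem_compare}: both $\theta_\uparrow'(\pi_{\beta,\alpha})$ and $\theta_\uparrow'(\pi_{\beta',\alpha})$ are non-zero, and their standard modules differ only by the singleton segments $\frac{\beta+1}{2},\dots,\frac{\beta'-1}{2}$. By construction (Remark~\ref{rem_uparrow}(iii)), the parameter $\psi_\uparrow'$ of $\theta_\uparrow'(\pi_{\beta',\alpha})$ has $t=0$ on its top block; equivalently, by Corollary~\ref{cor_removal} and Remark~\ref{rem_uparrow}(iv), its highest summand $\chi\otimes S_1\otimes S_{a_r+b_r-1}$ can be peeled off, i.e.\ $\Theta_{a_r+b_r-1}(\theta_\uparrow'(\pi_{\beta',\alpha}))\neq0$. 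Because $\frac{\beta'-1}{2}<B_r\le A_r=\frac{a_r+b_r}{2}-1$, the singletons distinguishing the two standard modules all lie strictly below the range $[B_r,A_r]$ where this removal takes place; hence the two standard modules agree there, and the same non-vanishing persists: $\Theta_{a_r+b_r-1}(\theta_\uparrow'(\pi_{\beta,\alpha}))\neq0$.

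Next I would run the uniqueness argument. Each step of $\theta_\uparrow'$ preserves membership in the A-A packet (Theorem~\ref{theoremA}), so $\theta_\uparrow'(\pi_{\beta,\alpha})$ lies in an A-packet whose top block is again $(A_r,B_r,\zeta_r)$; since $\Theta_{a_r+b_r-1}$ of it is non-zero, that block must have $t=0$ (a $\zeta=-1$ block with $t>0$ does not admit this lift, cf.\ Remark~\ref{rem_uparrow}(v)). Thus the sequence $\theta_\uparrow'$ --- which has the shape ``$\theta_{-\gamma}$ to a prescribed tower followed by first going-up lifts'', with a number of lifts equal to the $t$-value of the top block of $\psi_{\beta',\alpha}$ and initial tower recorded by the corresponding $\eta$ --- produces $t=0$ when applied to $\pi_{\beta,\alpha}$, whose top block carries data $(t',\eta')$. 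By the uniqueness in Remark~\ref{rem_uparrow}(v), the sequence of that shape producing $t=0$ out of the block $(t',\eta')$ has exactly $t'$ lifts and initial tower determined by $\eta'$; comparing with the defining data of $\theta_\uparrow'$ gives that $t'$ is the same for $\psi_{\beta,\alpha}$ and $\psi_{\beta',\alpha}$, and --- when this common value is $<\min(a_r,b_r)/2$ --- so is $\eta'$ (when it equals $\min(a_r,b_r)/2$, $\eta'$ is immaterial by Remark~\ref{rem_parni}, and only equality of the $t'$'s is asserted). Transitivity in $\beta$ then yields the full $\beta$-independence of $t'$ and (when $t'<\min(a_r,b_r)/2$) of $\eta'$. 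For $\eta''$: now $\theta_\uparrow=\theta_\uparrow'$, so Lemma~\ref{lem_compare} shows the standard modules of $\theta_\uparrow(\pi_{\beta,\alpha})$ and $\theta_\uparrow(\pi_{\beta',\alpha})$ agree above level $\frac{\alpha-1}{2}$, and since the sign on $\chi_V\otimes S_1\otimes S_\alpha$ is read off from that part of the standard module (via the Appendix results on going-up and going-down lifts), $\eta''$ is independent of $\beta$ as well. Finally, since $t'$ and $\eta'$ do not depend on $\beta$, I would evaluate them at a pair $d\le\beta<\alpha<b_r-a_r+1$ (available thanks to the spacing arranged in Lemma~\ref{lem_D_reduction1}, which puts the top block far above $d$): there $\theta_{-\beta}^{\text{down}}(\pi)$ and $\pi_{\beta,\alpha}$ are produced by the Recipe, and $(t',\eta')$ is computed directly from Proposition~\ref{prop_visoko} (for the two going-down lifts at large level) together with the change-of-order formulas of Lemma~\ref{lem_xu} (for the two passages of the added blocks across $(A_r,B_r,\zeta_r)$), yielding $t'=t$ and, for $t<\min(a_r,b_r)/2$, $\eta'=\eta$.

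The main obstacle is the transfer step in the second paragraph: converting the purely standard-module statement of Lemma~\ref{lem_compare} into the claim that $\theta_\uparrow'(\pi_{\beta,\alpha})$ still has $t=0$ on its top block (equivalently, $\Theta_{a_r+b_r-1}$ of it is non-zero). This is where the description of the action of going-up and going-down lifts on standard modules from Appendix~\ref{appendix} (Theorems~\ref{downlifts} and~\ref{uplifts}, Corollary~\ref{multiple_uplifts}), together with the chain-tracking already carried out in the proof of Lemma~\ref{lem_compare}, do the real work; one must also keep the characters $\chi_V,\chi_W$ straight as they evolve along $\theta_\uparrow'$, which is routine but has to be done consistently.
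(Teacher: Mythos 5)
Your overall strategy matches the paper's: compare $\pi_{\beta,\alpha}$ with $\pi_{\beta',\alpha}$ for a large auxiliary $\beta'$, run $\theta_\uparrow'$ on both, transfer the non-vanishing of $\theta_{a_r+b_r-1}$ across, and invoke the uniqueness in Remark~\ref{rem_uparrow}(v) to force $\theta_\uparrow = \theta_\uparrow'$, then read the values off at $\beta > d$ from Lemma~\ref{lem_xu}. That is exactly the structure of the paper's proof.

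However, there is a genuine gap in the transfer step, which you yourself flag as ``the main obstacle'' but do not resolve. Your stated justification---that the extra singletons $\frac{\beta+1}{2},\dots,\frac{\beta'-1}{2}$ ``lie strictly below the range $[B_r,A_r]$ where this removal takes place; hence the two standard modules agree there, and the same non-vanishing persists''---is not correct. The chain that gets consumed by $\theta_{a_r+b_r-1}$ in Theorem~\ref{downdownlifts} has its \emph{top} endpoint at $A_r$, but its lower endpoints descend all the way to $\frac{l(\tau')+1}{2}$, which can be far below $B_r$; so the chain lives precisely in the region where the two standard modules differ, and in principle removing a thin chain could drop $l$ below $a_r+b_r-1$. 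The missing ingredient is the observation that the chain consumed by $\theta_{a_r+b_r-1}$ is in fact \emph{thick}: since $\theta_\uparrow'(\pi_{\beta',\alpha})$ is a first going-up lift of a smaller representation (Remark~\ref{rem_uparrow}(iv), step~4), Corollary~\ref{cor_first_lift}~a) shows its standard module contains no singleton $\frac{a_r+b_r-2}{2}$, whence the chain in Theorem~\ref{downdownlifts} is thick. A thick chain is untouched by removing thin singletons, and only then does Theorem~\ref{lofpi} give $\theta_{a_r+b_r-1}(\theta_\uparrow'(\pi_{\beta,\alpha}))\neq 0$. Without the thickness argument, the transfer step is simply asserted.

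Separately, your argument for the $\beta$-independence of $\eta''$ is off: $\eta''$ is not ``read off from the standard module above level $\frac{\alpha-1}{2}$''; it is an A-packet datum, not a feature of the Langlands data. The clean reason is simpler and bypasses $\theta_\uparrow$ entirely: $\eta''$ is determined by the choice of target tower for $\theta_{-\alpha}$, and since $\theta_{-\alpha}^{\text{down}}$ is the going-down lift, the target is fixed by the tempered part of the standard module of $\theta_{-\beta}^{\text{down}}(\pi)$, which by Theorem~\ref{downlifts} is $\theta_{-1}(\tau)$ independently of $\beta$.
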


\begin{proof}
We begin by explaining why $\eta''$ is independent of $\beta$. By construction, $\pi_{\beta,\alpha}$ is equal to $\theta_{-\alpha}(\theta_{-\beta}(\pi))$. To construct the corresponding parameter, we add $\chi \otimes S_1 \otimes S_\alpha$ to $\psi_\beta$. The sign $\eta''$ on this added term is determined by the target tower. However, we know that we are lifting to the going-down tower. Now the claim follows from the fact that the going-down tower for $\theta_{-\beta}(\pi)$ is determined by the tempered part of its standard module, and this does not depend on $\beta$ (cf.~Theorem \ref{downlifts}).

We now prove that $t'=t$ and $\eta' = \eta$. We point out that we already know this when $\beta$ is big enough. Indeed, as long as $\beta > d$ (so that $\theta_{-\beta}(\pi)$ is computed using the Recipe), this is a direct consequence of the Recipe and the change of order formulas from Lemma \ref{lem_xu}: since we are computing two consecutive going-down lifts, we need to apply the Recipe twice. This means $\eta$ and $t$ on the last term get updated twice, and one checks that the two updates counteract one another.

To prove the Lemma for $\beta < d$, we compare $\pi_{\beta,\alpha}$ to $\pi_{\beta',\alpha}$, where $\beta' > d$.  To be precise, we show that the two transformations $\theta_\uparrow$ and $\theta_\uparrow'$  attached to these representations (introduced prior to Lemma \ref{lem_compare}) are, in fact, one and the same. The uniqueness from Lemma \ref{lem_76v} will then imply the result we need. 

As explained above, we know that the Lemma is true for $\pi_{\beta',\alpha}$. Consider the construction $\theta_\uparrow'(\pi_{\beta',\alpha})$. As pointed out in Lemma \ref{lem_76v}, this is the unique combination of lifts which produces a representation for which $\theta_{a_r+b_r-1}$ is non-zero, at least if $t < \text{min}(a_r,b_r)/2$. We now assume that this is the case, and deal with the case $t = \text{min}(a_r,b_r)/2$ later.

Before moving on, we examine how the standard module gets affected when we go from $\theta_\uparrow'(\pi_{\beta',\alpha})$ to $\theta_{a_r+b_r-1}(\theta_\uparrow'(\pi_{\beta',\alpha}))$: we claim that the chain which gets affected is thick. Indeed, notice that $\theta_\uparrow'(\pi_{\beta',\alpha})$ (or its $\det$ twist) can be viewed as the first going-up lift (i.e.\ $\theta_{-a_r-b_r+1}$) of some smaller representation; this follows from Remark \ref{rem_uparrow} (iv), part 4. Because it is the first going-up lift, Corollary \ref{cor_first_lift} a) shows that the standard module of $\theta_\uparrow'(\pi_{\beta',\alpha})$ does not contain the singleton segment $\frac{a_r+b_r-2}{2}$. Now Theorem \ref{downdownlifts} shows that the chain affected in computing $\theta_{a_r+b_r-1}$ will be thick. 

Now apply the same sequence of lifts (that is, $\theta_\uparrow'$) to $\pi_{\beta,\alpha}$. By Lemma \ref{lem_compare}, the standard modules of $\theta_\uparrow'(\pi_{\beta,\alpha})$ and $\theta_\uparrow'(\pi_{\beta',\alpha})$ differ only by a thin chain. In particular, the thick chain that gets affected when we go from $\theta_\uparrow'(\pi_{\beta',\alpha})$ to $\theta_{a_r+b_r-1}(\theta_\uparrow'(\pi_{\beta',\alpha}))$ is also present in $\theta_\uparrow'(\pi_{\beta,\alpha})$. Since $\theta_\uparrow'(\pi_{\beta',\alpha})$ and $\theta_\uparrow'(\pi_{\beta,\alpha})$ also share the same tempered part, Theorem \ref{lofpi} shows that $\theta_{a_r+b_r-1}$ of $\theta_\uparrow'(\pi_{\beta,\alpha})$ is also non-zero. 

Now, by Lemma \ref{lem_76v}, there is only one combination of lifts (meaning, a choice of target tower for the initial lift, and the number of subsequent going-up lifts) which results in a representation for which $\theta_{a_r+b_r-1}$ does not vanish. This combination is determined by $\eta$ and $t$ on the highest term of the parameter. Since the same combination works for both $\pi_{\beta',\alpha}$ and $\pi_{\beta,\alpha}$, we conclude that $\theta_\uparrow' = \theta_\uparrow$, i.e.\  $\pi_{\beta',\alpha}$ and $\pi_{\beta,\alpha}$ have the same values of $\eta$ and $t$ on the highest term of the parameter. This proves the Lemma in case when $t_r < \text{min}\{a_r,b_r\}/2$.

It remains to consider the case $t = \text{min}(a_r,b_r)/2$. In that case, Lemma \ref{lem_76v} says that there is not one, but two combinations of lifts that could be called $\theta_\uparrow$. However, in both combinations, exactly $t$ lifts are used. Therefore, we can apply the same reasoning as above to conclude $t'=t$. We cannot guarantee that $\eta' = \eta$, but we may assume so: the fact that $t' = \text{min}(a_r,b_r)/2$ means that we can change $\eta'$ to $-\eta'$ without changing the representation that is parametrized (see Remark \ref{rem_parni}).
\end{proof}
In the rest of this section, we keep the same assumptions as above: we assume (for the sake of contradiction) that there exists $\beta < d$ such that $\theta_{-\beta}^{\text{down}}(\pi)$ is in the A-A packet, and we choose (for the sake of comparison) $\beta'$ such that $d<\beta'< b_r-a_r+1$; we know $\theta_{-\beta'}^{\text{down}}(\pi)$ is in the A-A packet by Theorem \ref{theoremA}. We next prove a result (Lemma \ref{lem_replace}) which allows us to remove the highest term of $\psi_{\beta,\alpha}$. This will enable us to effectively replace the highest term of $\psi$ with a singleton segment.

Recall Remark \ref{rem_uparrow} (iv): the representation $\theta_\uparrow(\pi_{\beta,\alpha})$ (or its determinant twist) can be obtained by a sequence of lifts from a smaller representation $\pi_{\beta,\alpha}^0$, parametrized by 
\[
\bigoplus_{i=1}^{r-1}{\chi_V\otimes S_{a_i} \otimes S_{b_i}} \, \oplus \,  \chi_V\otimes S_{1} \otimes S_{\beta}  \, \oplus \,  \chi_V\otimes S_{1} \otimes S_{\alpha}.
\]
This parameter is obtained from $\psi_{\beta,\alpha}$ by removing the highest term. An analogous statement holds for $\theta_\uparrow(\pi_{\beta',\alpha})$. Our initial observation was that the standard modules of $\pi_{\beta',\alpha}$ and $\pi_{\beta,\alpha}$ differ by a thin chain. Lemma \ref{lem_compare} says that the same holds for $\theta_\uparrow(\pi_{\beta,\alpha})$ and $\theta_\uparrow(\pi_{\beta',\alpha})$. The following lemma allows us to extend this comparison to $\pi_{\beta,\alpha}^0$ and $\pi_{\beta',\alpha}^0$.
\begin{lem}
\label{lem_replace}
The standard modules of $\pi_{\beta,\alpha}^0$ and $\pi_{\beta',\alpha}^0$ still differ only in one thin chain: we have
\[
\{\pi_{\beta',\alpha}^0\}=\{\pi_{\beta,\alpha}^0\} 
\cup \{\frac{\beta+1}{2}, \frac{\beta+3}{2}, \dots, \frac{\beta'-1}{2}\}.
\]
\end{lem}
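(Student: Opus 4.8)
The plan is to run the argument of Lemma~\ref{lem_compare} in reverse. By Lemma~\ref{lem_alphabet}, the two sequences of lifts $\theta_\uparrow$ and $\theta_\uparrow'$ attached to $\pi_{\beta,\alpha}$ and $\pi_{\beta',\alpha}$ coincide; write this common sequence as $\mathcal{L}_N\circ\dots\circ\mathcal{L}_1$, where the $\mathcal{L}_i$ are the theta lifts listed in Remark~\ref{rem_uparrow}~(iv) (first $\theta_{-\gamma}$, then $t_r-1$ first going-up lifts, then the going-down lift $\theta_{-(b_r-a_r+1)}$, then $A_r-B_r$ first going-up lifts). By Corollary~\ref{cor_removal} and Remark~\ref{rem_uparrow}~(iv), $\theta_\uparrow(\pi_{\beta,\alpha})$ and $\theta_\uparrow(\pi_{\beta',\alpha})$ are obtained, up to a twist by $\det$ (irrelevant for the $\GL$-part of a standard module), by applying this same sequence to $\pi^0_{\beta,\alpha}$ and $\pi^0_{\beta',\alpha}$ respectively. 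Since each $\mathcal{L}_i$ is a theta lift to a fixed tower whose target first-occurrence level we control, each step is undone by the reverse theta lift; so it suffices to show that undoing the sequence step by step preserves the thin-chain difference recorded in Lemma~\ref{lem_compare}.

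Concretely, set $\sigma^0=\pi^0_{\beta,\alpha}$, $\sigma^k=\mathcal{L}_k\circ\dots\circ\mathcal{L}_1(\pi^0_{\beta,\alpha})$ (so $\sigma^N=\theta_\uparrow(\pi_{\beta,\alpha})$ up to $\det$), and define $\sigma'^k$ analogously from $\pi^0_{\beta',\alpha}$. I would prove by downward induction on $k$ that $\sigma^k$ and $\sigma'^k$ share the same tempered part and that $\{\sigma'^k\}=\{\sigma^k\}\cup C$, where $C=\{\frac{\beta+1}{2},\frac{\beta+3}{2},\dots,\frac{\beta'-1}{2}\}$; the case $k=N$ is exactly Lemma~\ref{lem_compare} together with Remark~\ref{towerispasseddown} for the tempered part. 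For the inductive step one distinguishes the two kinds of lift. When $\mathcal{L}_k$ is the going-down lift $\theta_{-(b_r-a_r+1)}^{\text{down}}$, Theorem~\ref{downlifts} says the forward lift appends the fixed thin chain $\{1,2,\dots,B_r\}$ and alters the tempered part in a way depending only on that tempered part; hence $\{\sigma^k\}=\{\sigma^{k-1}\}\cup\{1,\dots,B_r\}$ and likewise on the primed side, so cancelling the common chain gives $\{\sigma'^{k-1}\}=\{\sigma^{k-1}\}\cup C$ with the tempered parts still equal. When $\mathcal{L}_k$ is a first going-up lift, Theorem~\ref{uplifts} says the forward lift modifies a single chain of segments, namely the one computing $l$ of the source; by Theorem~\ref{lofpi} and the thickness observations used in Lemma~\ref{lem_alphabet} (the governing chain of a going-up lift cannot be thin, cf.\ Remark~\ref{rem_first_lift}) we get $l(\sigma^{k-1})=l(\sigma'^{k-1})$, and this common value is $\gg\frac{\beta'-1}{2}$; Corollary~\ref{multiple_uplifts}, applied to the reverse lift, then shows that no segment of $C$ is touched in passing from $\sigma^k$ to $\sigma^{k-1}$ or from $\sigma'^k$ to $\sigma'^{k-1}$, so the difference $C$ persists and the tempered parts remain equal. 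Finally, the first lift $\theta_{-\gamma}$ may be a going-up or a going-down lift (Remark~\ref{rem_uparrow}~(i)); both possibilities are covered by the two cases just described, exactly as at the end of the proof of Lemma~\ref{lem_compare}.

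Running the induction down to $k=0$ yields $\{\pi^0_{\beta',\alpha}\}=\{\pi^0_{\beta,\alpha}\}\cup C$, which is the assertion. The main obstacle is the bookkeeping in the going-up step: Corollary~\ref{multiple_uplifts} is phrased for a forward lift, so one must first check that undoing a first going-up lift is again a theta lift (on the appropriate tower) to which it, or its evident analogue, applies, and that the equality $l(\sigma^{k-1})=l(\sigma'^{k-1})$ genuinely propagates through the reversal — this is where the ``thick chain'' structure established in Lemma~\ref{lem_alphabet} is essential, since it is what guarantees the governing chain survives the passage from the $\beta'$-side to the $\beta$-side. The going-down step, by contrast, reduces to a clean multiset cancellation once Theorem~\ref{downlifts} is invoked.
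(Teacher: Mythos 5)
Your overall approach is exactly the paper's: undo the steps of Remark~\ref{rem_uparrow}~(iv) one at a time (by a downward induction), with Lemma~\ref{lem_compare} as the base case, and show at each step that the thin-chain difference $C = \{\frac{\beta+1}{2},\dots,\frac{\beta'-1}{2}\}$ persists. The going-down step and the $\theta_{-\gamma}$ step are handled as in the paper.

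The gap sits precisely where you flag it: the going-up step. You invoke Corollary~\ref{multiple_uplifts} ``applied to the reverse lift,'' but Corollary~\ref{multiple_uplifts} is a statement about \emph{forward} going-up lifts $\theta_{-\beta}^{\text{up}}(\pi')$; it does not describe the backward operation $\theta_\gamma(\sigma^k)$ ($\gamma>0$) by which $\sigma^{k-1}$ is recovered from $\sigma^k$. The paper's actual tool is Theorem~\ref{downdownlifts}, which describes the lift $\theta_\gamma$ for $\gamma>0$ directly and identifies the chain of segments that gets shortened. The missing ingredient you need is Remark~\ref{rem_first_lift} (equivalently Corollary~\ref{cor_first_lift}~a)): since $\sigma^k$ is a \emph{first} going-up lift of $\sigma^{k-1}$, the chain created in the forward direction is entirely thick and the relevant singleton $\frac{\gamma-1}{2}$ is absent from $\{\sigma^k\}$. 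Hence the chain selected by Theorem~\ref{downdownlifts} in the reverse direction is thick, and in particular disjoint from $C$; and since $\sigma^k$ and $\sigma'^k$ have identical thick segments (they differ only in the thin chain $C$), Theorem~\ref{downdownlifts} shortens the same chain in both, so $C$ persists. This also sidesteps the $l$-equalities you try to establish for $\sigma^{k-1}$, $\sigma'^{k-1}$, which are circular as written (you need the standard module of $\sigma^{k-1}$ to apply Theorem~\ref{lofpi}, but that is what you are computing). In short: replace Corollary~\ref{multiple_uplifts} and the $l$-equalities in the going-up step by Theorem~\ref{downdownlifts} together with Remark~\ref{rem_first_lift}, and the argument closes.
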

\begin{proof}
Going from $\theta_\uparrow(\pi_{\beta,\alpha})$ (resp.~$\theta_\uparrow(\pi_{\beta',\alpha})$) to $\pi_{\beta,\alpha}^0$ (resp.~$\pi_{\beta',\alpha}^0$) amounts to running the steps from Remark \ref{rem_uparrow} (iv) backwards. We need to show that these steps affect $\theta_\uparrow(\pi_{\beta,\alpha})$ and $\theta_\uparrow(\pi_{\beta',\alpha})$ in the same way.
\begin{itemize}
\item[4.] All the lifts in step 4 are the \emph{first} going-up lifts. By Remark \ref{rem_first_lift}, reversing these lifts will not affect any thin chains in the standard module. Because $\theta_\uparrow(\pi_{\beta,\alpha})$ and $\theta_\uparrow(\pi_{\beta',\alpha})$ have the same thick segments, Theorem \ref{downdownlifts} shows that reversing step 4 will affect them in the same way.
\item[3.] For both $\pi_{\beta,\alpha}$ and $\pi_{\beta',\alpha}$, this is a going-down lift. Reversing it, we need to remove the following thin chain from the standard module (see Theorem \ref{downlifts})
\[
1, 2, \dotsc, \frac{b_r-a_r}{2}.
\]
In doing this, we affect both representations in the same way.
\item[2.] Again, we are reversing the \emph{first} going-up lifts. Thus, we are changing the same (thick) segments in both representations, just like in step 4.
\item[1.] Here we are reversing the lift $\theta_{-\gamma}(\pi_{\beta,\alpha}^0)$ (resp.~$\theta_{-\gamma}(\pi_{\beta',\alpha}^0)$). As explained in Remark \ref{rem_uparrow} (i), we do not know whether this is a going-up lift or a going-down lift. However, this is non-important, as long as we can show that we are reversing the same type of lift for both $\pi_{\beta,\alpha}^0$ and $\pi_{\beta',\alpha}^0$ (because then we will be affecting the two standard modules in the same way).

The parameter of $\theta_{-\gamma}(\pi_{\beta,\alpha}^0)$ looks like
\[
\bigoplus_{i=1}^{r-1}{\chi\otimes S_{a_i} \otimes S_{b_i}} \, \oplus \,  \chi\otimes S_{1} \otimes S_{\beta}  \, \oplus \, {\chi\otimes S_{1} \otimes S_{\alpha}}  \, \oplus \,   {\chi\otimes S_{1} \otimes S_{\gamma}},
\]
as reversing steps 4, 3, and 2 results in removing all the higher terms. (The parameter for $\pi_{\beta',\alpha}^0$ is the same, with $\beta$ replaced by $\beta'$.) Whether this is the going-up lift or the going-down lift is determined by the values $\eta(\chi\otimes S_{1} \otimes S_{\alpha})$ and  $\eta(\chi\otimes S_{1} \otimes S_{\gamma})$: if they are the same, we are looking at the going-down lift; if they differ, then it is a going-up lift (cf.~Remark \ref{rem_tower}).

The second part of Lemma \ref{lem_alphabet} shows that $\pi_{\beta',\alpha}$ and $\pi_{\beta,\alpha}$ have the same $\eta(\chi\otimes S_{1} \otimes S_{\alpha})$. This equality remains true when we add summands to construct $\theta_\uparrow(\pi_{\beta',\alpha})$ and $\theta_\uparrow(\pi_{\beta,\alpha})$; similarly, the equality between signs holds after we remove summands to construct $\theta_{-\gamma}(\pi_{\beta,\alpha}^0), \theta_{-\gamma}(\pi_{\beta',\alpha}^0)$.

Furthermore, because $\eta(\chi\otimes S_{1} \otimes S_{\gamma})$ is determined by the sign $\eta$ on the highest summand in $\psi_{\beta,\alpha}$, it follows from the first part of Lemma \ref{lem_alphabet} that $\pi_{\beta',\alpha}$ and $\pi_{\beta,\alpha}$ also share the same sign $\eta(\chi\otimes S_{1} \otimes S_{\gamma})$.

We conclude that we are reversing the same type of lift for both representations. Therefore, the standard modules are affected in the same way. (Indeed, if $\theta_{-\gamma}$ is a going-down lift, this is trivial; in the going-up case, one verifies this using Theorem \ref{downdownlifts}\,---\,or reversing Theorem \ref{uplifts}\,---\,and the fact that $l(\pi_{\beta',\alpha}^0)=l(\pi_{\beta,\alpha}^0)=\alpha$.)
This is what we needed to show.\qedhere
\end{itemize}
\end{proof}

Let us take a moment to examine $\pi_{\beta',\alpha}^0$ and $\pi_{\beta,\alpha}^0$. Recall that $\pi_{\beta',\alpha}^0$ is parametrized by
\[
\bigoplus_{i=1}^{r-1}{\chi\otimes S_{a_i} \otimes S_{b_i}} \, \oplus \,  \chi\otimes S_{1} \otimes S_{\beta'}  \, \oplus \,  \chi\otimes S_{1} \otimes S_{\alpha}.
\]
Here we are still using $\chi$ as a placeholder for either $\chi_V$ or $\chi_W$: the number of lifts we had to perform to get to this representation depends on the size of the last segment in $\psi$, i.e. $A_r-B_r$. Recall that $\beta'$ was chosen so that $\chi \otimes S_1 \otimes S_{\beta'}$ is higher than all the other summands except $\chi \otimes S_1 \otimes S_{\alpha}$. The following observation is simple, but crucial:
\begin{lem}
\label{lem_10}
$\eta(\chi \otimes S_1 \otimes S_{\beta'}) = \eta(\chi \otimes S_1 \otimes S_{\alpha})$.
\end{lem}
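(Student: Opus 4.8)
Throughout, write $\chi$ for the placeholder appearing in $\psi^0_{\beta',\alpha}$ (either $\chi_V$ or $\chi_W$). The statement concerns the two $\eta$-values carried by $\chi\otimes S_1\otimes S_{\beta'}$ and $\chi\otimes S_1\otimes S_\alpha$ in the parameter $\psi^0_{\beta',\alpha}$ of $\pi^0_{\beta',\alpha}$, and the plan is to first push the question back to the parameter $\psi_{\beta',\alpha}$ of $\pi_{\beta',\alpha}$ and then settle it by a direct bookkeeping of signs through the two going-down lifts that build $\pi_{\beta',\alpha}$. For the reduction, recall that $\pi^0_{\beta',\alpha}$ is obtained from $\pi_{\beta',\alpha}$ by the chain of lifts in the construction $\theta_\uparrow$ and their reversals (Remark \ref{rem_uparrow} (iv)) together with the removal of the top block $\chi_V\otimes S_{a_r}\otimes S_{b_r}$ (Corollary \ref{cor_removal}). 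Every one of these operations acts on the $\eta$-value of a singleton block $\chi'\otimes S_1\otimes S_s$ either trivially, or by the sign flip of Proposition \ref{prop_visoko}, or by a $\det$-twist (Remark \ref{rem_ort_lifts}); in the latter two cases the flip is applied uniformly to all blocks of a given character, and it does flip $\chi'\otimes S_1\otimes S_s$ because $s$ is odd. The change-of-order formulas (Lemma \ref{lem_xu}) are never applied to $\chi\otimes S_1\otimes S_{\beta'}$ or $\chi\otimes S_1\otimes S_\alpha$, since throughout these manipulations they remain low, isolated singletons of the same character. Hence these two blocks always have their signs altered by the same factor, and it suffices to prove $\eta(\chi_V\otimes S_1\otimes S_{\beta'})=\eta(\chi_V\otimes S_1\otimes S_\alpha)$ in $\psi_{\beta',\alpha}$.

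Next, since $d<\beta'<\alpha<b_r-a_r+1$ (and, enlarging $\beta'$ if necessary, $\beta'>a_i+b_i-1$ for all $i<r$ — there is room for this because the top block is far above everything by Lemma \ref{lem_D_reduction1}), both $\theta_{-\beta'}^{\text{down}}(\pi)$ and $\pi_{\beta',\alpha}=\theta_{-\alpha}^{\text{down}}(\theta_{-\beta'}^{\text{down}}(\pi))$ are produced by the Recipe. So $\psi_{\beta',\alpha}$ is built by two applications of Proposition \ref{prop_visoko} together with the descents of the two new singletons through $(A_r,B_r,\zeta_r)$ governed by Lemma \ref{lem_xu}. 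In this order, $\chi_W\otimes S_1\otimes S_{\beta'}$ is first inserted at the top with the sign prescribed for the going-down tower of $\pi$ and descends past the single block $(A_r,B_r,\zeta_r)$ to its final position above all lower terms; on the second lift its sign is flipped once more (by the second application of Proposition \ref{prop_visoko}), and $\chi_V\otimes S_1\otimes S_\alpha$ is inserted at the top with the going-down sign for $\theta_{-\beta'}^{\text{down}}(\pi)$ and descends past $(A_r,B_r,\zeta_r)$ to its final position just above $\chi_V\otimes S_1\otimes S_{\beta'}$. Crucially, neither singleton passes any lower-term block (as $\beta',\alpha>a_i+b_i-1$) nor each other (the $(-1)^{A-B}$ factor for a singleton crossing another singleton is $1$), so the only nontrivial contribution each picks up is the factor $(-1)^{A_r-B_r}$: reading the $\zeta=-1$ case of Lemma \ref{lem_xu}, a singleton crossing $(A_r,B_r,-)$ from above to below always multiplies its $\eta$-value by $(-1)^{A_r-B_r}$, irrespective of the current $t$- and $\eta$-data on $(A_r,B_r,-)$. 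Thus the two singletons receive identical $(-1)^{A_r-B_r}$ contributions, and $\eta(\chi_V\otimes S_1\otimes S_{\beta'})=\eta(\chi_V\otimes S_1\otimes S_\alpha)$ reduces to matching their initial signs at the top, together with the single global flip produced by the second application of Proposition \ref{prop_visoko}.

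Finally, these initial signs match. When a singleton $\chi_W\otimes S_1\otimes S_s$ is placed on top for a going-down lift, its $\eta$-value enters \eqref{eq_det_condition} only through $\epsilon_{t,\eta}(\chi_W,1,s)=\eta(\chi_W,1,s)$, so it is pinned down by \eqref{eq_det_condition} (for symplectic $\pi$, by the choice of target tower) from the rest of the transformed parameter and the Hasse invariant; in particular it does not depend on $s$. Since $\theta_{-\beta'}^{\text{down}}(\pi)$ and $\theta_{-\alpha}^{\text{down}}(\pi)$ lift $\pi$ to the same (going-down) tower, the two initial signs in question differ only by the effect of the second application of Proposition \ref{prop_visoko}; that this effect is accounted for exactly is the same computation already carried out in the proofs of Corollary \ref{cor_removal} and Lemma \ref{lem_alphabet} (where it was checked that the two consecutive Recipe steps acting on $(A_r,B_r,\zeta_r)$ cancel), the companion check here being that the two initial singleton signs are shifted in tandem. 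Combining the three steps yields $\eta(\chi_V\otimes S_1\otimes S_{\beta'})=\eta(\chi_V\otimes S_1\otimes S_\alpha)$ in $\psi_{\beta',\alpha}$, hence the Lemma. I expect the reconciliation of the initial singleton signs with the global Proposition \ref{prop_visoko} flip in this last step to be the only genuinely delicate point; everything else is formal once the parity observation extracted from Lemma \ref{lem_xu} has been isolated.
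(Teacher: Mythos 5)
Your first two steps are sound. The reduction from $\psi^0_{\beta',\alpha}$ to $\psi_{\beta',\alpha}$ matches the paper's opening sentence, and your parity observation extracted from Lemma~\ref{lem_xu}\,---\,that a singleton crossing $(A_r,B_r,-)$ always has its $\eta$ multiplied by $(-1)^{A_r-B_r}$ regardless of the $(t,\eta)$-data on the block being crossed\,---\,is correct and a nice way to package the $\zeta=-1$ cases uniformly. The problem is your step~3. You assert that ``the two initial singleton signs are shifted in tandem'' and point to Corollary~\ref{cor_removal} and Lemma~\ref{lem_alphabet}, but those arguments establish different facts (the behaviour of $(\eta,t)$ on the \emph{top block} under consecutive Recipe steps), not the relation between the sign that Proposition~\ref{prop_visoko} assigns to the new singleton at $\gamma\gg 0$ when lifting $\pi$, and the sign it assigns when lifting $\theta^{\text{down}}_{-\beta'}(\pi)$. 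These are going-down lifts of \emph{different} representations to groups of different type, so there is no a priori reason the two initial signs obey a simple uniform rule; you flag this yourself as ``the only genuinely delicate point,'' which is precisely where the proof is incomplete.

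What is missing is the case analysis the paper carries out on $t_r'$ (the $t$-value on $(A_r,B_r,-)$ after the first lift). The point is that the obstacle governing the going-down/going-up distinction for $\theta^{\text{down}}_{-\beta'}(\pi)$ is \emph{not} fixed: if $t_r'>0$ it is $\chi\otimes S_1\otimes S_{\beta'}$ (giving $d^{\text{up}}=\beta'+2$), and then the going-down sign constraint ``added block may pass $\chi\otimes S_1\otimes S_{\beta'}$'' directly yields $(-1)^{A_r-B_r}\eta''=-\eta$; whereas if $t_r'=0$ the obstacle is $(A_r,B_r,-)$ itself (giving $d^{\text{up}}=a_r+b_r+1$), the going-down constraint reads $\eta''=(-1)^{A_r-B_r}(-\eta_r')$, and the conclusion only follows because Lemma~\ref{lem_xu} forces $\eta_r'=\eta$ whenever $t_r'=0$. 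Your uniform ``$(-1)^{A_r-B_r}$ in tandem'' argument cannot see that these two constraints are consistent; they need not be for an arbitrary parameter, and the consistency \emph{is} the content of the lemma. So the proposal is correct in structure but has a real gap at the decisive step, which requires the $t_r'$ case analysis (or some equivalent input from Lemma~\ref{lem_xu}) to close.
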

\begin{proof}
The construction $\pi_{\beta',\alpha} \mapsto \theta_\uparrow(\pi_{\beta',\alpha}) \mapsto \pi_{\beta',\alpha}^0$ does not change the relation between $\eta(\chi \otimes S_1 \otimes S_{\beta'})$ and $\eta(\chi \otimes S_1 \otimes S_{\alpha})$, so it suffices to prove that the same  equality holds in the parameter $\psi_{\beta',\alpha}$ of $\pi_{\beta',\alpha}$.

The claim follows from the fact that $\pi_{\beta',\alpha} = \theta_{-\alpha}^{\text{down}}(\theta_{-\beta'}^{\text{down}}(\pi))$ is obtained by lifting to the going-down tower twice. Let us explain; we observe how the two lifts affect the parameter. We start from $\pi$, given by

\[
\psi = \bigoplus_{i=1}^{r-1}{\chi_V\otimes S_{a_i} \otimes S_{b_i}} \, \oplus \,   \overset{t_r, \eta_r}{\chi_V\otimes S_{a_r} \otimes S_{b_r}},
\]
and then use the Recipe to obtain $\pi_\beta'$, parametrized by
\[
\psi_{\beta'}= \bigoplus_{i=1}^{r-1}{\chi_W\otimes S_{a_i} \otimes S_{b_i}} \, \oplus \, \overset{\eta}{\chi_W \otimes S_1 \otimes S_{\beta'}} \, \oplus \,  \overset{t'_r, \eta'_r}{\chi_W\otimes S_{a_r} \otimes S_{b_r}},
\]
(a priori, one does not know how $\eta$ and $\eta_r'$ are related).
To get $\pi_{\beta',\alpha}$, we lift once more. We use the Recipe again: we first compute $\theta_{-\alpha}$ for $\alpha \gg 0$, and then decrease $\alpha$, using Lemma \ref{lem_xu} to skip over the highest summand. For $\alpha \gg 0$ we have $\pi_{\beta',\alpha}$, given by
\[
\psi_{\beta',\alpha} =  \bigoplus_{i=1}^{r-1}{\chi_V\otimes S_{a_i} \otimes S_{b_i}} \, \oplus \, \overset{-\eta}{\chi_V \otimes S_1 \otimes S_{\beta'}} \, \oplus \,  \overset{t'_r, -\eta'_r}{\chi_V\otimes S_{a_r} \otimes S_{b_r}} \, \oplus \, \overset{\eta''}{\chi_V \otimes S_1 \otimes S_{\alpha}}.
\]
Here $\eta''$ is determined by the requirement that this be the going-down tower. Now there are two cases:
\begin{enumerate}
\item $t_r'  > 0$. In this case, $d^{\text{up}}(\pi_{\beta'},\psi_\beta')=\beta'+2$ (see Remark \ref{rem_tower}), and the going-down lift means that the added summand can descend past $\chi_V \otimes S_1 \otimes S_{\beta'}$. By Remark \ref{rem_descent}, this means precisely that the sign $\eta''$ should be compatible with $-\eta$, so we are done.
\item $t_r'  = 0$. In this case, $d^{\text{up}}(\pi_{\beta'},\psi_\beta') = a_r+b_r+1$ (again, by Remark \ref{rem_tower}), and now the going-down lift means $\eta'' = (-1)^{A_r-B_r}\cdot(-\eta_r')$. But by Lemma \ref{lem_xu}, $t_r'  =0$ can happen only if $t_r = 1$ and $\eta = \eta_r'$.

For $\alpha \gg 0$, we thus get $\pi_{\beta',\alpha}$ parametrized by
\[
\psi_{\beta',\alpha}=\bigoplus_{i=1}^{r-1}{\chi_V\otimes S_{a_i} \otimes S_{b_i}} \, \oplus \, \overset{-\eta}{\chi_V \otimes S_1 \otimes S_{\beta'}} \, \oplus \,  \overset{t=0, -\eta}{\chi_V\otimes S_{a_r} \otimes S_{b_r}} \, \oplus \, \overset{(-1)^{A_r-B_r}\cdot(-\eta)}{\chi_V \otimes S_1 \otimes S_{\alpha}}.
\]
We still need to use the Recipe to decrease $\alpha$ and move the added term below $\chi_V\otimes S_{a_r} \otimes S_{b_r}$. Again, we use Lemma \ref{lem_xu} to see what happens once $\alpha$ drops below $b_r-a_r+1$:
\[
\psi_{\beta',\alpha} =  \bigoplus_{i=1}^{r-1}{\chi_V\otimes S_{a_i} \otimes S_{b_i}} \, \oplus \, \overset{-\eta}{\chi_V \otimes S_1 \otimes S_{\beta'}} \, \oplus \, \overset{-\eta}{\chi_V \otimes S_1 \otimes S_{\alpha}} \, \oplus \,  \overset{t=1, \eta}{\chi_V\otimes S_{a_r} \otimes S_{b_r}}.
\]
Thus in this case we also get $\eta(\chi_V \otimes S_1 \otimes S_{\beta'}) = \eta(\chi_V \otimes S_1 \otimes S_{\alpha})$ ($=-\eta$).\qedhere
\end{enumerate}
\end{proof}
Finally, we can put together the above results. The lemma we just proved shows that $\pi_{\beta',\alpha}^0$ can be viewed as $\theta_{-\beta'}^{\text{down}}(\pi_\alpha^0)$, obtained using the Recipe from the representation $\pi^0_\alpha$ parametrized by
\[
\psi_{\alpha}^0 =  \bigoplus_{i=1}^{r-1}{\chi\otimes S_{a_i} \otimes S_{b_i}} \, \oplus \, {\chi \otimes S_1 \otimes S_{\alpha}}.
\]
But Lemma \ref{lem_replace} shows that the standard module of $\pi_{\beta,\alpha}^0$ is obtained from the standard module of $\pi_{\beta',\alpha}^0=\theta_{-\beta'}^{\text{down}}(\pi_\alpha^0)$ by removing the singleton segments $\frac{\beta'-1}{2}, \dotsc, \frac{\beta+1}{2}$. By Theorem \ref{downlifts}, this means that $\pi_{\beta,\alpha}^0$ can be viewed as a lower going-down lift of the same representation:
\[
\pi_{\beta,\alpha}^0=\theta_{-\beta}^{\text{down}}(\pi_\alpha^0).
\]
In particular, $\theta_{-\beta}^{\text{down}}(\pi_\alpha^0)$ is in the A-A packet even though $\beta < d$.
It remains to prove the following Lemma (whose proof we postpone):
\begin{lem}
\label{lem_same_d}
$d^{\text{down}}(\pi_\alpha^0, \psi_\alpha^0) = d^{\text{down}}(\pi, \psi)$.
\end{lem}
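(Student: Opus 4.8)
The plan is to deduce the equality from Lemma \ref{lem_high_blocks_dont_matter}. The key observation is that $\psi$ and $\psi_\alpha^0$ have exactly the same low and middle blocks (the blocks of $\psi$ lying below the high block $(A_r,B_r,\zeta_r)$), up to replacing $\chi_V$ by $\chi$ and a possible global sign change on the corresponding $\eta$'s\,---\,precisely the kind of discrepancy Lemma \ref{lem_high_blocks_dont_matter} is built to absorb. The two parameters differ only far above $d$: $\psi$ carries the high block $(A_r,B_r,\zeta_r)$ there, while $\psi_\alpha^0$ carries the singleton $\chi\otimes S_1\otimes S_\alpha$. Because $b_r\gg 0$ (the standing assumption \eqref{eq_parameter}), one can choose $\alpha$ within its prescribed range $d<\alpha<b_r-a_r+1$, $\alpha>a_i+b_i-1$, large enough that $\chi\otimes S_1\otimes S_\alpha$ sits far above the middle blocks; so in $\psi_\alpha^0$ this singleton is itself a far-away high block. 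To apply Lemma \ref{lem_high_blocks_dont_matter} it then remains to produce a common level $\delta$ below all high blocks of both parameters with $d^{\text{down}}(\pi,\psi),d^{\text{down}}(\pi_\alpha^0,\psi_\alpha^0)\le\delta$, and to check that the value of $\eta$ on the added summand $\chi_W\otimes S_1\otimes S_\delta$ agrees (in the sense of that lemma) in $\psi_\delta$ and $(\psi_\alpha^0)_\delta$.

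First I would take $\delta=d$, which is legitimate once $d^{\text{down}}(\pi_\alpha^0,\psi_\alpha^0)\le d$ is known. For this I would use the family constructed in \S\ref{subs_replacement}: the representations $\pi_{\beta',\alpha}^0=\theta_{-\beta'}^{\text{down}}(\pi_\alpha^0)$ are coherent\,---\,their standard modules differ from one another by a single thin chain (Lemma \ref{lem_replace})\,---\,and for $\beta'\gg 0$ they agree with the Recipe output $(\pi_\alpha^0)_{\beta'}$ (Proposition \ref{prop_visoko}). Running Theorem \ref{theoremA} downward through the family identifies $\pi_{\beta',\alpha}^0=(\pi_\alpha^0)_{\beta'}$ for all $\beta'\ge d$; taking $\beta'=d$ and using $\theta_{-d}^{\text{down}}(\pi)=\pi_d\ne 0$ gives $(\pi_\alpha^0)_d=\pi_{d,\alpha}^0\ne 0$, hence $d^{\text{down}}(\pi_\alpha^0,\psi_\alpha^0)\le d$.

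Next I would verify the sign condition. On the going-down tower the Recipe lowers the added summand $\chi_W\otimes S_1\otimes S_\gamma$ freely past every high block and every $\zeta=+1$ middle block (Remark \ref{rem_descent})\,---\,equally for $\psi$ and for $\psi_\alpha^0$\,---\,so the sign carried by the added summand when it first enters the common low-block region can differ between the two cases only through the flips incurred in passing the \emph{distinguishing} blocks ($(A_r,B_r,\zeta_r)$ versus the singleton $\chi\otimes S_1\otimes S_\alpha$) and through the parity-dependent twist $\chi_V\leftrightarrow\chi$ produced by the lifts relating $\pi$ to $\pi_\alpha^0$. I would compute the first contribution with Lemma \ref{lem_xu} (a flip by $(-1)^{A_r-B_r}$ for the big block versus no flip for the singleton) and the second from Proposition \ref{prop_visoko}, and then check that, together with the sign relations already recorded in Lemmas \ref{lem_10} and \ref{lem_alphabet}\,---\,which pin down the signs on the $\alpha$- and $\beta'$-terms precisely because $\theta_{-\alpha}^{\text{down}}$ and $\theta_{-\beta'}^{\text{down}}$ are going-down lifts\,---\,these discrepancies always match the pattern required by one of the two alternative hypotheses of Lemma \ref{lem_high_blocks_dont_matter} (same $\eta$ on the low blocks with the same sign on the added summand, or both reversed). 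That lemma then yields $d^{\text{down}}(\pi_\alpha^0,\psi_\alpha^0)=d^{\text{down}}(\pi,\psi)$.

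The hard part will be this last sign bookkeeping: one has to juggle the change-of-order formulas (Lemma \ref{lem_xu}), the sign rule for theta lifts (Proposition \ref{prop_visoko}), and the $\chi_V/\chi_W$ twist simultaneously, and confirm that the flip $(-1)^{A_r-B_r}$ coming from passing $(A_r,B_r,\zeta_r)$ rather than a singleton, and the twist coming from the parity of the number of intervening lifts, always conspire so that exactly one of the two hypotheses of Lemma \ref{lem_high_blocks_dont_matter} holds. Everything else\,---\,the coherence of the family $\pi_{\beta',\alpha}^0$ and the accompanying standard-module manipulations\,---\,is routine.
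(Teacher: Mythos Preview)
Your approach is essentially the same as the paper's: both reduce to Lemma~\ref{lem_high_blocks_dont_matter}. The difference is in how the sign condition is verified, and here the paper's argument is considerably simpler than the bookkeeping you propose.

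You choose to apply Lemma~\ref{lem_high_blocks_dont_matter} at level $\delta=d$ and then track, by hand via Lemma~\ref{lem_xu}, how the Recipe's added summand interacts with $(A_r,B_r,\zeta_r)$ in $\psi$ versus the singleton $\chi\otimes S_1\otimes S_\alpha$ in $\psi_\alpha^0$, together with the parity of the intervening lifts. This is doable but tedious, and you correctly identify it as the hard part without actually carrying it out. The paper avoids this entirely. It applies Lemma~\ref{lem_high_blocks_dont_matter} at level $\beta'$ (any $d<\beta'<b_r-a_r+1$) and compares the parameter $\psi_{\beta'}$ of $\theta_{-\beta'}^{\text{down}}(\pi)$ with the parameter $\psi_{\beta',\alpha}^0$ of $\theta_{-\beta'}^{\text{down}}(\pi_\alpha^0)=\pi_{\beta',\alpha}^0$ (the latter identification is precisely what Lemma~\ref{lem_10} gives). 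The point is that $\psi_{\beta',\alpha}^0$ is obtained from $\psi_{\beta'}$ by the sequence of lifts $\theta_{-\alpha}$, $\theta_\uparrow$, and then removal of the top block; every one of these lifts only adds or removes summands strictly above $\chi\otimes S_1\otimes S_{\beta'}$, so the sign on that summand relative to the lower blocks is unchanged (each lift flips all signs uniformly, preserving ratios). That single observation replaces your entire sign computation.

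Working at level $\beta'$ also makes your preliminary step superfluous: the inequality $d^{\text{down}}(\pi_\alpha^0,\psi_\alpha^0)\le\beta'$ is immediate from $(\pi_\alpha^0)_{\beta'}=\pi_{\beta',\alpha}^0\ne 0$, already established in \S\ref{subs_replacement}. So your separate argument for $d^{\text{down}}(\pi_\alpha^0,\psi_\alpha^0)\le d$ is not needed.
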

Once we prove this lemma, we will have proven the following
\begin{prop}
\label{prop_step}
Let $\pi$ be a representation parametrized by
\[
\psi =  \bigoplus_{i=1}^{r-1}{\chi_V\otimes S_{a_i} \otimes S_{b_i}} \, \oplus \,   {\chi_V\otimes S_{a_r} \otimes S_{b_r}}
\]
which satisfies the conditions of Lemma \ref{lem_D_reduction1}: all the $\zeta = -1$ summands above $d=d^{\text{down}}(\pi,\psi)$ are far away from $d$ and from each other. Suppose that Theorem \ref{theoremC} fails for $\pi$: there exists a  $\beta < d$ such that $\theta_{-\beta}(\pi)$ is in the A-A packet.

Then, Theorem \ref{theoremC} also fails for the representation $\pi_\alpha^0$ whose parameter
\[
\psi_\alpha^0 = \bigoplus_{i=1}^{r-1}{\chi\otimes S_{a_i} \otimes S_{b_i}} \, \oplus \,   {\chi\otimes S_{1} \otimes S_{\alpha}}.
\]
is obtained by replacing the highest summand in $\psi$ with a singleton segment.
\end{prop}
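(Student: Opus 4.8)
The plan is to assemble the lemmas of \S\ref{subs_replacement} into the desired reduction; essentially all of the real work has already been done there, so the proof is a matter of chaining them together correctly. Throughout I will assume $A_r > B_r$, since when the top block of $\psi$ is already a singleton one has $\pi_\alpha^0 = \pi$ and there is nothing to prove.

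So suppose Theorem \ref{theoremC} fails for $\pi$: there is an odd $\beta < d := d^{\text{down}}(\pi,\psi)$ with $\theta_{-\beta}^{\text{down}}(\pi)$ in its A-A packet. The first move is to introduce a "control" level $\beta'$ with $d < \beta' < b_r - a_r + 1$ (so that $\theta_{-\beta'}^{\text{down}}(\pi)$ is in its A-A packet by Theorem \ref{theoremA}) and an auxiliary level $\alpha$ with $\max\{\beta'\}\cup\{a_i+b_i-1 : i<r\} < \alpha < b_r - a_r + 1$. By the Recipe together with Remark \ref{rem_descent}, the double lifts $\pi_{\beta,\alpha} = \theta_{-\alpha}^{\text{down}}(\theta_{-\beta}^{\text{down}}(\pi))$ and $\pi_{\beta',\alpha} = \theta_{-\alpha}^{\text{down}}(\theta_{-\beta'}^{\text{down}}(\pi))$ both land in their A-A packets, with parameters $\psi_{\beta,\alpha}$, $\psi_{\beta',\alpha}$ as recorded in Lemma \ref{lem_alphabet}.

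Next I would run the removal construction $\theta_\uparrow$ of \S\ref{subs_removal} on both representations. By Lemma \ref{lem_alphabet} the data $(t',\eta')$ on the top block of $\psi_{\beta,\alpha}$ and $\psi_{\beta',\alpha}$ coincide (and so does $\eta''$), so $\theta_\uparrow$ is literally one and the same sequence of lifts for the two; hence, by Remark \ref{rem_uparrow}(iv) together with Corollary \ref{cor_removal}, each image $\theta_\uparrow(\pi_{\beta,\alpha})$, $\theta_\uparrow(\pi_{\beta',\alpha})$ (up to a possible $\det$ twist) is a prescribed chain of theta lifts of a smaller representation $\pi_{\beta,\alpha}^0$, $\pi_{\beta',\alpha}^0$ whose parameter is $\psi_{\beta,\alpha}$, $\psi_{\beta',\alpha}$ with the top block deleted. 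Lemma \ref{lem_replace} then tells us the standard modules of $\pi_{\beta,\alpha}^0$ and $\pi_{\beta',\alpha}^0$ differ only by the thin chain $\frac{\beta+1}{2}, \frac{\beta+3}{2}, \dots, \frac{\beta'-1}{2}$. To close, I would use Lemma \ref{lem_10} to see that in $\psi_{\beta',\alpha}^0$ the signs on $\chi\otimes S_1\otimes S_{\beta'}$ and $\chi\otimes S_1\otimes S_\alpha$ are compatible, so that — because $\beta' > d$ — the Recipe identifies $\pi_{\beta',\alpha}^0$ with the going-down lift $\theta_{-\beta'}^{\text{down}}(\pi_\alpha^0)$ of the representation $\pi_\alpha^0$ parametrized by $\psi_\alpha^0$. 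Theorem \ref{downlifts} describes how going-down lifts act on standard modules (a $\beta$-independent change of the tempered part, plus a thin chain of the appropriate length), so comparing with Lemma \ref{lem_replace} pins $\pi_{\beta,\alpha}^0 = \theta_{-\beta}^{\text{down}}(\pi_\alpha^0)$; this representation therefore lies in the A-A packet for $\psi_\alpha^0 \oplus \chi\otimes S_1\otimes S_\beta$. Since Lemma \ref{lem_same_d} gives $d^{\text{down}}(\pi_\alpha^0,\psi_\alpha^0) = d > \beta$, this exhibits the failure of Theorem \ref{theoremC} for $\pi_\alpha^0$ (a $\det$ twist being harmless here by Remark \ref{rem_ort_lifts}).

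The genuine difficulty is not this bookkeeping; it has already been spent in the lemmas invoked. If one were building the whole package from scratch, the hardest point would be Lemma \ref{lem_alphabet}: for $\beta < d$ the Recipe no longer computes $\theta_{-\beta}^{\text{down}}(\pi)$, so one has no direct handle on $(t,\eta)$ on the top block of $\psi_{\beta,\alpha}$ — and hence does not even know which sequence of lifts "$\theta_\uparrow$" is. The device of computing one further going-down lift to recover that data, and then comparing with the (Recipe-computable) case $\beta' > d$ via the uniqueness in Remark \ref{rem_uparrow}(v), is the crux; the rest rests on the standard-module calculus for theta lifts imported from \cite{nas_clanak} and recalled in Appendix \ref{appendix}.
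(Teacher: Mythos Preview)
Your proposal is correct and follows essentially the same approach as the paper: you chain together Lemmas \ref{lem_alphabet}, \ref{lem_replace}, \ref{lem_10}, and \ref{lem_same_d} in the same order and for the same purpose, using Theorem \ref{downlifts} to pass from the comparison of standard modules to the identification $\pi_{\beta,\alpha}^0 = \theta_{-\beta}^{\text{down}}(\pi_\alpha^0)$. Your closing remark correctly identifies Lemma \ref{lem_alphabet} (recovering the $(t,\eta)$ data on the top block below the Recipe's range via an extra going-down lift and the uniqueness of $\theta_\uparrow$) as the place where the real work is concentrated.
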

\noindent This is the key step in an inductive proof of Theorem \ref{theoremC}, which we complete in \S \ref{subs_removing}.
Before moving on, we still need to address Lemma \ref{lem_same_d}.
\begin{proof}[Proof of Lemma \ref{lem_same_d}]
This follows essentially from Lemma \ref{lem_high_blocks_dont_matter}. Recall that $d^{\text{down}}(\pi_\alpha^0, \psi_\alpha^0)$ and $d^{\text{down}}(\pi, \psi)$ are determined by applying the Recipe to $\psi_\alpha^0$ and $\psi$, respectively. Lemma \ref{lem_10} allowed us to conclude that $\theta_{-\beta'}^{\text{down}}(\pi_\alpha^0) = \pi_{\beta',\alpha}^0$; this is parametrized by
\[
\psi_{\beta',\alpha}^0 = \bigoplus_{i=1}^{r-1}{\chi\otimes S_{a_i} \otimes S_{b_i}} \, \oplus \,   {\chi\otimes S_{1} \otimes S_{\beta'}} \, \oplus \,   {\chi\otimes S_{1} \otimes S_{\alpha}}.
\]
On the other hand, we began this section by computing $\theta_{-\beta'}^{\text{down}}(\pi)$, which is parametrized by
\[
\psi_{\beta'} =
\bigoplus_{i=1}^{r-1}{\chi_W\otimes S_{a_i} \otimes S_{b_i}} \, \oplus \,   {\chi_W\otimes S_{1} \otimes S_{\beta'}} \, \oplus \,   {\chi_W\otimes S_{a_r} \otimes S_{b_r}}.
\]
The lower parts of these parameters satisfy the assumptions of Lemma \ref{lem_high_blocks_dont_matter}: we have $t(\chi,a_i,b_i)=t(\chi_W,a_i,b_i)$ and $\eta(\chi, a_i,b_i)=\eta(\chi_W, a_i,b_i)$ for all $i < r$ (or $\eta(\chi, a_i,b_i)=-\eta(\chi_W, a_i,b_i)$ for all $i<r$). Furthermore, $\psi_{\beta',\alpha}^0$ was computed from $\psi_{\beta'}$ by computing a series of lifts: $\theta_{-\alpha}$, $\theta_\uparrow$, etc. Note that none of these lifts changed the sign $\eta(\chi \otimes S_1 \otimes S_{\beta'})$ relative to the signs on the lower terms. Thus, we also have $\eta(\chi,1,\beta')=\eta(\chi_W,1,\beta')$ (resp.\ $\eta(\chi,1,\beta')=-\eta(\chi_W,1,\beta')$).

The above discussion shows that we may apply Lemma \ref{lem_high_blocks_dont_matter} with $\pi' = \pi_\alpha^0$ (the relevant level from Lemma \ref{lem_high_blocks_dont_matter} being $\beta'$) to conclude that $d^{\text{down}}(\pi_\alpha^0, \psi_\alpha^0)=d^{\text{down}}(\pi, \psi)$.
\end{proof}
\subsection{Removing the highest term}
\label{subs_removing}

In \S \ref{subs_replacement} we worked out a strategy for turning the highest segment of $\psi$ into a singleton. It remains to discuss what happens when the highest segment is a singleton.

Recall that we are assuming the parameter $\psi$ of $\pi$ satisfies the conditions of Lemma \ref{lem_D_reduction1}: all the $\zeta = -1$ blocks above $d=d^{\text{down}}(\pi,\psi)$ are far from $d$ and from each other. Suppose that the highest segment in $\Jord(\psi)$ is a singleton:
\[
\psi =  \bigoplus_{i=1}^{r-1}{\chi_V\otimes S_{a_i} \otimes S_{b_i}} \, \oplus \,   {\chi_V\otimes S_{1} \otimes S_{b_r}}.
\]
By Corollary \ref{cor_removal}, this means that either $\pi = \theta_{-b_r}(\pi')$ or $\pi\otimes \det = \theta_{-b_r}(\pi')$, where $\pi'$ is parametrized by
\[
\psi' =  \bigoplus_{i=1}^{r-1}{\chi_W\otimes S_{a_i} \otimes S_{b_i}}.
\]
In the following discussion, we assume $\theta_{-b_r}(\pi')=\pi$ just to simplify notation, employing Convention \ref{conv_dettwist} when necessary.

We would like to know whether $\pi$ is $\theta_{-b_r}^{\text{down}}(\pi')$ or $\theta_{-b_r}^{\text{up}}(\pi')$. We can think of it this way: we may apply the Recipe to lower the term ${\chi_V\otimes S_{1} \otimes S_{b_r}}$ in $\psi$. The Recipe will stop at a certain point (which we call $d(\pi',\psi')$), and we need to determine whether the point at which the Recipe stops is $d^{\text{up}}(\pi',\psi')$ or $d^{\text{down}}(\pi',\psi')$. By Remark \ref{rem_tower}, this depends on the blocks with $t=0$ and $\zeta = -1$ above $d$:
\begin{lem}
\label{lem_up_or_down}
\begin{enumerate}[(i)]
\item If $\chi_V\otimes S_{1} \otimes S_{b_r}$ is the only high block with $t=0$, then $\pi$ is the going-up lift of $\pi'$. Moreover, $d^{\text{down}}(\pi,\psi) = d^{\text{up}}(\pi',\psi')$.
\item If there are other high blocks with $t=0$, then $\pi$ is the going-down lift of $\pi'$. Moreover, $d^{\text{down}}(\pi,\psi) = d^{\text{down}}(\pi',\psi')$.
\end{enumerate}
\end{lem}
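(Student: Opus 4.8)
The plan is to run M\oe glin's Recipe for $\pi'$ and to read off both the target tower of the lift and the stopping level from the place where the Recipe stalls, via Remark \ref{rem_tower}; the two cases of the Lemma reflect whether the obstructing block of $\psi'$ is one of its high blocks or not.

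First I would note that, since $b_r$ is far above $2A_i+1$ for every other block of $\psi$ (hence of $\psi'$), the non-zero lift $\theta_{-b_r}(\pi')$ — on whichever of the two towers of $\pi'$ it is supported — is computed by M\oe glin's construction, i.e.\ it coincides, up to the harmless twist by $\det$, with the Recipe representation $\pi'_{b_r}$, whose parameter is $\psi$. Indeed, on the going-up tower the first occurrence of $\pi'$ equals $d^{\text{up}}(\pi',\psi')$ by Theorem \ref{theoremB}, so $\theta^{\text{up}}_{-b_r}(\pi')\neq 0$ forces $b_r\ge d^{\text{up}}(\pi',\psi')$; on the going-down tower the Recipe can only stall at a $\zeta=-1$, $t=0$ block of $\psi'$, each of which has $A$-value $\ll b_r$, so $b_r\ge d^{\text{down}}(\pi',\psi')$ as well. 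In particular $\chi_V\otimes S_1\otimes S_{b_r}$ is the largest block of $\psi=\psi'_{b_r}$.

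Next I would compare the Recipe for $\pi$ on its going-down tower with the Recipe for $\pi'$. On the going-down tower the new block of $\theta_{-\beta}(\pi)$, $\beta\gg 0$, descends; because $d=d^{\text{down}}(\pi,\psi)<b_r+2$ the going-down Recipe does not stall at $\chi_V\otimes S_1\otimes S_{b_r}$ (now carrying $\chi_W$), so the new block passes it, and since that block is a singleton with $A=B$, Lemma \ref{lem_xu} leaves the signs of both blocks untouched. Once the new block has dropped below $\chi_V\otimes S_1\otimes S_{b_r}$, this descent is precisely the Recipe for $\pi'$: the sign of the descending block and the signs of the blocks of $\psi'$ it now meets are each shifted, relative to the corresponding signs in the $\pi'$-lift, by the global sign flip that Proposition \ref{prop_visoko} imposes when $\pi$ is lifted up, together with the sign flips relating $\psi'$ to $\psi$ from Corollary \ref{cor_removal}; these shifts cancel in every compatibility test of Lemma \ref{lem_xu}/Lemma \ref{lem_where_can_it_stop}(iii), and one checks that they are consistent with the identity $\psi'_{b_r}=\psi$, so the descent reproduces the Recipe for $\pi'$ on the very tower carrying $\pi=\theta_{-b_r}(\pi')$. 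Hence $d^{\text{down}}(\pi,\psi)$ equals the level at which the Recipe for $\pi'$ stalls on that tower; equivalently $d^{\text{down}}(\pi,\psi)=d^{\text{up}}(\pi',\psi')$ if $\pi$ sits on the going-up tower of $\pi'$, and $d^{\text{down}}(\pi,\psi)=d^{\text{down}}(\pi',\psi')$ if $\pi$ sits on its going-down tower.

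It remains to decide the tower, via Remark \ref{rem_tower} applied to $\psi'$: the going-up tower of $\pi'$ is the one on which the Recipe stalls as soon as its added block meets the highest block $\beta_{\mathrm{top}}$ of $\psi'$ with $\zeta=-1$ and $t=0$, while on the going-down tower the added block passes $\beta_{\mathrm{top}}$. Such a $\beta_{\mathrm{top}}$ exists: otherwise the Recipe would never stall and the comparison above would give $d^{\text{down}}(\pi,\psi)=1$, contradicting $d>1$. In case (ii), $\beta_{\mathrm{top}}$ is one of the high blocks of $\psi$, hence lies far above $d$; since the descent of the previous paragraph stalls at level $d$, it must already have passed $\beta_{\mathrm{top}}$, which happens only on the going-down tower of $\pi'$. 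So $\pi=\theta^{\text{down}}_{-b_r}(\pi')$ and $d^{\text{down}}(\pi,\psi)=d^{\text{down}}(\pi',\psi')$. In case (i) every high block of $\psi'$ has $t>0$, so $\beta_{\mathrm{top}}$ is a low block of $\psi$; in particular the Recipe for $\pi'$ on its going-up tower stalls at a level $\le d$, i.e.\ $d^{\text{up}}(\pi',\psi')\le d$. Were the descent of the previous paragraph to pass $\beta_{\mathrm{top}}$, then $\pi$ would sit on the going-down tower of $\pi'$, giving $d=d^{\text{down}}(\pi',\psi')\le d^{\text{up}}(\pi',\psi')\le d$, hence $d^{\text{down}}(\pi',\psi')=d^{\text{up}}(\pi',\psi')$, hence both equal $1$ by Theorem \ref{theoremB}, contradicting $d>1$. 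So the descent stalls at $\beta_{\mathrm{top}}$, giving $\pi=\theta^{\text{up}}_{-b_r}(\pi')$ and $d^{\text{down}}(\pi,\psi)=d^{\text{up}}(\pi',\psi')$. I expect the main obstacle to be the sign bookkeeping of the comparison step: verifying that the two sources of sign discrepancy really cancel uniformly in all the change-of-order formulas, and checking this consistently across the symplectic and orthogonal cases and the possible determinant twist.
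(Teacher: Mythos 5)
Your overall strategy matches the paper's: establish that the going‐down descent for $\pi$ reproduces the Recipe for $\pi'$ on the tower carrying $\pi$ (so $d^{\text{down}}(\pi,\psi)=d(\pi',\psi')$), then use the sign pattern at the highest $\zeta=-1$, $t=0$ block to identify which tower that is. Case (ii) is handled as in the paper, via Remark~\ref{rem_tower}.

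Your handling of case (i) is a genuine variant, and it is where the argument is weakest. The paper's route is short: assume for contradiction that $d(\pi',\psi')=d^{\text{down}}(\pi',\psi')$; then $\psi'$ satisfies the hypotheses of Lemma~\ref{lem_D_reduction1} with $d'=d$, so Lemma~\ref{lem_obstacle} applies and guarantees a high $\zeta=-1$, $t=0$ block above $d$, contradicting the case (i) assumption. Your alternative tries to prove $d^{\text{up}}(\pi',\psi')\le d$ outright, by identifying $d^{\text{up}}(\pi',\psi')$ with $2A_{\beta_{\text{top}}}+3$ via Remark~\ref{rem_tower} and observing $\beta_{\text{top}}$ is a low block. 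But Remark~\ref{rem_tower} is stated "under the assumptions of Lemma~\ref{lem_D_reduction1}," i.e., it assumes the $\zeta=-1$ blocks above the relevant $d'$ are far apart; if $\beta_{\text{top}}$ is a low block of $\psi$, it sits in the part of the parameter where no such spacing is guaranteed, so the identification $d^{\text{up}}(\pi',\psi')=2A_{\beta_{\text{top}}}+3$ is not justified by Remark~\ref{rem_tower} alone. Unless you first establish the hypotheses of Lemma~\ref{lem_D_reduction1} for $\psi'$ (which is exactly what the paper's contradiction set‐up delivers for free), the inequality $d^{\text{up}}(\pi',\psi')\le d$ that drives your contradiction is not secured. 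The ensuing appeal to the iff‐clause of Theorem~\ref{theoremB} is fine but stronger machinery than the paper needs.

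Two smaller points. First, the sign bookkeeping in your comparison step ("these shifts cancel in every compatibility test") is hand‐waved; the paper channels it through Lemma~\ref{lem_high_blocks_dont_matter} after a single explicit sign observation ($\eta$ of the new block equals $\eta$ of the $S_{b_r}$ block on the going‐down tower). You should either reproduce that observation or verify the cancellations. Second, the parenthetical claim that "Lemma~\ref{lem_xu} leaves the signs of both blocks untouched" when passing $S_{b_r}$ is not what Lemma~\ref{lem_xu} says in the $A=B$ case; what actually happens is that the two disjoint singletons commute without invoking Lemma~\ref{lem_xu} at all (the Recipe only invokes it when $B=\frac{\alpha-1}{2}$).
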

\begin{proof} In both cases, $d^{\text{up}}(\pi,\psi) = b_r+2$. To find $d^{\text{down}}(\pi,\psi)$, we look at $\theta_{-\alpha}^{\text{down}}(\pi)$ for $\alpha \gg 0$, parametrized by
\[
\psi_\alpha =  \bigoplus_{i=1}^{r-1}{\chi_W\otimes S_{a_i} \otimes S_{b_i}} \, \oplus \,   {\chi_W\otimes S_{1} \otimes S_{b_r}}\, \oplus \,   {\chi_W\otimes S_{1} \otimes S_{\alpha}},
\]
and then use the Recipe to decrease $\alpha$. Since we are computing the going-down lift, we have $\eta({\chi_V\otimes S_{1} \otimes S_{\alpha}})\allowbreak = \eta(\chi_V\otimes S_{1} \otimes S_{b_r})$. Thus the sign $\eta({\chi_V\otimes S_{1} \otimes S_{\alpha}})$ is the same as the sign $\eta(\chi_V\otimes S_{1} \otimes S_{b_r})$ relative to the lower summands. By Lemma \ref{lem_high_blocks_dont_matter}, this implies that $d^{\text{down}}(\pi,\psi) = d(\pi',\psi')$; here $d(\pi',\psi')$ is the point at which the recipe stops when we descend in the tower on which we get $\pi$. It remains to determine whether $d(\pi',\psi')$ is $d^{\text{up}}(\pi',\psi')$ or $d^{\text{down}}(\pi',\psi')$, which brings us to the two cases from the statement of the Lemma:
\begin{enumerate}[(i)]
\item In this case, there are no terms with $t=0$ above $d(\pi',\psi')$ in $\psi'$. By Lemma \ref{lem_obstacle}  (applied to $\pi'$), this means that $d(\pi',\psi')$ cannot be $d^{\text{down}}(\pi',\psi')$. Therefore, $d(\pi',\psi') = d^{\text{up}}(\pi',\psi')$.
\item In this case, there exist terms with $t=0$ above $d(\pi',\psi')$. By Remark \ref{rem_tower}, the highest of these terms determines $d^{\text{up}}(\pi',\psi')$. Therefore, $d(\pi',\psi')$ is lower than $d^{\text{up}}(\pi',\psi')$, so we conclude $d(\pi',\psi') = d^{\text{down}}(\pi',\psi')$.\qedhere
\end{enumerate}
\end{proof}
We are now ready to end the proof, following the strategy outlined in \S \ref{subs_outline}. Of the above two cases, the second one is easy to handle:
\begin{lem}
\label{lem_step}
Let $\pi$ be parametrized with
\[
\psi =  \bigoplus_{i=1}^{r-1}{\chi_V\otimes S_{a_i} \otimes S_{b_i}} \, \oplus \,   {\chi_V\otimes S_{1} \otimes S_{b_r}}
\]
and let $\pi' = \theta_{b_r}(\pi)$ as above. Suppose that we are in case (ii) of Lemma \ref{lem_up_or_down}: $\pi = \theta_{-b_r}^{\text{down}}(\pi')$.

Assume that Theorem \ref{theoremC} fails for $\pi$: there exists an odd integer $\beta < d$ such that $\theta_{-\beta}(\pi)$ is in the A-A packet. Then Theorem \ref{theoremC} also fails for $\pi'$: $\theta_{-\beta}(\pi')$ is in the A-A packet.
\end{lem}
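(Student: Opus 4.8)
The plan is to carry out case 1 of the proof outline in \S\ref{subs_outline}, with the $\alpha$ there replaced by our $\beta$. Suppose, for contradiction, that $\theta_{-\beta}(\pi)$ lies in its A-A packet for some odd $\beta<d$. First I would observe that this lift must sit on the going-down tower: since $\beta<d=d^{\text{down}}(\pi,\psi)\le d^{\text{up}}(\pi,\psi)$, Theorem \ref{theoremB} gives $\theta^{\text{up}}_{-\beta}(\pi)=0$, and a zero lift cannot belong to an A-packet. Hence $\theta^{\text{down}}_{-\beta}(\pi)\in\Pi_{\psi_\beta}$ with
\[
\psi_\beta=\bigoplus_{i=1}^{r-1}\chi_W\otimes S_{a_i}\otimes S_{b_i}\ \oplus\ \chi_W\otimes S_1\otimes S_\beta\ \oplus\ \chi_W\otimes S_1\otimes S_{b_r}.
\]
Because $\psi$ satisfies the conclusion of Lemma \ref{lem_D_reduction1}, its highest block $\chi_V\otimes S_1\otimes S_{b_r}$ — which has $\zeta=-1$, since $b_r>1$ (otherwise $d^{\text{up}}(\pi,\psi)=3$ and hence $d=1$, which is excluded) and $2A+1=b_r\ge d$ — must lie far above $d$; in particular $b_r\gg 0$ and $\beta<d\le b_r$, so $\chi_W\otimes S_1\otimes S_{b_r}$ remains the highest summand of $\psi_\beta$, far above all the others.

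Next, using $\pi=\theta^{\text{down}}_{-b_r}(\pi')$ together with the commutativity of going-down lifts (Corollary \ref{downlifts_commute}),
\[
\theta^{\text{down}}_{-\beta}(\pi)=\theta^{\text{down}}_{-\beta}\bigl(\theta^{\text{down}}_{-b_r}(\pi')\bigr)=\theta^{\text{down}}_{-b_r}\bigl(\theta^{\text{down}}_{-\beta}(\pi')\bigr),
\]
so $\rho_\beta:=\theta^{\text{down}}_{-\beta}(\pi')$ is nonzero. I would then strip the top block off $\psi_\beta$ by applying $\theta_{b_r}$, exactly as in the outline: since $\chi_W\otimes S_1\otimes S_{b_r}$ is the highest summand of $\psi_\beta$ and $b_r\gg 0$, Corollary \ref{cor_removal} (the reverse of Proposition \ref{prop_visoko}) identifies $\theta_{b_r}(\theta^{\text{down}}_{-\beta}(\pi))$ — which, by the displayed equality and the fact that a going-down lift followed by the reverse lift is the identity, equals $\rho_\beta$ — as a representation of Arthur type with parameter
\[
\psi'_\beta=\bigoplus_{i=1}^{r-1}\chi_V\otimes S_{a_i}\otimes S_{b_i}\ \oplus\ \chi_V\otimes S_1\otimes S_\beta
\]
(up to the harmless $\det$-twist ambiguity in Corollary \ref{cor_removal}, which is absorbed by passing from $\pi'$ to $\pi'\otimes\det$ exactly as in the statement of the lemma, and which does not affect the parameter by Remark \ref{rem_ort_lifts}). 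Hence $\theta^{\text{down}}_{-\beta}(\pi')=\rho_\beta\in\Pi_{\psi'_\beta}$.

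Finally, $\psi'_\beta$ is precisely the A-A packet of $\theta_{-\beta}(\pi')$: as $\pi'$ is parametrized by $\bigoplus_{i=1}^{r-1}\chi_W\otimes S_{a_i}\otimes S_{b_i}$, Conjecture \ref{slutnja} predicts $(\chi_V\chi_W^{-1}\otimes\psi')\oplus\chi_V\otimes S_1\otimes S_\beta$, which is the displayed $\psi'_\beta$. Since $\Theta_{-\beta}(\pi')\neq 0$ (because $\rho_\beta\neq 0$) and $\beta<d=d^{\text{down}}(\pi',\psi')$ by Lemma \ref{lem_up_or_down}(ii), this shows that Theorem \ref{theoremC} fails for $\pi'$, as claimed. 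The only delicate points are bookkeeping ones: keeping track of the Witt tower so that Corollary \ref{downlifts_commute} and the ``go down, then back up'' identity both refer to the same going-down tower, and disposing of the $\det$-twist ambiguity. There is no essential new difficulty here — which is exactly why case (ii) of Lemma \ref{lem_up_or_down} is the easy one, the genuinely hard case being case (i), handled next.
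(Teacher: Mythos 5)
Your proposal is correct and takes essentially the same route as the paper: identify the lift as going-down, invoke Corollary \ref{downlifts_commute} to swap the two consecutive going-down lifts, remove the highest summand $\chi_W\otimes S_1\otimes S_{b_r}$ by applying $\theta_{b_r}$ (using $\theta_{b_r}\circ\theta^{\text{down}}_{-b_r}=\mathrm{id}$ and Corollary \ref{cor_removal}), and then conclude $\beta<d=d^{\text{down}}(\pi',\psi')$ from Lemma \ref{lem_up_or_down}(ii). The additional remarks you supply — the explicit tower determination via Theorem \ref{theoremB} and the note that $b_r\gg 0$ keeps $\chi_W\otimes S_1\otimes S_{b_r}$ the highest block of $\psi_\beta$ — are implicit in the paper's proof and are sound elaborations, not deviations.
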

\begin{proof}
We are assuming that $\pi$ is the going-down lift of $\pi'$, i.e.\,$\pi = \theta_{-b_r}^{\text{down}}(\pi')$. Suppose that Theorem \ref{theoremC} fails for $\pi$, so that $\theta_{-\beta}^{\text{down}}(\pi) = \theta_{-\beta}^{\text{down}}(\theta_{-b_r}^{\text{down}}(\pi'))$ is in the packet parametrized by
\[
\bigoplus_{i=1}^{r-1}{\chi_W\otimes S_{a_i} \otimes S_{b_i}} \, \oplus \,   {\chi_W\otimes S_{1} \otimes S_{\beta}}\, \oplus \,   {\chi_W\otimes S_{1} \otimes S_{b_r}}.
\]
The highest term in this parameter is still $ {\chi_W\otimes S_{1} \otimes S_{b_r}}$, and we can take $\theta_{b_r}$ of the above representation to remove it: we get a representation $\pi'' := \theta_{b_r}(\theta_{-\beta}^{\text{down}}(\pi))$ parametrized by 
\[
\psi'_\beta = \bigoplus_{i=1}^{r-1}{\chi_V\otimes S_{a_i} \otimes S_{b_i}} \, \oplus \,   {\chi_V\otimes S_{1} \otimes S_{\beta}}.
\]
However, by Corollary \ref{downlifts_commute}, the going-down lifts commute:
\[
\theta_{-\beta}^{\text{down}}(\theta_{-b_r}^{\text{down}}(\pi')) = \theta_{-b_r}^{\text{down}}(\theta_{-\beta}^{\text{down}}(\pi')).
\]
Therefore (using Convention \ref{conv_dettwist})
\[
\pi'' = \theta_{b_r}(\theta_{-\beta}^{\text{down}}(\pi)) =  \theta_{b_r}(\theta_{-\beta}^{\text{down}}(\theta_{-b_r}^{\text{down}}(\pi')))  =  \theta_{b_r}( \theta_{-b_r}^{\text{down}}(\theta_{-\beta}^{\text{down}}(\pi'))) = \theta_{-\beta}^{\text{down}}(\pi').
\]
This shows that $\theta_{-\beta}^{\text{down}}(\pi')$ is parametrized by $\psi'_\beta$, which is precisely the A-A parameter for the lift of $\pi'$. By Lemma \ref{lem_up_or_down}, $d(\pi',\psi') = d$. Since $\beta < d$, this shows that Theorem \ref{theoremC} fails for $\pi'$ as well.
\end{proof}
Finally, we have all we need to prove Theorem \ref{theoremC}.
\begin{proof}[Proof of Theorem \ref{theoremC}]
Let $\pi$ be parametrized by
\[
\psi =  \bigoplus_{i=1}^{r}{\chi_V\otimes S_{a_i} \otimes S_{b_i}}.
\]
By Lemma \ref{lem_D_reduction1}, we may assume that $\pi$ satisfies \eqref{eq_parameter}: all the $\zeta = -1$ terms above $d = d^{\text{down}}(\pi,\psi)$ are far from $d$ and from each other.

Suppose, for the sake of contradiction, that Theorem \ref{theoremC} fails for $\pi$. If $a_r < b_r$, we may use Proposition \ref{prop_step} to replace the highest term with a singleton segment. Now we are in the situation addressed by Lemma \ref{lem_up_or_down}. If we are in case (ii) of that lemma (so that $\pi$ is the going-down lift of $\pi'$), then we use Lemma \ref{lem_step} to remove the highest term, and reduce the problem to a smaller parameter. We repeat this inductively, until we reach situation (i) from \ref{lem_up_or_down}: the highest term is a singleton, and there are no other terms with $t=0$ above $d$.

The upshot of the above discussion is the following: we may assume the highest term of $\pi$ is a singleton segment (i.e.~$a_r=1$), and moreover, that $\pi$ is the going-up lift of $\pi' = \theta_{b_r}(\pi)$. We are assuming that Theorem \ref{theoremC} fails for $\pi$: there exists a $\beta < d$ for which $\theta_{-\beta}^{\text{down}}(\pi)$ is parametrized by
\[
\bigoplus_{i=1}^{r-1}{\chi_W\otimes S_{a_i} \otimes S_{b_i}} \, \oplus \,   {\chi_W\otimes S_{1} \otimes S_{\beta}}\, \oplus \,   {\chi_W\otimes S_{1} \otimes S_{b_r}}.
\]
Once more, the highest term in this parameter shows that $\theta_{b_r}(\theta_{-\beta}^{\text{down}}(\pi)) \neq 0$. But $\pi = \theta_{-b_r}^{\text{up}}(\pi')$, so this implies
\[
\theta_{b_r}(\theta_{-\beta}^{\text{down}}(\theta_{-b_r}^{\text{up}}(\pi'))) \neq 0,
\]
with $\beta < d= d^{\text{up}}(\pi',\psi')=l(\pi')+2$. Corollary \ref{key} shows that this is impossible. We have thus reached a contradiction. This proves Theorem \ref{theoremC}.\qedhere
\end{proof}

\appendix
\refstepcounter{section}% advance the section counter (to A)
\section*{Appendix \thesection: Chains}\label{appendix}

To prove Theorem \ref{theoremC}, we need precise information about the lifts $\theta_{-\alpha}(\pi)$ for $\alpha > 0$, on both the going-up and the going-down tower. These lifts have been completely determined (in terms of $L$-parameters) in \cite{nas_clanak}. In an attempt to keep the present paper self-contained, we use this appendix to summarize the main results of \cite{nas_clanak}. The main theorems are recorded here as \ref{lofpi}, \ref{downlifts}, \ref{downdownlifts}, and \ref{uplifts}. We also list a number of corollaries which contain statements in a ready-to-use form suitable for the proof of Theorem \ref{theoremC}. In stating the results, we do not strive for utmost precision; rather, we state them in a way which allows us to use them in our proofs, while not overburdening the reader with technical details. For the unabridged version of the results, we refer to \cite{nas_clanak}.

Any irreducible representation is completely determined by its standard module. It is convenient to state the results in terms of standard modules. Recall \S \ref{subsubs_Langlands}: a standard module is any representation of the form 
\[
\nu^{s_k}\delta_k \times \dotsb \times \nu^{s_1}\delta_1 \rtimes \tau,
\]
with $s_k \geq \dots \geq s_1 > 0$, where $\tau$ is an irreducible tempered representation of a classical group, and each $\delta_i$ is an irreducible discrete series representation of a general linear group. Recall that each $\nu^{s_i}\delta_i$ corresponds to a cuspidal segment: $\delta_i = \delta([x_i,y_i]_{\rho_i})$; see \S \ref{subsubs_segments}.
To specify a standard module, one thus needs
\begin{itemize}
\item an irreducible tempered representation; and
\item a collection (multiset) of segments $[x_i,y_i]_{\rho_i}$ with $x_i+y_i >0$.
\end{itemize}
Accordingly, to describe the theta lifts, one needs 
\begin{itemize}
\item information about lifts of tempered representations\,---\,this is supplied by Atobe and Gan \cite{Atobe_Gan};
\item combinatorics involving the segments of the standard module.
\end{itemize}
In fact, in this paper, we are only interested in segments $[x,y]_\rho$ where $\rho$ is trivial and $x,y$ are integers. (To be fully precise, we consider $\rho=\chi_V$ and $\rho = \chi_W$, but we suppress this from the notation, in accordance with Remark \ref{rem_chi}.) Thus, in this appendix, a \emph{segment} will always mean a cuspidal segment with $\rho = 1$ and $x,y \in \mathbb{Z}$. Often, we will talk about singleton segments. In that case, we write $x$ instead of $[x,x]$. By contrast, any segment $[x,y]$ for which $y-x >0$ is called thick. It is convenient to allow a segment to be empty, that is, of the form $[x,x-1]$ for $x \in \mathbb{Z}$ (the corresponding representation is trivial). The key combinatorial concept we need is that of a \emph{chain} of segments; this is a special case of what we call a RS-ladder in \cite{nas_clanak}:
\begin{defn}
A sequence $[c_1,d_1], [c_2,d_2], \dotsc, [c_k, d_k]$ of segments is called a chain if
\begin{itemize}
\item $d_{i+1} = d_i+1$, for $i=1,\dotsc, k-1$; and
\item $c_{i+1} > c_i$, for $i=1,\dotsc, k-1$.
\end{itemize}
(In particular, the segments $[c_{i+1},d_{i+1}]$ and $[c_{i},d_{i}]$ are linked.)\\
We call a chain thin if all its segments are singletons; we call it thick if none are singletons.
\end{defn}
We set up the notation needed to state our theorems. 
For a representation $\pi$, we let
\[
l(\pi)=\max \{l \geq -1: \theta_{l}(\pi)\neq 0 \text{ on the going down tower}\}.
\]
Note the edge case $l(\pi) = -1$, when $\theta_{-1}(\pi)$ is the first lift on both towers (cf.~Remark \ref{rem_updown}).
Throughout the Appendix, we let $\tau$ denote the tempered representation that appears in the standard module of $\pi$. The results of \cite{Atobe_Gan} describe the lifts of $\tau$ in terms of its Langlands parameter $\phi$. Let $S_d$ denote the unique irreducible $d$-dimensional algebraic representation of $\SL_2(\mathbb{C})$. We use $m_\phi(S_{d})$ to denote the multiplicity of $S_d$ in $\phi$. We refer the reader to \S 3 of \cite{Atobe_Gan} for details.
%
%The first theorem describes the first occurrence index of $\pi$. The result is stated in terms of $l(\pi)$ (as the Conservation Relation then gives the first occurrence on the going-up tower). 
%\begin{thm}[Theorem 5.7 in \cite{nas_clanak}]
%\label{occurrence}
%Consider all the chains $[c_1,d_1], [c_2,d_2], \dotsc, [c_k, d_k]$ in the standard module of $\pi$ such that
%\begin{enumerate}[(i)]
%\item $d_1 = \frac{l(\tau)+1}{2}$;
%\item if $m_\phi(S_{l(\tau)})$ is even, then $c_1 \neq 1-d_1$.
%\end{enumerate}
%Let $t$ be the length of the longest chain satisfying these conditions. Then $l(\pi) = l(\tau) +2t$.
%\end{thm}
The first result is about non-vanishing:
\begin{thm}[Theorem 5.7 of \cite{nas_clanak}]
\label{lofpi}
Consider all chains $[c_1,d_1], \dotsc, [c_k,d_k]$ in the standard module of $\pi$ such that 
\begin{enumerate}[(i)]
\item $d_1 = \frac{l(\tau)+1}{2}$ and 
\item $c_1 \neq  \frac{1-l(\tau)}{2}$ if $l(\tau) >0$ and $m_\phi(S_{l(\tau)})$ is even.
\end{enumerate}
Let $t$ be the length of the longest chain satisfying these conditions. Then $l(\pi) = l(\tau)+2t$.
 \end{thm}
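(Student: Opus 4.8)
The plan is to argue by induction on the number $k$ of segments appearing in the standard module of $\pi$. For $k=0$ the representation $\pi=\tau$ is tempered, there are no chains, $t=0$, and the equality $l(\pi)=l(\tau)$ is exactly the content of Atobe and Gan's explicit description of the first occurrence of tempered representations in \cite{Atobe_Gan}; the dichotomy in condition (ii) --- whether $l(\tau)>0$ and $m_\phi(S_{l(\tau)})$ is even --- is already present there, governing both which of the two towers is the going-down tower for $\tau$ and whether $\Jac^{\chi_W}_{(l(\tau)-1)/2}$ of $\theta^{\text{down}}_{l(\tau)}(\tau)$ is nonzero. Since, as recorded in the appendix (cf.\ Remark \ref{towerispasseddown}), the going-down tower depends only on the tempered part, throughout the induction we may speak of ``the'' going-down tower for every representation with tempered component $\tau$.

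For the inductive step I would fix a carefully chosen segment $\Delta=[x,y]$ of the standard module, write $\pi$ (up to a harmless $\det$-twist when $G$ is orthogonal, by Remark \ref{rem_ort_lifts}) as a quotient $\delta(\Delta)\rtimes \pi_0 \twoheadrightarrow \pi$ with $\pi_0$ the Langlands quotient whose standard module is $\{\pi\}\smallsetminus\{\Delta\}$, and compare $l(\pi)$ with $l(\pi_0)=l(\tau)+2t_0$ (inductive hypothesis). The two inequalities are handled separately. For the lower bound $l(\pi)\ge l(\tau)+2t$, start from a longest valid chain $[c_1,d_1],\dots,[c_t,d_t]$ and build the lift $\theta^{\text{down}}_{l(\tau)+2t}(\pi)$ by hand: beginning with the nonzero lift $\theta^{\text{down}}_{l(\tau)}(\tau)$, add the segments back one at a time; a segment which, at the moment it is processed, sits at a level where Kudla's condition (K) holds is transported through the correspondence by Lemma \ref{lem_Kudla} and Lemma \ref{lem_Kudla_small} without changing the lift level, while each link of the chain, processed from the bottom up, meets the hypotheses of Lemma \ref{lem_descent} and raises the going-down lift by exactly $2$. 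For the upper bound $l(\pi)\le l(\tau)+2t$, assume $\theta^{\text{down}}_{l}(\pi)\ne0$ with $l=l(\tau)+2s$; using Lemma \ref{lem_Kudla_nonvanishing} one shows $l(\pi)\in\{l(\pi_0),\,l(\pi_0)+2\}$, with the value $l(\pi_0)+2$ occurring precisely when $\Delta$ can be appended as the next link to a longest valid chain of $\pi_0$, and a short combinatorial lemma then identifies $t$ with $t_0$ or $t_0+1$ for this choice of $\Delta$, closing the induction.

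The main obstacle, as this outline already suggests, is the combinatorial bookkeeping around the word \emph{longest}: chain length is not additive under deletion of an arbitrary segment, so everything rests on selecting $\Delta$ so that simultaneously (a) $\delta(\Delta)\rtimes\pi_0\twoheadrightarrow\pi$ really holds --- this forces $\Delta$ to be ``outermost'' in a precise sense and, among segments sharing a right endpoint, to be singled out by its left endpoint --- (b) $\Delta$ satisfies (K) at every non-critical level, so Kudla's filtration applies cleanly in both directions, and (c) deleting $\Delta$ perturbs the longest-chain length by at most one in a predictable way. A secondary, more technical, point is propagating condition (ii) through the induction: it is a statement about $\tau$ and about the parity of $m_\phi(S_{l(\tau)})$, so one must check that peeling a segment never disturbs $\tau$ nor this parity, and that the segment with left endpoint $\tfrac{1-l(\tau)}{2}$ is correctly barred from ever serving as the first link $[c_1,d_1]$. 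Finally one must keep the two towers straight throughout --- the statement concerns the going-down tower only --- which is where the conservation relation and part (ii) of Lemma \ref{lem_Kudla_nonvanishing} enter.
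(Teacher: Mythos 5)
This statement is quoted (as Theorem 5.7) from the companion paper \cite{nas_clanak} and is \emph{not} proved in the present paper --- Appendix \ref{appendix} explicitly states that it only summarizes results of \cite{nas_clanak} --- so there is no internal proof to compare against; what follows assesses your proposal on its own terms.

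Your overall scheme (induct on the number of segments in the Langlands data, peel an outermost segment, push information through Kudla's filtration) is the right style for this circle of ideas, and the base case reduces correctly to Atobe--Gan. But there are two genuine gaps. The first is structural: Theorem \ref{lofpi} cannot be proved by a stand-alone induction. Every step of your inductive argument needs to check the hypotheses of Lemma \ref{lem_Kudla_small} (uniqueness of a subquotient in a Jacquet module of the lift) or Lemma \ref{lem_descent} (a pair of Jacquet-module (non)vanishing conditions, one on $\pi$ and one on $\theta_{-\alpha}(\pi)$). To verify these you must already know the standard module --- hence the full Jacquet data --- of the intermediate lifts $\theta^{\text{down}}_{l}(\pi_0)$, which is precisely the content of Theorems \ref{downlifts} and \ref{downdownlifts}. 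So the induction has to be run \emph{simultaneously} for the first-occurrence formula and the explicit description of all lifts along the going-down tower; an induction on Theorem \ref{lofpi} alone does not close, because the ingredients you invoke to prove it presuppose it.

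The second gap is the invocation of Lemma \ref{lem_descent} itself. As stated, that lemma assumes $\Theta_{-\alpha}(\pi)\neq 0$ with $\alpha>0$ (a lift to a group of \emph{larger} rank) and concludes $\Theta_{2-\alpha}(\pi)\neq 0$; iterating it only gets you from a large $\alpha$ down to $\alpha=1$, i.e.\ to $\Theta_1(\pi)\neq 0$. It does not, in the form given here, produce $\Theta_l(\pi)\neq 0$ for $l\geq 3$, which is what ``raises the going-down lift by exactly $2$'' would require each time a chain link is processed. To make that step work you need a variant of the descent argument valid at positive $l$, and verifying its hypotheses again requires the standard-module description of the lower lifts (back to the first gap). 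Finally, the ``short combinatorial lemma'' deferring $t\in\{t_0,t_0+1\}$ under deletion of an outermost segment, and the matching of that dichotomy with $l(\pi)\in\{l(\pi_0),l(\pi_0)+2\}$, is the real combinatorial content of the theorem (it is also where condition (ii) on $c_1$ must be tracked), and should not be waved through. You have located the right obstacles but have not supplied the simultaneous induction needed to make the Kudla-filtration steps verifiable.
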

\begin{rem}
\label{towerispasseddown}
In particular, this shows that $l(\tau)=-1$ implies $l(\pi)=-1$. On the other hand, if $l(\tau)>0$, then the going-down (resp.\ going-up) tower of $\pi$ is inherited from $\tau$; see Proposition 5.2 of \cite{nas_clanak}.
\label{whichtower}
\end{rem}

The next theorem describes the lifts $\theta_{-\alpha}^{\text{down}}(\pi)$ for $\alpha >0$:
\begin{thm}[Theorem 6.7 of \cite{nas_clanak}]
\label{downlifts}
Let $\pi$ be an irreducible representation and let $\alpha > 0$ be an odd integer. On the going-down tower, the standard module of $\theta_{-\alpha}(\pi)$ is obtained from the standard module of $\pi$ by replacing $\tau$ with $\theta_{-1}(\tau)$ (which is tempered) and adding the singleton segments $1, 2, \dotsc, \frac{\alpha-1}{2}$.
\end{thm}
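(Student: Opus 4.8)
The plan is to prove Theorem \ref{downlifts} by combining two ingredients: Kudla's filtration (Proposition \ref{prop_Kudla}), which lets one transport the $\GL$-part of a standard module through the theta correspondence, and the explicit determination of the lifts of \emph{tempered} representations due to Atobe and Gan \cite{Atobe_Gan}. These are glued together by an induction on the odd integer $\alpha$, descending from $\alpha \gg 0$.

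\emph{Reduction to the tempered case.} Write $\pi$ as the Langlands quotient of $\delta_k \times \dotsb \times \delta_1 \rtimes \tau$ as in \S\ref{subsubs_Langlands}, with the $\delta_i = \delta([x_i,y_i])$ ordered so that $\delta_k$ is the topmost segment; then $\delta_k \rtimes \pi_0 \twoheadrightarrow \pi$, where $\pi_0$ is the Langlands quotient of $\delta_{k-1} \times \dotsb \times \delta_1 \rtimes \tau$. As long as $\frac{\alpha-1}{2}$ does not lie in $[x_k,y_k]$ --- so that condition (K) of Lemma \ref{lem_Kudla} holds for $\pm\alpha$ --- Lemma \ref{lem_Kudla_small} (with Lemma \ref{lemma:MVWinv} to pass between the sub- and quotient forms of the Langlands classification) gives $\delta_k \rtimes \theta^{\text{down}}_{-\alpha}(\pi_0) \twoheadrightarrow \theta^{\text{down}}_{-\alpha}(\pi)$, so the standard module of $\theta^{\text{down}}_{-\alpha}(\pi)$ is that of $\theta^{\text{down}}_{-\alpha}(\pi_0)$ with $\delta_k$ adjoined. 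Iterating peels off the entire $\GL$-part and reduces the theorem to $\pi = \tau$ tempered. In that case one invokes \cite{Atobe_Gan}: $\theta^{\text{down}}_{-1}(\tau)$ is tempered, and their formulas identify $\theta^{\text{down}}_{-\alpha}(\tau)$ with the Langlands quotient of $\nu^{\frac{\alpha-1}{2}} \times \dotsb \times \nu^1 \rtimes \theta^{\text{down}}_{-1}(\tau)$, which is exactly the assertion of the theorem in the tempered case.

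\emph{Descending in $\alpha$.} The previous paragraph settles the theorem whenever $\frac{\alpha-1}{2}$ exceeds every segment endpoint in the standard module of $\pi$, in particular for $\alpha \gg 0$. To pass from level $\alpha$ to level $\alpha-2$, note first that $\theta^{\text{down}}_{-\alpha}(\pi)\neq 0$ for every odd $\alpha>0$ by the tower property (Proposition \ref{prop_towers}), since $m^{\text{down}}(\pi)\le n+\epsilon+1$. If $\Jac_{\frac{1-\alpha}{2}}(\pi)=0$, then Lemma \ref{lem_descent} gives $\theta^{\text{down}}_{-(\alpha-2)}(\pi) = \Jac_{\frac{1-\alpha}{2}}(\theta^{\text{down}}_{-\alpha}(\pi))$, and using the inductive formula at level $\alpha$ together with Tadić's formulas \eqref{eq_tadic} and \eqref{eq_tadic_2} and Lemma \ref{lem_standardni_mod_iznad}, one checks that this Jacquet module simply deletes the top singleton $\nu^{\frac{\alpha-1}{2}}$ of the added chain while leaving $\theta^{\text{down}}_{-1}(\tau)$ and the remaining $\GL$-data intact. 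When instead $\Jac_{\frac{1-\alpha}{2}}(\pi)\neq 0$, or a thick segment of $\pi$ straddles $\frac{\alpha-1}{2}$ and (K) fails, one must work harder: one establishes the formula at a nearby level without collision and transports it down via the Jacquet-module relations, tracking how $\Jac_{\frac{1-\alpha}{2}}$ interacts with the colliding segments.

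\emph{The main obstacle.} The genuinely delicate part is exactly this last collision analysis --- controlling the simultaneous interaction of $\Jac_{\frac{1-\alpha}{2}}$ with the new chain $1,2,\dots,\frac{\alpha-1}{2}$ and with the pre-existing segments of $\pi$, and verifying that the tempered part is replaced by $\theta^{\text{down}}_{-1}(\tau)$ and by nothing larger (which is where the going-down hypothesis, via the Atobe--Gan temperedness of the first lift, is essential). Organizing these interactions coherently is precisely the combinatorial content that \cite{nas_clanak} packages into the notion of a chain.
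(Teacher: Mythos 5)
This statement is cited from \cite{nas_clanak} and carries no internal proof in the present paper, so the evaluation is of your proposal on its own merits.

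Your strategy --- peel the $\GL$-part through Kudla's filtration, settle the tempered base case via Atobe--Gan, and descend in $\alpha$ with Lemma \ref{lem_descent} --- is sound in spirit, and you correctly single out the collision analysis as the real difficulty. But what you call ``the main obstacle'' is not a loose end that can be flagged and deferred; it is where the argument actually breaks, and the proposal does not fill it. Both of your reduction devices fail simultaneously precisely when the standard module of $\pi$ contains a segment $[x_i,y_i]$ with $x_i=\frac{\alpha-1}{2}$. In that case $\Jac_{\frac{1-\alpha}{2}}(\pi)\neq 0$ (by Lemma \ref{lem_standardni_mod_iznad}, since $\Sigma_{\geq \frac{\alpha-1}{2}}$ contains $\delta([x_i,y_i])$), so Lemma \ref{lem_descent} does not take you from level $\alpha$ to level $\alpha-2$; and at the same time condition (K) with $l=\alpha$ fails for $\delta([x_i,y_i])$, so alternative (ii) of Lemma \ref{lem_Kudla_small} is unavailable, while alternative (i) is circular --- it presupposes knowledge of the Jacquet module of $\theta_{-\alpha}(\pi)$, which is exactly what you are trying to determine. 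Your proposed remedy, ``establish the formula at a nearby level without collision and transport it down,'' has no purchase: there is no nearby level from which either mechanism lands you at $\alpha-2$ when such a segment is present. Closing the gap requires an additional idea --- something like a parameter-shifting argument in the spirit of Proposition \ref{prop_wwtawwtaKudla}, carried out at the level of standard modules or $L$-parameters rather than Arthur parameters, which moves the colliding segments far away, performs the lift there, and returns them. You honestly name the hole, but the hole is the theorem.
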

\begin{rem}
\label{multi_S1}
Roughly speaking, the Langlands parameter of $\theta_{-1}(\tau)$ is obtained by adding one copy of $S_1$ to $\phi$; see Theorem 4.3 (2) of \cite{Atobe_Gan}.
\end{rem}
\begin{cor}
\label{downlifts_commute}
On the going-down tower, the lifts commute: for any odd $\alpha, \beta >0$, we have $\theta_{-\alpha}^{\text{down}}(\theta_{-\beta}^{\text{down}}(\pi))\allowbreak = \theta_{-\beta}^{\text{down}}(\theta_{-\alpha}^{\text{down}}(\pi))$.
\end{cor}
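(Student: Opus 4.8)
The statement is an immediate consequence of Theorem \ref{downlifts}, so the plan is essentially to bookkeep the effect of two successive going-down lifts on the standard module and observe that the two operations commute. The key point is that Theorem \ref{downlifts} describes $\theta_{-\alpha}^{\text{down}}(\pi)$ entirely in terms of two independent modifications of the standard module of $\pi$: (a) replacing the tempered part $\tau$ by $\theta_{-1}(\tau)$, and (b) adjoining the thin chain of singletons $1,2,\dotsc,\frac{\alpha-1}{2}$ to the collection of $\GL$-segments. Neither modification interacts with the other: the $\GL$-segments added in (b) do not touch the tempered part, and passing from $\tau$ to $\theta_{-1}(\tau)$ does not alter the existing $\GL$-segments.

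Carrying this out: first I would fix odd integers $\alpha,\beta>0$ and compute $\theta_{-\alpha}^{\text{down}}(\theta_{-\beta}^{\text{down}}(\pi))$ by applying Theorem \ref{downlifts} twice. The first application replaces $\tau$ with $\theta_{-1}(\tau)$ and adds the singletons $1,\dotsc,\frac{\beta-1}{2}$; the second application replaces $\theta_{-1}(\tau)$ with $\theta_{-1}(\theta_{-1}(\tau))$ and adds the singletons $1,\dotsc,\frac{\alpha-1}{2}$. Here one must check that Theorem \ref{downlifts} is applicable to the intermediate representation $\theta_{-\beta}^{\text{down}}(\pi)$, i.e.\ that its standard module is of the stated form (tempered part plus $\GL$-segments with positive midpoints); this is automatic since the added singletons $1,2,\dotsc$ all have positive midpoint and the tempered part $\theta_{-1}(\tau)$ is tempered by the same theorem. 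The upshot is that the standard module of $\theta_{-\alpha}^{\text{down}}(\theta_{-\beta}^{\text{down}}(\pi))$ has tempered part $\theta_{-1}(\theta_{-1}(\tau))$ and $\GL$-segments obtained from those of $\pi$ by adjoining the multiset $\{1,\dotsc,\frac{\beta-1}{2}\}\cup\{1,\dotsc,\frac{\alpha-1}{2}\}$.

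Then I would run the same computation for $\theta_{-\beta}^{\text{down}}(\theta_{-\alpha}^{\text{down}}(\pi))$ and note that it produces the identical data: the tempered part is again $\theta_{-1}(\theta_{-1}(\tau))$, and the $\GL$-segments are those of $\pi$ together with $\{1,\dotsc,\frac{\alpha-1}{2}\}\cup\{1,\dotsc,\frac{\beta-1}{2}\}$, which is the same multiset. Since an irreducible representation is determined by its standard module (equivalently, by the tempered part together with the multiset of segments of the $\GL$-part, cf.\ \S\ref{subsubs_Langlands}), the two iterated lifts coincide. I do not anticipate any genuine obstacle here; the only mild subtlety is making sure the intermediate lift satisfies the hypotheses of Theorem \ref{downlifts}, and the fact that the order in which the two thin chains $\{1,\dotsc,\frac{\alpha-1}{2}\}$ and $\{1,\dotsc,\frac{\beta-1}{2}\}$ are adjoined is irrelevant because we only care about the resulting multiset of segments.
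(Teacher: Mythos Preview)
Your proposal is correct and is precisely the argument the paper has in mind: the corollary is stated without proof immediately after Theorem \ref{downlifts}, since both going-down lifts act on the standard module by the same tempered replacement $\tau\mapsto\theta_{-1}(\tau)$ and by adjoining thin chains of singletons, and these operations visibly commute. There is nothing to add.
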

\begin{cor}
\label{higher_lift}
Let $\alpha > 0$ and $\beta \gg 0$. Then $\theta_{-\beta}^{\text{down}}(\pi)$ is the unique irreducible quotient of $\zeta(\frac{\alpha+1}{2},\frac{\beta-1}{2}) \rtimes \theta_{-\alpha}^{\text{down}}(\pi)$.
\end{cor}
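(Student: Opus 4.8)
The plan is to derive Corollary \ref{higher_lift} from Theorem \ref{downlifts} by an induction that builds the longer lift out of the shorter one, peeling off one segment at a time from the top. Write $\Sigma(\xi)$ for the standard module of an irreducible $\xi$. Theorem \ref{downlifts} tells us that $\Sigma(\theta_{-\alpha}^{\text{down}}(\pi))$ and $\Sigma(\theta_{-\beta}^{\text{down}}(\pi))$ are both obtained from $\Sigma(\pi)$ by replacing the tempered part $\tau$ with $\theta_{-1}(\tau)$ and then adjoining the singleton segments $1,2,\dotsc,\tfrac{\alpha-1}{2}$, respectively $1,2,\dotsc,\tfrac{\beta-1}{2}$; thus $\Sigma(\theta_{-\beta}^{\text{down}}(\pi))$ differs from $\Sigma(\theta_{-\alpha}^{\text{down}}(\pi))$ precisely in the chain of singletons $\tfrac{\alpha+1}{2},\tfrac{\alpha+3}{2},\dotsc,\tfrac{\beta-1}{2}$. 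Note that $\theta_{-\gamma}^{\text{down}}(\pi)\neq 0$ for every odd $\gamma>0$ (the going-down first occurrence is at index $\le n+\epsilon+1$, cf.\ Remark \ref{rem_updown}), so all the induced representations below are nonzero.

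For the inductive step I would argue as follows. Assume $\beta\gg 0$, so that $\tfrac{\beta+1}{2}$ is strictly larger than every exponent occurring in $\Sigma(\theta_{-\beta}^{\text{down}}(\pi))$. Then Theorem \ref{downlifts} shows that $\Sigma(\theta_{-(\beta+2)}^{\text{down}}(\pi))$ is obtained from $\Sigma(\theta_{-\beta}^{\text{down}}(\pi))$ simply by prepending $\nu^{\frac{\beta+1}{2}}$ as the Langlands-leading factor, whence $\theta_{-(\beta+2)}^{\text{down}}(\pi)$ is the unique irreducible quotient of $\nu^{\frac{\beta+1}{2}}\rtimes\theta_{-\beta}^{\text{down}}(\pi)$. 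Composing the surjection $\zeta(\tfrac{\alpha+1}{2},\tfrac{\beta-1}{2})\rtimes\theta_{-\alpha}^{\text{down}}(\pi)\twoheadrightarrow\theta_{-\beta}^{\text{down}}(\pi)$ (the inductive hypothesis) with parabolic induction, one obtains
\[
\nu^{\frac{\beta+1}{2}}\times\zeta(\tfrac{\alpha+1}{2},\tfrac{\beta-1}{2})\rtimes\theta_{-\alpha}^{\text{down}}(\pi)\ \twoheadrightarrow\ \nu^{\frac{\beta+1}{2}}\rtimes\theta_{-\beta}^{\text{down}}(\pi)\ \twoheadrightarrow\ \theta_{-(\beta+2)}^{\text{down}}(\pi).
\]
Since the segments $[\tfrac{\beta+1}{2},\tfrac{\beta+1}{2}]$ and $[\tfrac{\alpha+1}{2},\tfrac{\beta-1}{2}]$ are juxtaposed, the standard general-linear fact $\nu^{\frac{\beta+1}{2}}\times\zeta(\tfrac{\alpha+1}{2},\tfrac{\beta-1}{2})\twoheadrightarrow\zeta(\tfrac{\alpha+1}{2},\tfrac{\beta+1}{2})$ (cf.\ \S\ref{subsubs_segments}) shows that the left-hand side above also surjects onto $\zeta(\tfrac{\alpha+1}{2},\tfrac{\beta+1}{2})\rtimes\theta_{-\alpha}^{\text{down}}(\pi)$. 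As the unique-irreducible-quotient property passes to any nonzero quotient, $\theta_{-(\beta+2)}^{\text{down}}(\pi)$ is then the unique irreducible quotient of $\zeta(\tfrac{\alpha+1}{2},\tfrac{\beta+1}{2})\rtimes\theta_{-\alpha}^{\text{down}}(\pi)$, which is the assertion for $\beta+2$.

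What remains — and what I expect to be the main obstacle — is the base case: the statement for the smallest $\beta\gg 0$ to which the step applies. When $\alpha$ is already large this is immediate from Theorem \ref{downlifts}, since one may then start at $\beta=\alpha+2$, where $\zeta(\tfrac{\alpha+1}{2},\tfrac{\beta-1}{2})=\nu^{\frac{\alpha+1}{2}}$ is again a Langlands-leading factor. For small $\alpha$, however, the lowest singletons $\tfrac{\alpha+1}{2},\tfrac{\alpha+3}{2},\dotsc$ interleave with the segments already present in $\Sigma(\pi)$, and pinning down the unique irreducible quotient of $\zeta(\tfrac{\alpha+1}{2},\tfrac{\beta-1}{2})\rtimes\theta_{-\alpha}^{\text{down}}(\pi)$ becomes a genuine combinatorial problem of rearranging a standard module into Langlands order — precisely the chain (RS-ladder) analysis that also underlies Theorems \ref{lofpi} and \ref{downlifts}, and which is carried out in full in \cite{nas_clanak}. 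Everything else in the argument is formal; in particular the uniqueness of the quotient never requires separate attention, since at each stage the representation in question is exhibited as a quotient of a standard module.
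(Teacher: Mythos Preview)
Your inductive step contains a genuine gap. From the two surjections
\[
A:=\nu^{\frac{\beta+1}{2}}\times\zeta\bigl(\tfrac{\alpha+1}{2},\tfrac{\beta-1}{2}\bigr)\rtimes\theta_{-\alpha}^{\text{down}}(\pi)\ \twoheadrightarrow\ \theta_{-(\beta+2)}^{\text{down}}(\pi)
\quad\text{and}\quad
A\ \twoheadrightarrow\ B:=\zeta\bigl(\tfrac{\alpha+1}{2},\tfrac{\beta+1}{2}\bigr)\rtimes\theta_{-\alpha}^{\text{down}}(\pi)
\]
you cannot conclude $B\twoheadrightarrow\theta_{-(\beta+2)}^{\text{down}}(\pi)$ unless you first know that $A$ itself has a unique irreducible quotient. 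The inductive hypothesis only gives this for $C=\zeta(\tfrac{\alpha+1}{2},\tfrac{\beta-1}{2})\rtimes\theta_{-\alpha}^{\text{down}}(\pi)$, and inducing by $\nu^{\frac{\beta+1}{2}}$ does not preserve that property automatically: the singleton $[\tfrac{\beta+1}{2},\tfrac{\beta+1}{2}]$ is juxtaposed with the top of the $\zeta$-segment, so $\nu^{\frac{\beta+1}{2}}\times\zeta(\tfrac{\alpha+1}{2},\tfrac{\beta-1}{2})$ is reducible, and irreducible quotients of $A$ could in principle arise from the kernel of $A\twoheadrightarrow B$.

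More importantly, you have misjudged where the content lies. The paper does not induct on $\beta$ at all: it proves directly, for \emph{any} irreducible $\sigma$ and $\beta\gg 0$, that $\zeta(\tfrac{\alpha+1}{2},\tfrac{\beta-1}{2})\rtimes\sigma$ is a quotient of the standard module obtained by adjoining the singletons $\tfrac{\alpha+1}{2},\dotsc,\tfrac{\beta-1}{2}$ to $\Sigma(\sigma)$. One writes $\zeta(\tfrac{\alpha+1}{2},\tfrac{\beta-1}{2})$ as a quotient of the principal series $\nu^{\frac{\beta-1}{2}}\times\dotsb\times\nu^{\frac{\alpha+1}{2}}$ and then rearranges $\nu^{\frac{\beta-1}{2}}\times\dotsb\times\nu^{\frac{\alpha+1}{2}}\times\Sigma(\sigma)$ into Langlands order using only the elementary irreducibility criterion for products of unlinked segments (\S\ref{subsubs_segments}); no RS-ladder or chain analysis from \cite{nas_clanak} is needed. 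This is exactly the step you call the ``base case'' and flag as the main obstacle --- but it is in fact the whole argument. Once you have it, existence and uniqueness of the irreducible quotient, together with its identification via Theorem~\ref{downlifts}, are immediate, and the induction becomes superfluous.
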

\begin{proof}
For any irreducible representation $\sigma$, the following is true: if $\beta \gg 0$, then $\zeta(\frac{\alpha+1}{2},\frac{\beta-1}{2}) \rtimes \sigma$ has a unique irreducible quotient, whose standard module is obtained by adding the singletons $\frac{\alpha+1}{2},\dotsc, \frac{\beta-1}{2}$ to the standard module of $\sigma$. 

One proves this by writing $\zeta(\frac{\alpha+1}{2},\frac{\beta-1}{2})$ as a quotient of a principal series representation; then, using irreducibility results for general linear groups, one  rearranges the segments to observe that $\zeta(\frac{\alpha+1}{2},\frac{\beta-1}{2}) \rtimes \sigma$ is a quotient of the relevant standard module.

The corollary follows from this observation (applied to $\sigma =\theta_{-\alpha}^{\text{down}}(\pi)$) and the description of lifts in Theorem \ref{downlifts}.
\end{proof}
Next, we describe the lifts $\theta_\alpha(\pi)$ for $\alpha > 0$ (this only makes sense on the going-down tower):
\begin{thm}[Theorem 6.1 of \cite{nas_clanak}]
\label{downdownlifts}
    Let $\alpha > 0$ be an odd integer such that $\theta_\alpha(\pi)\neq 0$. Let $[c_1,d_1], \dotsc, [c_t,d_t]$ be the longest chain in the standard module of $\pi$ with $d_t = \frac{\alpha-1}{2}$. If there are multiple such chains, take the one which minimizes segment widths. We set $t=0$ if such a chain does not exist. There are now two cases:
    \begin{enumerate}[(i)]
        \item $c_1 > 1-d_1$. Then $\theta_{\alpha-2t}(\tau)$ is non-zero and tempered. The standard module of $\theta_{\alpha}(\pi)$ is obtained by replacing $[c_i,d_i]$ with  $[c_i,d_i-1]$ for $i=1,\dotsc,t$ (if $d_i=c_i$, we simply eliminate the segment), and $\tau$ by $\theta_{\alpha-2t}(\tau)$.
        \item $c_1 = 1-d_1$. Let $\tau'$ be the Langlands quotient of $\delta(1-d_1,d_1) \rtimes \tau$. Then $\theta_{\alpha-2t+2}(\tau')$ is non-zero and tempered. The standard module of $\theta_{\alpha}(\pi)$ is obtained by replacing $[c_i,d_i]$ with  $[c_i,d_i-1]$ for $i=2,\dotsc,t$, removing $[c_1,d_1]$, and replacing $\tau$ by $\theta_{\alpha-2t+2}(\tau')$.
    \end{enumerate}
\end{thm}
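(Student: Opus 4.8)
Since, by Remark~\ref{rem_chi}, the whole computation lives on a single cuspidal line, the plan is to argue by induction on the number $N$ of non-empty segments in the standard module $\nu^{s_k}\delta_k\times\dotsb\times\nu^{s_1}\delta_1\rtimes\tau$ of $\pi$. The base case $N=0$ is $\pi=\tau$: then the distinguished chain is empty ($t=0$), and the claimed formula reduces to the computation of $\theta_\alpha(\tau)$ for tempered $\tau$, which is supplied by Atobe and Gan \cite{Atobe_Gan} (this also furnishes the non-vanishing and temperedness of $\theta_{\alpha-2t}(\tau)$, resp.\ $\theta_{\alpha-2t+2}(\tau')$, whenever they are needed below). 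For the inductive step we realize $\pi$ as the unique irreducible quotient of $\delta([x,y])\times\Sigma'\rtimes\tau$ for an \emph{extremal} segment $[x,y]$ --- one with the largest right endpoint $y$, and among those the largest left endpoint $x$ --- so that $\Sigma'\rtimes\tau$ is again a standard module, with Langlands quotient $\pi'$ having one fewer segment; by Lemma~\ref{lemma:MVWinv} we pass freely between this quotient form and the subrepresentation form $\pi\hookrightarrow\delta([-y,-x])\rtimes\pi'$.

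We split according to how $[x,y]$ meets the distinguished chain $[c_1,d_1],\dotsc,[c_t,d_t]$ attached to $\alpha$ (the longest chain with $d_t=\tfrac{\alpha-1}{2}$, of minimal total width). In the non-interacting case --- $[x,y]$ is not the top member $[c_t,d_t]$, i.e.\ either $y\neq\tfrac{\alpha-1}{2}$ or $[x,y]$ is too wide to lie in the minimal-width chain --- one checks that $\delta([x,y])$ satisfies condition~(K) of Lemma~\ref{lem_Kudla} for both $l=\alpha$ and $l=-\alpha$; uniqueness of the relevant Jacquet module follows from the genericity of the Langlands data via Tadi\'c's formula \eqref{eq_tadic}, so Lemma~\ref{lem_Kudla_small}(ii) gives $\delta([x,y])\rtimes\theta_\alpha(\pi')\twoheadrightarrow\theta_\alpha(\pi)$. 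The induction hypothesis describes $\theta_\alpha(\pi')$; because $[x,y]$ was chosen extremal, re-inserting it produces a genuine standard module, and since $\theta_\alpha(\pi)$ is irreducible (Howe duality, Theorem~\ref{thm_Howe}) its Langlands quotient is precisely the representation predicted by the statement.

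The substantive case is $[x,y]=[c_t,d_t]$ with $d_t=\tfrac{\alpha-1}{2}$, where condition~(K) fails by design. Here I would apply Kudla's filtration, Proposition~\ref{prop_Kudla}, directly to
\[
\Theta_\alpha(\pi)^\vee\hookrightarrow\Hom\bigl(R_{P_k}(\omega),\,\delta([c_t,\tfrac{\alpha-1}{2}])^\vee\otimes\pi'\bigr),\qquad k=\tfrac{\alpha-1}{2}-c_t+1,
\]
and observe that now a middle layer $J^a$ contributes through the $\Sigma_a$ term (whose twist $|\det|^{\lambda}$ absorbs the whole segment): this reads off that removing $[c_t,d_t]$ lowers the lift index by $2$ and shortens the chain's segment to $[c_t,d_t-1]$. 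Iterating from the top of the chain downwards shortens each $[c_i,d_i]$ to $[c_i,d_i-1]$ (dropping it when $c_i=d_i$) and lowers the index by $2$ per step, leaving $\theta_{\alpha-2t}$ of the Langlands quotient of the shortened standard module. If $c_1>1-d_1$ the chain never reaches the axis and we land on $\theta_{\alpha-2t}(\tau)$, which is case~(i). If $c_1=1-d_1$, the final step cannot lower the index once more, because $[1-d_1,d_1-1]$ is no longer above the axis; this is exactly the reducibility point of $\delta(1-d_1,d_1)\rtimes\tau$, and matching with the Atobe--Gan description shows the correct tempered output is $\theta_{\alpha-2t+2}(\tau')$ with $\tau'=L(\delta(1-d_1,d_1);\tau)$, which is case~(ii). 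The non-vanishing of these tempered lifts is then forced by $\theta_\alpha(\pi)\neq0$ together with the first-occurrence formula, Theorem~\ref{lofpi}.

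It remains to upgrade the surjection onto $\theta_\alpha(\pi)$ constructed above to the assertion that the displayed standard module \emph{is} the standard module of $\theta_\alpha(\pi)$. For this I would run a two-sided bound: the surjection gives one inequality, and for the reverse I would compare $\Jac_{-\frac{\alpha-1}{2}}$ (and, more generally, enough Jacquet modules) of the candidate and of $\theta_\alpha(\pi)$ using Proposition~\ref{prop_Kudla} and Lemma~\ref{lem_standardni_mod_iznad}, pin down the tempered support via \cite{Atobe_Gan}, and invoke the conservation relation to exclude any residual ambiguity in the total size. The main obstacle, as usual in such computations, is the bookkeeping in the interacting case --- proving that Kudla's filtration applied to a non-(K) segment performs exactly the ``shorten by one, lower $\alpha$ by two'' operation, and reconciling the crossover at $c_1=1-d_1$ with the Atobe--Gan formulas for tempered theta; a secondary subtlety is justifying the minimal-width tiebreak, since a wider competing chain would, after peeling, leave linked segments, so the resulting module would not be in Langlands position, and hence only the minimal-width chain can govern the lift.
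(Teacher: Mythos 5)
This statement is not proved in the present paper: it is Theorem 6.1 of the authors' companion paper \cite{nas_clanak}, and Appendix~\ref{appendix} is an explicit, proof-free summary of the results of \cite{nas_clanak} needed for Theorem~C. There is therefore no proof here to compare your attempt against; I can only assess the reconstruction on its own terms.

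Your global strategy---induction on the number of segments in the standard module, peeling off an extremal segment, separating ``interacting'' and ``non-interacting'' cases, driving the argument with Kudla's filtration---is plausible and is in the same spirit as the theta-correspondence machinery the present paper does use (Lemmas~\ref{lem_Kudla}, \ref{lem_Kudla_small}, Proposition~\ref{prop_wwtawwtaKudla}). But the argument has genuine gaps exactly where the content lies. In the interacting case, the claim that ``a middle layer $J^a$ contributes through the $\Sigma_a$ term (whose twist $|\det|^\lambda$ absorbs the whole segment)'' is not an argument: since $R_{P_{k-a,a}}(\delta([c_t,d_t]))$ has a Steinberg in each factor while $\chi_V|\det_{\GL_{k-a}}|^{\lambda_{k-a}}$ is a $\zeta$-type character, the Kret--Lapid criterion (Lemma~\ref{lem_KretLapid}) forces $k-a=1$, so the only layer that can contribute beyond $J^k$ is $J^{k-1}$, and its contribution is the constituent $\nu^{\frac{\alpha-1}{2}}\otimes\delta([c_t,d_t-1])$---this is precisely the ``shorten the top, lower the level by two'' shape, but you neither identify the relevant layer nor explain what to do with the simultaneous contribution from $J^k$, nor address the crossover at $c_1=1-d_1$ that separates your cases (i) and (ii). The inductive step also quietly assumes that after $[c_t,d_t]$ is removed, the distinguished minimal-width chain for $(\pi',\alpha-2)$ is exactly $[c_1,d_1],\dotsc,[c_{t-1},d_{t-1}]$; this combinatorial compatibility needs proof, since removing a segment can in principle change which chain is longest or of minimal width, and your one-sentence heuristic for the minimal-width tiebreak does not substitute for such a verification. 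Finally, the closing paragraph---upgrading a surjection to equality of standard modules via Jacquet modules, the Atobe--Gan tempered recipe, and the conservation relation---is a list of intentions rather than an argument. In short, the skeleton is reasonable and probably resembles what \cite{nas_clanak} actually does, but as written it is a roadmap, not a proof.
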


We now describe the going-up lifts. Recall that, by the Conservation Relation, the first lift of $\tau$ on the going-up tower is $\theta_{-2-l(\tau)}(\tau)$.
\begin{thm}[Theorem 6.8 of \cite{nas_clanak}]
\label{uplifts} Let $\alpha > l(\pi)$ be an odd integer, so that $\theta_{-\alpha}^{\text{up}}(\pi) \neq 0$. We distinguish two cases:
\begin{enumerate}[(i)]
\item $m_\phi(\chi_VS_{l(\tau)})$ is odd;
\item $m_\phi(\chi_VS_{l(\tau)})$ is even.
\end{enumerate}
Let $t= \frac{\alpha-l(\tau)}{2}-1$ and let $[c_1,d_1], [c_2,d_2], \dotsc, [c_t, d_t]$ be the chain in the standard module of $\pi$ which maximizes segment widths such that $d_1 = \frac{l(\tau)+1}{2}$ and, in case (ii), $c_1 > 1-d_1$ (we allow $t=0$, in which case the chain formally contains no segments). If there is no such chain of length $t$, we take the longest available chain and complete it with empty segments $[c_i,d_i] = [d_i+1,d_i]$ to achieve length $t$.  Then
\begin{enumerate}[(i)]
\item In case (i), $\theta_{-2-l(\tau)}(\tau)$ is tempered. The standard module of $\theta_{-\alpha}(\pi)$ is obtained by replacing $\tau$ with $\theta_{-2-l(\tau)}$, and replacing each segment $[c_i,d_i]$ with $[c_i,d_i+1]$.
\item In case (ii), $\theta_{-2-l(\tau)}(\tau)$ is non-tempered; it is equal to the Langlands quotient of the standard module $\delta(\frac{1-l(\tau)}{2},\frac{l(\tau)+1}{2}) \rtimes \sigma$ for a certain tempered representation $\sigma$ for which $l(\sigma)\geq l(\tau)$. The standard module of $\theta_{-\alpha}(\pi)$ is obtained by replacing $\tau$ with $\sigma$, adding $[\frac{1-l(\tau)}{2},\frac{l(\tau)+1}{2}]$ to the collection of segments, and replacing each $[c_i,d_i]$ with $[c_i,d_i+1]$.
\end{enumerate}
\end{thm}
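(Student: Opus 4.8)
The plan is to reduce Theorem~\ref{uplifts} to the explicit description of theta lifts of tempered representations due to Atobe--Gan \cite{Atobe_Gan}, by inducting on the number of segments in the standard module of $\pi$ and using Kudla's filtration to follow the lift through each peeling step. Throughout, one uses that the non-vanishing $\theta^{\mathrm{up}}_{-\alpha}(\pi)\ne 0$ for $\alpha>l(\pi)$ is already known (Theorem~\ref{lofpi} together with the conservation relation), so the only issue is to identify the standard module of the lift.

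The base case is $\pi=\tau$ tempered. Then the standard module of $\pi$ has no segments, the chain in the statement consists of empty segments $[d_i+1,d_i]$ with $d_i=\tfrac{l(\tau)+1}{2}+i-1$, and ``stretching'' each to $[d_i+1,d_i+1]$ amounts to appending the singletons $\tfrac{l(\tau)+3}{2},\dots,\tfrac{\alpha-1}{2}$. For $t=0$ (i.e.\ $\alpha=l(\tau)+2$) the assertion is exactly Atobe--Gan's computation of the first going-up lift $\theta_{-2-l(\tau)}(\tau)$: this lift is tempered when $m_\phi(\chi_VS_{l(\tau)})$ is odd, and otherwise equals the Langlands quotient $L(\delta(\tfrac{1-l(\tau)}{2},\tfrac{l(\tau)+1}{2});\sigma)$ for a tempered $\sigma$ with $l(\sigma)\ge l(\tau)$ (cf.\ Remark~\ref{multi_S1} and \cite{Atobe_Gan}); this is precisely what produces the extra segment $[\tfrac{1-l(\tau)}{2},\tfrac{l(\tau)+1}{2}]$ of case~(ii). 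For $t>0$ one needs the bookkeeping statement that each further going-up lift of a tempered representation appends exactly one singleton segment, which follows from \cite{Atobe_Gan} combined with a Jacquet-module computation via Proposition~\ref{prop_Kudla} and Tadi\'c's formula.

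For the inductive step, write $\pi$ as the Langlands quotient of its standard module and single out the segment $\Delta=[x,y]$ with $y$ maximal and, among those, with $y-x$ minimal. Commuting non-linked $\delta$'s (and treating a chain through $\Delta$ as one block, via the linked/juxtaposed reducibility criterion) one arranges that $\pi$ is the unique irreducible quotient of $\chi_V\delta(\Delta)\rtimes\pi_0$, where $\pi_0$ is obtained by deleting $\Delta$. Applying Kudla's filtration to this surjection, in the spirit of Lemmas~\ref{lem_Kudla}--\ref{lem_Kudla_small} but in the going-up direction (so that one must pass between the two towers using the conservation relation, as in Lemma~\ref{lem_Kudla_nonvanishing}(ii)), reduces the computation of $\theta^{\mathrm{up}}_{-\alpha}(\pi)$ to that of $\theta^{\mathrm{up}}_{-\alpha}(\pi_0)$ together with re-inducing $\Delta$ on the $\chi_W$ side. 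One then checks that re-inserting $\Delta$ into the inductively-known standard module of $\theta^{\mathrm{up}}_{-\alpha}(\pi_0)$ yields exactly the standard module in the theorem; this is where the ``maximizes segment widths'' clause is forced (by the Langlands ordering together with the linked/juxtaposed dichotomy) and where Theorem~\ref{lofpi} fixes the position of $\Delta$ relative to the tempered core.

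The main obstacle is the interaction between the stretched chain and the tempered core in case~(ii): when $m_\phi(\chi_VS_{l(\tau)})$ is even, $\theta_{-2-l(\tau)}(\tau)$ is non-tempered and a segment splits off from the bottom of the standard module, and one must show that this happens exactly once, producing precisely $\delta(\tfrac{1-l(\tau)}{2},\tfrac{l(\tau)+1}{2})$, and that it does not collide with the chain (forcing the condition $c_1>1-d_1$). Settling this requires analyzing $r_{\overline P}$ of the candidate lift through the second form of Frobenius reciprocity, combined with the multiplicity-one statements for the relevant Jacquet modules (as in Lemma~\ref{lem_L}); once this compatibility is in hand, reading off the standard module from the combinatorial data is routine.
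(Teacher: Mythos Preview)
This statement is not proved in the present paper. Theorem~\ref{uplifts} is explicitly attributed to \cite[Theorem 6.8]{nas_clanak}, and the Appendix merely records it (along with Theorems~\ref{lofpi}, \ref{downlifts}, \ref{downdownlifts}) as a summary of results needed from that earlier work; no argument is given here. So there is no ``paper's own proof'' against which to compare your proposal.

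That said, your outline is broadly in the right spirit for how such statements are established in \cite{nas_clanak}: reduce to the tempered case via Kudla's filtration applied to the standard module, and invoke Atobe--Gan for the tempered input. A few points where your sketch is thinner than it needs to be: your ``base case for $t>0$'' is not really a base case, since after the first going-up lift $\theta_{-2-l(\tau)}(\tau)$ the representation is (in case~(ii)) already non-tempered, so appealing again to the tempered results of \cite{Atobe_Gan} is circular; one really needs a separate argument (e.g.\ Lemma~\ref{lem_descent} in reverse, or the RS-ladder machinery of \cite{nas_clanak}) to propagate from the first lift to higher $\alpha$. Also, in your inductive step the assertion that peeling a top segment $\Delta$ and applying Lemma~\ref{lem_Kudla_small} computes $\theta^{\mathrm{up}}_{-\alpha}$ requires verifying condition~(K) for the relevant $\Delta$ on the going-up tower and checking the Jacquet-module multiplicity-one hypothesis; neither is automatic, and the actual proof in \cite{nas_clanak} organizes the induction around the chain structure (RS-ladders) rather than peeling one segment at a time, precisely to control these issues and to make the ``maximizes segment widths'' clause emerge naturally.
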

We now derive some consequences of this theorem. By the Conservation Relation the first going-up lift is $\theta_{-l(\pi)-2}^{\text{up}}(\pi)$. The above theorem says that the chain $[c_1,d_1], \dotsc, [c_t,d_t]$ may need to be completed with empty segments. The following remark clarifies the situation (this is Remark 6.9 from \cite{nas_clanak}).
\begin{rem}
\label{rem_first_lift}
We need empty segments to form the chain $[c_1,d_1], \dotsc, [c_t,d_t]$ any time we compute $\theta_{-\alpha}^{\text{up}}(\pi)$ with $\alpha > l(\pi)+2$ (i.e.\,in every lift except the first occurrence). To be precise, the segments with $d_i > \frac{l(\pi)-1}{2}$ will be empty (formally: $[c_i,d_i]=[d_i+1,d_i]$), and will thus turn into singleton segments when we replace them by $[c_i,d_i+1]$. In short,
\[
[c_i,d_i+1] = \begin{cases}
\text{singleton}, \quad &\text{if } d_i > \frac{l(\pi)-1}{2};\\
\text{thick}, \quad &\text{if } d_i \leq \frac{l(\pi)-1}{2}.
\end{cases}
\]
\end{rem}
We isolate some direct consequences of the above remark:
\begin{cor}\mbox{}
\label{cor_first_lift}
\begin{enumerate}[a)]
\item The standard module of $\theta_{-\alpha}^{\text{up}}(\pi)$ does not include the singleton segment $\frac{l(\pi)+1}{2}$.
\item If the standard module of $\theta_{-\alpha}(\pi)$ does not include the singleton segment $\frac{\alpha-1}{2}$, we may conclude that $\theta_{-\alpha}(\pi)$ is the first lift on the going-up tower.
\item Let $\alpha > l(\pi)+2$. Then the standard modules of $\theta_{-\alpha}^{\text{up}}(\pi)$ and $\theta_{-\alpha}^{\text{down}}(\pi)$ are the same above $\frac{\alpha-1}{2}$ ($\Sigma_{\geq\frac{\alpha-1}{2}}$, using the notation of \S \ref{subs_stdmod}).
\end{enumerate}
\end{cor}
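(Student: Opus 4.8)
The plan is to read all three statements off the explicit descriptions of the standard module of $\theta_{-\alpha}(\pi)$ — Theorem \ref{downlifts} on the going-down tower, and Theorem \ref{uplifts} together with its refinement Remark \ref{rem_first_lift} on the going-up tower — so that the only real work is bookkeeping of which segments can occur. The recurring input is the observation of Remark \ref{rem_first_lift}: in a going-up lift $\theta_{-\alpha}^\text{up}(\pi)$ with $\alpha>l(\pi)+2$, a chain segment $[c_i,d_i]$ (with $d_1=\frac{l(\tau)+1}{2}$, $d_{i+1}=d_i+1$, and length $t=\frac{\alpha-l(\tau)}{2}-1$) is empty precisely when $d_i>\frac{l(\pi)-1}{2}$, in which case the shift $[c_i,d_i]\mapsto[c_i,d_i+1]$ turns it into the singleton $[d_i+1,d_i+1]$ with $d_i+1\ge\frac{l(\pi)+3}{2}$.

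For part a) I would argue: by Theorem \ref{uplifts} the standard module of $\theta_{-\alpha}^\text{up}(\pi)$ is that of $\pi$ with $\tau$ replaced by its first going-up lift (which in case (ii) contributes only the thick segment $[\frac{1-l(\tau)}{2},\frac{l(\tau)+1}{2}]$, of left endpoint $\le 0$) and each chain segment $[c_i,d_i]$ replaced by $[c_i,d_i+1]$. By the observation above, the singletons produced by the shift all have value $\ge\frac{l(\pi)+3}{2}$, and a \emph{genuine} chain segment (for which $c_i\le d_i$) never becomes a singleton at $\frac{l(\pi)+1}{2}$. So it remains to see that $\frac{l(\pi)+1}{2}$ does not already occur as a singleton in the standard module of $\pi$ itself. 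This is the only non-mechanical point in the whole corollary: if it did occur, one could append it to any longest admissible chain with $d_1=\frac{l(\tau)+1}{2}$, which by Theorem \ref{lofpi} ends at $d=\frac{l(\pi)-1}{2}$ with last left endpoint $\le\frac{l(\pi)-1}{2}<\frac{l(\pi)+1}{2}$, yielding a strictly longer admissible chain and contradicting the characterization of $l(\pi)$ in Theorem \ref{lofpi}.

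For part b), suppose $\theta_{-\alpha}(\pi)\ne 0$ and its standard module omits the singleton $\frac{\alpha-1}{2}$ (we take $\alpha\ge 3$, this being the only case in which the hypothesis is non-vacuous). If $\theta_{-\alpha}(\pi)$ were a going-down lift, Theorem \ref{downlifts} would place the singletons $1,2,\dots,\frac{\alpha-1}{2}$ in its standard module, a contradiction; so it is a going-up lift. If in addition $\alpha>l(\pi)+2$, then with $t=\frac{\alpha-l(\tau)}{2}-1$ one computes the top chain endpoint $d_t=\frac{\alpha-3}{2}>\frac{l(\pi)-1}{2}$, so that segment is empty and, by Remark \ref{rem_first_lift}, becomes the singleton $[\frac{\alpha-1}{2},\frac{\alpha-1}{2}]$ — a contradiction again. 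Hence $\alpha=l(\pi)+2$, i.e.\ $\theta_{-\alpha}(\pi)$ is the first lift on the going-up tower.

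For part c), fix $\alpha>l(\pi)+2$ and set $z=\frac{\alpha-1}{2}$. On the going-down tower, Theorem \ref{downlifts} keeps every $\GL$-segment of $\pi$ and adds singletons $1,\dots,z$, of which only $z$ has left endpoint $\ge z$, so $\Sigma_{\ge z}(\theta_{-\alpha}^\text{down}(\pi))$ is the collection of segments of $\pi$ with left endpoint $\ge z$ together with one copy of $[z,z]$. On the going-up tower, every genuine chain segment has $c_i\le d_i\le\frac{l(\pi)-1}{2}<z$, so no segment of $\pi$ with left endpoint $\ge z$ lies in the chain and all such segments pass over unchanged; among the shifted segments $[c_i,d_i+1]$ only the padding ones with $d_i+1\ge z$ — i.e.\ the single segment $[z,z]$ (from $d_t$) — have left endpoint $\ge z$; and the case-(ii) segment has left endpoint $<z$. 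Thus $\Sigma_{\ge z}(\theta_{-\alpha}^\text{up}(\pi))$ is the same multiset, which is the assertion. Apart from the chain-maximality argument in part a), the whole proof reduces to the arithmetic identities $d_t=\frac{\alpha-3}{2}$, $d_t+1=z$, and ``padding starts above $\frac{l(\pi)-1}{2}$'', all of which I expect to be routine.
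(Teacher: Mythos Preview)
Your proof is correct and follows the same approach as the paper, which simply says that parts a) and b) ``follow directly from the above remark'' (Remark \ref{rem_first_lift}) and gives for c) exactly the bookkeeping you describe. In fact your argument for a) is more careful than the paper's: the paper's remark only controls the segments produced by the chain shift, and you correctly observe that one must also rule out a pre-existing singleton $\frac{l(\pi)+1}{2}$ in $\pi$ itself, which you do via the chain-maximality characterization of $l(\pi)$ in Theorem \ref{lofpi}.
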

\begin{proof}
Parts a) and b) follow directly from the above remark. For part c), we argue as follows. By theorem A.5, to obtain the standard module of $\theta_{-\alpha}^{\text{up}}(\pi)$, we alter some segments that appear in the standard module of $\pi$, and add certain singleton segments to complete the chain. By Remark \ref{rem_first_lift}, the highest existing segment that gets altered ends in $\frac{l(\pi)-1}{2}$; the added singletons are $\frac{l(\pi)+3}{2}, \frac{l(\pi)+5}{2}, \dotsc, \frac{\alpha-1}{2}$. But these are the same singletons that we add when computing the going-down lift. Thus, the standard modules are the same above $\frac{l(\pi)+3}{2}$.
\end{proof}
We now state another useful corollary of Theorem \ref{uplifts}. Let $\tau'$ denote the tempered part in the standard module of $\theta_{-\alpha}^{\text{up}}(\pi)$. Thus $\tau' = \theta_{-2-l(\tau)}(\tau)$ in case (i), and $\tau' = \sigma$ in case (ii). Let $\phi'$ denote the corresponding L-parameter.
\begin{cor}
\label{tau}
$m_{\phi'}(S_1)$ is odd whenever $l(\tau) > 0$.
\end{cor}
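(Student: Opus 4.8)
The plan is to deduce this from Atobe--Gan's explicit description of the theta lift of the tempered representation $\tau$, i.e.\ the same description that underlies Theorems \ref{downlifts}--\ref{uplifts}. First I would identify $\tau'$ concretely. Since $\pi=\tau$ is tempered, applying Theorem \ref{uplifts} at the first admissible value $\alpha=l(\tau)+2$ gives $t=0$ and no chain segments, so $\tau'$ is the tempered part of the first lift of $\tau$ on its going-up tower, $\theta_{-l(\tau)-2}^{\mathrm{up}}(\tau)$: in case (i) it equals this lift, and in case (ii) it equals the tempered constituent $\sigma$ of the Langlands quotient of $\delta(\tfrac{1-l(\tau)}{2},\tfrac{l(\tau)+1}{2})\rtimes\sigma$. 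Thus $\tau'$ is tempered and $\phi'$ is its L-parameter.

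Next I would track the block $S_1$ through this construction. The structural facts I would use, all contained in Atobe--Gan's analysis, are: (a) $l(\tau)>0$ forces the block $S_1$ to occur in $\phi_\tau$ with odd multiplicity (this is part of their computation of $l(\tau)$; cf.\ Remark \ref{towerispasseddown}, where $l(\tau)$ and the going-up/going-down towers are read off from $\phi_\tau$, with $l(\tau)=-1$ exactly when $S_1\not\subset\phi_\tau$); and (b) the going-up first lift modifies $\phi_\tau$ only by adjoining the single summand $S_{l(\tau)+2}$ (together, in case (ii), with the removal of the irreducible summand $S_{l(\tau)+1}$ of dimension $l(\tau)+1>1$, and possibly a reshuffling of the summands $S_d$ with $d\ge 3$). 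Since $l(\tau)+2\ge 3$, none of these operations can alter the $S_1$-part; combined with the bookkeeping of how multiplicities along the relevant cuspidal line ($\chi_V$ on the source side, $\chi_W$ on the target side) transform, this gives that the multiplicity of $S_1$ in $\phi'$ has the same parity as in $\phi_\tau$, hence is odd by (a).

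The main obstacle is the normalization bookkeeping in Atobe--Gan: one has to pin down the splitting characters $\chi_V,\chi_W$ and the exact behaviour of the reshuffling of the summands $S_d$, $d\ge 3$ (which is what actually happens when $\phi_\tau$ already contains the block one wishes to adjoin), together with the possibility of higher multiplicities along the $S_d$-line. In case (ii) one must additionally check that passing from the Langlands quotient to its tempered part $\sigma$ does not change the parity of the multiplicity of $S_1$; this is immediate once one notes that case (ii) forces $l(\tau)\ge 3$ (otherwise $S_{l(\tau)}=S_1$ would lie in $\phi_\tau$, contradicting the case hypothesis that $m_\phi(\chi_V S_{l(\tau)})$ is even), so that the removed summand $S_{l(\tau)+1}$ is irreducible of dimension $\ge 4$ and contributes no copy of $S_1$. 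An alternative route, cleaner in case (i), is to use the reciprocity $\tau=\theta^{\mathrm{down}}_{\mathrm{first}}(\tau')$ (lifting the first going-up occurrence back down recovers the first going-down occurrence) together with Theorem \ref{downlifts} and Remark \ref{multi_S1}, which describe how $\phi_\tau$ is recovered from $\phi'$ by operations that do not delete the block $S_1$; I would carry out whichever of the two routes keeps the normalization analysis lightest.
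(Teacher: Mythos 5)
Your claim (a) — that $l(\tau)>0$ forces $m_\phi(S_1)$ to be odd — is false, and this breaks the proof. The paper's own argument explicitly allows the contrary: it says that if $l(\tau)>1$ then $m_\phi(S_1)$ is odd (Atobe--Gan, Thm.~4.1), whereas if $l(\tau)=1$ it \emph{could happen} that $m_\phi(S_1)$ is even (e.g.~a supercuspidal $\tau$ whose parameter contains no copy of $S_1$ at all but which still lifts to the going-down tower at level $1$). Remark~\ref{towerispasseddown} does not assert the equivalence $l(\tau)=-1\iff S_1\not\subset\phi_\tau$ that you attribute to it; it only records that $l(\tau)=-1$ implies $l(\pi)=-1$ and that for $l(\tau)>0$ the choice of tower is inherited from $\tau$.

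Because of this, the entire case-(ii) treatment collapses. You use (a) to conclude that case (ii) forces $l(\tau)\geq 3$, but in fact case (ii) with $l(\tau)=1$ is precisely the interesting subcase. There, the parity of the multiplicity of $S_1$ is \emph{not} preserved on passing from $\phi$ to $\phi'$: rather, Atobe--Gan, Thm.~4.5(2) shows that $m_{\phi'}(S_1)$ becomes odd even when $m_\phi(S_1)$ was even. Thus your claim (b) (that the going-up first lift cannot alter the $S_1$-part) gives the wrong conclusion exactly in the case that must be handled by hand. The correct argument, as in the paper, must split on $l(\tau)>1$ vs.~$l(\tau)=1$: in the former case $m_\phi(S_1)$ is already odd and Thm.~4.5(1) shows $m_{\phi'}(S_1)=m_\phi(S_1)$; in the latter, one invokes Thm.~4.5(2) directly to see that $m_{\phi'}(S_1)$ is odd, whatever the parity of $m_\phi(S_1)$ was. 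Your proposed alternative via $\tau=\theta^{\mathrm{down}}_{\mathrm{first}}(\tau')$ and Remark~\ref{multi_S1} faces the same obstruction and cannot rescue the argument without separately addressing $l(\tau)=1$.
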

\begin{proof}
This result is a direct consequence of \cite[Theorems 4.1, 4.5]{Atobe_Gan}. The condition $l(\tau) > 0$ simply excludes the edge case $l(\tau)=-1$ (cf.~Remark \ref{rem_updown}). 

Indeed, if $l(\tau) >1$, then $m_\phi(S_1)$ is odd by \cite[Theorem 4.1]{Atobe_Gan}. Then \cite[Theorem 4.5 (1)]{Atobe_Gan} shows that $m_{\phi'}(S_1) = m_\phi(S_1)$; thus  $m_{\phi'}(S_1)$ is odd as well. If $l(\tau) = 1$ it could happen that $m_\phi(S_1)$ is even, but then $m_{\phi'}(S_1)$ is odd by \cite[Theorem 4.5 (2)]{Atobe_Gan}. 
\end{proof}

At a certain point in the proof of Theorem \ref{theoremC}, we need to understand what happens when we compute consecutive going-up lifts. Let us set the scene. Let $\pi' = \theta_{-\alpha}^\text{up}(\pi)$, $\alpha > l(\pi)$, be a going-up lift of $\pi$. As before, let $\tau'$ denote the tempered part of $\pi'$ (this does not depend on $\alpha$).
Here is the result we need:
\begin{cor}
\label{multiple_uplifts}
When computing another going-up lift of $\pi'$ (i.e.\ $\theta_{-\beta}^\text{up}(\pi')$ for $\beta > l(\pi')$), we do not affect any singleton segments smaller than $\frac{l(\pi)+3}{2}$ in the standard module of $\pi'$.
\end{cor}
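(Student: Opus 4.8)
\textbf{Proof plan for Corollary \ref{multiple_uplifts}.} The plan is to apply Theorem \ref{uplifts} to the lift $\theta_{-\beta}^{\text{up}}(\pi')$ and track precisely which segments of the standard module of $\pi'$ get altered. Recall that $\pi' = \theta_{-\alpha}^{\text{up}}(\pi)$ with $\alpha > l(\pi)$, so by Theorem \ref{lofpi} (or the formula $l(\pi') = \alpha$ on the going-up tower, cf.~Remark \ref{rem_tower}) we have $l(\pi') = \alpha \geq l(\pi) + 2$. The tempered part $\tau'$ of $\pi'$ is inherited from $\tau$ via one going-up lift, and does not change under further going-up lifts (Remark \ref{towerispasseddown}). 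The key point is to locate $l(\tau')$: since $\tau'$ is obtained from $\tau$ by a single going-up lift, $l(\tau') = l(\tau)+2$ (by Theorem \ref{lofpi} applied at the tempered level, or directly from \cite{Atobe_Gan}).

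First I would invoke Theorem \ref{uplifts} for $\theta_{-\beta}^{\text{up}}(\pi')$: the lift alters a chain $[c_1,d_1],\dotsc,[c_s,d_s]$ in the standard module of $\pi'$ with $d_1 = \frac{l(\tau')+1}{2}$, replacing each $[c_i,d_i]$ with $[c_i,d_i+1]$ and possibly adding one extra thick segment (case (ii)) or modifying $\tau'$ (case (i)). The crucial structural input is Remark \ref{rem_first_lift}: since $\beta > l(\pi')+2$ is not the first going-up lift of $\pi'$ (this is the only case that needs an argument; if $\beta = l(\pi')+2$ the statement about what is affected is governed by the same remark with the roles shifted, and one checks the claim holds trivially there too), the segments $[c_i,d_i]$ with $d_i > \frac{l(\pi')-1}{2}$ are empty and turn into singletons $\frac{l(\pi')+1}{2}+1, \dotsc$, i.e.~singletons $\geq \frac{l(\pi')+3}{2}$, while the segments with $d_i \leq \frac{l(\pi')-1}{2}$ stay thick (or are the lowest altered ones). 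So the only pieces of the standard module of $\pi'$ that get touched are: segments ending at heights between $\frac{l(\tau')+1}{2}$ and $\frac{l(\pi')-1}{2}$ (which remain thick after alteration, hence are not singletons of the forbidden form), the newly created singletons at heights $\geq \frac{l(\pi')+3}{2}$, and (in case (ii)) one added thick segment, plus the tempered part.

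Next I would compare these heights with the threshold $\frac{l(\pi)+3}{2}$. Since $l(\pi') = \alpha \geq l(\pi)+2$, the new singletons sit at heights $\geq \frac{l(\pi')+3}{2} = \frac{\alpha+3}{2} \geq \frac{l(\pi)+5}{2} > \frac{l(\pi)+3}{2}$, so they are strictly above the threshold and irrelevant to the claim. For the altered existing segments: they are thick after the alteration (their width only grows), so even if they sit below $\frac{l(\pi)+3}{2}$ they are not singletons and do not count as "singleton segments affected". The one subtle case is case (i), where instead of adding a segment we replace $\tau'$ by $\theta_{-2-l(\tau')}(\tau')$; here I would note that this changes only the tempered part, not the $\GL$-segments, so no singleton segment of the standard module is affected at all. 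This leaves only the possibility, in case (ii), that the extra added thick segment $[\frac{1-l(\tau')}{2}, \frac{l(\tau')+1}{2}]$ is somehow a singleton lying below $\frac{l(\pi)+3}{2}$ — but it is a singleton only if $l(\tau') = 0$, which is impossible since $l(\tau') = l(\tau)+2 \geq 1$ (and $l(\tau')$ is $\equiv l(\tau) \pmod 2$, so $l(\tau') = 0$ would force $l(\tau) = -2$, excluded), and in any event it occurs only when $l(\tau') > 0$ is even, giving a genuinely thick segment.

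\textbf{Main obstacle.} The delicate step is bookkeeping the exact height at which the chain altered by $\theta_{-\beta}^{\text{up}}(\pi')$ starts, namely $\frac{l(\tau')+1}{2}$, and confirming that everything between $\frac{l(\tau')+1}{2}$ and $\frac{l(\pi')-1}{2}$ either stays thick or lies above $\frac{l(\pi)+3}{2}$ — this requires the identity $l(\tau') = l(\tau)+2$ together with $l(\pi') \geq l(\pi)+2$, and a careful reading of Remark \ref{rem_first_lift} to distinguish the "empty segment becomes singleton" regime (heights $> \frac{l(\pi')-1}{2}$) from the "genuinely thick" regime. One must also handle the $l(\tau)=-1$ edge case separately: there $\theta_{-\alpha}^{\text{up}}(\pi)$ is already the first lift on both towers for $\tau$, and the relevant singletons produced start even higher, so the conclusion is immediate; I would dispose of this at the outset. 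Once these height comparisons are pinned down, the corollary follows directly by reading off Theorems \ref{uplifts} and Remark \ref{rem_first_lift}.
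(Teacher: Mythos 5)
Your proposal has a genuine gap, stemming from a misreading of what ``affect a singleton segment'' means. The corollary asserts that the chain $[c_1',d_1'],\dotsc$ selected by Theorem \ref{uplifts} when computing $\theta_{-\beta}^{\text{up}}(\pi')$ contains no segment that is \emph{already} a singleton in the standard module of $\pi'$ at a height below $\frac{l(\pi)+3}{2}$. You argue that the affected segments at heights $d_i' \leq \frac{l(\pi')-1}{2}$ ``stay thick,'' citing Remark \ref{rem_first_lift}; but that remark only says $[c_i',d_i'+1]$ is thick in that range, i.e.\ that the pre-shift segment $[c_i',d_i']$ is \emph{non-empty}. A non-empty $[c_i',d_i']$ can perfectly well be a singleton $[d_i',d_i']$ of $\pi'$, in which case the lift does touch a singleton below the threshold and the corollary would fail. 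You then compound the misreading: ``they are thick after the alteration... so even if they sit below $\frac{l(\pi)+3}{2}$ they are not singletons and do not count as `singleton segments affected'.'' This addresses whether the \emph{image} is a singleton, not whether a singleton of $\pi'$ was used in the chain; the latter is what the statement (and its use in Lemma \ref{lem_compare}, where the concern is that the preserved singletons $\frac{\beta+1}{2},\dotsc,\frac{\beta'-1}{2}$ not be touched) actually requires.

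The missing idea is the structural input about $\pi'$: since $\pi'$ is itself a going-up lift of $\pi$, Theorem \ref{uplifts} together with Remark \ref{rem_first_lift} (applied to the \emph{earlier} lift $\pi \mapsto \pi'$, not to the current one) shows that the standard module of $\pi'$ contains a chain of genuinely thick segments whose endpoints run from $\frac{l(\tau')+1}{2}$ up to $\frac{l(\pi)+1}{2}$. Because Theorem \ref{uplifts} instructs us to choose the chain that \emph{maximizes segment widths}, these thick segments are preferred over any available singletons in that range, so no singleton below $\frac{l(\pi)+3}{2}$ can appear in the chosen chain. That width-maximization argument, together with the existence of the thick chain supplied by the construction of $\pi'$, is precisely what the paper's proof provides and what your proposal omits.
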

\begin{proof}
Recall Theorem \ref{uplifts}: when computing $\theta_{-\beta}^\text{up}(\pi')$, we affect a certain chain of segments, which we denote $[c_i',d_i']$. Recall that the lowest segment $[c_1',d_1']$ has $d_1' = \frac{l(\tau')+1}{2}$. Furthermore, if there are many chains to choose from, we choose the chain which maximizes segment widths.

We claim that the standard module of $\pi'$ always contains at least one thick chain of segments whose ends range from $\frac{l(\tau')+1}{2}$ to $\frac{l(\pi)+1}{2}$. Indeed, by Theorem \ref{uplifts}, when constructing $\pi'$ we create a chain whose lowest segment ends in $\frac{l(\tau)+3}{2}$ (in case (i)) or $\frac{l(\tau)+1}{2}$ (in case (ii)), while the highest segment ends in $\frac{l(\pi')-1}{2}$. In both cases, the end of the lowest segment is less than or equal to $\frac{l(\tau')+1}{2}$. Moreover, by Remark \ref{rem_first_lift}, a segment belonging to this chain is thick if and only if its endpoint is less than or equal to $\frac{l(\pi)+1}{2}$. We conclude that $\pi'$ contains a thick chain whose ends range from $\frac{l(\tau')+1}{2}$ to $\frac{l(\pi)+1}{2}$.

Since we are maximizing segment widths when choosing the segments $[c_i',d_i']$, this implies the desired fact: all the segments with $d_i' < \frac{l(\pi)+3}{2}$ are necessarily thick.
\end{proof}

\noindent Finally, these results combine into a corollary that is critical for the proof of Theorem C.
\begin{cor}
\label{key}
Let $\alpha > l(\pi)$, so that $\theta_{-\alpha}^{\text{up}}(\pi)\neq 0$. Take $\beta \leq l(\pi)$ and consider the representation $\theta_{-\beta}^{\text{down}}(\theta_{-\alpha}^{\text{up}}(\pi))$. Then
\[
\theta_{\alpha}(\theta_{-\beta}^{\text{down}}(\theta_{-\alpha}^{\text{up}}(\pi))) = 0.
\]
\end{cor}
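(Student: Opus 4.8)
The strategy is to track standard modules through the three lifts and show that the hypothetical nonzero lift $\theta_{\alpha}(\theta_{-\beta}^{\text{down}}(\theta_{-\alpha}^{\text{up}}(\pi)))$ would force a chain of segments in the standard module of $\theta_{-\beta}^{\text{down}}(\theta_{-\alpha}^{\text{up}}(\pi))$ ending at $\frac{\alpha-1}{2}$, while simultaneously the description of that standard module (coming from Theorems \ref{uplifts} and \ref{downlifts}) shows no such chain can exist. The key numerical tension is: $\alpha > l(\pi)$, so $\frac{\alpha-1}{2} \geq \frac{l(\pi)+1}{2}$, and on the other hand $\beta \leq l(\pi)$, so the going-down lift $\theta_{-\beta}^{\text{down}}$ only adds singleton segments $1, 2, \dotsc, \frac{\beta-1}{2}$, all of which have right endpoint at most $\frac{\beta-1}{2} \leq \frac{l(\pi)-1}{2} < \frac{\alpha-1}{2}$.

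First I would write $\pi' = \theta_{-\alpha}^{\text{up}}(\pi)$ and, using Corollary \ref{cor_first_lift} a), record that the standard module of $\pi'$ does \emph{not} contain the singleton segment $\frac{l(\pi)+1}{2}$ — and in fact, by Remark \ref{rem_first_lift}, the only segments of the standard module of $\pi'$ with right endpoint $> \frac{l(\pi)-1}{2}$ are the singletons $\frac{l(\pi)+3}{2}, \dotsc, \frac{\alpha-1}{2}$ together with whatever thick segments were already present in $\pi$ at those heights (but thick ones cannot end at $\frac{l(\pi)+1}{2}$ either, since by Remark \ref{rem_first_lift} thick altered segments end at $\leq \frac{l(\pi)-1}{2}$, and original thick segments of $\pi$ ending above $\frac{l(\pi)-1}{2}$ would, via Theorem \ref{lofpi}, contradict the definition of $l(\pi)$). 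Next, applying Theorem \ref{downlifts} to compute $\theta_{-\beta}^{\text{down}}(\pi')$, I note the standard module is obtained from that of $\pi'$ by replacing $\tau'$ with $\theta_{-1}(\tau')$ and adjoining the singletons $1, \dotsc, \frac{\beta-1}{2}$; since $\frac{\beta-1}{2} < \frac{l(\pi)+1}{2}$, none of these new singletons reaches the critical height, so the standard module of $\theta_{-\beta}^{\text{down}}(\pi')$ still contains \emph{no} segment (singleton or thick) ending exactly at $\frac{l(\pi)+1}{2}$. Also, $l(\theta_{-\beta}^{\text{down}}(\pi')) = l(\tau')$ after the tempered part is altered; I will need that $\frac{l(\tau')+1}{2} \leq \frac{l(\pi)+1}{2} < \frac{\alpha-1}{2}$, which holds since $l(\tau') = l(\theta_{-2-l(\tau)}(\tau))$ relates to $l(\tau) \leq l(\pi)$ via Theorem \ref{lofpi}/\cite{Atobe_Gan}.

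Then I apply Theorem \ref{downdownlifts} to $\sigma := \theta_{-\beta}^{\text{down}}(\pi')$ with the lift $\theta_{\alpha}$. If $\theta_{\alpha}(\sigma) \neq 0$, the theorem produces the longest chain $[c_1,d_1], \dotsc, [c_t,d_t]$ in the standard module of $\sigma$ with $d_t = \frac{\alpha-1}{2}$, where we must have $t \geq 1$ because otherwise (if no chain with top endpoint $\frac{\alpha-1}{2}$ exists, so $t=0$) the formula replaces $\tau$ by $\theta_\alpha(\tau_\sigma)$ — but $\theta_\alpha$ of the tempered part is nonzero only if $\alpha \leq l(\tau_\sigma)+$(something controlled), and here $\alpha$ exceeds $l(\sigma) = l(\tau')$, which would already be a contradiction with the going-down vanishing bound. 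Wait — more carefully: $\theta_\alpha(\sigma)\ne 0$ on the going-down tower requires $\alpha \le l(\sigma)$; but $l(\sigma) = l(\tau') \le l(\pi) < \alpha$, contradiction. So in fact the cleanest argument is: $\theta_{\alpha}(\sigma) \neq 0$ on the going-down tower is impossible \emph{by the definition of} $l$, because $\alpha > l(\pi) \geq l(\tau) $ and, tracing through, $l(\sigma) = l(\theta_{-\beta}^{\text{down}}(\theta_{-\alpha}^{\text{up}}(\pi)))$ equals $l(\tau')$ which is $\le l(\pi) < \alpha$. The main obstacle — and the step I would spend the most care on — is establishing this inequality $l(\sigma) < \alpha$ rigorously: one must verify via Theorem \ref{uplifts}, Remark \ref{towerispasseddown}, and the tempered-lift results of \cite{Atobe_Gan} that passing from $\tau$ to $\tau'$ to $\theta_{-1}(\tau')$ does not raise the going-down first-occurrence index above $l(\pi)$, i.e.\ that $l(\theta_{-1}(\tau')) \le l(\pi)$, keeping track of the case split $m_\phi(S_{l(\tau)})$ odd/even. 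Once that inequality is in hand, the corollary follows immediately: $\theta_\alpha(\sigma) = 0$ since $\alpha$ exceeds the largest index for which the going-down lift of $\sigma$ is nonzero.
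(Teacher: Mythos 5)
Your overall strategy of showing $l(\sigma)<\alpha$ for $\sigma=\theta_{-\beta}^{\text{down}}(\theta_{-\alpha}^{\text{up}}(\pi))$ is sound, but there are genuine gaps. The key one is the step you flagged. You propose to show $l(\tau_\sigma)\le l(\pi)$ via ``Theorem \ref{uplifts}, Remark \ref{towerispasseddown}, and the tempered-lift results,'' but that sketch misses the essential ingredient: by Corollary \ref{tau}, $m_{\phi'}(S_1)$ is odd, and by Remark \ref{multi_S1}, $\tau_\sigma=\theta_{-1}(\tau')$ has $m_{\phi_{\tau_\sigma}}(S_1)$ even; then \cite[Theorem 4.1]{Atobe_Gan} (as quoted in the proof of Corollary \ref{tau}) forces $l(\tau_\sigma)=1$. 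Without this parity computation the inequality does not follow, and your intermediate statement ``$l(\sigma)=l(\tau')$'' is simply false: by Theorem \ref{lofpi}, $l(\sigma)=l(\tau_\sigma)+2t_\sigma$ with $\tau_\sigma=\theta_{-1}(\tau')\neq\tau'$ and with the chain length $t_\sigma$ contributing.

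A second error is your parenthetical claim that the standard module of $\pi'$ has no \emph{thick} segment ending at $\frac{l(\pi)+1}{2}$. Remark \ref{rem_first_lift} explicitly produces such a segment: the shifted segment $[c_i,d_i+1]$ with $d_i=\frac{l(\pi)-1}{2}$ is thick and ends at $\frac{l(\pi)+1}{2}$. What Corollary \ref{cor_first_lift} a) gives is only the absence of the \emph{singleton} $\frac{l(\pi)+1}{2}$. To make the chain bound work you need the extra observation that, once $l(\tau_\sigma)=1$ is established, condition (ii) of Theorem \ref{lofpi} forces $c_1\neq 0$, hence $c_1=1=d_1$, and then $c_{i+1}>c_i$ together with $c_i\le d_i=i$ forces $c_i=i$ for all $i$ — so the chain computing $t_\sigma$ consists entirely of singletons. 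It is precisely this that lets the missing singleton at $\frac{l(\pi)+1}{2}$ stop the chain and give $l(\sigma)\le l(\pi)<\alpha$. The paper sidesteps all of this by a cleaner two-case split: assuming $\theta_\alpha(\sigma)=\pi''\neq 0$, either $\sigma=\theta_{-\alpha}^{\text{down}}(\pi'')$ (ruled out because Theorem \ref{downlifts} would force a thin chain $1,\dots,\frac{\alpha-1}{2}$ in $\sigma$, contradicting the absence of the singleton $\frac{l(\pi)+1}{2}$) or $\sigma=\theta_{-\alpha}^{\text{up}}(\pi'')$ (ruled out directly by the even $S_1$-multiplicity versus Corollary \ref{tau}). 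The ingredients are the same, but you would need to supply the parity step and correct the thick-segment claim for your version to close.
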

\begin{proof}
Theorem \ref{uplifts} tells us how to compute $\theta_{-\alpha}^{\text{up}}(\pi)$: we replace $\tau$ by $\tau'$, and replace a certain chain $[c_i,d_i]$ with $[c_i,d_i+1]$. By Corollary \ref{cor_first_lift} a), the standard module of $\theta_{-\alpha}^{\text{up}}(\pi)$ does not contain the singleton segment $\frac{l(\pi)+1}{2}$. By Corollary \ref{tau}, $m_{\phi'}(S_1)$ is odd.

Having established this, we compute $\theta_{-\beta}^{\text{down}}(\theta_{-\alpha}^{\text{up}}(\pi))$; to simplify notation, call this representation $\sigma$. Recall what computing $\theta_{-\beta}^{\text{down}}$ does to the standard module: we update the tempered part, and add the thin chain $1,\dotsc,\frac{\beta-1}{2}$ to the standard module. In particular, by Remark \ref{multi_S1}, the multiplicity of $S_1$ in the L-parameter of the tempered part is now even (because it was odd in $\phi'$).

Now suppose, for the sake of contradiction, that $\theta_{\alpha}(\sigma) \neq 0$. Call this representation $\pi'$. Then we have two cases:
\begin{enumerate}[(i)]
\item $\sigma = \theta_{-\alpha}^{down}(\pi')$. In this case, Theorem \ref{downlifts} shows that $\sigma$ would have to contain a thin chain $1, \dotsc, \frac{\alpha-1}{2}$. However, this is impossible because the standard module of $\sigma$ does not contain the singleton segment $\frac{l(\pi)+1}{2}$. Indeed, we know that $\theta_{-\alpha}^{\text{up}}(\pi)$ does not contain it. To compute $\sigma$, we simply add $1,\dotsc,\frac{\beta-1}{2}$. But we are assuming $\beta \leq l(\pi)$, so $\frac{l(\pi)+1}{2}$ is still missing. Thus, this case is impossible.
\item $\sigma =  \theta_{-\alpha}^{up}(\pi')$. In this case, Corollary \ref{tau} shows that the multiplicity of $S_1$ in the L-parameter of $\sigma$ should be odd. But we know that it is even. Therefore, this case is impossible as well.
\end{enumerate}
In either case, we have reached a contradiction. We conclude that $\theta_{\alpha}(\sigma) = 0$, which we needed to show.
\end{proof}
%\end{appendices} 

\bibliographystyle{siam}
\bibliography{bibliography}

\begin{thebibliography}{10}

\bibitem{Adams}
{\sc J.~Adams}, {\em {$L$}-functoriality for dual pairs}, in Orbites unipotentes et repr\'{e}sentations - II. Groupes $p$-adiques et r\'{e}els, Ast\'{e}risque, no.~171-172, Paris: Soci{\'e}t{\'e} Math{\'e}matique de France, 1989, pp.~85--129.

\bibitem{Arthur_endoscopic}
{\sc J.~Arthur}, {\em The Endoscopic Classification of Representations: Orthogonal and Symplectic Groups}, vol.~61 of American Mathematical Society Colloquium Publications, American Mathematical Society, Providendce, RI, 2013.

\bibitem{atobe2022construction}
{\sc H.~Atobe}, {\em Construction of local {A}-packets}, Journal für die reine und angewandte Mathematik (Crelles Journal), 2022 (2022), pp.~1--51.

\bibitem{Atobe2}
{\sc H.~Atobe}, {\em The set of local {A}-packets containing a given representation}, Journal f{\"u}r die reine und angewandte Mathematik (Crelles Journal),  (2023).

\bibitem{Atobe_Gan}
{\sc H.~{Atobe} and W.~T. {Gan}}, {\em {Local theta correspondence of tempered representations and Langlands parameters}}, {Invent. Math.}, 210 (2017), pp.~341--415.

\bibitem{Atobe_Gan_O(2n)}
{\sc H.~Atobe and W.~T. Gan}, {\em On the local {L}anglands correspondence and {A}rthur conjecture for even orthogonal groups}, Represent. Theory, 21 (2017), pp.~354--415.

\bibitem{nas_clanak}
{\sc P.~Baki{\'c} and M.~Hanzer}, {\em Theta correspondence for p-adic dual pairs of type {I}}, Journal f{\"u}r die reine und angewandte Mathematik (Crelles Journal), 2021 (2021), pp.~63--117.

\bibitem{MR1739616}
{\sc D.~Ban}, {\em Parabolic induction and {J}acquet modules of representations of {${\rm O}(2n,F)$}}, Glas. Mat. Ser. III, 34(54) (1999), pp.~147--185.

\bibitem{Gan_Ichino_formal_degrees}
{\sc W.~T. Gan and A.~Ichino}, {\em Formal degrees and local theta correspondence}, Invent. Math., 195 (2014), pp.~509--672.

\bibitem{Gan_Takeda_proof_of_Howe}
{\sc W.~T. Gan and S.~Takeda}, {\em A proof of the {H}owe duality conjecture}, J. Amer. Math. Soc., 29 (2016), pp.~473--493.

\bibitem{Purdue}
{\sc A.~Hazeltine, B.~Liu, and C.-H. Lo}, {\em On the intersection of local arthur packets for classical groups}, preprint, arXiv:2302.02492,  (2022).

\bibitem{Howe_theta_series}
{\sc R.~Howe}, {\em {$\theta $}-series and invariant theory}, in Automorphic forms, representations and {$L$}-functions ({P}roc. {S}ympos. {P}ure {M}ath., {O}regon {S}tate {U}niv., {C}orvallis, {O}re., 1977), {P}art 1, Proc. Sympos. Pure Math., XXXIII, Amer. Math. Soc., Providence, R.I., 1979, pp.~275--285.

\bibitem{Kret_Lapid}
{\sc A.~Kret and E.~Lapid}, {\em Jacquet modules of ladder representations}, C. R. Math. Acad. Sci. Paris, 350 (2012), pp.~937--940.

\bibitem{Kudla2}
{\sc S.~S. Kudla}, {\em On the local theta-correspondence}, Invent. Math., 83 (1986), pp.~229--255.

\bibitem{Kudla1}
\leavevmode\vrule height 2pt depth -1.6pt width 23pt, {\em Notes on the local theta correspondence}, unpublished notes,  (1996).

\bibitem{Kudla_Rallis_progress}
{\sc S.~S. Kudla and S.~Rallis}, {\em On first occurrence in the local theta correspondence}, in Automorphic representations, {$L$}-functions and applications: progress and prospects, vol.~11 of Ohio State Univ. Math. Res. Inst. Publ., de Gruyter, Berlin, 2005, pp.~273--308.

\bibitem{moeglin2006paquets}
{\sc C.~M{\oe}glin}, {\em Paquets d'{A}rthur pour les groupes classiques; point de vue combinatoire}, arXiv:math/0610189,  (2006).

\bibitem{moeglin2006certains}
\leavevmode\vrule height 2pt depth -1.6pt width 23pt, {\em Sur certains paquets d’{A}rthur et involution d’{A}ubert--{S}chneider--{S}tuhler g{\'e}n{\'e}ralis{\'e}e}, Representation Theory, 10 (2006), pp.~86--129.

\bibitem{moeglin2009comparaison}
\leavevmode\vrule height 2pt depth -1.6pt width 23pt, {\em Comparaison des paramètres de {L}anglands et des exposants {à} l'int{\'e}rieur d'un paquet d'{A}rthur}, Journal of Lie Theory, 19 (2009), pp.~797--840.

\bibitem{Moeglin_discrets}
\leavevmode\vrule height 2pt depth -1.6pt width 23pt, {\em Paquets d'{A}rthur discrets pour un groupe classique {$p$}-adique}, in Automorphic forms and {$L$}-functions {II}. {L}ocal aspects, vol.~489 of Contemp. Math., Amer. Math. Soc., Providence, RI, 2009, pp.~179--257.

\bibitem{moeglin2010holomorphie}
\leavevmode\vrule height 2pt depth -1.6pt width 23pt, {\em Holomorphie des op{\'e}rateurs d’entrelacement normalis{\'e}s {\`a} l’aide des param{\`e}tres d’{A}rthur}, Canadian Journal of Mathematics, 62 (2010), pp.~1340--1386.

\bibitem{moeglinkudla}
\leavevmode\vrule height 2pt depth -1.6pt width 23pt, {\em Conjecture d’{A}dams pour la correspondance de {H}owe et filtration de {K}udla}, Arithmetic geometry and automorphic forms, 19 (2011), pp.~445--503.

\bibitem{moeglin2011multiplicite1}
\leavevmode\vrule height 2pt depth -1.6pt width 23pt, {\em Multiplicit{\'e} 1 dans les paquets d’{A}rthur aux places p-adiques}, in On certain {L}-functions, vol.~13 of Clay Matematics Proceedings, American Mathematical Society, Providence, RI, 2011, pp.~333--374.

\bibitem{MR1896238}
{\sc C.~M{\oe}glin and M.~Tadi\'{c}}, {\em Construction of discrete series for classical {$p$}-adic groups}, J. Amer. Math. Soc., 15 (2002), pp.~715--786.

\bibitem{MVW_Howe}
{\sc C.~M{\oe}glin, M.-F. Vign\'eras, and J.-L. Waldspurger}, {\em Correspondances de {H}owe sur un corps {$p$}-adique}, vol.~1291 of Lecture Notes in Mathematics, Springer-Verlag, Berlin, 1987.

\bibitem{Moeglin1989}
{\sc C.~M{\oe}glin and J.-L. Waldspurger}, {\em Le spectre résiduel de {$\textnormal{GL}(n)$}}, Annales scientifiques de l'École Normale Supérieure, 22 (1989), pp.~605--674.

\bibitem{muic2004howe}
{\sc G.~Mui\'{c}}, {\em {Howe correspondence for discrete series representations; the case of (Sp(n); O(V))}}, Journal fur die Reine und Angewandte Mathematik, 567 (2004), pp.~99--150.

\bibitem{Muic_Israel}
\leavevmode\vrule height 2pt depth -1.6pt width 23pt, {\em On the structure of theta lifts of discrete series for dual pairs {$({\rm Sp}(n),{\rm O}(V))$}}, Israel J. Math., 164 (2008), pp.~87--124.

\bibitem{Sun_Zhu_conservation}
{\sc B.~Sun and C.-B. Zhu}, {\em Conservation relations for local theta correspondence}, J. Amer. Math. Soc., 28 (2015), pp.~939--983.

\bibitem{Tadic_structure}
{\sc M.~Tadi\'{c}}, {\em Structure arising from induction and {J}acquet modules of representations of classical {$p$}-adic groups}, J. Algebra, 177 (1995), pp.~1--33.

\bibitem{Waldspurger_howe_duality}
{\sc J.-L. Waldspurger}, {\em D\'emonstration d'une conjecture de dualit\'e de {H}owe dans le cas {$p$}-adique, {$p\neq 2$}}, in Festschrift in honor of {I}. {I}. {P}iatetski-{S}hapiro on the occasion of his sixtieth birthday, {P}art {I} ({R}amat {A}viv, 1989), vol.~2 of Israel Math. Conf. Proc., Weizmann, Jerusalem, 1990, pp.~267--324.

\bibitem{Weil}
{\sc A.~Weil}, {\em Sur certains groupes d'op\'{e}rateurs unitaires}, Acta Math., 111 (1964), pp.~143--211.

\bibitem{xu2017moeglin}
{\sc B.~Xu}, {\em On {M}{\oe}glin's parametrization of {A}rthur packets for p-adic quasisplit {Sp(N)} and {SO(N)}}, Canadian Journal of Mathematics, 69 (2017), pp.~890--960.

\bibitem{xu2021combinatorial}
\leavevmode\vrule height 2pt depth -1.6pt width 23pt, {\em A combinatorial solution to {M}{\oe}glin’s parametrization of {A}rthur packets for $p$-adic quasisplit {S}p({N}) and {O(N)}}, J.~Inst.~Math.~Jussieu, 20 (2021), pp.~1091--1204.

\bibitem{Zelevinsky_GL}
{\sc A.~V. Zelevinsky}, {\em Induced representations of reductive {$p$}-adic groups. {II}. {O}n irreducible representations of {${\rm GL}(n)$}}, Ann. Sci. \'Ecole Norm. Sup. (4), 13 (1980), pp.~165--210.

\end{thebibliography}

\end{document}